\documentclass[10pt,english]{amsart}
\usepackage{amsmath,amssymb,amsthm,amsfonts,graphicx,color}
\usepackage{amssymb,geometry}

\usepackage{color, graphics}

\usepackage{graphicx}
\usepackage[T1]{fontenc}
\usepackage[latin1]{inputenc}
\usepackage[english]{babel}
\usepackage{geometry}
\geometry{a4paper,top=3cm,bottom=3cm,left=2.5cm,right=2.5cm,%
 heightrounded,bindingoffset=5mm}
\usepackage{indentfirst} 

\usepackage{cases}

\usepackage[colorlinks=true]{hyperref}

\makeatletter
\def\@currentlabel{2.1}\label{e:dispaa}
\def\@currentlabel{2.21}\label{e:dispau}
\def\@currentlabel{2.22}\label{e:dispav}
\def\@currentlabel{2.23}\label{e:dispaw}
\def\@currentlabel{2.24}\label{e:dispax}
\def\theequation{\thesection.\@arabic\c@equation}
\makeatother

\usepackage[colorlinks=true]{hyperref}
\hypersetup{linkcolor=black,citecolor=black,filecolor=black,urlcolor=black} 

\newcommand{\N}{\mathbb{N}}

\newcommand{\Q}{\mathbb{Q}}
\newcommand{\R}{\mathbb{R}}

\newcommand{\D}{\mathbb{D}}

\newcommand{\diver}{\text{div}}
\newcommand{\tr}{\text{tr}}

\newcommand{\eps}{\varepsilon}

\newcommand{\ve}{\varepsilon}

\renewcommand{\theequation}{\thesection.\arabic{equation}}
 
 \newtheorem{lemma}{Lemma}[section]
\newtheorem{definition}{Definition}[section]
\newtheorem{theorem}{Theorem}[section]
\newtheorem{proposition}{Proposition}[section]

\newtheorem{remark}{Remark}[section]
\newcommand{\bremark}{\begin{remark} \em}
\newcommand{\eremark}{\end{remark} }

\begin{document}

\title[Delaunay ends solutions of the Cahn-Hilliard equation]{Multiple Delaunay ends solutions of the Cahn-Hilliard equation}

\author{Micha{\l } Kowalczyk}
\address{Departamento de Ingenier\'{\i}a Matem\'atica and Centro
de Modelamiento Matem\'atico (UMI 2807 CNRS), Universidad de Chile, Casilla
170 Correo 3, Santiago, Chile.}
\email {kowalczy@dim.uchile.cl}
\author{Matteo Rizzi}
\address{Centro
de Modelamiento Matem\'atico (UMI 2807 CNRS), Universidad de Chile, Casilla
170 Correo 3, Santiago, Chile.}
\email{mrizzi1988@gmail.com}
\thanks{M. Kowalczyk was partially supported by Chilean research grants Fondecyt 1130126 and 1170164 and Fondo Basal AFB170001 CMM-Chile.  M. Rizzi was partially supported by Fondecyt postdoctoral research grant 3170111 and Fondo Basal AFB170001 CMM-Chile.}

\begin{abstract}
Let $\Sigma$ be a surface of constant mean curvature in $\R^3$ with multiple Delaunay ends.
Assuming that $\Sigma$ is non degenerate in this paper we construct new solutions to the Cahn-Hilliard equation $\ve\Delta u+\ve^{-1}u(1-u^2)=\ell_\ve$ in $\R^3$  such that as $\ve\to 0$ the zero level set of $u_\ve$ approaches $\Sigma$. Moreover, on compacts of the connected components of $\R^3\setminus \Sigma$ we have $1-|u_\ve|\to 0$ uniformly. 
\end{abstract}

\maketitle

\tableofcontents

\section{Introduction}

\setcounter{equation}{0}

In this paper we consider the Cahn-Hilliard equation
\begin{eqnarray}
\eps\Delta u+\frac{1}{\eps}(u-u^3)=\ell_\eps\label{cahn_hilliard}
\end{eqnarray}
in $\R^3$, which arises from phase transitions theory. If, for instance, two liquids are mixed in a bounded container $\Omega$ and $u(x)$ is the density of one of the two at a point $x\in\Omega$, we expect the optimal configuration to minimise an energy, which, at first glance, can be taken to be
$$\int_\Omega W(u) dx,\qquad W(u)=\frac{(1-u^2)^2}{4}.$$
However, this naive model reveals to be unsatisfactory, since any piecewise constant function taking only the values $\pm 1$ would be a minimiser. Therefore it looks convenient to add a gradient term to the energy, in order to penalise the transition between the two phases represented by $-1$ and $1$. More precisely, we consider the energy
\begin{equation}
\label{allen-cahn_energy}
E_\eps(u,\Omega):=\int_\Omega \bigg(\frac{\eps}{2}|\nabla u|^2+\frac{1}{\eps}W(u) \bigg)  dx.
\end{equation}
It turns out that equation (\ref{cahn_hilliard}) on a domain $\Omega$ is the Euler equation of $E_\eps(\cdotp,\Omega)$ under the mass constraint
\begin{equation}
\label{mass_constraint}
\frac{1}{|\Omega|}\int_\Omega u(x)dx=m, \qquad m\in(-1,1).
\end{equation}
Modica proved that, if $\eps_k$ is a sequence of positive numbers tending to $0$ and $u_{\eps_k}$ is a sequence of minimisers of $E_{\eps_k}(\cdotp,\Omega)$ under the aforementioned constraint (\ref{mass_constraint}) such that $u_{\eps_k}\to u_0$ in $L^1(\Omega)$, then $u_0(x)\in\{\pm 1\}$ for almost every $x\in\Omega$, and the boundary in $\Omega$ of the set $E:=\{x\in\Omega:\, u_0(x)=1\}$ has minimal perimeter among all subsets $F\subset \Omega$ such that $|F|=|E|$, where $|\cdotp|$ denotes the volume (see \cite{M}, Theorem $1$).  For further $\Gamma$-convergence results relating the Ginzburg-Landau energy $E_\eps$ to the perimeter, we refer to \cite{MM}. In view of this results, we are lead to think that, for $\eps$ small, the interface of the minimisers $u_\eps$ resemble minimal surfaces. Conversely, there are several results in the literature where the authors construct families $\{u_\eps\}_{\eps\in(0,\eps_0)}$ of solutions to the Allen-Cahn equation
\begin{eqnarray}
\eps\Delta u_\eps+\frac{1}{\eps}(u_\eps-u_\eps^3)=0,\label{allen_cahn}
\end{eqnarray}
not necessarily minimisers, whose nodal set approaches, for $\eps$ small, some given minimal surface. Some relevant results in this direction are obtained, for instance, in \cite{DKPW,DKW,KLPW,PR}.\\

In our case, the presence of a nontrivial Lagrange multiplier suggests that, in order to construct solutions to (\ref{cahn_hilliard}), it is convenient to start from a constant mean curvature surface instead of a minimal surface. It is known that the only compact embedded constant mean curvature surface is the round sphere, while, as regards the non compact case, the simplest example is the cylinder. Moreover, a relevant family of rotationally symmetric embedded constant mean curvature surfaces was discovered by Delaunay, whose construction relies on rotating some given periodic graph around a fixed axis in $\R^3$ (see, for instance, \cite{MP, JP}). This family is parametrised by a real number $\tau\in(0,1]$, and usually denoted by $\{D_\tau\}_{\tau\in(0,1]}$. In the degenerate case $\tau=1$, $D_\tau$ reduces to a cylinder, while, for $\tau\in(0,1)$, there is a nontrivial curvature in the direction of the axis too. A relevant existence result was obtained by Hernandez and Kowalczyk, who proved the following.
\begin{theorem}\cite{HK1}
Let $\tau\in(0,1)$. Then there exists $\eps_\tau>0$ such that, for any $0<\eps<\eps_\tau$, there exists a solution $u_{\tau,\eps}$ to (\ref{cahn_hilliard}) such that 
\begin{equation}
u_{\tau,\eps}\to\pm 1 \qquad\text{as $\eps\to 0$, uniformly in $\Omega^\pm_\tau$,}
\end{equation}
where $\Omega^\pm_\tau$ denote the interior and the exterior of $D_\tau$ respectively.
\label{th_KH}
\end{theorem}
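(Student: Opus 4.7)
The plan is to construct $u_{\tau,\eps}$ by an infinite-dimensional Lyapunov--Schmidt reduction. Near $D_\tau$, introduce Fermi coordinates $(y,t)$ with $y\in D_\tau$ and $t$ the signed distance. In these coordinates the Laplacian expands as
\begin{equation*}
\Delta = \partial_t^2 - H(y,t)\partial_t + \Delta_{\Sigma_t}, \qquad H(y,t) = H_0 + t\,|A|^2(y) + O(t^2),
\end{equation*}
where $H_0$ is the (constant) mean curvature of $D_\tau$ and $|A|^2$ is the squared norm of its second fundamental form. The first-order ansatz is $u_0(x) = H_\eps((t - \eps h(y))/\eps)$, where $h:D_\tau\to\R$ is a small unknown normal perturbation and $H_\eps$ is a modified one-dimensional heteroclinic adapted to the nonzero right-hand side, connecting the outer roots $r_\pm(\eps) = \pm 1 + O(\eps)$ of $u - u^3 = \eps\ell_\eps$ and satisfying $H_\eps'' + H_\eps - H_\eps^3 = \eps\ell_\eps$. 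The solvability condition for the leading-order projected equation pins $\ell_\eps$ at leading order to $H_0$ through an explicit constant involving $\|H'\|_{L^2(\R)}$.

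Writing $u = u_0 + \phi$, the correction satisfies
\begin{equation*}
\mathcal{L}_\eps \phi = E_\eps(h) + N_\eps(\phi),
\end{equation*}
with $\mathcal{L}_\eps = \eps\Delta + \eps^{-1}(1-3u_0^2)$ the linearised Cahn--Hilliard operator, $E_\eps(h)$ the approximation error, and $N_\eps$ a quadratic remainder. Since the one-dimensional linearisation $-\partial_s^2 + W''(H(s))$ has kernel spanned by $H'$, decompose
\begin{equation*}
\phi = \phi^\perp + c(y)\,H_\eps'\!\left(\tfrac{t-\eps h(y)}{\eps}\right), \qquad \int_\R \phi(y,t)\,H_\eps'\!\left(\tfrac{t-\eps h(y)}{\eps}\right) dt = 0.
\end{equation*}
The orthogonal component is obtained by inverting $\mathcal{L}_\eps$ in $\eps$-weighted H\"older spaces exploiting exponential decay in $t/\eps$ away from the interface. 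This reduces the problem to a Jacobi-type equation for $h$ on $D_\tau$,
\begin{equation*}
J_{D_\tau} h := \Delta_{D_\tau} h + |A|^2 h = F_\eps[h],
\end{equation*}
where $F_\eps$ collects small nonlinear and higher-order geometric terms.

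The core difficulty is the Fredholm theory for $J_{D_\tau}$ on the noncompact, periodic surface of revolution $D_\tau$. Its bounded kernel is spanned by geometric Jacobi fields: axial translation of $D_\tau$ and the variation $\partial_\tau D_\tau$ with respect to the Delaunay parameter. Working in weighted spaces adapted to the periodic ends, together with Floquet/separation-of-variables analysis in the axial direction, one constructs a right inverse modulo this finite-dimensional subspace. The corresponding cokernel is then killed by exploiting the two free parameters naturally present in the construction, namely the axial position of the reference surface and the Delaunay parameter itself, so that the projected obstructions can be made to vanish simultaneously.

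The main obstacle is precisely securing this linear theory uniformly in $\eps$: one must combine the $\eps$-scale estimates for $\mathcal{L}_\eps$ (which dominate across the interface) with the $\eps$-independent Fredholm estimates for $J_{D_\tau}$ on the ends of the Delaunay surface, matching them in a boundary layer around $\partial\{|t|\lesssim 1\}$. Once both pieces are in place, a contraction mapping on a suitable weighted ball of triples $(h,\phi,\ell_\eps)$ produces $u_{\tau,\eps}$. Its nodal set coincides with a $C^1$-graph over $D_\tau$ of size $O(\eps^2)$, so in each of $\Omega^\pm_\tau$ the outer expansion $u_{\tau,\eps} = \pm 1 + O(\eps)$ yields the uniform convergence to $\pm 1$ on compacts, as claimed.
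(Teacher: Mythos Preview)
This theorem is quoted from \cite{HK1} and is not proved in the present paper; however, the paper's treatment of the periodic corrections along each end (Sections~\ref{section_aux_end}--\ref{section_bifo_end}, especially Lemma~\ref{lemma_injet_tau}) reproduces the essential linear theory, so one can compare against that.

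Your overall Lyapunov--Schmidt scheme is the right one, but the handling of the Jacobi step contains a genuine gap. You assert that the bounded kernel of $J_{D_\tau}$ is spanned by the axial translation and the Delaunay-parameter variation $\partial_\tau D_\tau$. This is incorrect: the Delaunay-variation field $\Phi^D_\tau$ grows linearly in the axial variable (see the discussion after Lemma~\ref{lemma_Jacobi_fields}), so it is not a bounded Jacobi field and does not contribute to the cokernel in any reasonable periodic or bounded function space. More seriously, you then propose to kill obstructions by moving $\tau$; but $\tau$ is \emph{fixed} in the statement, so perturbing it would produce a solution whose interface converges to $D_{\tau'}$ with $\tau'\neq\tau$, which is not what is claimed.

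The actual mechanism in \cite{HK1}, mirrored in Lemma~\ref{lemma_injet_tau} here, is symmetry reduction rather than parameter matching. One works from the outset with functions that are rotationally symmetric, periodic of period $s_\tau$ in the axial isothermal variable $s$, and even under the reflection $s\mapsto s_\tau-s$. In this space the reduced Jacobi operator $L_{0,\tau}=\partial_s^2+\tau^2\cosh(2\sigma_\tau)$ has \emph{trivial} kernel: the axial-translation field $\partial_s\sigma_\tau$ is odd under the reflection and hence excluded, while $\Phi^D_\tau$ is not periodic. Consequently $L_{0,\tau}$ is an isomorphism on the symmetric spaces $D^{2,\alpha}_\tau(\R)\to D^{0,\alpha}_\tau(\R)$, no cokernel appears, and no free parameters are needed. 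Once you impose these symmetries, the rest of your outline (the $\eps$-scale inversion of $\mathcal{L}_\eps$ orthogonal to $H_\eps'$, followed by a contraction for the pair $(h,\phi)$) goes through essentially as you describe.
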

Here we are interested in a similar construction, based on another kind of embedded constant mean curvature surfaces, that is we start from an arbitrary element $\Sigma$ in the set $\mathcal{M}_{k,g}$ of complete Alexandrov embedded constant mean curvature surfaces of genus $g$ with $k$-ends. This means that our surface is given, outside a large ball $B_R$, by the disjoint union of $k$ connected components $E_j$ called \textit{the ends}, that is
$$\Sigma\cap(\R^3\backslash B_R)=\cup_{j=1}^k E_j.$$
There are several results in the literature about theses surfaces, for instance it is known that $\mathcal{M}_{0,g}$ only consists of the round sphere (since, by definition, any surface in $\mathcal{M}_{0,g}$ has to be compact), $\mathcal{M}_{1,g}$ is empty and $\mathcal{M}_{2,g}$ only consists of Delaunay surfaces, while the situation is highly nontrivial if we have $k\ge 3$ ends (see for instance \cite{K,KMP,JP}). A very important general fact is that every end is exponentially close to a translated and rotated copy of some Delaunay surface (see \cite{KMP}), whose parameter will be denoted by $\tau_j$ and whose axis is spanned by a unit vector, which will be denoted by $\mathbf{c}_j$. Note that, in general, we have two possible choices of $\mathbf{c}_j$: in the paper, we will always assume that the orientation of $\mathbf{c}_j$ agrees with the one of the end, that is  
\begin{equation}
\mathbf{c}_j\cdotp x>0,  \qquad\forall\,x\in E_j,\,\forall\,1\le j\le k. 
\end{equation}
In our construction, we will make the assumption\\

(H) not two of the ends are parallel,\\

in the sense that, if there exist $1\le j\ne i\le k$ and $\lambda\in\R$ such that $\mathbf{c}_j=\lambda\mathbf{c}_i$, then $\lambda<0$. This assumption is equivalent to say that, given two ends $E_i$ and $E_j$, $i\ne j$, it is always possible to find two disjoint open cones $C_j$ and $C_i$ such that $E_j\subset C_j$ and $E_i\subset C_i$. Moreover, one expects some \textit{non degeneracy} assumption about the surface to be necessary, in the sense that the Jacobi operator  
$$\mathcal{J}_\Sigma:=\Delta_\Sigma+|A_\Sigma|^2$$
has no $L^2(\Sigma)$-kernel. In other words, defining a \textit{Jacobi field} to a be nontrivial solution to 
$$\mathcal{J}_\Sigma \phi=0,$$
non degeneracy is equivalent to say that there are no Jacobi fields in $L^2(\Sigma)$. This hypothesis, which is common to most of the papers dealing with such constructions (see \cite{DKPW,DKW,HK1,KLPW,PR}), is of course required here, however we will see that it is not sufficient. Additionally, we need every end to be \textit{regular}. In order to explain this notion, we observe that, for any $\Sigma\in\mathcal{M}_{g,k}$, it is known that any of the ends gives rise to at least $5$ \textit{globally defined} linearly independent Jacobi fields with at most mild exponential growth along the ends (for a precise definition of what mild exponential growth means, we refer to section \ref{section_non_deg}). In view of this fact, we say that an end is regular if there are exactly $6$ linearly independent Jacoby fields of $\Sigma$ associated to it.\\

Before stating our main result, we introduce some notation. We observe that $\Sigma$ divides $\R^3$ into two connected components, that we will denote by $\Sigma^\pm$, the interior and the exterior respectively. For any multi index $\beta:=(\beta_1,\beta_2,\beta_3)\in\N^3$, we set
$$\partial^\beta_x:=\partial^{\beta_1}_{x_1}\partial^{\beta_2}_{x_2}\partial^{\beta_3}_{x_3}.$$
\begin{theorem}
Let $\Sigma$ be a non compact non degenerate complete Alexandrov embedded constant mean curvature surface in $\R^3$ with $k$ ends. Assume furthermore that every end is regular and (H) is satisfied. Then there exists $\eps_0>0$ such that, for any $0<\eps<\eps_0$, there exists a solution $u_\eps$ to equation (\ref{cahn_hilliard}) such that
$$u_\eps\to\pm 1, \qquad \partial^\beta_x u_\eps \to 0 \qquad\text{as $\eps\to 0$}$$
uniformly on compact subsets of $\Sigma^\pm$, for any multi index $\beta$, with $1\le|\beta|\le 2$. 
\label{main_th}
\end{theorem}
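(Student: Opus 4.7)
The natural strategy is an infinite-dimensional Lyapunov--Schmidt reduction in the spirit of \cite{PR,HK1,DKPW}. Let $\nu$ be a unit normal field along $\Sigma$ and work in Fermi coordinates $(y,t)\in\Sigma\times\R$ defined by $x=y+t\nu(y)$. Given an unknown function $h:\Sigma\to\R$ to be determined, introduce the normally perturbed interface $\Sigma_h=\{y+\eps h(y)\nu(y):y\in\Sigma\}$ and take as first approximation the function $U_1(x)=H\bigl((t-\eps h(y))/\eps\bigr)$ in a tubular neighborhood of $\Sigma$ of width $\eta|\log\eps|$, where $H$ denotes the one-dimensional heteroclinic with $H''+H-H^3=0$ and $H(\pm\infty)=\pm 1$. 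Outside this neighborhood, glue $U_1$ to the constants $\pm 1$ with a smooth cutoff; hypothesis (H) guarantees that the tubes around distinct ends can be chosen disjoint outside a compact set, so that the extension is globally well defined.

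Next, I would expand the error $S_\eps(U_1):=\eps\Delta U_1+\eps^{-1}(U_1-U_1^3)-\ell_\eps$ using the Fermi expansion of $\Delta$, which schematically gives
\begin{equation*}
S_\eps(U_1)=\bigl(\eps H_\Sigma-\ell_\eps\bigr)H'(\xi)+\eps^2\bigl(\mathcal{J}_\Sigma h\bigr)H'(\xi)+O(\eps^3),\qquad \xi=\tfrac{t-\eps h(y)}{\eps},
\end{equation*}
where $H_\Sigma$ is the constant mean curvature of $\Sigma$ and $\mathcal{J}_\Sigma=\Delta_\Sigma+|A_\Sigma|^2$ is its Jacobi operator. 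Projecting the equation along $H'(\xi)$ in each normal fibre determines the Lagrange multiplier $\ell_\eps$ to leading order (with $\ell_\eps=\eps H_\Sigma c_0+O(\eps^2)$, $c_0=\int_\R (H')^2$) and produces a Jacobi-type equation $\mathcal{J}_\Sigma h=F_\eps(h)$ for the normal displacement, in which the right-hand side is nonlinear and nonlocal but of lower order.

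Looking for a true solution $u_\eps=U_1+\phi$, I would enforce the orthogonality $\int_\R \phi(y,t)H'(\xi)\,dt=0$ for all $y\in\Sigma$, which removes the kernel of the one-dimensional linearization and turns the linearized operator $\mathcal{L}_\eps\phi=\eps\Delta\phi+\eps^{-1}(1-3U_1^2)\phi$ into an isomorphism between suitable weighted H\"older spaces encoding exponential decay transverse to $\Sigma$ and mild growth along the ends. For fixed admissible $h$, the resulting projected equation for $\phi$ is then solved by contraction mapping, leaving a bifurcation equation for the pair $(h,\ell_\eps)$. The principal obstacle is the Fredholm theory for $\mathcal{J}_\Sigma$ on the non-compact surface $\Sigma$: although the non-degeneracy assumption rules out $L^2$ Jacobi fields, each regular end carries exactly six linearly independent Jacobi fields of mild exponential growth (three translations and two axis rotations in general, plus one Delaunay parameter change precisely thanks to the regularity assumption). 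To solve $\mathcal{J}_\Sigma h=F_\eps(h)$ modulo this $6k$-dimensional obstruction I would augment $h$ by a finite-dimensional vector of end parameters $(\mathbf{a}_j,\mathbf{b}_j,\tau_j')_{j=1}^k$ built into the geometry of $\Sigma_h$, and adjust them together with $\ell_\eps$ so as to cancel the projection of $F_\eps$ onto the cokernel. Hypothesis (H) is crucial here, since placing the ends in pairwise disjoint cones makes the $6k\times 6k$ obstruction matrix diagonally dominant for $\eps$ small, after which the implicit function theorem delivers the required $(h,\mathbf{a}_j,\mathbf{b}_j,\tau_j',\ell_\eps)$ and hence the solution $u_\eps$.
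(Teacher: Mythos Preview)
Your plan follows the right overall architecture (Fermi coordinates, ansatz with normal shift, projection onto $H'$, contraction for the orthogonal part, Jacobi equation for the shift), but there is a genuine gap at the bifurcation step that would make the argument stall.

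The difficulty is that the right-hand side $F_\eps(h)$ of your reduced equation $\mathcal{J}_\Sigma h=F_\eps(h)$ is \emph{not} in a space on which $\mathcal{J}_\Sigma$ has good Fredholm theory. The projection of $S_\eps(U_1)$ along $H'$ produces, at order $\eps^2$, geometric terms such as $\tr A_\Sigma^3$ and $\lambda|A_\Sigma|^2$; along each end these are asymptotically \emph{periodic} in the Delaunay isothermal variable, hence bounded but nowhere decaying. Proposition~\ref{prop_non_deg}, however, gives surjectivity of $\mathcal{J}_\Sigma$ only from $\mathcal{C}^{2,\alpha}_a(\Sigma)\oplus\mathcal{D}(\Sigma)$ onto $\mathcal{C}^{0,\alpha}_a(\Sigma)$ for $a>0$. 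Since $F_\eps(h)\notin\mathcal{C}^{0,\alpha}_a(\Sigma)$, you cannot invert, and augmenting $h$ by a $6k$-dimensional finite family of end parameters does not help: the mismatch is an entire periodic function along each end, not a finite set of numbers.

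The paper resolves this by performing, \emph{before} the global bifurcation, a separate Lyapunov--Schmidt reduction on each end. The shift is split as $h=h_0+\sum_j\zeta_j{\tt h}_j$ with ${\tt h}_j$ periodic of the Delaunay period, and a further periodic correction $\varphi_j$ is added to the approximate solution along $E_j$. Solving the auxiliary and bifurcation equations (\ref{eq_aux_end})--(\ref{eq_bifo_end}) along each end (Propositions~\ref{prop_aux_end} and~\ref{prop_bifo_end}) determines $({\tt h}_j,\varphi_j)$ so that the periodic part of the error is annihilated; only then does the remaining error lie in $\mathcal{E}^{0,\alpha}_{a,\gamma}$ (Lemma~\ref{lemma_dec_error}, Remark~\ref{remark_decaying_error}), and the global Jacobi equation (\ref{eq_Sigma}) can be solved using Proposition~\ref{prop_non_deg}. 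Your scheme is missing this entire layer.

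A secondary point: your description of the role of (H) is not how it is actually used. Hypothesis (H) is not needed to make an obstruction matrix diagonally dominant; it is used to place the ends in pairwise disjoint cones so that the global weight $\Gamma$ satisfies $C^{n,\alpha}_{a,\gamma}(\R^3)\subset L^2(\R^3)$ (Remark~\ref{rem_angles}), which is what allows the coercivity argument for the far-from-$\Sigma$ equation (\ref{eq_far}) in Lemma~\ref{lemma_far_scaled}.
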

As regards the Lagrange multiplier, it follows from the construction that, as the case considered in \cite{HK1},
$$\ell_\eps=-\frac{1}{2}H_\Sigma\int_\R (v'_\star(t))^2 dt+O(\eps)<0.$$
Our assumptions are fulfilled by a large class of surfaces. In fact, in the previous notations, the \textit{balancing formula}
\begin{equation}
\label{balancing_formula}
\sum_{j=1}^k \tau_j^2\mathbf{c}_j=0
\end{equation}
holds (see \cite{JP}). Moreover, it is known that, if $\Sigma\in \mathcal{M}_{g,k}$ is contained in a cylinder, then $k=2$ and hence $\Sigma$ is a Delaunay surface. This fact, together with the balancing formula, yields that, if $k=3$, assumption (H) is automatically satisfied. In fact, if two of the ends were parallel, then, by the balancing formula, $\Sigma$ would be contained in a cylinder, which is not possible. For $k\ge 3$, some examples of surfaces fulfilling our hypothesis are known. For instance, the construction in \cite{JP} relies on the existence of a family of $k$-ended surfaces whose ends are planar (that is all the $\mathbf{c}_j$'s are orthogonal to a fixed vector) and such that the angle between an end the following is $2\pi/k$. What we get is a one-parameter family of surfaces, parametrised by the Delaunay parameter $\tau$ of the Delaunay surfaces to which the ends are asymptotic. In this case the parameter $\tau$ is the same for all the ends, since they only differ by a rotation of angle $2\pi/k$. It is known that, for any $k\ge 3$, these surfaces are non degenerate, all the ends are regular and (H) is satisfied by construction. In the case $k=3$, playing with the angle $\alpha$ between two of the ends and with the Delaunay parameter $\tau$ of one of them, it is also possible to construct another family of planar surfaces, depending on the parameters $\tau$ and $\alpha$, in which only two of the ends have the same Delaunay parameter, the other being determined by (\ref{balancing_formula}). Even in this case, we get a family of surfaces which fulfil all our assumptions.\\

Our proof relies on perturbation techniques, such as fixed point theorems and the infinite-dimensional Lyapunov-Schmidt reduction, iterated $k$ times, for any end, in order to improve the error of approximation, and then applied again to an equation which sees the whole surface, which can be solved thanks to the non degeneracy of $\Sigma$ and the regularity of the ends. Moreover, we will need a very small global correction, whose existence is proved thanks to assumption (H), which enables us to use coercivity. \\ 

Similar arguments were applied, for instance, in (\cite{KLPW}), where a family of solutions to the Allen-Cahn equation is constructed. However their proof relies on a gluing procedure, which starts from a given, known, family of $4$-ended solutions to the Allen-Cahn equation. A similar end-to-end construction is used, for instance, in \cite{JP}, in a geometric context, were new constant mean curvature surfaces are produced by gluing together known surfaces. Here our technique is slightly different, since our proof is self-contained, in the sense it does not rely on any known solution to our PDE, although it turns out to be asymptotic, close to any of the ends, to a translated and rotated copy of a solution constructed by Hernandez and Kowalczyk in \cite{HK1}. \\

The plan of the paper is the following: in Section \ref{section_surface} we explain the geometric background of the paper, that is, in Section \ref{section_Delaunay}, we recollect the basic information that we need about Delaunay surfaces and their Jacobi fields and, in Section \ref{section_non_deg}, we give a detailed explanation of what we mean by non degeneracy. Then, in Section \ref{section_SIgma_d}, we construct a family of auxiliary surfaces, which will be the nodal set of our approximate solutions. In Section \ref{section_Fermi} we introduce the Fermi coordinates.

In Section \ref{section_approx_solution} we construct a family of approximated solutions. In particular we will see that the first approximation constructed in Section \ref{section_first_approximation} is not enough to solve our equation, thus, in Section \ref{section_improvement}, we overcome this technical issue by adding some suitable corrections, whose explicit construction is carried out in Sections \ref{section_aux_end} and \ref{section_bifo_end}, thanks to a Lyapunov-Schmidt reduction. 

Finally, Section \ref{section_proof} is devoted to the construction of a family of global approximations, through a gluing procedure, carried out in Section \ref{section_gluing}, which enables us to find a true solution of our equation, by adding a small global approximation, studied in detail in Section \ref{subsec_far}, and by solving a problem near $\Sigma$, in Section \ref{section_near}.

\section{About the surface}\label{section_surface}

\setcounter{equation}{0}

\subsection{Delaunay surfaces}\label{section_Delaunay}

The aim of this subsection is to recall the main properties of the Delaunay surfaces, that is a family of noncompact complete constant mean curvature surfaces in $\R^3$ obtained by rotating a periodic curve around a fixed axis, that we assume to be the $x_3$-axis. For further details, we refer to \cite{MP}. These surfaces admit a parametrisation of the form 
$$R(x_3,\vartheta):=(\rho(x_3)\cos\vartheta,\rho(x_3)\sin\vartheta,x_3), \qquad\forall\,(x_3,\vartheta)\in\R\times S^1,$$
where $\rho:\R\to\R$ is a periodic function determined in such a way that the curvature is identically equal to $1$. This condition is equivalent to the ODE 
\begin{eqnarray}
\partial^2_{x_3}\rho-\frac{1}{\rho}(1+(\partial_{x_3}\rho)^2)+(1+(\partial_{x_3}\rho)^2)^{3/2}=0.\label{ODE_rho}
\end{eqnarray}
It is known that, for any $\tau\in(0,1)$, there exists a unique periodic solution $\rho_\tau$ to (\ref{ODE_rho}) such that
\begin{eqnarray}
\rho_\tau(0)=\epsilon:=1-\sqrt{1-\tau^2},\qquad\partial_{x_3}\rho_\tau(0)=0, \qquad\epsilon\le \rho_\tau\le 2-\epsilon. 
\end{eqnarray}
We denote its period by period $T_\tau$. Therefore, we have a family of constant mean curvature surfaces, known as \textit{Delaunay surfaces}, that we denote by $D_\tau$. 
In the sequel, we will be interested in the Jacobi operator of $D_\tau$. This operator is defined in a variational way, in fact it appears as the linearisation of the functional
$$v\mapsto H_{\Sigma_v},$$
where $v:D_\tau\to\R$ is a real-valued function and $H_{\Sigma_v}$ is the mean curvature of the normal graph
$$\Sigma_v:=\{p+v(p)\nu_\tau(p):\, p\in D_\tau\},$$
$\nu_\tau(p)$ being the inward-pointing unit normal to $D_\tau$ at $p$. In other words, assuming, for instance, that $v\in C^2(D_\tau)$, the Jacobi operator applied to $v$ is defined by the relation
$$\mathcal{J}_{D_\tau}v:=\frac{d}{dt}\bigg\arrowvert_{t=0}H_{\Sigma_{tv}}.$$
It turns out that   
\begin{eqnarray}\notag
\mathcal{J}_{D_\tau}=\Delta_{D_\tau}+|A_{D_\tau}|^2,
\end{eqnarray}
where $\Delta_{D_\tau}$ is the Laplace-Beltrami operator and $|A_{D_\tau}|^2$ is the squared norm of the second fundamental form, that is the sum of the squares of the principal curvatures of $D_\tau$. The Jacobi operator is particularly simple if we introduce isothermal coordinates on $D_\tau$, that is we use the parametrisation
$$X_\tau(s,\vartheta):=(\tau e^{\sigma_\tau(s)}\cos\vartheta,\tau e^{\sigma_\tau(s)}\sin\vartheta, k_\tau(s)),\qquad\forall \, (s,\vartheta)\in \R\times S^1$$
where $\sigma_\tau$ and $k_\tau$ are defined by the relations
\begin{eqnarray}
(1+(\partial_{x_3}\rho_\tau\circ k_\tau)^2)\partial_s k_\tau^2=\rho_\tau\circ k_\tau ,\qquad\partial_{s}k_\tau>0,\qquad 
\tau e^{\sigma_\tau}=\rho_\tau\circ k_\tau.\label{def_sigma_k}
\end{eqnarray}
We note that $\sigma_\tau$ is periodic of period $s_\tau$, satisfying $T_\tau=\frac{1}{2}k_\tau(s_\tau)$, and $k_\tau$ is strictly increasing with linear growth, since 
$$\partial_s k_\tau=\frac{\tau^2}{2}(1+e^{2\sigma_\tau})$$
is periodic. In these coordinates, the metric and the second fundamental form are given by
\begin{eqnarray}
g_{D_\tau}=(\tau e^{\sigma_\tau})^2(ds^2+d\vartheta^2), \qquad A_{D_\tau}=-\frac{\tau e^{\sigma_\tau}\partial_{ss}\sigma_\tau}{\sqrt{1-\sigma_\tau^2}}ds^2+\tau e^{\sigma_\tau}\sqrt{1-\sigma_\tau^2}d\vartheta^2,\label{g_A_Delaunay}
\end{eqnarray}
hence the Jacobi operator reduces to
\begin{eqnarray}
\tilde{L}_\tau=\frac{1}{\tau^2 e^{2\sigma_\tau}}L_\tau, \qquad L_\tau:=\partial^2_s+\partial^2_\vartheta+\tau^2\cosh(2\sigma_\tau).\label{def_jacobi_operator}
\end{eqnarray}
For further details about these formulas, we refer to \cite{MP}.

\subsubsection{Jacobi fields of $D_\tau$}\label{section_Jacobi_Delaunay}
The \textit{Jacobi fields} of $D_\tau$, that is the solutions to the homogeneous equation $\mathcal{J}_{D_\tau}\phi=0$, are of special interest in this paper. 
It is known that there are exactly $6$ linearly independent Jacobi fields with at most exponential growth strictly smaller than $\cosh^{\sqrt{2+\tau^2}}(s)$. 
\begin{lemma}[\cite{MP}]
Let $a\in(0,\sqrt{2+\tau^2})$ and let $\Phi$ be a solution to $L_\tau \Phi=0$ such that $\Phi\cosh^{-a}(s)\in L^\infty(\R\times S^1)$. Then 
$$\Phi\in \mathop{span}\{\Phi_\tau^{T,e_l},\, 1\le l\le 3, \,\Phi_\tau^{R,e_l},\, 1\le l\le 2, \,\Phi_\tau^{D} \},$$ 
where
\begin{eqnarray}\notag
\Phi^{T,e_3}_\tau=\partial_s \sigma_\tau,\qquad \Phi^{D}_\tau=\sqrt{1-\tau^2}\bigg(\frac{1}{\tau}\partial_s \sigma_\tau\partial_\tau k_\tau-e^{\sigma_\tau}\cosh\sigma_\tau(1+\tau\partial_\tau\sigma_\tau)\bigg),\\\notag
\Phi^{T,e_1}_\tau=-\tau\cosh\sigma_\tau \cos\vartheta,\qquad \Phi^{R,e_1}_\tau=-\tau k_\tau(\cosh\sigma_\tau+\partial_s \sigma_\tau e^{\sigma_\tau})\cos\vartheta,\\\notag
\Phi^{T,e_2}_\tau=-\tau\cosh\sigma_\tau \sin\vartheta, \qquad \Phi^{R,e_2}_\tau=-\tau k_\tau(\cosh\sigma_\tau+\partial_s \sigma_\tau e^{\sigma_\tau})\sin\vartheta.
\end{eqnarray} 
\label{lemma_Jacobi_fields}
\end{lemma}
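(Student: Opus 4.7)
The plan is to perform an angular Fourier decomposition and then apply Floquet theory to each resulting Hill-type ODE. Writing
$$\Phi(s,\vartheta) = \sum_{n\in\Z}\phi_n(s)\,e^{in\vartheta},$$
and using that $L_\tau$ is separated in $\vartheta$, the equation $L_\tau\Phi=0$ becomes a countable family of equations
$$\phi_n''(s) + \bigl(\tau^2\cosh(2\sigma_\tau(s)) - n^2\bigr)\phi_n(s) = 0, \qquad n\in\Z,$$
each a Hill equation with smooth, $s_\tau$-periodic coefficient. By Floquet theory the corresponding two-dimensional solution space is spanned by Bloch-type solutions with a nonnegative characteristic exponent $\mu_n$; typical nontrivial solutions grow on at least one end at rate $e^{\mu_n|s|}$, with a possible $|s|$ prefactor only in the degenerate coincidence of the two exponents.

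The first step is to match the six listed Jacobi fields to the low modes. The field $\Phi^{T,e_3}_\tau = \partial_s\sigma_\tau$ lies in the $n=0$ mode; it is periodic and solves its Hill equation because it is the generator of the vertical translation isometry of $D_\tau$. Its partner $\Phi^D_\tau$ also lives in the $n=0$ mode and solves the same equation, since it arises by differentiating the Delaunay family with respect to $\tau$; the factor $\partial_\tau k_\tau$ grows linearly in $s$, so $\Phi^D_\tau$ is linearly independent from $\Phi^{T,e_3}_\tau$. The horizontal translation fields $\Phi^{T,e_{1,2}}_\tau$ are of the form $f(s)\cos\vartheta$ or $f(s)\sin\vartheta$, hence belong to the $n=\pm1$ modes, with $f$ bounded; the rotation fields $\Phi^{R,e_{1,2}}_\tau$ lie in the same modes but with $f$ growing linearly through the factor $k_\tau$. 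Together these fields span the four-dimensional $n=\pm1$ solution space. In each case one may either verify the Jacobi equation by direct substitution using \eqref{def_sigma_k}--\eqref{def_jacobi_operator} or, more conceptually, invoke the principle that every one-parameter family of CMC deformations of $D_\tau$ produces a Jacobi field.

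The main step is then to show that the Fourier modes with $|n|\ge 2$ must vanish under the growth hypothesis. It suffices to prove that the Floquet exponent satisfies $\mu_n \ge \sqrt{2+\tau^2}$ for all $|n|\ge 2$. Monotonicity in $|n|$ (the potential $\tau^2\cosh(2\sigma_\tau)-n^2$ decreases as $|n|$ grows, so a Sturm--Riccati comparison yields $\mu_n\ge\mu_2$) reduces the question to the critical mode $|n|=2$. For that case, the sharp bound $\mu_2 \ge \sqrt{2+\tau^2}$ can be obtained by a Riccati comparison against the constant-coefficient equation at the cylindrical limit $\tau=1$, $\sigma_\tau\equiv 0$, where $\mu_n = \sqrt{n^2-1}$ gives exactly $\sqrt{2+\tau^2}=\sqrt3$, followed by a monotonicity argument in $\tau$ using the explicit structure of $\sigma_\tau$; alternatively one may invoke the sharp estimate derived in Mazzeo--Pacard \cite{MP}. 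Once this bound is secured, any nonzero $\phi_n$ with $|n|\ge 2$ grows at least like $e^{\sqrt{2+\tau^2}|s|}$ on one side, violating the hypothesis $\Phi\cosh^{-a}(s)\in L^\infty(\R\times S^1)$ for $a<\sqrt{2+\tau^2}$. Hence all high modes vanish and $\Phi$ lies in the claimed six-dimensional span.

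The hardest ingredient is the sharp Floquet lower bound $\mu_2 \ge \sqrt{2+\tau^2}$: the potential $\tau^2\cosh(2\sigma_\tau)$ is not explicit enough for direct diagonalization of the monodromy matrix, so the argument must proceed by a comparison principle or by the asymptotic analysis of Mazzeo--Pacard. Everything else is essentially bookkeeping once the correct Fourier modes are identified with the correct geometric generators.
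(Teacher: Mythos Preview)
The paper does not supply a proof of this lemma; it is quoted verbatim from \cite{MP}. Your Fourier decomposition in $\vartheta$ followed by Floquet analysis of the resulting Hill operators is exactly the method used in \cite{MP}, and your identification of the six geometric Jacobi fields with the $n=0$ and $n=\pm1$ modes is correct.

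The only soft spot is your argument for the sharp bound $\mu_2 \ge \sqrt{2+\tau^2}$. The Sturm--Riccati monotonicity in $|n|$ is fine, but the ``monotonicity in $\tau$ from the cylindrical limit'' is not a complete argument: matching at $\tau=1$ and claiming monotonicity down to $\tau\in(0,1)$ requires more than the explicit form of $\sigma_\tau$, since both sides of the inequality move with $\tau$. In \cite{MP} this step is handled by a direct supersolution/comparison argument for the $n=2$ Hill equation rather than by a limiting procedure. Since you already flag this as the hard ingredient and allow yourself to invoke \cite{MP} for it, the proposal is acceptable as a sketch, but be aware that the $\tau$-monotonicity route as written would not close without additional work.
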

We note that, since $\sigma_\tau$ is periodic and $k_\tau$ is strictly increasing, due to (\ref{def_sigma_k}), the Jacobi fields $\Phi^{T,e_l}_\tau$ are bounded and periodic, for $1\le l\le 3$, while $\Phi^{R,e_l}_\tau$, $l=1,2$, and $\Phi^D_\tau$ are linearly growing in $s$, hence they are not in $L^2(\R\times S^1)$. The Jacobi fields have a quite explicit geometric meaning, that explains our notations. In fact, 
$\Phi^{T,e_l}_\tau$ arises from a translation along the $x_l$-axis, $\Phi^{R,e_l}_\tau$ arises from a rotation of the $x_3$-axis toward the $x_l$ axis and $\Phi^D_\tau$ arises from changing the Delaunay parameter, in the sense that will explain below.\\ 

For $\eta>0$ (small) and $\mathbf{a}$ in the ball $B_\eta\subset\R^6$, we will use the notation $\mathbf{a}=(\mathbf{a}^T,\mathbf{a}^R,\mathbf{a}^D)\in \R^3\times\R^2\times\R$, in order to underline that $\mathbf{a}^T=(\mathbf{a}^{T_1},\mathbf{a}^{T_2},\mathbf{a}^{T_3})$ is related to translations, $\mathbf{a}^R=(\mathbf{a}^{R_1}, \mathbf{a}^{R_2})$ is related to rotations and $\mathbf{a}^D$ is related to changing the Delaunay parameter. For $\mathbf{a}\in B_M$, we define
$$\mathcal{R}_{\mathbf{a}^{R_1}}:=\begin{bmatrix}
\cos (\mathbf{a}^{R_1}) & 0 & \sin (\mathbf{a}^{R_1}) \\
0 & 1 & 0 \\
-\sin (\mathbf{a}^{R_1}) & 0 & \cos (\mathbf{a}^{R_1})
\end{bmatrix}, \qquad
\mathcal{R}_{\mathbf{a}^{R_2}}:=\begin{bmatrix}
1 & 0 & 0 \\
0 & \cos (\mathbf{a}^{R_2}) & -\sin (\mathbf{a}^{R_2}) \\
0 & \sin (\mathbf{a}^{R_2}) & \cos (\mathbf{a}^{R_2})
\end{bmatrix}
$$
to be small rotations of angles $\mathbf{a}^{R_l}$ toward the axes $x_l$, $l=1,2$, and we set $\mathcal{R}_{\mathbf{a}^R}=\mathcal{R}_{\mathbf{a}^{R_1}}\circ\mathcal{R}_{\mathbf{a}^{R_2}}$. For $\mathbf{a}\in B_\eta$, with $\eta$ small enough, we define the new Delaunay surface $D_\tau(\mathbf{a})$ to be the image of the Delaunay surface $D_{\tau(\epsilon+\mathbf{a}^D)}$ under the rotation $\mathcal{R}_{\mathbf{a}^R}$ composed with the translation $x\mapsto x+\mathbf{a}^T$. We recall that $\epsilon$ is such that $\epsilon=1-\sqrt{1-\tau^2}$, thus the Delaunay parameter of $D_\tau(\mathbf{a})$ is not the same as the one of $D_\tau$. With this notation, $D_\tau=D_\tau(0)$. We denote its isothermal parametrisation of $D_\tau(\mathbf{a})$ by $X_\tau(\mathbf{a})$. It is possible to prove that, if $\eta$ is small enough and we restrict ourselves to a compact subset, then the new surface $D_\tau(\mathbf{a})$ can be locally seen as the normal graph over $D_\tau$ of a function which, in isothermal coordinates, is given by
\begin{eqnarray}
\Phi_\tau(\mathbf{a}):=\sum_{l=1}^3 \mathbf{a}^{T_l}\Phi^{T,e_l}_\tau+\sum_{l=1}^2 \mathbf{a}^{R_l}\Phi^{R,e_l}_\tau+\mathbf{a}^D\Phi^D_\tau\label{def_Phi_tau},
\end{eqnarray}
plus a term of order $|\mathbf{a}|^2$. More precisely, we have the following result.
\begin{lemma}
Let $s_1,s_2\in\R$, $s_1<s_2$ and $\tau\in(0,1)$. Then there exists $\eta>0$ depending on $s_2-s_1$ such that, for any $\mathbf{a}\in B_{\eta}$, there exist $s'_1=s'_1(\mathbf{a}),s'_2=s'_2(\mathbf{a})\in\R$, $s'_1<s'_2$, such that for any $y\in\mathring{D}_\tau(\mathbf{a})(s_1,s_2):=X_\tau(\mathbf{a})((s_1,s_2)\times S^1)$, there exists a point $y'(\mathbf{a})\in\mathring{D}_\tau(s'_1,s'_2):=X_\tau((s'_1,s'_2)\times S^1)$ and a function $w(\mathbf{a})\in C^\infty(\mathring{D}_\tau(s'_1,s'_2))$ such that
\begin{eqnarray}
y=y'(\mathbf{a})+w(\mathbf{a})(y'(\mathbf{a}))\nu_\tau(y'(\mathbf{a})),\label{wa_normal_graph}
\end{eqnarray}
Moreover, $w(\mathbf{a})$ is of the form
\begin{eqnarray}
w(\mathbf{a})=\sum_\bullet\mathbf{a}^\bullet_\tau\frac{\partial w}{\partial\mathbf{a}^\bullet_\tau}(0)+\psi(\mathbf{a}),\label{exp_wa}
\end{eqnarray}
where $\frac{\partial w}{\partial\mathbf{a}^\bullet_\tau}(0)$ are Jacobi fields of $\mathcal{J}_{D_\tau}$ and
\begin{eqnarray}
\begin{aligned}
\|\psi(\mathbf{a})\|_{C^\infty(\mathring{D}_\tau(s'_1,s'_2))}&\le C|\mathbf{a}|^2,\qquad\forall \mathbf{a}\in B_M,\\
\|\psi(\mathbf{a}^1)-\psi(\mathbf{a}^2)\|_{C^\infty(\mathring{D}_\tau(s_1,s_2))}&\le C|\mathbf{a}^1-\mathbf{a}^2|, \qquad\forall \mathbf{a}^1,\mathbf{a}^2\in B_\eta.
\end{aligned}\label{size_wa}
\end{eqnarray}
\label{lemma_perturbe_D}
\end{lemma}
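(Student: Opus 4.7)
The plan is to construct $w(\mathbf{a})$ as the defining function of $D_\tau(\mathbf{a})$ expressed as a normal graph over a compact piece of $D_\tau$, using the tubular neighborhood theorem, and then to obtain \eqref{exp_wa}--\eqref{size_wa} by Taylor-expanding the smooth family $\mathbf{a}\mapsto w(\mathbf{a})$ at $\mathbf{a}=0$.

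First, on any compact piece $\mathring{D}_\tau(s'_1,s'_2)$ the normal exponential map $\Psi(p,w):=p+w\nu_\tau(p)$ is a diffeomorphism from $\mathring{D}_\tau(s'_1,s'_2)\times(-\delta,\delta)$ onto a tubular neighborhood $U_\delta$ for some $\delta>0$. Since the family $\mathbf{a}\mapsto X_\tau(\mathbf{a})$ built from translations, rotations, and the Delaunay parameter $\tau(\epsilon+\mathbf{a}^D)$ is smooth and coincides with $X_\tau$ at $\mathbf{a}=0$, by continuity there exists $\eta=\eta(s_2-s_1)>0$ such that, for every $\mathbf{a}\in B_\eta$, the piece $\mathring{D}_\tau(\mathbf{a})(s_1,s_2)$ is contained in $U_\delta$. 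Choosing $s'_1,s'_2$ to bound the $\Psi^{-1}$-projection onto $D_\tau$, I define $w(\mathbf{a})\in C^\infty(\mathring{D}_\tau(s'_1,s'_2))$ by reading off the second coordinate of $\Psi^{-1}\circ X_\tau(\mathbf{a})$. This gives \eqref{wa_normal_graph}, and the smoothness of $\Psi^{-1}$ and of $\mathbf{a}\mapsto X_\tau(\mathbf{a})$ implies that $\mathbf{a}\mapsto w(\mathbf{a})$ is $C^\infty$ from $B_\eta$ into $C^\infty(\mathring{D}_\tau(s'_1,s'_2))$.

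Next I compute $\partial_{\mathbf{a}^\bullet}w(0)$. Writing \eqref{wa_normal_graph} as
\begin{equation*}
X_\tau(\mathbf{a})(s,\vartheta)=X_\tau(s'(\mathbf{a},s,\vartheta),\vartheta'(\mathbf{a},s,\vartheta))+w(\mathbf{a})(X_\tau(s',\vartheta'))\,\nu_\tau(X_\tau(s',\vartheta')),
\end{equation*}
differentiating at $\mathbf{a}=0$ and taking the inner product with $\nu_\tau$ to kill the tangential reparametrisation terms, I obtain
\begin{equation*}
\frac{\partial w}{\partial \mathbf{a}^\bullet}(0)(X_\tau(s,\vartheta))=\frac{\partial X_\tau(\mathbf{a})}{\partial\mathbf{a}^\bullet}\bigg|_{\mathbf{a}=0}(s,\vartheta)\cdot\nu_\tau(X_\tau(s,\vartheta)).
\end{equation*}
Each of the six parameters gives an infinitesimal one-parameter deformation of $D_\tau$ through constant mean curvature surfaces (rigid motions are isometries, the Delaunay family preserves $H\equiv 1$), so the resulting normal component lies in the kernel of $\mathcal{J}_{D_\tau}$. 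A direct evaluation in the isothermal parametrisation using \eqref{def_sigma_k} identifies the six fields with $\Phi^{T,e_l}_\tau$, $\Phi^{R,e_l}_\tau$, and $\Phi^D_\tau$ from Lemma \ref{lemma_Jacobi_fields}.

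Finally, since $w(0)=0$ and $\mathbf{a}\mapsto w(\mathbf{a})$ is $C^\infty$ into $C^\infty(\mathring{D}_\tau(s'_1,s'_2))$, the remainder $\psi(\mathbf{a}):=w(\mathbf{a})-\sum_\bullet\mathbf{a}^\bullet\,\partial_{\mathbf{a}^\bullet}w(0)$ satisfies $\|\psi(\mathbf{a})\|_{C^\infty}\le C|\mathbf{a}|^2$ by the integral form of Taylor's theorem, and the Lipschitz estimate in \eqref{size_wa} follows from the uniform boundedness of $D\psi$ on $B_\eta$. The main obstacle is the identification of $\partial_{\mathbf{a}^\bullet}w(0)$ with the explicit fields of Lemma \ref{lemma_Jacobi_fields}: the translational cases are immediate, but the rotational and Delaunay-parameter cases require careful manipulation of $\sigma_\tau$ and $k_\tau$ in isothermal coordinates; the existence of $w(\mathbf{a})$ and the remainder estimates themselves are routine consequences of the implicit function theorem applied to $\Psi$.
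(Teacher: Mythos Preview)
Your proof is correct and follows essentially the same approach as the paper: construct $w(\mathbf{a})$ via the tubular neighbourhood, show that $\partial_{\mathbf{a}^\bullet}w(0)$ are Jacobi fields using that $D_\tau(\mathbf{a})$ has the same mean curvature as $D_\tau$, and obtain \eqref{exp_wa}--\eqref{size_wa} by Taylor expansion. The paper phrases the Jacobi-field step as differentiating the identity $0=H_{D_\tau(\mathbf{a})}-H_{D_\tau}=\mathcal{J}_{D_\tau}w(\mathbf{a})+\mathcal{Q}_{D_\tau}(w(\mathbf{a}),\dots)$ at $\mathbf{a}=0$ and using that the quadratic term has vanishing first variation, which is exactly your ``deformation through CMC surfaces'' argument; the explicit identification with the $\Phi^\bullet_\tau$ that you sketch is not part of the lemma itself but is recorded separately (Remark~\ref{rem_Jacobi_fields}, citing \cite{MP}).
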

\begin{proof}
It follows from the geometry that (\ref{wa_normal_graph}) holds true, and $w(\mathbf{a}):\mathring{D}_\tau(s_1,s_2)\to\R$ is an unknown function, which depends on the parameter $\mathbf{a}$. Using that the mean curvature of $D_\tau$ is the same as the one of $D_\tau(\mathbf{a})$ and the variational definition of the Jacobi operator, we have
$$0=H_{D_\tau(\mathbf{a})}-H_{D_\tau}=\mathcal{J}_{D_\tau}w(\mathbf{a})(y'(\mathbf{a}))+\mathcal{Q}_{D_\tau}(w(\mathbf{a}),\nabla w(\mathbf{a}),\nabla^2 w(\mathbf{a}))(y'(\mathbf{a})).$$
Taking the derivative of the above expression with respect to $\mathbf{a}^\bullet$ in $\mathbf{a}=0$ and using the fact that $w(0)=0$, we have
\[0=\mathcal{J}_{D_\tau}\big(\frac{\partial w}{\partial\mathbf{a}^\bullet}(0)\big)+\frac{d}{d\mathbf{a}^\bullet}\bigg\vert_{\mathbf{a}=0}\bigg(\mathcal{Q}_{D_\tau}(w(\mathbf{a}),\nabla w(\mathbf{a}),\nabla^2 w(\mathbf{a}))(y'(\mathbf{a}))\bigg).\]
By the quadratic nature of $\mathcal{Q}_{D_\tau}$, it is possible to see that
\[\frac{d}{d\mathbf{a}^\bullet}\bigg\vert_{\mathbf{a}=0}\bigg(\mathcal{Q}_{D_\tau}(w(\mathbf{a}),\nabla w(\mathbf{a}),\nabla^2 w(\mathbf{a}))(y'(\mathbf{a}))\bigg)=0,\]
which proves that $\frac{\partial w}{\partial\mathbf{a}^\bullet}(0)$ are Jacobi fields. Therefore, taking the Taylor expansion in $\mathbf{a}$ of $w$ and using once again that $w(0)=0$, we can see that (\ref{exp_wa}) holds true with a remainder $\psi(\mathbf{a})$ satisfying (\ref{size_wa}).
\end{proof}
\begin{remark}\label{rem_Jacobi_fields}
It is proved in \cite{MP} that the expression of the Jacobi fields of $D_\tau$ in isothermal coordinates is given by $\Phi^\bullet_\tau$, defined in the statement of Lemma \ref{lemma_Jacobi_fields}, that is 
\[\frac{\partial w}{\partial \mathbf{a}^\bullet}(0)\circ X_\tau(s,\vartheta)=\Phi^\bullet_\tau(s,\vartheta), \qquad\forall\, (s,\vartheta)\in(s_1,s_2)\times S^1\]
\end{remark}
\begin{remark}\label{notation}
Above and in what follows we agree that whenever the dependence on the parameter ${\mathbf {a}}$ is indicated, as for example in $D_\tau({\mathbf{a}})$, then this dependence is carried over to the evaluation of the parameter $\tau=\tau({\mathbf{a}})$. When the parameter $\mathbf{a}$ is omitted then it is implicitly assumed that $\mathbf{a}=0$. For instance with this convention the Jacobi fields $\Phi^\bullet_\tau$ in (\ref{def_Phi_tau}) are of the original surface $D_\tau$ i.e. $\tau=\tau(0)$. Also, $X_\tau(\mathbf{a})$ denotes  the isothermal parametrisation of $D_\tau(\mathbf{a})$.
\end{remark}

\subsection{Nondegeneracy of the surface}\label{section_non_deg}
In order to explain our assumptions about the surface $\Sigma$, we give an idea of the moduli space theory. We assume that $\Sigma$ belongs to the set $\mathcal{M}_{g,k}$ of complete, non compact Alexandrov embedded constant mean curvature surfaces of genus $g$ with $k$ ends. As regards the regularity of $\Sigma$, it is not restrictive to assume that it is $C^\infty$, since any $C^{2,\alpha}$ CMC surface is $C^\infty$, due to the fact that any surface is locally a graph and the fact that any solution $u$ to mean curvature equation
\begin{eqnarray}
\diver(\frac{\nabla u}{\sqrt{1+|\nabla u|^2}})=1.\label{mean_curv_eq}
\end{eqnarray}
which is $C^{2,\alpha}$ in an open set $U\subset\R^N$ is actually $C^\infty(U)$. This is a consequence of Theorem $6.17$ of \cite{GT} and a bootstrap argument.\\

Any surface in $\mathcal{M}_{g,k}$ can be written as
$$\Sigma=\mathbf{K}\cup\big(\cup_{1\le j\le k} E_j\big),$$
where $\mathbf{K}$ is compact and each of the ends $E_j$ is asymptotic to a rotated and translated copy of the Delaunay surface $D_{\tau_j}$, in a sense that will be made precise below. Moreover, we assume that \\

(H) none of two symmetry axes of the ends are parallel (we refer to the introduction for further explanations).\\

For $a>0$, $n\in\mathbb{N}$ and $\alpha\in(0,1)$, we say that a function $w:(0,\infty)\times S^1\to\R$ is in $C^{n,\alpha}_{a}((0,\infty)\times S^1)$ if
\begin{eqnarray}
\|w\|_{C^{n,\alpha}_{a}((0,\infty)\times S^1)}:=\|e^{as}w\|_{C^{n,\alpha}((0,\infty)\times S^1)}
\end{eqnarray}
is finite. \\

It is known that, up to a translation and a rotation, each of the ends $E_j$ admits a parametrisation of the form
\begin{eqnarray}
Y_j(s,\vartheta):=X_{\tau_j}(s,\vartheta)+v_j(s,\vartheta) N_{\tau_j}(s,\vartheta),\label{def_Yj}
\end{eqnarray} 
where $(s,\vartheta)$ are the isothermal coordinates of $D_{\tau_j}$, $N_{\tau_j}$ is the expression of the outward-pointing unit normal in isothermal coordinates and $v_j$ is a function in $C^{2,\alpha}_{\bar{a}_j}((0,\infty)\times S^1)$, where $\bar{a}_j:=\sqrt{2+\tau_j^2}$ is the corresponding indicial root (see \cite{JP}, Theorem $2.2$ of \cite{KMP} and \cite{KKS}). Moreover, each of the intersections $E_j\cap \mathbf{K}$ is homeomorphic to an annulus $(0,1]\times S^1$. 
\begin{remark}
The functions $v_j$ are in $C^\infty((0,\infty)\times S^1)$, since the surface $\Sigma$ is smooth and the Delaunay unduloid $D_\tau$ is also smooth, because $\rho_\tau\in C^\infty(\R)$.\label{rem_regularuty_v}
\end{remark}

In order to solve the geometric problem, we need some non degeneracy assumption about $\Sigma$. For $a>0$, $n\in\mathbb{N}$ and $\alpha\in(0,1)$, we say that a function $v:\Sigma\to\R$ is in $\mathcal{C}^{n,\alpha}_a(\Sigma)$ if the norm
\begin{eqnarray}
\|v\|_{\mathcal{C}^{n,\alpha}_a(\Sigma)}=\|v|_{\mathbf{K}}\|_{\mathcal{C}^{n,\alpha}(\Sigma)}+\sum_{j=1}^k \|v|_{E_j}\circ Y_j\|_{C^{n,\alpha}_a((0,\infty)\times S^1)}
\end{eqnarray}
is finite.

\begin{definition}[\cite{JP}]
We say that $\Sigma$ is non degenerate if the Jacobi operator
$$\mathcal{J}_\Sigma:=\Delta_\Sigma+|A_\Sigma|^2: \mathcal{C}^{2,\alpha}_a(\Sigma) \to \mathcal{C}^{0,\alpha}_a(\Sigma)$$ 
is injective for any $a>0$ and $\alpha\in(0,1)$.
\end{definition}
Moreover, it is known that, for each end $E_j$, there exist $5$ globally defined linearly independent Jacobi fields $\Phi^{T,e_l}_{E_j}$, $1\le l\le 3$, and $\Phi^{R,e_l}_{E_j}$, $1\le l\le 2$, which, along $E_j$, are exponentially close to the Jacobi fields of the Delaunay unduloid $D_{\tau_j}$, that is
\begin{eqnarray}
\Phi^{T,e_l}_{E_j}\circ Y_j-\Phi^{T,e_l}_{\tau_j} \in C^{2,\alpha}_{\bar{a}_j}((0,\infty)\times S^1),\qquad\Phi^{R,e_l}_{E_j}\circ Y_j-\Phi^{R,e_l}_{\tau_j} \in C^{2,\alpha}_{\bar{a}_j}((0,\infty)\times S^1),\label{Jacobi_fields_Sigma}
\end{eqnarray}
for any $\alpha\in(0,1)$. Since the Delaunay surface $D_{\tau_j}$ also has another linearly independent Jacobi field $\Phi^D_{\tau_j}$, related to the variation of the Delaunay parameter, we expect the existence of additional  $k$ linearly independent Jacobi fields, globally defined on $\Sigma$, each of them exponentially close to $\Phi^D_{\tau_j}$ on $E_j$. However it is not always the case, since in general it is possible to construct such a Jacobi field just on $E_j$, but it is not necessarily globally defined.
\begin{definition}\cite{JP}
The end $E_j$ is said to be regular if there exists a Jacobi field $\Phi^D_{E_j}$, globally defined on $\Sigma$, such that
\begin{eqnarray}
\Phi^D_{E_j}\circ Y_j-\Phi^D_{\tau_j} \in C^{2,\alpha}_{\bar{a}_j}((0,\infty)\times S^1),
\end{eqnarray}
for any $\alpha\in(0,1)$.
\label{def_non_deg_end}
\end{definition}
We note that, in (\ref{Jacobi_fields_Sigma}) and in Definition \ref{def_non_deg_end}, $Y_j$ is exactly the parametrisation defined in (\ref{def_Yj}) if the axis of $E_j$ coincides with the $x_3$-axis, otherwise there are slight modifications to take into account, such as a rotation and a translation. In the sequel, we will assume that our surface is non degenerate and each end is regular. Now we will see that non degeneracy enables us to find a unique solution to the linear equation 
\begin{eqnarray}
\mathcal{J}_\Sigma h:=\Delta_\Sigma h+|A_\Sigma|^2h=g, \qquad g\in \mathcal{C}^{0,\alpha}_a(\Sigma),\label{eq_lin_geom}
\end{eqnarray}
at least for $0<a<\bar{a}:=\min_{1\le j\le k}{\bar{a}_j}$, and to give a quite precise description of this solution. The definition of the  Jacobi operator $\mathcal{J}_\Sigma$ parallels the one of the Jacobi operator of $D_\tau$. We define the $6k$-dimensional \textit{deficiency space} 
\begin{equation}
\mathcal{D}(\Sigma):=\oplus_{j=1}^k \mathop{span}\{\eta_j\Phi^{T,e_l}_{E_j}:\,1\le l\le 3\}\oplus \, \oplus_{j=1}^k \mathop{span}\{\eta_j\Phi^{R,e_l}_{E_j}:\, 1\le l\le 2\}\, \oplus \, \oplus_{j=1}^k \mathop{span}\{\eta_j\Phi^D_{E_j}\},\label{def_deficiency}
\end{equation}
where $\eta_j\in C^\infty(\Sigma)$ are cutoff functions vanishing on $\Sigma\backslash \big(\cup_{1\le j\le k} E_j\big)$ and identically equal to $1$ on $Y_j((1,\infty)\times S^1)$. The next Proposition is the key result to understand when (\ref{eq_lin_geom}) is solvable and what is the structure of the set its solutions.
\begin{proposition}[\cite{KMP}]
Let $\alpha\in(0,1)$. Assume that $\Sigma$ is non degenerate and fix $a\in(0,\bar{a})$, $\bar{a}:=\min_{1\le j\le k} \bar{a}_j$. Then the mapping
$$\mathcal{J}_\Sigma: \mathcal{C}^{2,\alpha}_a(\Sigma)\oplus \mathcal{D}(\Sigma)\to \mathcal{C}^{0,\alpha}_a(\Sigma) $$
is surjective and has a kernel of dimension $3k$. In addition, there exists a $3k$-dimensional subspace $\mathcal{K}(\Sigma)\subset \mathcal{D}(\Sigma)$ such that
$$ \mathop{ker}{\mathcal{J}_\Sigma}\subset \mathcal{C}^{2,\alpha}_a(\Sigma)\oplus \mathcal{K}(\Sigma).$$
Finally, given any $3k$-dimensional subspace $\mathcal{E}(\Sigma)\subset\mathcal{D}(\Sigma)$ such that $\mathcal{E}(\Sigma)\oplus\mathcal{K}(\Sigma)=\mathcal{D}(\Sigma)$, the mapping
$$\mathcal{J}_\Sigma: \mathcal{C}^{2,\alpha}_a(\Sigma)\oplus\mathcal{E}(\Sigma)\to \mathcal{C}^{0,\alpha}_a(\Sigma) $$
is an isomorphism.
\label{prop_non_deg}
\end{proposition}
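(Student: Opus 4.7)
\smallskip

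The proof plan is to combine three classical ingredients for elliptic operators on manifolds with asymptotically periodic ends: (i) a Fredholm/parametrix construction on weighted Hölder spaces; (ii) the relative index theorem, which measures the jump of the index of $\mathcal{J}_\Sigma$ as the weight crosses an indicial root; (iii) a formal self-adjointness duality identifying the cokernel at weight $a$ with the kernel at weight $-a$.

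\smallskip

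\textbf{Fredholm property and indicial analysis.} On each end $E_j$ the operator $\mathcal{J}_\Sigma$ is an exponentially small perturbation (with rate $\bar a_j$) of $\mathcal{J}_{D_{\tau_j}}$, whose principal part in isothermal coordinates is $L_{\tau_j}=\partial_s^2+\partial_\vartheta^2+\tau_j^2\cosh(2\sigma_{\tau_j})$. Fourier-decomposing in $\vartheta$ and applying Floquet theory in $s$ to the periodic coefficient identifies the indicial roots of $\mathcal{J}_\Sigma$ at $E_j$ as a discrete set, symmetric about $0$ by formal self-adjointness. By Lemma \ref{lemma_Jacobi_fields}, the only indicial root in $(-\bar a_j,\bar a_j)$ is $\mu=0$, with (generalised) multiplicity $6$ per end coming from $\Phi^{T,e_l}_{\tau_j}$, $\Phi^{R,e_l}_{\tau_j}$ and $\Phi^{D}_{\tau_j}$. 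A standard parametrix patching (interior Schauder estimates glued with the explicit right inverse on each model Delaunay end) then produces a parametrix modulo compact operators, giving that $\mathcal{J}_\Sigma:\mathcal{C}^{2,\alpha}_a(\Sigma)\to\mathcal{C}^{0,\alpha}_a(\Sigma)$ is Fredholm for every $a\in(-\bar a,\bar a)\setminus\{0\}$.

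\smallskip

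\textbf{Index jump and duality.} Denote $\mathrm{ind}(a):=\mathrm{ind}\bigl(\mathcal{J}_\Sigma:\mathcal{C}^{2,\alpha}_a\to\mathcal{C}^{0,\alpha}_a\bigr)$. The relative index theorem yields
\[
\mathrm{ind}(-a)-\mathrm{ind}(a)=6k, \qquad a\in(0,\bar a),
\]
since sweeping the weight from $-a$ (growth allowed) to $a$ (decay imposed) crosses exactly the indicial root $0$ of total multiplicity $6k$. Formal self-adjointness of $\mathcal{J}_\Sigma$ with respect to the $L^2$ pairing on $\Sigma$ yields
\[
\dim\mathrm{coker}\bigl(\mathcal{J}_\Sigma:\mathcal{C}^{2,\alpha}_a\to\mathcal{C}^{0,\alpha}_a\bigr)=\dim\ker\bigl(\mathcal{J}_\Sigma:\mathcal{C}^{2,\alpha}_{-a}\to\mathcal{C}^{0,\alpha}_{-a}\bigr).
\]
Non degeneracy gives $\ker\bigl(\mathcal{J}_\Sigma:\mathcal{C}^{2,\alpha}_a\bigr)=0$ for every $a>0$. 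Writing $d(a)$ for $\dim\mathrm{coker}$ at weight $a$, the two displays combine into $-d(a)-(\,\dim\ker_{-a}-0)=-6k$ with $\dim\ker_{-a}=d(a)$, hence $d(a)=\dim\ker_{-a}=3k$.

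\smallskip

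\textbf{The extended operator.} The space $\mathcal{D}(\Sigma)$ has dimension $6k$, and, because each $\Phi^\bullet_{E_j}$ is a genuine Jacobi field on $\Sigma$, one has $\mathcal{J}_\Sigma(\eta_j\Phi^\bullet_{E_j})=[\mathcal{J}_\Sigma,\eta_j]\Phi^\bullet_{E_j}$, which is compactly supported; so the extended map $\widetilde{\mathcal{J}}:\mathcal{C}^{2,\alpha}_a(\Sigma)\oplus\mathcal{D}(\Sigma)\to\mathcal{C}^{0,\alpha}_a(\Sigma)$ is well defined and
\[
\mathrm{ind}(\widetilde{\mathcal{J}})=\mathrm{ind}(a)+\dim\mathcal{D}(\Sigma)=-3k+6k=3k.
\]
A pair $(u,\phi)\in\ker\widetilde{\mathcal{J}}$ corresponds to a global Jacobi field $u+\phi$ on $\Sigma$ whose asymptotic behaviour on each end lies in the span of the mild-growth Delaunay Jacobi fields, modulo an error in $\mathcal{C}^{2,\alpha}_a$; by the asymptotic matching of Step 1 this is exactly $\ker\bigl(\mathcal{J}_\Sigma:\mathcal{C}^{2,\alpha}_{-a}\bigr)$, of dimension $3k$. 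Hence $\dim\ker\widetilde{\mathcal{J}}=3k$, and by the index count $\widetilde{\mathcal{J}}$ is surjective. Define $\mathcal{K}(\Sigma)$ as the image of $\ker\widetilde{\mathcal{J}}$ under the projection $\mathcal{C}^{2,\alpha}_a\oplus\mathcal{D}\to\mathcal{D}$; by non degeneracy this projection is injective on $\ker\widetilde{\mathcal{J}}$, so $\dim\mathcal{K}(\Sigma)=3k$ and $\ker\widetilde{\mathcal{J}}\subset\mathcal{C}^{2,\alpha}_a\oplus\mathcal{K}(\Sigma)$. Finally, for any complement $\mathcal{E}(\Sigma)$ of $\mathcal{K}(\Sigma)$ in $\mathcal{D}(\Sigma)$, the restriction $\widetilde{\mathcal{J}}|_{\mathcal{C}^{2,\alpha}_a\oplus\mathcal{E}(\Sigma)}$ is injective (its kernel would lie in $\mathcal{C}^{2,\alpha}_a\oplus\mathcal{E}(\Sigma)\cap\ker\widetilde{\mathcal{J}}$, which is trivial by the direct sum decomposition) and surjective (image unchanged), hence an isomorphism.

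\smallskip

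The main technical obstacle is the Fredholm/parametrix step on weighted Hölder spaces, together with justifying the $L^2$ duality between cokernel and kernel in the non-Hilbert Hölder setting; both can be handled either by passing through the weighted $L^2$ framework and bootstrapping regularity, or by quoting the corresponding statements from the b-calculus of Melrose–Mazzeo and the analysis in \cite{KMP}.
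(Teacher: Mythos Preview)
The paper does not actually prove this proposition: it is stated with the attribution \cite{KMP} and no argument is given, so there is no ``paper's own proof'' to compare against. Your sketch is a faithful outline of the standard argument from the moduli-space theory of \cite{KMP} (and the related work of Mazzeo--Pacard): Fredholmness on weighted spaces away from indicial roots, the relative index formula across the root $\mu=0$ of multiplicity $6k$, the $L^2$ duality identifying cokernel at weight $a$ with kernel at weight $-a$, and the resulting index bookkeeping for the extended operator on $\mathcal{C}^{2,\alpha}_a\oplus\mathcal{D}(\Sigma)$.

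One small correction in your algebra: you write ``$-d(a)-(\dim\ker_{-a}-0)=-6k$'', which has the wrong sign; the clean derivation is $\mathrm{ind}(-a)=\dim\ker_{-a}$ (since $\mathrm{coker}_{-a}\cong\ker_a=0$ by duality and non-degeneracy), $\mathrm{ind}(a)=-d(a)$, and $d(a)=\dim\ker_{-a}$ by duality again, so $\mathrm{ind}(-a)-\mathrm{ind}(a)=2d(a)=6k$. This gives $d(a)=3k$ as you claim. Your identification of $\ker\widetilde{\mathcal J}$ with the space of globally defined Jacobi fields of at-most-mild growth, and the construction of $\mathcal{K}(\Sigma)$ via projection, are correct and are exactly how the result is packaged in \cite{KMP}. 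The caveat you flag about transferring the duality from the $L^2$ setting to weighted H\"older spaces is genuine but standard (elliptic regularity bootstrap), and is handled in the references you cite.
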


\subsection{An auxiliary surface}\label{section_SIgma_d}
In this subsection we construct an auxiliary surface $\Sigma(\mathbf{d})$, depending on a parameter $\mathbf{d}\in (B_\eta)^k\subset \R^{6k}$, $\eta>0$ small, which will be useful to construct a family of approximate solutions to the Cahn-Hilliard equation (\ref{cahn_hilliard}). 
As we explained above, each of the ends of $\Sigma$ is asymptotic to a rotated and translated copy of $D_{\tau_j}$, therefore after a rotation and a translation, we can assume that the axis of the end $E_j$ is the $x_3$-axis. Then we take $\mathbf{d}_j\in B_\eta\subset \R^6$ and we construct the new end $E_j(\mathbf{d}_j)$ of $\Sigma(\mathbf{d})$ in such a way that it is asymptotic to $D_{\tau_j}(\mathbf{d}_j)$. In order to do so, we introduce a smooth cutoff function $\xi:\R\to\R$, such that $\xi=0$ in $(-\infty,s_0)$ and $\xi=1$ in $(s_0+1,\infty)$, for some $s_0>0$, and, given the parametrisation of $E_j$ in (\ref{def_Yj}), we define the perturbed end through the parametrisation
$$Y_j(\mathbf{d}_j)(s,\vartheta):=(1-\xi(s))Y_j(s,\vartheta)+\xi(s)Z_j(\mathbf{d}_j)(s,\vartheta), \qquad \forall\,(s,\vartheta)\in \R\times S^1,$$
where
$$Z_j(\mathbf{d}_j)(s,\vartheta):=X_{\tau_j}(\mathbf{d}_j)(s,\vartheta)+v_j(s,\vartheta)N_{\tau_j}(\mathbf{d}_j)(s,\vartheta), \qquad \forall\,(s,\vartheta)\in \R\times S^1,$$
and $N_{\tau_j}(\mathbf{d}_j)$ is the inward-pointing normal vector to $D_{\tau_j}(\mathbf{d}_j)$ expressed in  isothermal coordinates. Roughly speaking, we cut the end $E_j$ of the original surface at level $s=s_0$ and we glue it with a new end, which is asymptotic to $D_{\tau_j}(\mathbf{d}_j)$. Doing the same with every end, we obtain a new surface 
$$\Sigma(\mathbf{d}):=\cup_{j=1}^k Y_j(\mathbf{d}_j)((0,\infty)\times S^1)\cup\bigg(\Sigma\backslash \cup_{j=0}^k E_j\bigg).$$
with $k$ ends, each of them being asymptotic to a  Delaunay surface. 

\begin{remark}
We point out that our construction guarantees that $\Sigma(0)=\Sigma$, since $Y_j(0)=Y_j$, which is crucial to guarantee the convergence to $\pm 1$ on compact sets of $\Sigma^\pm$ of the solution $u_\eps$ constructed in Theorem \ref{main_th}.
\end{remark}
This new surface $\Sigma(\mathbf{d})$ is diffeomorphic to $\Sigma$, for any $\mathbf{d}\in (B_M)^k$, and the diffeomorphism
$$\beta(\mathbf{d}):\Sigma(\mathbf{d})\to\Sigma$$
is given by
\begin{eqnarray}\notag
\beta(\mathbf{d})(\mathbf{y}):=
\begin{cases}
\mathbf{y} &\text{if $\mathbf{y}\in\Sigma\backslash \cup_{j=1}^k E_j$}\medskip\\
Y_j\circ Y_j^{-1}(\mathbf{d}_j)(\mathbf{y}) &\text{if $\mathbf{y}\in Y_j(\mathbf{d}_j)((0,\infty)\times S^1)$}.
\end{cases}
\end{eqnarray}
We set
\begin{eqnarray}
\Phi(\mathbf{d})(y):=\sum_{j=1}^k \tilde{\eta}_j\bigg(\sum_{l=1}^3 \mathbf{d}^{T_l}_j\Phi^{T,e_l}_{E_j}+\sum_{l=1}^2 \mathbf{d}^{R_l}_j\Phi^{R,e_l}_{E_j}+\mathbf{d}^D_j\Phi^D_{E_j}\bigg), \qquad\forall\, y\in\Sigma,
\end{eqnarray}
where
\begin{eqnarray}
\tilde{\eta}_j(y):=
\begin{cases}
\xi(s) &\text{for $y=Y_j(s,\vartheta)\in E_j$}\medskip\\
0 &\text{for $y\in\Sigma\backslash (\cup_{1\le j\le k}E_j).$}
\end{cases}\label{def_tilde_eta_j}
\end{eqnarray}
The new cutoff functions $\tilde{\eta}_j$ satisfy $\tilde{\eta}_j\eta_j=\tilde{\eta}_j$ and are not exactly equal to $\eta_j$, however they can be used in the definition of $\mathcal{D}(\Sigma)$ instead of the $\eta_j$'s, since modifying the basis of $\mathcal{D}(\Sigma)$ on a compact set does not change the validity of Proposition \ref{prop_non_deg}. We stress that the mean curvature of $\Sigma(\mathbf{d})$ is constant in the set
$$\Sigma(\mathbf{d})\backslash \cup_{1\le j\le k} Y_j(\mathbf{d}_j)\big((s_0,\infty)\times S^1\big),$$
where it also satisfies
\begin{eqnarray}
H_{\Sigma(\mathbf{d})}(\mathbf{y})=H_\Sigma+\mathcal{J}_\Sigma \Phi(\mathbf{d})(\beta(\mathbf{y})), \label{aux_MC}
\end{eqnarray}
since, in this set, $\Phi(\mathbf{d})\equiv 0$. Now it remains to compare the mean curvature of $\Sigma(\mathbf{d})$ to the one of $\Sigma$ along the ends. For this purpose, it is useful to note that, in the annular regions parametrized by $(s,\vartheta)\in(s_0,s_0+1)\times S^1$, $\Sigma(\mathbf{d})$ is close to the normal graph over $\Sigma$ of $\Phi(\mathbf{d})$, which suggests that the mean curvature is given, up to lower order terms, by (\ref{aux_MC}). 

\begin{lemma}
Let $\mathbf{d}=(\mathbf{d}_1,\dots,\mathbf{d}_k)\in (B_\eta)^k$, $a\in(0,\bar{a})$ and $\alpha\in(0,1)$. Then, for $\eta>0$ small enough, the mean curvature of $H_{\Sigma(\mathbf{d})}$ fulfills
\begin{eqnarray}
\|H_{\Sigma(\mathbf{d})}\circ Y_j(\mathbf{d}_j)-H_\Sigma-\eps^2 \mathcal{J}_\Sigma\Phi(\mathbf{d})\circ Y_j\|_{C^{0,\alpha}_a((0,\infty)\times S^1)}\le C|\mathbf{d}_j|e^{-(\bar{a}-a)s_0},\label{curvature_Sigma_d}
\end{eqnarray}
for some constant $C>0$, for any $1\le j\le k$.
\label{lemma_H_Sigmad}
\end{lemma}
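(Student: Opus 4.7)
The plan is to partition the parameter domain $(0,\infty)\times S^1$ into three zones determined by the cutoff $\xi$, namely $\{s<s_0\}$, the transition annulus $\{s_0<s<s_0+1\}$, and $\{s>s_0+1\}$, and to control the left-hand side on each zone separately. The ingredients are the piecewise structure of $Y_j(\mathbf{d}_j)$, Lemma~\ref{lemma_perturbe_D}, the exponential asymptotics (\ref{Jacobi_fields_Sigma}) of the Jacobi fields $\Phi^\bullet_{E_j}$ of $\Sigma$ toward the corresponding $\Phi^\bullet_{\tau_j}$, and the fact that $\mathcal{J}_\Sigma\Phi^\bullet_{E_j}\equiv 0$.

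On $\{s<s_0\}\times S^1$, where $\xi\equiv 0$, the construction yields $Y_j(\mathbf{d}_j)=Y_j$, so $\Sigma(\mathbf{d})$ coincides with $\Sigma$ along this portion of the end, while simultaneously $\tilde\eta_j\equiv 0$ forces $\Phi(\mathbf{d})\circ Y_j\equiv 0$. Both terms being compared therefore vanish identically. On $\{s>s_0+1\}\times S^1$, where $\xi\equiv 1$, we have $Y_j(\mathbf{d}_j)=Z_j(\mathbf{d}_j)$, which is the normal graph of the fixed function $v_j$ over the Delaunay surface $D_{\tau_j}(\mathbf{d}_j)$; writing the mean curvature of this graph and subtracting the analogous identity at $\mathbf{d}_j=0$ (which recovers $H_\Sigma$ along the end $E_j$, since $Y_j$ parametrises a CMC end) yields
\[
H_{\Sigma(\mathbf{d})}\circ Y_j(\mathbf{d}_j)-H_\Sigma=\bigl(\mathcal{J}_{D_{\tau_j}(\mathbf{d}_j)}-\mathcal{J}_{D_{\tau_j}}\bigr)v_j+\bigl(\mathcal{Q}_{D_{\tau_j}(\mathbf{d}_j)}-\mathcal{Q}_{D_{\tau_j}}\bigr)(v_j,\nabla v_j,\nabla^2 v_j),
\]
whose $C^{0,\alpha}_{\bar{a}_j}$-norm is $O(|\mathbf{d}_j|)$ by the smooth dependence of the coefficients on $\mathbf{d}_j$ together with $v_j\in C^{2,\alpha}_{\bar{a}_j}$. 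On the same zone $\tilde\eta_j\equiv 1$, so $\Phi(\mathbf{d})$ restricts to a linear combination of Jacobi fields of $\Sigma$ and $\mathcal{J}_\Sigma\Phi(\mathbf{d})\circ Y_j\equiv 0$. Converting the $C^{0,\alpha}_{\bar{a}_j}$-norm into the $C^{0,\alpha}_a$-norm on $\{s>s_0+1\}\times S^1$ costs precisely the factor $e^{-(\bar{a}_j-a)s_0}$.

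The transition annulus $\{s_0<s<s_0+1\}\times S^1$ is the most delicate zone and is where I expect the main work to concentrate. The idea is to regard $\mathbf{d}_j\mapsto H_{\Sigma(\mathbf{d})}\circ Y_j(\mathbf{d}_j)(s,\vartheta)$ as a smooth map and to Taylor-expand it at $\mathbf{d}_j=0$. The zeroth-order term is $H_\Sigma$; the first-order term equals $\mathcal{J}_\Sigma$ applied to the component normal to $\Sigma$ of $\partial_{\mathbf{d}_j}Y_j(\mathbf{d}_j)|_{\mathbf{d}_j=0}$. Combining Lemma~\ref{lemma_perturbe_D} (applied on the compact region of $D_{\tau_j}$ corresponding to this annulus) with the fact that $v_j$ and its derivatives are $O(e^{-\bar{a}_j s})$ shows that this normal component equals $\xi(s)\Phi_{\tau_j}(\mathbf{d}_j)$ modulo an error of size $|\mathbf{d}_j|e^{-\bar{a}_j s_0}$ in $C^{2,\alpha}$ of the annulus. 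The asymptotics (\ref{Jacobi_fields_Sigma}) then replace each $\Phi^\bullet_{\tau_j}$ by $\Phi^\bullet_{E_j}\circ Y_j$, incurring an error of the same order, which produces $\mathcal{J}_\Sigma\Phi(\mathbf{d})\circ Y_j$. The quadratic Taylor remainder is $O(|\mathbf{d}_j|^2)$ and is absorbed once $\eta$ is small. Since the weight $e^{as}$ is bounded by $e^{a(s_0+1)}$ on this compact annulus, the pointwise error $|\mathbf{d}_j|e^{-\bar{a}_j s_0}$ yields a $C^{0,\alpha}_a$-norm of order $|\mathbf{d}_j|e^{-(\bar{a}_j-a)s_0}\le|\mathbf{d}_j|e^{-(\bar{a}-a)s_0}$, completing the proof.

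The main obstacle lies precisely in the transition annulus: one has to carefully identify the first variation of the mean curvature of the interpolated parametrisation $Y_j(\mathbf{d}_j)$ with $\mathcal{J}_\Sigma\Phi(\mathbf{d})\circ Y_j$ up to the advertised exponentially small error, which requires pairing Lemma~\ref{lemma_perturbe_D} with the asymptotics (\ref{Jacobi_fields_Sigma}) and carefully bookkeeping the two small parameters $|\mathbf{d}_j|$ and $e^{-\bar{a}_j s_0}$ across the change from the $C^{0,\alpha}_{\bar{a}_j}$-norm to the $C^{0,\alpha}_a$-norm.
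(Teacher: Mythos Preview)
Your proposal is correct and follows essentially the same route as the paper: both split $(0,\infty)\times S^1$ into the region where $\xi\equiv 0$ (trivial), the transition annulus (where one identifies $Y_j(\mathbf{d}_j)$ as a normal perturbation of $Y_j$ via Lemma~\ref{lemma_perturbe_D} and the asymptotics (\ref{Jacobi_fields_Sigma}), and reads off $\mathcal{J}_\Sigma\Phi(\mathbf{d})$ as the first variation of the mean curvature), and the region $\{s>s_0+1\}$ (where one compares the normal-graph formula for the mean curvature of $Z_j(\mathbf{d}_j)$ over $D_{\tau_j}(\mathbf{d}_j)$ with its value at $\mathbf{d}_j=0$). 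Your observation that $\mathcal{J}_\Sigma\Phi(\mathbf{d})\circ Y_j\equiv 0$ on $\{s>s_0+1\}$, because there $\tilde\eta_j\equiv 1$ and the $\Phi^\bullet_{E_j}$ are genuine Jacobi fields of $\Sigma$, is in fact a slightly cleaner way to dispose of that term than the paper's estimate via decay; and your bookkeeping of the factor $e^{-\bar{a}_j s_0}$ on the annulus is more explicit than the paper's, which states only an $O(|\mathbf{d}_j|)$ bound there while invoking the decay of $v_j$.
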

\begin{proof}
By Lemma \ref{lemma_perturbe_D} and Remark \ref{rem_Jacobi_fields}, we have
\[X_{\tau_j}(\mathbf{d}_j)(s,\vartheta)=X_{\tau_j}(s',\vartheta')
+(\Phi_{\tau_j}(\mathbf{d}_j)+\tilde{\psi}(\mathbf{d}_j))(s',\vartheta')N_{\tau_j}(s',\vartheta'),\]
for some $(s',\vartheta'):=T_{\mathbf{d}_j}(s,\vartheta)$, where $\tilde{\psi}(\mathbf{d}_j)=\psi(\mathbf{d}_j)\circ Y_j$ and $T_{\mathbf{d}_j}$ is close to the identity, in the sense that
\[\|T_{\mathbf{d}_j}-Id\|_{C^{0,\alpha}((s_1,s_2)\times S^1)}\le C|\mathbf{d}_j|,\]
for some constant $C>0$. Therefore, using the definition of $Z_j(\mathbf{d}_j)$, it follows that, for $(s,\vartheta)\in(s_0-1,s_0+2)\times S^1$,
\begin{equation}
\begin{aligned}
Y_j(\mathbf{d}_j)&=Y_j+\xi(X_{\tau_j}(\mathbf{d}_j)+v_j N_{\tau_j}(\mathbf{d}_j)-Y_j)\\
&=
Y_j+\xi(\Phi(\mathbf{d})\circ Y_j)N_\Sigma\circ Y_j+\xi(X_{\tau_j}\circ T_{\mathbf{d}_j}- X_{\tau_j}+(\tilde{\psi}(\mathbf{d}_j)N_{\tau_j})\circ T_{\mathbf{d}_j} )\\
&\quad +\xi\left[(\Phi_{\tau_j}(\mathbf{d}_j) N_{\tau_j})\circ T_{\mathbf{d}_j} -\Phi(\mathbf{d})N_\Sigma\circ Y_j\right]
+\xi v_j(N_{\tau_j}(\mathbf{d}_j)-N_{\tau_j}).
\end{aligned}
\label{normal_graph}
\end{equation}
By the variational definition of the Jacobi operator, the curvature of the surface parametrised by
$$(s,\vartheta)\in(s_0-1,s_0+2)\times S^1\mapsto Y_j(s,\vartheta)+\xi(s)\big(\Phi(\mathbf{d})N_\Sigma(s,\vartheta)\big)\circ Y_j(s,\vartheta),$$
is given, at first order, by the mean curvature of $\Sigma$ plus the Jacobi operator applied to the function of which we take the normal graph, that is,
\begin{eqnarray}
H_\Sigma+ \mathcal{J}_\Sigma\Phi(\mathbf{d})\circ Y_j+ \mathcal{Q}_\Sigma(\Phi(\mathbf{d}),\nabla_\Sigma\Phi(\mathbf{d}),\nabla_\Sigma^2\Phi(\mathbf{d}))\circ Y_j.
\end{eqnarray}
where the $\mathcal{Q}_\Sigma(\Phi(\mathbf{d}),\nabla_\Sigma\Phi(\mathbf{d}),\nabla_\Sigma^2\Phi(\mathbf{d}))\circ Y_j\in C^{0,\alpha}((s_1,s_2)\times S^1)$ is quadratic in $\Phi(\mathbf{d})$ and its derivatives. Since, by (\ref{normal_graph}), we are considering a small variation of this surface, we have
$$\|H_{\Sigma(\mathbf{d})}\circ Y_j(\mathbf{d}_j)-H_\Sigma-\mathcal{J}_\Sigma\Phi(\mathbf{d})\circ Y_j\|_{C^{0,\alpha}((s_0-1,s_0+2)\times S^1)}\le C|\mathbf{d}_j|,$$
due to the decay of $v_j$. In order to estimate the curvature outside a compact set, we just use the fact that here the ends of $\Sigma(\mathbf{d})$ are asymptotic to $D_{\tau_j}(\mathbf{d}_j)$. In fact, for $s>s_0+1$, the curvature is given by
\begin{eqnarray}\notag
H_{\Sigma(\mathbf{d})}\circ Y_j(\mathbf{d}_j)
=H_{D_{\tau_j}(\mathbf{d}_j)}+\tilde{L}_{\tau_j(\mathbf{d}_j)}v_j+\bar{\mathcal{Q}}_{D_{\tau_j}(\mathbf{d}_j)}(v_j,\nabla v_j,\nabla^2 v_j),
\end{eqnarray}
where $\tilde{L}_{\tau_j(\mathbf{d}_j)}$ is the expression of the Jacobi operator of $D_{\tau_j}(\mathbf{d}_j)$ in isothermal coordinates (see (\ref{def_jacobi_operator})) and $\bar{\mathcal{Q}}_{D_{\tau_j}(\mathbf{d}_j)}$ is quadratic in $v_j$ and its derivatives, up to order $2$. Since $\Sigma$ and $D_{\tau_j}(\mathbf{d}_j)$ are constructed in such a way that $H_\Sigma=H_{D_{\tau_j}}=H_{D_{\tau_j}(\mathbf{d}_j)}$, then 
$$\tilde{L}_{\tau_j}v_j+\bar{\mathcal{Q}}_{D_{\tau_j}}(v_j,\nabla v_j,\nabla^2 v_j)=0,$$
therefore
\[
H_{\Sigma(\mathbf{d})}\circ Y_j(\mathbf{d}_j)=H_\Sigma+(\tilde{L}_{\tau_j(\mathbf{d}_j)}-\tilde{L}_{\tau_j})v_j
+\bar{\mathcal{Q}}_{D_{\tau_j}(\mathbf{d}_j)}(v_j,\nabla v_j,\nabla^2 v_j)
-\bar{\mathcal{Q}}_{D_{\tau_j}}(v_j,\nabla v_j,\nabla^2 v_j),
\]
for $s>s_0+1$. By (\ref{def_jacobi_operator}) and the decay of $v_j$ and its derivatives, 
$$\|(\tilde{L}_{\tau_j(\mathbf{d}_j)}-\tilde{L}_{\tau_j})v_j+\bar{\mathcal{Q}}_{D_{\tau_j}(\mathbf{d}_j)}(v_j,\nabla v_j,\nabla^2 v_j)
-\bar{\mathcal{Q}}_{D_{\tau_j}}(v_j,\nabla v_j,\nabla^2 v_j)\|_{C^{0,\alpha}_a((s_0,\infty)\times S^1)}\le C|\mathbf{d}_j|e^{-(\bar{a}-a)s_0}.$$
In conclusion, using that, along the end $E_j$, the Jacobi operator of $\Sigma$ is close to the one of $D_{\tau_j}$ (see formula (2.3) of \cite{KMP}),
\[
\begin{aligned}
& \|H_{\Sigma(\mathbf{d})}\circ Y_j(\mathbf{d}_j)-H_\Sigma\circ Y_j-\mathcal{J}_\Sigma\Phi(\mathbf{d})\circ Y_j\|_{C^{0,\alpha}_a((s_0+1,\infty)\times S^1)}\\
& \le\|H_{\Sigma(\mathbf{d})}\circ Y_j(\mathbf{d}_j)-H_\Sigma\circ Y_j\|_{C^{0,\alpha}_a((s_0+1,\infty)\times S^1)}+\|\mathcal{J}_\Sigma\Phi(\mathbf{d})\circ Y_j\|_{C^{0,\alpha}_a((s_0+1,\infty)\times S^1)}\\
&\le C|\mathbf{d}_j| e^{-(\bar{a}-a)s_0},
\end{aligned}
\]
for some constant $C>0$.
\end{proof}
We also need to compare the metric and the second fundamental form of the two surfaces, in order to compare their Jacobi operators. Before giving a precise statement, let us fix some notation. For any $\mathbf{d}=(\mathbf{d}_1,\dots,\mathbf{d}_k)\in (B_\eta)^k$, $g_{\Sigma(\mathbf{d})}$ denotes the metric of $\Sigma(\mathbf{d})$ and $g(\mathbf{d}_j)$ denotes the metric of $D_{\tau_j}(\mathbf{d}_j)$. Similarly, $A_{\Sigma(\mathbf{d})}$ and $A_{D_{\tau_j}}(\mathbf{d}_j)$ denote, respectively,  the second fundamental form of $\Sigma(\mathbf{d})$  and of $D_{\tau_j}(\mathbf{d}_j)$. When the evaluation at $\mathbf{d}_j$ or at $\mathbf{d}$ is omitted, it is understood that we are evaluating at $\mathbf{d}=0$, that is we are referring to the geometric quantities either of $D_{\tau_j}$ or of $\Sigma$. When we refer to the metric and the second fundamental form of a Delaunay surface, it is understood that we are using their expression in isothermal coordinates.
\begin{lemma}
Let $\mathbf{d}^i\in (B_\eta)^k$, $i=1,2$, and $\alpha\in(0,1)$. Then, for $\eta>0$ small enough,
\begin{eqnarray}
\|(g_{\Sigma(\mathbf{d}^1)})_{lm}\circ Y_j(\mathbf{d}_j^1)-(g_{\Sigma(\mathbf{d}^2)})_{lm}\circ Y_j(\mathbf{d}_j^2)\|_{C^{1,\alpha}((0,\infty)\times S^1)}\le C|\mathbf{d}_j^1-\mathbf{d}_j^2|,\label{metric_Sigma_d}\\
\|(A_{\Sigma(\mathbf{d}^1)})_{lm}\circ Y_j(\mathbf{d}_j^1)-(A_{\Sigma(\mathbf{d}^2)})_{lm}\circ Y_j(\mathbf{d}_j^2)\|_{C^{1,\alpha}((0,\infty)\times S^1)}\le C|\mathbf{d}_j^1-\mathbf{d}_j^2|,\label{secondff_Sigma_d}
\end{eqnarray}
for any $1\le l,m\le 2$ and $1\le j\le k$, for some constant $C>0$.
\label{lemma_Sigma_d}
\end{lemma}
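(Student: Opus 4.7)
The plan is to split the parameter domain $(0,\infty)\times S^1$ into three pieces dictated by the cutoff $\xi$: the region $\{s<s_0\}$ where $\xi\equiv 0$, the compact transition strip $\{s_0<s<s_0+1\}$, and the asymptotic region $\{s>s_0+1\}$ where $\xi\equiv 1$. On the first piece $Y_j(\mathbf{d}_j)$ reduces to $Y_j$ independently of $\mathbf{d}_j$, so both differences in (\ref{metric_Sigma_d}) and (\ref{secondff_Sigma_d}) vanish identically. On the compact transition strip, the parametrisation $(1-\xi)Y_j+\xi Z_j(\mathbf{d}_j)$ is a smooth combination of objects whose derivatives up to order two depend smoothly on $\mathbf{d}_j$ (using Lemma \ref{lemma_perturbe_D} for $X_{\tau_j}(\mathbf{d}_j)$, and the fact that $v_j$ is $\mathbf{d}_j$-independent and smooth), so a standard mean value argument gives the Lipschitz bound with a constant depending only on the compact support of $\xi'$.

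The substantial content lies in the asymptotic region $s>s_0+1$, where $Y_j(\mathbf{d}_j)=Z_j(\mathbf{d}_j)$. Here I would exploit the rigid structure
\[
Z_j(\mathbf{d}_j)(s,\vartheta)=\mathbf{d}^T_j+\mathcal{R}_{\mathbf{d}^R_j}\bigl(X_{\tau_j(\mathbf{d}^D_j)}(s,\vartheta)+v_j(s,\vartheta)\,N_{\tau_j(\mathbf{d}^D_j)}(s,\vartheta)\bigr),
\]
coming from the very definition of $X_{\tau_j}(\mathbf{d}_j)$ and the analogous pullback $N_{\tau_j}(\mathbf{d}_j)=\mathcal{R}_{\mathbf{d}^R_j}N_{\tau_j(\mathbf{d}^D_j)}$. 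Since the pullback metric and the second fundamental form of a submanifold of $\R^3$ are invariant under rigid motions, the pullbacks of $g_{\Sigma(\mathbf{d})}$ and $A_{\Sigma(\mathbf{d})}$ through $Y_j(\mathbf{d}_j)$ depend on $\mathbf{d}_j$, on this region, only through the scalar parameter $\mathbf{d}^D_j$. Thus the problem reduces to proving Lipschitz dependence in the single parameter $\mathbf{d}^D_j$ of the metric and second fundamental form of the normal graph of the fixed function $v_j$ over $D_{\tau_j(\mathbf{d}^D_j)}$.

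For this reduction, I would use the standard expressions for the metric and SFF of a normal graph over $D_\tau$ as polynomials in $v_j,\nabla v_j,\nabla^2 v_j$ together with the base quantities $g_{D_\tau}$ and $A_{D_\tau}$ displayed in (\ref{g_A_Delaunay}). Because $\sigma_\tau$, $\partial_{ss}\sigma_\tau$, $\tau e^{\sigma_\tau}$ and $\partial_s k_\tau$ are periodic in $s$ with coefficients depending smoothly on $\tau$, the metric and SFF components of $D_{\tau_j(\mathbf{d}^D_j)}$ are uniformly bounded in $C^{1,\alpha}((0,\infty)\times S^1)$ and their $\mathbf{d}^D_j$-derivatives are likewise uniformly bounded. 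The contribution of $v_j$ carries an exponential factor $e^{-\bar a_j s}$ inherited from $v_j\in C^{2,\alpha}_{\bar a_j}$, so it produces only smoothly and uniformly bounded $C^{1,\alpha}$ terms. The claimed Lipschitz estimate then follows by the fundamental theorem of calculus applied to the one-parameter family $\mathbf{d}^D_j\mapsto(g,A)$, and is glued to the bound on the compact transition strip.

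\textbf{Main obstacle.} The only genuinely delicate point is the uniformity of the Lipschitz constant in the noncompact variable $s$. This is rescued by the periodicity in $s$ of the isothermal Delaunay data, which reduces every $C^{1,\alpha}$ estimate over $(0,\infty)\times S^1$ to an estimate over one period, together with the exponential decay of $v_j$ handling the perturbative part. If one tried to avoid the rigid-motion reduction and work directly with the full parameter $\mathbf{d}_j$, the translation component $\mathbf{d}^T_j$ would generate apparently unbounded contributions via the linearly growing coordinate $k_\tau(s)$, which is precisely why the isometric invariance of the pullbacks is the key conceptual ingredient.
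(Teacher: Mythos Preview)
Your argument is correct and, in its essentials, follows the same route as the paper: split off the compact piece, then on the end write the surface as the normal graph of the fixed function $v_j$ over $D_{\tau_j}(\mathbf{d}_j)$, express the induced metric and second fundamental form through the standard normal-graph formulas in terms of $g_{D_{\tau_j}(\mathbf{d}_j)}$, $A_{D_{\tau_j}(\mathbf{d}_j)}$ and $v_j,\nabla v_j,\nabla^2 v_j$, and conclude using that the Delaunay data in isothermal coordinates (formula~(\ref{g_A_Delaunay})) are periodic in $s$ and depend smoothly on the Delaunay parameter, while $v_j$ decays.

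The one genuine organizational difference is that you make the rigid-motion invariance explicit, reducing the $\mathbf{d}_j$-dependence on the asymptotic piece to the single scalar $\mathbf{d}^D_j$ before computing anything. The paper instead writes out the components $\hat g_{ss}(\mathbf{d}_j)$, $\hat g_{\vartheta\vartheta}(\mathbf{d}_j)$, $\hat g_{s\vartheta}(\mathbf{d}_j)$ directly and estimates their differences using (\ref{g_A_Delaunay}); the invariance is then implicit in the fact that those isothermal expressions involve only $\sigma_\tau$ and hence only the Delaunay parameter. Your version is cleaner conceptually; the paper's is more hands-on but makes the transition-strip computation (where it expands $(g_{\Sigma(\mathbf{d})})_{lm}-(g_\Sigma)_{lm}$ in terms of $\xi,\xi'$, $Z_j(\mathbf{d}_j)-Y_j$ and their derivatives) more explicit than your one-line appeal to smooth dependence on a compact set. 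One small remark on your ``main obstacle'' paragraph: the translation $\mathbf{d}^T_j$ never actually threatens uniformity, since the metric and second fundamental form involve only derivatives of the parametrisation; it is really the interaction of the rotation with the linearly growing $k_\tau$ that your invariance argument tidies up.
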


\begin{remark}
In particular, applying (\ref{metric_Sigma_d}) and (\ref{secondff_Sigma_d}) with, for instance, $\mathbf{d}^2=0$, we have
$$\|(A_{\Sigma(\mathbf{d})})_{lm}\circ Y_j(\mathbf{d}_j)\|_{C^{0,\alpha}((0,\infty)\times S^1)}+
\|(g_{\Sigma(\mathbf{d})})_{lm}\circ Y_j(\mathbf{d}_j)\|_{C^{1,\alpha}((0,\infty)\times S^1)}\le C, \qquad\forall\, \mathbf{d}\in (B_\eta)^k,$$
for any $1\le l,m\le 2$ and $1\le j\le k$, for some constant $C>0$.
\end{remark}
\begin{remark}
We note that, since $\Sigma$ is smooth and $v_j\in C^\infty((0,\infty)\times S^1)$, all the geometric quantities of $\Sigma(\mathbf{d})$ are also smooth (see Remark \ref{rem_regularuty_v}). However, the norms taken into account in the estimates of Lemma \ref{lemma_Sigma_d} are enough for our purposes.   
\label{rem_regularity_Sigma_d}
\end{remark}

\begin{proof}
Since $\Sigma(\mathbf{d})$ agrees with $\Sigma$ in a compact set, from now on we fix an end and we prove our estimates using the coordinates $(s,\vartheta)\in(s_0,\infty)\times S^1$. We recall that, outside a ball of radius $R$ large enough, the end $E_j(\mathbf{d}_j)$ agrees with a normal graph over the Delaunay surface $D_{\tau_j}(\mathbf{d}_j)$, explicitly parametrized by
$$Z_j(\mathbf{d}_j):=X_{\tau_j}(\mathbf{d}_j)+v_j N_{\tau_j}(\mathbf{d}_j).$$
Therefore, the restriction of the metric of $\Sigma(\mathbf{d})$ to $E_j(\mathbf{d}_j)\backslash B_R$ is given by 
\[
\begin{aligned}
\hat{g}_{ss}(\mathbf{d}_j)&=g_{ss}(\mathbf{d}_j)+2(A_{D_{\tau_j}}(\mathbf{d}_j))_{ss}v_j+v_j^2 g^{ss}(\mathbf{d}_j)(A_{D_{\tau_j}}(\mathbf{d}_j))_{ss}^2+(\partial_s v_j)^2,\\
\hat{g}_{\vartheta\vartheta}(\mathbf{d}_j)&=g_{\vartheta\vartheta}(\mathbf{d}_j)+2(A_{D_{\tau_j}}(\mathbf{d}_j))_{\vartheta\vartheta}
v_j+v_j^2 g^{\vartheta\vartheta}(\mathbf{d}_j) (A_{D_{\tau_j}}(\mathbf{d}_j))_{\vartheta\vartheta}^2+(\partial_\vartheta v_j)^2,\\
\hat{g}_{s\vartheta}(\mathbf{d}_j)&=\partial_s v_j\partial_\vartheta v_j.
\end{aligned}
\]
Thus, using the explicit expression of the metric and the second fundamental form of a Delaunay surface, that is (\ref{g_A_Delaunay}), we have, for instance
\[
\begin{aligned}
\|\hat{g}_{ss}(\mathbf{d}_j^1)-\hat{g}_{ss}(\mathbf{d}_j^2)\|_{C^{1,\alpha}((s_0,\infty)\times S^1)}&\le \|\hat{g}_{ss}(\mathbf{d}_j^1)-g_{ss}(\mathbf{d}_j^1)+\hat{g}_{ss}(\mathbf{d}_j^2)-g_{ss}(\mathbf{d}_j^2)\|_{C^{1,\alpha}((s_0,\infty)\times S^1)}\\
&\quad +\|g_{ss}(\mathbf{d}_j^1)-g_{ss}(\mathbf{d}_j^2)\|_{C^{1,\alpha}((s_0,\infty)\times S^1)}\\
&\le C|\mathbf{d}_j^1-\mathbf{d}_j^2|.
\end{aligned}
\]
Using the explicit form of the parametrisation $Y_j(\mathbf{d})$, it is possible to compute
\[
\begin{aligned}
(g_{\Sigma(\mathbf{d})})_{ss}-(g_\Sigma)_{ss}&=\xi^2(\hat{g}_{ss}(\mathbf{d})-(g_\Sigma)_{ss})+2\xi(1-\xi)\partial_s Y_j\cdotp \partial_s(Z_j(\mathbf{d}_j)-Y_j)+2\xi\xi'\partial_s Z_j(\mathbf{d}_j)\cdotp (Z_j(\mathbf{d}_j)-Y_j)\\
&\quad +(\xi')^2|Z_j(\mathbf{d}_j)-Y_j|^2+2(1-\xi)\xi'\partial_s Y_j\cdotp(Z_j(\mathbf{d}_j)-Y_j).
\end{aligned}
\]
Similar computations show that
\[
\begin{aligned}
(g_{\Sigma(\mathbf{d})})_{\vartheta\vartheta}(\mathbf{d})-(g_\Sigma)_{\vartheta\vartheta}&=
\xi^2(\hat{g}_{\vartheta\vartheta}(\mathbf{d})-(g_\Sigma)_{\vartheta\vartheta})+2\xi(1-\xi)\partial_\vartheta Y_j\cdotp \partial_\vartheta(Z_j(\mathbf{d}_j)-Y_j),\\
(g_{\Sigma(\mathbf{d})})_{s\vartheta}-(g_\Sigma)_{s\vartheta}&=\xi^2(\hat{g}_{s\vartheta}(\mathbf{d}_j)-(g_\Sigma)_{s\vartheta})+\xi(1-\xi)(\partial_s Y_j\cdotp \partial_\vartheta(Z_j(\mathbf{d}_j)-Y_j)+\partial_\vartheta Y_j\cdotp \partial_s(Z_j(\mathbf{d}_j)-Y_j))\\
&\quad +\xi'(1-\xi)\partial_\vartheta Y_j\cdotp (Z_j(\mathbf{d}_j)-Y_j)+\xi\xi'\partial_\vartheta Z_j(\mathbf{d}_j)\cdotp (Z_j(\mathbf{d}_j)-Y_j),
\end{aligned}
\]
hence (\ref{metric_Sigma_d}) is true. The proof of (\ref{secondff_Sigma_d}) is similar, we just use the definition of second fundamental form and the fact that also the normal vectors satisfy inequalities like (\ref{metric_Sigma_d}).



\end{proof}

\subsection{The Fermi coordinates}\label{section_Fermi}
We are interested in the Fermi coordinates of $\Sigma(\eps^2\mathbf{d})$, for $\mathbf{d}\in (B_M)^k$ with $M>0$ large (to be fixed later) and $\eps>0$ small enough. For $\delta>0$ small, we define 
the tubular neighbourhood of $\Sigma(\eps^2\mathbf{d})$ of width $10\delta$ as
$$\mathcal{N}_{10\delta}:=\{x\in\R^3:\, \mathop{dist}(x,\Sigma(\eps^2\mathbf{d}))<10\delta\}.$$
Denoting the inward-pointing normal vector to $\Sigma(\eps^2\mathbf{d})$ at $\mathbf{y}$ by $\nu_{\eps,\mathbf{d}}(\mathbf{y})$, the mapping 
$$\mathcal{Y}_{\eps,\mathbf{d}}(\mathbf{y},z):=\mathbf{y}+z \nu_{\eps,\mathbf{d}}(\mathbf{y})$$
is a diffeorphism between $\Sigma(\eps^2\mathbf{d})\times(-10\delta,10\delta)$ and $\mathcal{N}_{10\delta}$, provided $\delta>0$ is small enough, thus it defines a change of coordinates on $\mathcal{N}_{10\delta}$. The new coordinates $(\mathbf{y},t)\in\Sigma(\eps^2\mathbf{d})\times\R$ associated to this diffeomorphism are known as the \textit{Fermi coordinates} of $\Sigma(\eps^2\mathbf{d})$. We recall that, if a surface is $C^{n,\alpha}$, then the diffeomorphism defining the Fermi coordinates is $C^{n-1,\alpha}$, due to the fact that the normal vector depends on the first derivative of the parametrisations. Here we want our approximate solutions to be of class $C^{2,\alpha}$, thus we need to start from a surface which is at least $C^{3,\alpha}$. However, this is not a problem, since $\Sigma$ is $C^\infty$, and, by Remark \ref{rem_regularuty_v}, the functions $v_j$ are $C^\infty$, thus $\Sigma(\eps^2\mathbf{d})$ is of class $C^\infty$ too. This problem also arises in \cite{PR}, where it is solved by introducing regularising operators in order to get a $C^{3,\alpha}$ manifold.\\ 

The metric of $\mathcal{N}_{10\delta}$ in these coordinates is given by
\[
\begin{aligned}
G_{lm}(\eps^2\mathbf{d})&=(g_{\Sigma(\eps^2\mathbf{d})})_{lm}+2z(A_{\Sigma(\eps^2\mathbf{d})})_{lm}
+z^2(A_{\Sigma(\eps^2\mathbf{d})})_{lk}(A_{\Sigma(\eps^2\mathbf{d})})_{ms}(g_{\Sigma(\eps^2\mathbf{d})})^{ks}, \qquad 1\le m,l\le 2\label{metric_CF}\\
G_{lz}(\eps^2\mathbf{d})&=G_{zl}(\eps^2\mathbf{d})=0, \qquad G_{zz}(\eps^2\mathbf{d})=1, \qquad 1\le l\le 2.
\end{aligned}
\]
It is understood that $G_{lm}(\eps^2\mathbf{d})$ is evaluated at $(\mathbf{y},z)$, while the metric and the second fundamental form of $\Sigma(\eps^2\mathbf{d})$ are evaluated at $\mathbf{y}$. With this notation, $G_{lm}(\eps^2\mathbf{d})(\mathbf{y},0)=(g_{\Sigma(\eps^2\mathbf{d})})_{lm}(\mathbf{y})$. The expression of the Laplacian in Fermi coordinates is given by
$$
\Delta=\Delta_{\Sigma(\eps^2\mathbf{d})_z}-H_{\Sigma(\eps^2\mathbf{d})_z}\partial_z+\partial^2_z,
$$
where $\Sigma(\eps^2\mathbf{d})_z$ is the surface parallel to $\Sigma(\eps^2\mathbf{d})$ at distance $|z|<10\delta$, that is 
$$\Sigma(\eps^2\mathbf{d})_z:=\{\mathcal{Y}_{\eps,\mathbf{d}}(\mathbf{y},z):\, \mathbf{y}\in \Sigma(\eps^2\mathbf{d})\}.$$
For a proof of this well-known formula we refer, for instance, to \cite{PR}. We further expand
$$
\Delta_{\Sigma(\eps^2\mathbf{d})_z}=\Delta_{\Sigma(\eps^2\mathbf{d})}+z\mathbb{A}_{\Sigma(\eps^2\mathbf{d})}(\mathbf{y},t),
\qquad H_{\Sigma(\eps^2\mathbf{d})_z}=H_{\Sigma(\eps^2\mathbf{d})}+z|A_{\Sigma(\eps^2\mathbf{d})}|^2+z^2\mathbb{Q}_{\Sigma(\eps^2\mathbf{d})}(\mathbf{y},z),
$$
where 
\[
\begin{aligned}
\mathbb{A}_{\Sigma(\eps^2\mathbf{d})}(\mathbf{y},z)&=a^{lm}_{\Sigma(\eps^2\mathbf{d})}(\mathbf{y},z)\partial_{lm}
+b^l_{\Sigma(\eps^2\mathbf{d})}(\mathbf{y},z)\partial_l,\\
za_{\Sigma(\eps^2\mathbf{d})}^{lm}(\mathbf{y},z)&=G^{lm}(\eps^2\mathbf{d})(\mathbf{y},z)-g_{\Sigma(\eps^2\mathbf{d})}^{lm}(\mathbf{y}),\\
z(b_{\Sigma(\eps^2\mathbf{d})})^l(\mathbf{y},z)&=G^{km}(\eps^2\mathbf{d})(\mathbf{y},z)\Gamma^l_{km}(\eps^2\mathbf{d})(\mathbf{y},z)
-g_{\Sigma(\eps^2\mathbf{d})}^{km}(\mathbf{y})\Gamma^l_{km}(\eps^2\mathbf{d})(\mathbf{y}),
\end{aligned}
\]
where $\Gamma^l_{km}(\eps^2\mathbf{d})$ are the Christoffel symbols of $\Sigma(\eps^2\mathbf{d})_z$. We stress that $\mathbb{A}_{\Sigma(\eps^2\mathbf{d})}$ does not contain derivatives in $z$ and the coefficients satisfy
\begin{eqnarray}
\|a^{lm}_{\Sigma(\eps^2\mathbf{d})}\|_{C^{0,\alpha}(\Sigma(\eps^2\mathbf{d})\times(-10 \delta,10\delta))}+\|b^l_{\Sigma(\eps^2\mathbf{d})}\|_{C^{0,\alpha}(\Sigma(\eps^2\mathbf{d})\times(-10 \delta,10\delta))}\le C,
\, 1\le l,m\le 2, 
\label{est_remainder_Fermi_A}
\end{eqnarray}
and $\Q_{\Sigma(\eps^2\mathbf{d})}$ is bounded uniformly in $(\mathbf{y},z)$, that is
\begin{eqnarray}
\|\Q_{\Sigma(\eps^2\mathbf{d})}\|_{C^{0,\alpha}(\Sigma(\mathbf{d})\times(-10 \delta,10\delta))}\le C. \label{est_remainder_Fermi_Q}
\end{eqnarray}
We stress that the constants appearing in (\ref{est_remainder_Fermi_A}) and (\ref{est_remainder_Fermi_Q}) are independent of $\mathbf{d}$ and $\eps$. In the sequel, we will also need some Lipschitz dependence on $\mathbf{d}$, namely, setting $\Omega_\delta:=(0,\infty)\times S^1\times (-10\delta,10\delta)$,
\begin{align}
\|a^{lm}_{\Sigma(\eps^2\mathbf{d}^1)}\circ (Y_j(\eps^2\mathbf{d}^1_j)\times Id_\R)-a^{lm}_{\Sigma(\eps^2\mathbf{d}^2)}\circ (Y_j(\eps^2\mathbf{d}^2_j)\times Id_\R)\|_{C^{0,\alpha}(\Omega_\delta)}&\le C\eps^2|\mathbf{d}^1-\mathbf{d}^2|,\label{lip_dep_a}\\
\|b^l_{\Sigma(\eps^2\mathbf{d}^1)}\circ (Y_j(\eps^2\mathbf{d}^1_j)\times Id_\R)-b^l_{\Sigma(\eps^2\mathbf{d}^2)}\circ (Y_j(\eps^2\mathbf{d}^2_j)\times Id_\R)\|_{C^{0,\alpha}(\Omega_\delta)}&\le C\eps^2|\mathbf{d}^1-\mathbf{d}^2|\label{lip_dep_b},\\
\|\Q_{\Sigma(\eps^2\mathbf{d}^1)}\circ (Y_j(\eps^2\mathbf{d}^1_j)\times Id_\R)-\Q_{\Sigma(\eps^2\mathbf{d}^2)}\circ (Y_j(\eps^2\mathbf{d}^2_j)\times Id_\R)\|_{C^{0,\alpha}(\Omega_\delta)}
&\le C\eps^2|\mathbf{d}^1-\mathbf{d}^2|.\label{lip_dep_Q}
\end{align}
These estimates follow from Lemma \ref{lemma_Sigma_d} and (\ref{metric_CF}). Finally
\begin{eqnarray}
\Delta=\Delta_{\Sigma(\eps^2\mathbf{d})}+z\mathbb{A}_{\Sigma(\eps^2\mathbf{d})}(z,\mathbf{y})
-(H_{\Sigma(\eps^2\mathbf{d})}+z|A_{\Sigma(\eps^2\mathbf{d})}|^2+z^2\Q_{\Sigma(\eps^2\mathbf{d})}(z,\mathbf{y}))\partial_z
+\partial^2_z.\label{lapl_Fermi}
\end{eqnarray}
Now we introduce a shift on the surface: given a function $h:\Sigma(\eps^2\mathbf{d})\to\R$ of class $C^2$, we introduce the change of variables 
$$\mathcal{Y}_{\eps,h,\mathbf{d}}(t,\mathbf{y})=\mathbf{y}+\eps (t+\eps h(\mathbf{d})(\mathbf{y}))\nu_{\eps,\mathbf{d}}(\mathbf{y}), \qquad\forall\, (\mathbf{y},t)\in\Sigma(\eps^2\mathbf{d})\times(-9\delta/\eps,9\delta/\eps).$$
Finally, the expression of the Laplacian in the \textit{shifted Fermi coordinates} $(t,\mathbf{y})$ is given by
\begin{equation}
\label{lapl_shift}
\begin{aligned}
\Delta&=\Delta_{\Sigma(\mathbf{d})}-\eps \Delta_{\Sigma(\eps^2\mathbf{d})} h \partial_t-2\eps\nabla_{\Sigma(\eps^2\mathbf{d})} h\nabla_{\Sigma(\eps^2\mathbf{d})}\partial_t+\eps^2|\nabla_{\Sigma(\eps^2\mathbf{d})} h|^2\partial_t^2+\eps^{-2}\partial^2_t\\
&\quad +\eps^{-1}(H_{\Sigma(\eps^2\mathbf{d})}+(\eps t+\eps^2 h)|A_{\Sigma(\eps^2\mathbf{d})}|^2+(\eps t+\eps^2 h)^2\mathbb{Q}_{\Sigma(\eps^2\mathbf{d})}(\mathbf{y},\eps t+\eps^2 h))\partial_t\\
&\quad +\eps (t+\eps h)\mathbb{A}_{\Sigma(\eps^2\mathbf{d})}(\mathbf{y},\eps t+\eps^2 h)-\eps^2 (t+\eps h)\mathbb{A}_{\Sigma(\eps^2\mathbf{d})}(\mathbf{y},\eps t+\eps^2 h)[h]\partial_t\\
&\quad -2\eps^2 (t+\eps h)a^{ij}_{\Sigma(\eps^2\mathbf{d})}(\mathbf{y},\eps t+\eps^2 h)\partial_i h\partial_{tj}+\eps^3 (t+\eps h)a^{ij}_{\Sigma(\eps^2\mathbf{d})}(\mathbf{y},\eps t+\eps^2 h)\partial_i h\partial_j h\partial_t^2.
\end{aligned}
\end{equation}


\section{The approximate solution}\label{section_approx_solution}

\setcounter{equation}{0}

\subsection{First approximation}\label{section_first_approximation}

We need to introduce a shift on $\Sigma(\eps^2\mathbf{d})$, that is a function on $\Sigma(\eps^2\mathbf{d})$ which, along the ends, is the sum of a decaying term and a periodic one. More precisely, for $\tau\in(0,1)$, $n\in \N$ and $\alpha\in(0,1)$, we define the spaces
$$D^{n,\alpha}_\tau(\R):=\{{\tt h}\in C^{n,\alpha}_{loc}(\R):{\tt h}(s)={\tt h}(s+s_\tau)={\tt h}(s_\tau-s), \, \forall\, s\in\R\},$$
of functions that are periodic and even with respect to the reflection around $s=s_\tau/2$. Let $\tau_j(\eps^2\mathbf{d}_j):=\tau(\epsilon_j+\eps^2\mathbf{d}^D_j)$, where $\epsilon_j=1-\sqrt{1-\tau_j}$, in such a way that $\tau_j(0)=\tau_j$. Given a function $h_0\in \mathcal{C}^{2,\alpha}_a(\Sigma)$, $k$ functions ${\tt h}_j\in D^{n,\alpha}_{\tau_j(\eps^2\mathbf{d}_j)}(\R)$, $1\le j\le k$, and a smooth cutoff function $\tilde{\xi}:\R\to\R$ equal to $0$ in $(-\infty,s_0+2)$ and equal to $1$ in $(s_0+3,\infty)$, we set
\begin{equation}
\label{def_zeta_j}
\zeta_j(\mathbf{y}):=
\begin{cases}
\tilde{\xi}(s) &\text{if $\mathbf{y}=Y_j(\eps^2\mathbf{d}_j)(s,\vartheta)$, $\forall\,(s,\vartheta)\in(0,\infty)\times S^1$,}\medskip\\
0 &\text{if $\mathbf{y}\in \Sigma(\eps^2\mathbf{d})\backslash Y_j(\eps^2\mathbf{d}_j)((0,\infty)\times S^1)$.}
\end{cases}
\end{equation}
and
\[
\begin{aligned}
 h_j(Y_j(\eps^2\mathbf{d}_j)(s,\vartheta))&:={\tt h}_j(s),\qquad \forall \, (s,\vartheta)\in(0,\infty)\times S^1\\
h(\mathbf{d})(\mathbf{y})&:=h_0\circ \beta(\eps^2\mathbf{d})(\mathbf{y})+\sum_{j=1}^k \zeta_j(\mathbf{y})h_j(\mathbf{y}).
\end{aligned}
\]
There is a slight abuse of notation here, justified by the fact that the cutoff functions $\zeta_j$ vanish in $Y_j(\eps^2\mathbf{d}_j)((0,s_0+2)\times S^1)$, thus the value of $h_j$ is relevant just along an end of $\Sigma(\eps^2\mathbf{d})$, where it is actually defined. Throughout the construction, we will always assume that the functions $h_0,\, \text{h}_1,\dots,\text{h}_k$ are in the set
\begin{equation}
\label{def_H}
\textbf{H}(\mathbf{d}):=\left\{\mathbf{h}:=(h_0,{\tt h}_1,\dots,{\tt h}_k)\left| \begin{aligned} &h_0\in \mathcal{C}^{2,\alpha}_a(\Sigma),\, \|h_0\|_{\mathcal{C}^{2,\alpha}_a(\Sigma)}< M,\\
&{\tt h}_j\in D^{2,\alpha}_{\tau_j(\eps^2\mathbf{d}_j)}(\R),\, \|{\tt h}_j\|_{C^{2,\alpha}(\R)}<K,\, 1\le j\le k \end{aligned}\right.\right\},
\end{equation}
where $K$ and $M$ will be determined later. Note that $M$ is also the radius of the ball $B_M$ to which the parameters $\mathbf{d}_j$ belong. In this way $h(\mathbf{d})$ is the sum of a globally defined decaying term plus $k$ periodic terms, each of them playing a role along one of the ends. In order to define the approximate solution near the surface, we introduce the Fermi coordinates $(\mathbf{y},z)\in\Sigma(\eps^2\mathbf{d})\times(-10\delta,10 \delta)$ in a tubular neighbourhood of $\Sigma(\eps^2\mathbf{d})$, and the change of coordinates $$z=\eps t+\eps^2 h(\mathbf{d})(\mathbf{y}).$$ 
We would like to produce an error which is decaying both along the surface and in $t$. In order to make this concept clear, we introduce some function spaces. Given $a>0$, $\gamma>0$, $n\in\N$, $\alpha\in(0,1)$ and a function $\phi\in C^{n,\alpha}_{loc}(\Sigma\times\R)$, we say that $\phi\in\mathcal{E}^{n,\alpha}_{a,\gamma}(\Sigma\times\R)$ if the norm 
$$\|\phi\|_{\mathcal{E}^{n,\alpha}_{a,\gamma}(\Sigma\times\R)}:=\sum_{0\le m+k\le n}\eps^m \|\partial_t^k D^m_\Sigma\phi\|_{\mathcal{C}^{0,\alpha}_{a,\gamma}(\Sigma\times\R)}$$
is finite, where we have set
\begin{equation}
\label{def_wa}
\begin{aligned}
&\|\phi\|_{\mathcal{C}^{0,\alpha}_{a,\gamma}(\Sigma\times\R)}:=\|\phi w_a(y)\cosh^\gamma(t)\|_{C^{0,\alpha}(\Sigma\times\R)}, \\
&w_a(y):=\tilde{\eta}_0(y)+\sum_{j=1}^k \tilde{\eta}_j(y) w_{a,j}(y), \qquad w_{a,j}(Y_j(s,\vartheta)):=e^{as},\, \forall\, (s,\vartheta)\in(0,\infty)\times S^1.
\end{aligned}
\end{equation}
We recall that $\tilde{\eta}_j$ is defined above (see (\ref{def_tilde_eta_j})) and $\tilde{\eta}_0$ is chosen in such a way that the family $\{\tilde{\eta}_j\}_{0\le j\le k}$ is a partition of unity on $\Sigma$. For a function $\phi\in C^{n,\alpha}_{loc}(\Sigma(\eps^2\mathbf{d})\times\R)$, we say that it is in $\mathcal{E}^{n,\alpha}_{a,\gamma}(\Sigma(\eps^2\mathbf{d})\times\R)$ if the composition $\phi\circ(\beta(\eps^2\mathbf{d})^{-1}\times \mathop{Id}_\R)$ is in $\mathcal{E}^{n,\alpha}_{a,\gamma}(\Sigma\times\R)$ and its norm is defined by
$$\|\phi\|_{\mathcal{E}^{n,\alpha}_{a,\gamma}(\Sigma(\eps^2\mathbf{d})\times\R)}:=
\|\phi\circ(\beta(\eps^2\mathbf{d})^{-1}\times {\mathop{Id}}_\R)\|_{\mathcal{E}^{n,\alpha}_{a,\gamma}(\Sigma\times\R)}.$$ 
In terms of these spaces, we want the error to be in $\mathcal{E}^{0,\alpha}_{a,\gamma}(\Sigma(\eps^2\mathbf{d})\times\R)$.\\ 

A first guess could be that our approximate solution just depends on $t$. Using the expansion of the Laplacian given by (\ref{lapl_shift}), with $h:=h(\mathbf{d})$, we get a formal expression for the error of the form
\[
\begin{aligned}
&\eps\Delta U+\eps^{-1}f(U)-\ell_\eps =\eps^{-1}(U''-\eps H_\Sigma U'+f(U)-\eps\ell_\eps)-(H_{\Sigma(\eps^2\mathbf{d})}-H_\Sigma)U'\\
&\qquad-\eps(t+\eps h)|A_{\Sigma(\eps^2\mathbf{d})}|^2 U'
-\eps^2\Delta_{\Sigma(\eps^2\mathbf{d})} h-\eps^2(t+\eps h)^2\tr A_{\Sigma(\eps^2\mathbf{d})}^3 U'-\eps^3(t+\eps h)^3\tilde{\Q}_{\Sigma(\eps^2\mathbf{d})}(\eps t+\eps^2 h,\mathbf{y})U'\\
&\qquad +\eps^3|\nabla_{\Sigma(\eps^2\mathbf{d})} h|^2 U''-\eps^3(t+\eps h)\mathbb{A}_{\Sigma(\eps^2\mathbf{d})}(\eps t+\eps^2 h,\mathbf{y})[h]U'+\eps^4 (t+\eps h)a^{ij}_{\Sigma(\eps^2\mathbf{d})}(\eps t+\eps^2 h,\mathbf{y})\partial_i h\partial_j h U'',
\end{aligned}
\]
where we have set $f(t):=t-t^3$. In the above computation, we have expanded the term $\Q_{\Sigma(\mathbf{d})}$ as
$$\Q_{\Sigma(\eps^2\mathbf{d})}(\mathbf{y},t)=\tr A_{\Sigma(\eps^2\mathbf{d})}^3(\mathbf{y})+z\tilde{\Q}_{\Sigma(\eps^2\mathbf{d})}(\mathbf{y},t),$$
with $\tilde{\Q}_{\Sigma(\eps^2\mathbf{d})}$ satisfying (\ref{est_remainder_Fermi_Q}). This is not crucial, however the term $\tr A_{\Sigma(\eps^2\mathbf{d})}^3(\mathbf{y})$ is somehow the easiest term to understand in the error, and we will use it as a model to show the properties of the error. To get rid of  the main term we choose $U$ to be a solution to the ODE
$$U''-\eps H_\Sigma U'+f(U)=\eps\ell_\eps.$$
This solution can be found as a small perturbation of $v_\star(t):=\tanh(t/\sqrt{2})$, in the form 
\begin{eqnarray}
U(t)=v_\star(t)+\eps v_1(t).\label{construction_U}
\end{eqnarray} 
We recall that $v_\star$ is the unique monotone increasing solution to $-v''_\star=v_\star-v_\star^3$ vanishing at $t=0$. The function $U$ was also used in \cite{HK1} for the construction of the approximate solution. It follows from their construction that $U$ depends on the curvature $H_\Sigma$ and
$$\ell_\eps=-\frac{1}{2}H_\Sigma\int_\R (v'_\star(t))^2 dt+O(\eps)<0.$$
Formally, this produces an error which is the sum of $-(H_{\Sigma(\eps^2\mathbf{d})}-H_\Sigma)U'$, which is of order $\eps^2$ (see Lemma \ref{lemma_Sigma_d}), and a term of order $\eps$, which is still too large for our purposes. Therefore we add a correction of order $\eps^2$, so that the new approximation has the form
$$u_0(\mathbf{d})(\mathbf{y},t):=U(t)+\eps^2|A_{\Sigma(\eps^2\mathbf{d})}(\mathbf{y})|^2\psi_0(t).$$
Using once again (\ref{lapl_shift}), we get
\begin{equation}
\label{exp_error_t}
\begin{aligned}
&\eps\Delta u_0(\mathbf{d})+\eps^{-1}f(u_0(\mathbf{d}))-\ell_\eps\\
&\quad=
\eps\Delta U+\eps^{-1}f(U)-\ell_\eps+\eps^2(\eps\Delta+\eps^{-1}f'(U))[|A_{\Sigma(\eps^2\mathbf{d})}|^2\psi_0]
-\eps^{3}|A_{\Sigma(\eps^2\mathbf{d})}|^4\psi_0^2(3U+\eps^2|A_{\eps^2\Sigma(\mathbf{d})}|^2\psi_0)\\
&\quad =
-(H_{\Sigma(\eps^2\mathbf{d})}-H_\Sigma)(U'+\eps^2|A_{\Sigma(\eps^2\mathbf{d})}|^2\psi'_0)+\eps|A_{\Sigma(\eps^2\mathbf{d})}|^2(\psi''_0-\eps H_\Sigma \psi'_0+f'(U)\psi_0)-\eps|A_{\Sigma(\eps^2\mathbf{d})}|^2(t+\eps h)U'\\
&\qquad -\eps^2\Delta_{\Sigma(\eps^2\mathbf{d})} h U'-\eps^2(t+\eps h)^2\tr A_{\Sigma(\eps^2\mathbf{d})}^3 U'-\eps^3(t+\eps h)^3\tilde{\Q}_{\Sigma(\eps^2\mathbf{d})}(\eps t+\eps^2 h,\mathbf{y})U'\\
&\qquad-\eps^{3}|A_{\Sigma(\eps^2\mathbf{d})}|^4\psi_0^2(3U+\eps^2|A_{\Sigma(\eps^2\mathbf{d})}|^2\psi_0)+\eps^3|\nabla_{\Sigma(\eps^2\mathbf{d})} h|^2 U''-\eps^3(t+\eps h)\mathbb{A}_{\Sigma(\eps^2\mathbf{d})}(\eps t+\eps^2 h,\mathbf{y})[h]U'\\
&\qquad+\eps^4 (t+\eps h) a^{ij}_{\Sigma(\eps^2\mathbf{d})}(\eps t+\eps^2 h,y)\partial_i h\partial_j h U''+\eps^3\Delta_{\Sigma(\eps^2\mathbf{d})}|A_{\Sigma(\eps^2\mathbf{d})}|^2\psi_0-\eps^4\Delta_{\Sigma(\eps^2\mathbf{d})} h|A_{\Sigma(\eps^2\mathbf{d})}|^2\psi'_0\\
&\qquad -2\eps^4\nabla_{\Sigma(\eps^2\mathbf{d})}h\nabla_{\Sigma(\eps^2\mathbf{d})}|A_{\Sigma(\eps^2\mathbf{d})}|^2\psi'_0
+\eps^5|\nabla_{\Sigma(\eps^2\mathbf{d})} h|^2|A_{\Sigma(\eps^2\mathbf{d})}|^2\psi''_0\\
&\qquad -\eps^3((t+\eps h)|A_{\Sigma(\eps^2\mathbf{d})}|^2+\eps(t+\eps h)^2\tr A_{\Sigma(\eps^2\mathbf{d})}^3+\eps^2(t+\eps h)^3\tilde{\Q}_{\Sigma(\eps^2\mathbf{d})}(\eps t+\eps^2 h,\mathbf{y}))|A_{\Sigma(\eps^2\mathbf{d})}|^2\psi'_0\\
&\qquad+\eps^4 (t+\eps h)\mathbb{A}_{\Sigma(\eps^2\mathbf{d})}(\eps t+\eps^2 h,\mathbf{y})[|A_{\Sigma(\eps^2\mathbf{d})}|^2]\psi_0-\eps^5(t+\eps h)|A_{\Sigma(\eps^2\mathbf{d})}|^2\psi'_0\mathbb{A}_{\Sigma(\eps^2\mathbf{d})}(\eps t+\eps^2 h,\mathbf{y})[h]\\
&\qquad-2\eps^5 (t+\eps h) a^{ij}_{\Sigma(\eps^2\mathbf{d})}(\eps t+\eps^2 h,\mathbf{y})\partial_i h\partial_{j}|A_{\Sigma(\eps^2\mathbf{d})}|^2\psi'_0+\eps^6 (t+\eps h)a^{ij}_{\Sigma(\eps^2\mathbf{d})}(\eps t+\eps^2 h,\mathbf{y})\partial_i h\partial_j h|A_{\Sigma(\eps^2\mathbf{d})}|^2\psi''_0.
\end{aligned}
\end{equation}
In order to correct the term of order $\eps$, $\psi_0$ must be a solution to the ODE
$$\psi''_0-\eps H_\Sigma \psi'_0+f'(U)\psi_0=(t+\eps\lambda) U'.$$
The presence of the Lagrange multiplier is due to the fact that, in order to obtain an exponentially decaying solution, it is enough to impose the right-hand side to be $L^2(\R)$-orthogonal to $U'e^{-\eps H_\Sigma t}$. This condition is equivalent to say that the Lagrange multiplier satisfies
$$\eps\lambda=-\frac{\int_\R t(U')^2 e^{-\eps H_\Sigma t}dt}{\int_\R (U')^2 e^{-\eps H_\Sigma t}dt},$$
which yields that $\lambda=O(1)$, due to (\ref{construction_U}) and the fact that $$\int_\R t(v'_\star)^2 dt=0.$$
We stress that here it is crucial to use the fact that the term to correct is almost $L^2(\R)$-orthogonal to $U'$, otherwise the Lagrange multiplier would be of the same order in $\eps$ as left-hand side, and adding the term $\eps^2|A_{\Sigma(\eps^2\mathbf{d})}|^2 \psi_0$ would not improve the size of the error. Also the construction of $\psi_0$ can be found in \cite{HK1}. In conclusion, the error is given by
\begin{equation}
\label{first_error}
\begin{aligned}
\eps\Delta u_0(\mathbf{d})+\eps^{-1}f(u_0(\mathbf{d}))-\ell_\eps &=-\eps^2(\Delta_{\Sigma(\eps^2\mathbf{d})} h+|A_{\Sigma(\eps^2\mathbf{d})}|^2 h)U'-\eps^2\tr A_{\Sigma(\eps^2\mathbf{d})}^3 t^2 U'\\
&\quad +\eps^2\lambda|A_{\Sigma(\eps^2\mathbf{d})}|^2 U'-(H_{\Sigma(\eps^2\mathbf{d})}-H_\Sigma)U'+\eps^3\mathcal{G}^1_\eps(\mathbf{h},\mathbf{d})(\mathbf{y},t),
\end{aligned}
\end{equation}
where $\mathcal{G}^1_\eps(\mathbf{h},\mathbf{d})$ recollects all the terms in (\ref{exp_error_t}) of order at least $\eps^3$, including $-\eps^2|A_{\Sigma(\eps^2\mathbf{d})}|^2\psi'_0(H_{\Sigma(\eps^2\mathbf{d})}-H_\Sigma)$, which is actually of order $\eps^4$, due to Lemma \ref{lemma_H_Sigmad}. This remainder turns out to be exponentially decaying in $t$, at any rate strictly smaller than $\sqrt{2}$ at $\pm\infty$. However, the error is not decaying along the surface, in other words it is not in $\mathcal{E}^{0,\alpha}_{a,\gamma}(\Sigma(\eps^2\mathbf{d})\times\R)$, and this does not allow to solve the problem, since the inverse of the Jacobi operator of $\Sigma$ is defined in a space of decaying functions (see Proposition \ref{prop_non_deg}). Therefore, we need to improve the approximate solution, at least along the ends.

\subsection{Improvement of the approximation along the ends}\label{section_improvement}

\subsubsection{Asymptotically periodic error}

First we introduce some function spaces. For $\tau\in(0,1)$, $\gamma>0$, $n\in\N$ and $\alpha\in(0,1)$, we say that a function $\varphi\in C^{n,\alpha}_{loc}(\R^2)$ is in $\D^{n,\alpha}_{\tau,\gamma}(\R^2)$ if it satisfies the symmetries of the Delaunay surface $D_\tau$, that is
\begin{eqnarray}
\varphi(s,t)=\varphi(s+s_\tau,t)=\varphi(s_\tau-s,t), \qquad \forall\, (s,t)\in\R^2,\label{symm_delaunay}
\end{eqnarray}
and the weighted norm 
\begin{eqnarray}
\|\varphi\|_{E^{n,\alpha}_\gamma(\R^2)}:=\sum_{0\le m+k\le n}\eps^m \|\partial_t^k \partial^m_s\varphi\|_{\D^{0,\alpha}_{\gamma}(\R^2)},\qquad\|\varphi\|_{\D^{0,\alpha}_\gamma(\R^2)}:=\|\varphi \cosh^\gamma(t)\|_{C^{0,\alpha}(\R^2)}\label{def_wnorm_end_eps}
\end{eqnarray}
is finite. \\

Along the ends of $\Sigma(\eps^2\mathbf{d})$, the error given by (\ref{first_error}) is asymptotically periodic, in a sense that we will explain just below. We take, as an example, the term $\tr A_{\Sigma(\eps^2\mathbf{d})}^3 t^2 U'(t)$, which appears in (\ref{first_error}). We define $\zeta_0:\Sigma(\eps^2\mathbf{d})\to\R$ to be a  smooth cutoff function such that  $\zeta_0$ and  $\{\zeta_j\}_{1\le j\le k}$ form a partition of unity on $\Sigma(\eps^2\mathbf{d})$, so that 
\begin{eqnarray}\notag
\tr A_{\Sigma(\eps^2\mathbf{d})}^3(\mathbf{y}) t^2 U'(t)=\zeta_0(\mathbf{y}) \tr A_{\Sigma(\eps^2\mathbf{d})}^3(\mathbf{y}) t^2 U'(t)+\sum_{j=1}^k \zeta_j(\mathbf{y}) \tr A_{\Sigma(\eps^2\mathbf{d})}^3(\mathbf{y}) t^2 U'(t).
\end{eqnarray}
Using the coordinates $(s,\vartheta)$ along each of the ends, we decompose
\begin{equation}
\label{traceA3}
\begin{aligned}
&(\zeta_j \tr A_{\Sigma(\eps^2\mathbf{d})}^3)(Y_j(\eps^2\mathbf{d}_j)(s,\vartheta)) t^2 U'(t)=\tilde{\xi}(s)\tr A^3_{D_{\tau_j}(\eps^2\mathbf{d}_j)}(s)t^2 U'(t)\\
&\quad +\tilde{\xi}(s)\left[\tr A^3_{\Sigma(\eps^2\mathbf{d})}(Y_j(\eps^2\mathbf{d}_j)(s,\vartheta)) -\tr A^3_{D_{\tau_j}(\eps^2\mathbf{d}_j)}(s)\right]t^2 U'(t),
\end{aligned}
\end{equation}
where $\tr A^3_{D_{\tau_j}(\eps^2\mathbf{d}_j)}(s)t^2 U'(t)$ is in the space $\D^{0,\alpha}_{\tau_j(\eps^2\mathbf{d}_j),\gamma}(\R^2)$, since $A_{D_{\tau_j}(\eps^2\mathbf{d}_j)}(s)$, the second fundamental form of $D_{\tau_j}(\eps^2\mathbf{d}_j)$ at $X_{\tau_j}(\eps^2\mathbf{d}_j)(s,\vartheta)$,  depends only on $s$ and satisfies the symmetries (\ref{symm_delaunay}). The other term is decaying both in $t$ and in $s$, more precisely it is the product of a function of $s\in(0,\infty)$ and $\vartheta\in S^1$, which measures the difference between the second fundamental form of $\Sigma(\eps^2\mathbf{d})$ and the one of the Delaunay surfaces $D_{\tau_j}(\eps^2\mathbf{d}_j)$ to which the ends are asymptotic, and the function $t^2 U'(t)$, which decays exponentially at any rate $\gamma\in (0,\sqrt{2})$. Roughly, if we imagined to replace the ends of $\Sigma(\eps\mathbf{d})$ with exact Delaunay surfaces and we took $h_0=0$, the error would exactly coincide with the periodic part. Setting $\textbf{h}:=(h_0,{\tt h}_1,\dots,{\tt h}_k)$ and using a similar argument to control the other terms in (\ref{first_error}),
we have
\begin{equation}
\begin{aligned}
\label{asymptotically_periodic_error}
\eps\Delta u_0(\mathbf{d})+\eps^{-1}f(u_0(\mathbf{d}))-\ell_\eps &=-\eps^2(\Delta_{\Sigma(\eps^2\mathbf{d})}+|A_{\Sigma(\eps^2\mathbf{d})}|^2) (h_0\circ\beta(\eps^2\mathbf{d}))U'-(H_{\Sigma(\eps^2\mathbf{d})}-H_\Sigma)U'\\
&\quad +\eps^2\mathcal{G}^2_\eps(\mathbf{h},\mathbf{d})+\sum_{j=1}^k \zeta_j\mathcal{H}^1_\eps(\text{h}_j,\mathbf{d}_j),
\end{aligned}
\end{equation}
where $\mathcal{G}^2_\eps(\mathbf{h},\mathbf{d})\in\mathcal{E}^{0,\alpha}_{a,\gamma}(\Sigma(\eps^2\mathbf{d})\times\R)$ is bounded uniformly in $\mathbf{d}$, $\mathbf{h}$ and $\eps$, in the sense that 
\begin{eqnarray}
\|\mathcal{G}^2_\eps(\mathbf{h},\mathbf{d})\|_{\mathcal{E}^{0,\alpha}_{a,\gamma}(\Sigma(\eps^2\mathbf{d})\times\R)}\le \tilde{C}_1(K),\label{est_G2}
\end{eqnarray}
for some constant $\tilde{C}_1(K)>0$ depending on $K$, due to the presence of commutator  terms like $[\Delta_{\Sigma(\eps^2\mathbf{d})},\zeta_j]h_j$, but independent of $\mathbf{d}$, $\mathbf{h}$, $M$, and $\eps$, while $\mathcal{H}^1_\eps({\tt h}_j,\mathbf{d}_j)$ recollects all the periodic terms. Here $a\in(0,\bar{a})$ and $\alpha\in(0,1)$ are arbitrary. It turns out that
\begin{eqnarray}
\mathcal{H}^1_\eps({\tt h}_j,\mathbf{d}_j)\circ Y_j(\eps^2\mathbf{d}_j):=-\eps^2 \tilde{L}_{0,\tau_j(\eps^2\mathbf{d}_j)}{\tt h}_j U'
+\eps^2\mathcal{H}^2_\eps({\tt h}_j,\mathbf{d}_j),\label{per_error}
\end{eqnarray}
where
\begin{eqnarray}
\tilde{L}_{0,\tau}:=\frac{1}{\tau^2 e^{2\sigma_\tau}}L_{0,\tau},\qquad L_{0,\tau}:=\partial^2_s+\tau^2\cosh(2\sigma_\tau)\label{def_L_tau}
\end{eqnarray}
and, for any $\gamma\in(0,\sqrt{2})$, $\alpha\in(0,1)$ and $\eps$ small, $\mathcal{H}^2_\eps({\tt h}_j,\mathbf{d}_j)$ is actually a function in $\D^{0,\alpha}_{\tau_j(\eps^2\mathbf{d}_j),\gamma}(\R^2)$, such that
\begin{eqnarray}
\|\mathcal{H}^2_\eps({\tt h}_j,\mathbf{d}_j)\|_{E^{0,\alpha}_\gamma(\R^2)}\le C.\label{est_H2}
\end{eqnarray}
Even in this case, the constant $C>0$ appearing in (\ref{est_H2}) is independent of $\mathbf{d}_j$, $\text{h}_j$, $M$, $K$ and $\eps$. The Lipschitz dependence on the data $\mathbf{d}$ and $\mathbf{h}$ is dealt with in the following Lemma. This is a quite technical issue, however it is worth to mention it, since we will use the size of the Lipschitz constants in the forthcoming proofs, in order to apply the contraction mapping theorem.
\begin{lemma}
Let $M>0$, $K>0$, $a\in(0,\bar{a})$, $\gamma\in(0,\sqrt{2})$ and $\alpha\in(0,1)$. Then there exists a constant $\tilde{C}_2(K)>0$ depending on $K$ such that, for $\eps$ small enough,
\begin{equation}\notag
\begin{aligned}
&\|\mathcal{G}^2_\eps(\mathbf{h}^1,\mathbf{d}^1)\circ(\beta(\eps^2\mathbf{d}^1)^{-1}\times {\mathop{Id}}_\R)-\mathcal{G}^2_\eps(\mathbf{h}^2,\mathbf{d}^2)\circ(\beta(\eps^2\mathbf{d}^2)^{-1}\times {\mathop{Id}}_\R)\|_{\mathcal{E}^{0,\alpha}_{a,\gamma}
(\Sigma\times\R)}\\
&\le \tilde{C}_2(K) \big(\eps^2|\mathbf{d}^1-\mathbf{d}^2|+\sum_{j=1}^k\|{\tt h}^1_j-{\tt h}^2_j\|_{C^{2,\alpha}(\R)}+\eps \|h^1_0-h^2_0\|_{\mathcal{C}^{2,\alpha}_a(\Sigma\times\R)}\big),
\end{aligned}
\end{equation}
for any $\mathbf{d}^i\in (B_M)^k$, for any $\mathbf{h}^i:=(h^i_0,{\tt h}^i_1,\dots,{\tt h}^i_k)\in \mathbf{H}(\mathbf{d}^i)$, $i=1,2$ (see (\ref{def_H}) for the definition of $\mathbf{H}(\mathbf{d})$).

\label{lemma_lip_G2}
\end{lemma}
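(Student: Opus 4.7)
The plan is to decompose $\mathcal{G}^2_\eps(\mathbf{h},\mathbf{d})$ into the finite list of terms inherited from the expansion (\ref{exp_error_t}), once the leading contributions $-\eps^2(\Delta_{\Sigma(\eps^2\mathbf{d})}+|A_{\Sigma(\eps^2\mathbf{d})}|^2)(h_0\circ\beta(\eps^2\mathbf{d}))U'$, $-(H_{\Sigma(\eps^2\mathbf{d})}-H_\Sigma)U'$ and the asymptotically periodic pieces $\sum_j\zeta_j\mathcal{H}^1_\eps({\tt h}_j,\mathbf{d}_j)$ have been subtracted as in (\ref{asymptotically_periodic_error}). Each of the remaining summands has the tensor-product form $\eps^p\Phi_k(\mathbf{d})(\mathbf{y})\Psi_k\big(h(\mathbf{d}),\nabla_{\Sigma(\eps^2\mathbf{d})}h(\mathbf{d}),\nabla^2_{\Sigma(\eps^2\mathbf{d})}h(\mathbf{d})\big)(\mathbf{y})\chi_k(t)$ with $p\ge 1$, where $\Phi_k$ is a polynomial in the geometric quantities $g_{\Sigma(\eps^2\mathbf{d})}$, $A_{\Sigma(\eps^2\mathbf{d})}$, $a^{lm}_{\Sigma(\eps^2\mathbf{d})}$, $b^l_{\Sigma(\eps^2\mathbf{d})}$, $\mathbb{Q}_{\Sigma(\eps^2\mathbf{d})}$ (and, when relevant, in $(t+\eps h)$), $\Psi_k$ is polynomial in $h(\mathbf{d})$ and its first two derivatives, and $\chi_k$ is one of the exponentially decaying profiles $U^{(j)}$, $\psi_0^{(j)}$ for $j\in\{0,1,2\}$, each bounded by $C\cosh^{-\gamma}(t)$ for every $\gamma\in(0,\sqrt{2})$. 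This tensor-product structure makes the estimate amenable to a standard add-and-subtract argument in which each of the three factor types is perturbed independently.

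After pulling everything back to $\Sigma\times\R$ via $\beta(\eps^2\mathbf{d}^i)^{-1}\times\mathop{Id}_\R$, the Lipschitz dependence of the geometric factors $\Phi_k(\mathbf{d})$ on $\mathbf{d}$ is supplied by Lemma \ref{lemma_Sigma_d} and (\ref{lip_dep_a})--(\ref{lip_dep_Q}), and in every instance it carries the prefactor $\eps^2$ because the auxiliary surface depends on $\eps^2\mathbf{d}$, not on $\mathbf{d}$ directly; this accounts for the $\eps^2|\mathbf{d}^1-\mathbf{d}^2|$ contribution. For the $\mathbf{h}$-dependence I use the splitting $h(\mathbf{d})=h_0\circ\beta(\eps^2\mathbf{d})+\sum_j\zeta_j h_j$. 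The periodic components ${\tt h}_j$ enter $\mathcal{G}^2_\eps$ only through three mechanisms: (i) the commutators $[\Delta_{\Sigma(\eps^2\mathbf{d})},\zeta_j]h_j$, compactly supported where $\zeta_j'\neq 0$ and hence automatically carrying the weight $w_a$; (ii) the difference $\Delta_{\Sigma(\eps^2\mathbf{d})}h_j-\tilde L_{0,\tau_j(\eps^2\mathbf{d}_j)}{\tt h}_j$, which by the Delaunay asymptotics of the ends (encoded in Lemma \ref{lemma_Sigma_d}) decays exponentially in $s$ at any rate $a<\bar a_j$; and (iii) higher-order $\eps^3$-corrections such as $\eps^3|\nabla_{\Sigma(\eps^2\mathbf{d})} h|^2U''$, in which the non-decaying part of ${\tt h}_j$ is damped by the extra $\eps$-factor. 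None of these requires an extra $\eps$-weight on $\|{\tt h}_j^1-{\tt h}_j^2\|_{C^{2,\alpha}(\R)}$. The $h_0$-part, by contrast, enters $\mathcal{G}^2_\eps$ only through $\eps^3$-and-higher corrections, its leading $\eps^2$-contribution having been extracted in (\ref{asymptotically_periodic_error}); dividing by the overall $\eps^2$-prefactor of $\eps^2\mathcal{G}_\eps^2$ produces the $\eps\|h_0^1-h_0^2\|_{\mathcal{C}^{2,\alpha}_a(\Sigma)}$ factor, while the $w_a$-weight is supplied globally by the norm on $h_0$.

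The main technical obstacle is the systematic book-keeping: for each of the roughly twenty summands of (\ref{exp_error_t}) one has to verify separately the correct $\eps$-power, the $\cosh^{-\gamma}(t)$-decay via the profile $\chi_k$, and the $w_a$-decay in $s$ via one of the three mechanisms sketched above. A bit of extra care is needed for the quadratic and cubic $h$-terms such as $\eps^3|\nabla_{\Sigma(\eps^2\mathbf{d})}h|^2 U''$ and $\eps^4(t+\eps h)a^{ij}_{\Sigma(\eps^2\mathbf{d})}\partial_i h\partial_j h\,U''$: applying the factorisation $|\nabla h^1|^2-|\nabla h^2|^2=(\nabla h^1-\nabla h^2)\cdot(\nabla h^1+\nabla h^2)$ (and analogously for the cubic term), one factor becomes Lipschitz in $\mathbf{h}$ while the other is uniformly bounded by the a priori estimate $\|h(\mathbf{d})\|_{\mathcal{C}^{2,\alpha}_a(\Sigma)}\le C(M+K)$; combined with the surplus $\eps$-powers in these terms, this yields the advertised constant $\tilde C_2(K)$.
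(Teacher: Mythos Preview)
Your proposal is correct and follows essentially the same approach as the paper's proof: a term-by-term analysis of the summands inherited from (\ref{exp_error_t}), using Lemma \ref{lemma_Sigma_d} (and its consequences (\ref{lip_dep_a})--(\ref{lip_dep_Q})) for the Lipschitz dependence of the geometric quantities on $\eps^2\mathbf{d}$, together with careful tracking of the $\eps$-powers. The paper's proof is a brief sketch that treats only the model term $\tr A_{\Sigma(\eps^2\mathbf{d})}^3$ via the decomposition (\ref{traceA3}) and then asserts that all remaining terms behave similarly; your write-up is a more systematic elaboration of that same argument, making explicit the tensor-product structure of each summand and cataloguing the three mechanisms (commutators, Delaunay-asymptotic differences, higher-order $\eps$-corrections) through which the periodic shifts ${\tt h}_j$ enter. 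One small point worth adding, which the paper highlights and you use only implicitly: the identity $h_0\circ\beta(\eps^2\mathbf{d}^1)\circ Y_j(\eps^2\mathbf{d}^1_j)=h_0\circ\beta(\eps^2\mathbf{d}^2)\circ Y_j(\eps^2\mathbf{d}^2_j)=h_0\circ Y_j$ is what ensures that, after pullback, the $h_0$-factor itself carries no direct $\mathbf{d}$-dependence.
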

\begin{proof}
We treat, as an example, the term coming from $\tr A^3_{\Sigma(\eps^2\mathbf{d})}$. According to (\ref{traceA3}), along any of the ends, it is the sum of a periodic term, given by $\tr A_{D_{\tau_j}(\eps^2\mathbf{d}^j)}^3$, which is multiplied by a cutoff function and is not involved in $\mathcal{G}_\eps^2$, and a decaying term, that is 
$$\tilde{\xi}(\tr A^3_{\Sigma(\eps^2\mathbf{d})}\circ Y_j(\eps^2\mathbf{d}_j)-\tr A_{D_{\tau_j}(\eps^2\mathbf{d}^j)}^3),$$
which fulfils the required estimate, due to Lemma \ref{lemma_Sigma_d}. All the other terms of (\ref{exp_error_t}) are similar, including the ones involving $h$ and its derivatives, since, by definition, 
\[h_0\circ\beta(\eps^2\mathbf{d}^1)\circ Y_j(\eps^2\mathbf{d}^1_j)=h_0\circ\beta(\eps^2\mathbf{d}^2)\circ Y_j(\eps^2\mathbf{d}^2_j)=h_0\circ Y_j.\] 
The size in $\eps$ of the Lipschitz constant follows from a careful analysis of the size in $\eps$ of all the terms in (\ref{exp_error_t}).
\end{proof}
Similar arguments hold for the remainder in the periodic term $\mathcal{H}^2_\eps({\tt h}_j,\mathbf{d}^1_j)$, whose main term is given by $-\tr A_{D_{\tau_j}(\eps^2\mathbf{d}_j)}^3 t^2 U'+\lambda |A_{D_{\tau_j}(\eps^2\mathbf{d}_j)}|^2 U'$. The difference is that this term depends on the geometric quantities of $D_{\tau_j}(\eps^2\mathbf{d}_j)$ and on ${\tt h}_j$, thus it also fulfils the symmetries of the Delaunay surface $D_{\tau_j}(\eps^2\mathbf{d}_j)$, that is it is even with respect to $s_{\tau_j(\eps^2\mathbf{d}_j)/2}$.
\begin{lemma}
Let $M>0$, $K>0$, $\gamma\in(0,\sqrt{2})$ and $\alpha\in(0,1)$. Then there exists a constant $C>0$ such that, for $\eps$ small enough,
$$\|\mathcal{H}^2_\eps({\tt h}^1_j,\mathbf{d}^1_j)-\mathcal{H}^2_\eps({\tt h}^2_j,\mathbf{d}^2_j)\|_{E^{0,\alpha}_\gamma(\R^2)}\le C (\eps\|{\tt h}^1_j-{\tt h}^2_j\|_{C^{2,\alpha}(\R)}+\eps^2|\mathbf{d}^1_j-\mathbf{d}^2_j|),$$
for any $\mathbf{d}^i_j\in B_M$, for any ${\tt h}^i_j\in D^{2,\alpha}_{\tau_j(\eps^2\mathbf{d}^i_j)}(\R)$, with $\|{\tt h}^i_j\|_{C^{2,\alpha}(\R)}<K$, $i=1,2,\,1\le j\le k$.
\label{lemma_lip_H2}
\end{lemma}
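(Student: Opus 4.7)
The strategy is parallel to the proof of Lemma \ref{lemma_lip_G2}, with the roles of decaying and periodic constituents reversed, and with the $\eps$-orders of the Lipschitz constants tracked carefully. I would begin by unpacking the construction of $\mathcal{H}^2_\eps$: it collects every summand of the expansion (\ref{exp_error_t}) which, after restriction to the end $E_j(\eps^2\mathbf{d}_j)$ via $Y_j(\eps^2\mathbf{d}_j)$, reduces to a function periodic and symmetric in $s$ of period $s_{\tau_j(\eps^2\mathbf{d}_j)}$, \emph{after} the Delaunay Jacobi piece $-\eps^2\tilde{L}_{0,\tau_j(\eps^2\mathbf{d}_j)}{\tt h}_jU'$ has been subtracted off. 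Concretely these constituents are algebraic expressions built from the Delaunay data of $D_{\tau_j}(\eps^2\mathbf{d}_j)$ (namely $A_{D_{\tau_j}(\eps^2\mathbf{d}_j)}$, $\tr A^3_{D_{\tau_j}(\eps^2\mathbf{d}_j)}$, $\Q_{D_{\tau_j}(\eps^2\mathbf{d}_j)}$, $\mathbb{A}_{D_{\tau_j}(\eps^2\mathbf{d}_j)}$, $a^{ij}_{D_{\tau_j}(\eps^2\mathbf{d}_j)}$), the function $U$ and its derivatives, the Lagrange multiplier $\lambda$, and the shift ${\tt h}_j$ together with $\partial_s{\tt h}_j$, $\partial_s^2{\tt h}_j$.

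With this bookkeeping in place, the estimate would follow by the standard triangle-inequality splitting
\[
\mathcal{H}^2_\eps({\tt h}^1_j,\mathbf{d}^1_j)-\mathcal{H}^2_\eps({\tt h}^2_j,\mathbf{d}^2_j)=\bigl[\mathcal{H}^2_\eps({\tt h}^1_j,\mathbf{d}^1_j)-\mathcal{H}^2_\eps({\tt h}^1_j,\mathbf{d}^2_j)\bigr]+\bigl[\mathcal{H}^2_\eps({\tt h}^1_j,\mathbf{d}^2_j)-\mathcal{H}^2_\eps({\tt h}^2_j,\mathbf{d}^2_j)\bigr],
\]
treating the two brackets separately. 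For the first one, the dependence on $\mathbf{d}^i_j$ enters only through the map $\mathbf{d}_j\mapsto\tau_j(\eps^2\mathbf{d}_j)=\tau(\epsilon_j+\eps^2\mathbf{d}^D_j)$ and through the rigid motions defining $D_{\tau_j}(\eps^2\mathbf{d}_j)$. Since $\rho_\tau$ and hence $\sigma_\tau,k_\tau,A_{D_\tau},g_{D_\tau},\Q_{D_\tau},\mathbb{A}_{D_\tau},a^{ij}_{D_\tau}$ all depend smoothly on $\tau$, Lemma \ref{lemma_Sigma_d} specialised to the pure Delaunay setting, combined with the chain rule (which produces the crucial extra factor $\eps^2$), yields a Lipschitz constant $C\eps^2|\mathbf{d}^1_j-\mathbf{d}^2_j|$ for each summand; the weight $\cosh^\gamma(t)$ in the norm $E^{0,\alpha}_\gamma(\R^2)$ is absorbed by the exponential decay of $U'$, $U''$ and their polynomial multiples at any rate $\gamma<\sqrt{2}$.

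For the second bracket, I would exploit the fact that the \emph{linear} piece in ${\tt h}_j$, namely $-\eps^2\tilde{L}_{0,\tau_j(\eps^2\mathbf{d}_j)}{\tt h}_jU'$, has already been removed in (\ref{per_error}); everything that remains in $\mathcal{H}^2_\eps$ and still depends on ${\tt h}_j$ comes from the $\eps^3\mathcal{G}^1_\eps$-remainder of (\ref{exp_error_t}), hence carries an extra factor of $\eps$ after dividing by $\eps^2$. Typical contributions are $\eps|\nabla_{\Sigma(\eps^2\mathbf{d})}h|^2U''$, $\eps(t+\eps h)\mathbb{A}_{\Sigma(\eps^2\mathbf{d})}[h]U'$ and the quadratic term involving $a^{ij}\partial_ih\partial_jh\,U''$, which are polynomial expressions in $({\tt h}_j,\partial_s{\tt h}_j,\partial^2_s{\tt h}_j)$ with coefficients that are Delaunay-periodic and uniformly bounded under the assumption $\|{\tt h}^i_j\|_{C^{2,\alpha}(\R)}<K$. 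Writing each polynomial difference telescopically $P({\tt h}^1)-P({\tt h}^2)=\int_0^1\partial_sP\bigl({\tt h}^2+s({\tt h}^1-{\tt h}^2)\bigr)\,ds\cdot({\tt h}^1-{\tt h}^2)$ and invoking the uniform $C^{2,\alpha}$ bound $K$ together with the exponential decay of $U',U''$, one obtains the desired bound $C\eps\|{\tt h}^1_j-{\tt h}^2_j\|_{C^{2,\alpha}(\R)}$ in the $E^{0,\alpha}_\gamma$-norm.

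The main obstacle in the argument is the verification that, among all $\eps^2$-order contributions to the error (\ref{exp_error_t}), the \emph{only} ${\tt h}_j$-dependent periodic piece is precisely the Delaunay Jacobi operator term $-\eps^2\tilde{L}_{0,\tau_j(\eps^2\mathbf{d}_j)}{\tt h}_jU'$ that gets subtracted in (\ref{per_error}); everything else involving $h$ at the $\eps^2$-level reduces, along the Delaunay end $E_j(\eps^2\mathbf{d}_j)$ where ${\tt h}_j$ depends only on $s$, either to this very term (via the identity $\Delta_{D_{\tau_j}(\eps^2\mathbf{d}_j)}{\tt h}_j+|A_{D_{\tau_j}(\eps^2\mathbf{d}_j)}|^2{\tt h}_j=\tilde L_{0,\tau_j(\eps^2\mathbf{d}_j)}{\tt h}_j$, exploiting $\partial_\vartheta{\tt h}_j=0$) or to an $O(\eps^3)$ correction absorbed into $\mathcal{G}^1_\eps$. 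Once this structural fact is established, the $\eps$-factor gain in the Lipschitz constant with respect to ${\tt h}_j$ and the $\eps^2$-factor gain with respect to $\mathbf{d}_j$ follow automatically from the argument above.
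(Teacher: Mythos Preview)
Your proposal is correct and follows essentially the same approach as the paper, which simply states that the proof is similar to that of Lemma~\ref{lemma_lip_G2}. You have in fact supplied considerably more detail than the paper itself: the identification of the main periodic term $-\tr A_{D_{\tau_j}(\eps^2\mathbf{d}_j)}^3 t^2 U'+\lambda |A_{D_{\tau_j}(\eps^2\mathbf{d}_j)}|^2 U'$, the $\eps^2$ gain from the chain rule $\mathbf{d}_j\mapsto\tau_j(\eps^2\mathbf{d}_j)$, and the observation that the only $\eps^2$-level ${\tt h}_j$-dependence is the Delaunay Jacobi term already subtracted in (\ref{per_error}) are exactly the structural points the paper relies on implicitly.
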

The proof is similar to the one of Lemma \ref{lemma_lip_G2}.

\subsubsection{Corrections along the ends: a Lyapunov-Schmidt reduction.}

The idea is to choose a family of approximate solutions so that the periodic terms in the error vanish. We take $\varphi_j\in \D^{2,\alpha}_{\tau_j(\eps^2\mathbf{d}_j),\gamma}(\R^2)$ and we define our new approximation as
$$v(\mathbf{d})(\mathbf{y},t):=u_0(\mathbf{d})(\mathbf{y},t)+\sum_{j=1}^k \zeta_j(\mathbf{y})\phi_j(\mathbf{y},t), \qquad \phi_j(Y_j(\eps^2\mathbf{d}_j)(s,\vartheta),t):=\varphi_j(s,t)\in \D^{2,\alpha}_{\tau_j(\eps^2\mathbf{d}_j),\gamma}(\R^2).$$
The corrections $\varphi_j$ will be determined to correct the periodic part of the error. We are interested in computing the new error, at least near $\Sigma(\eps^2\mathbf{d})$. For this purpose, we introduce the smooth cutoff function $\chi:\R\to\R$ equal to $1$ in $(-\infty,1)$ and equal to $0$ in $(2,\infty)$ and we set
$$\chi_{l,\eps}(t):=\chi(\frac{\eps|t|}{\delta}-(l-1)), \qquad l=1,\dots 5.$$
Far from the surface, our approximate solution will coincide with the constant solutions $\pm 1+\sigma_\eps^\pm$, as we will see in section \ref{section_proof}. Now we introduce the operator
\begin{eqnarray}\notag
\mathbb{L}_{\eps,\mathbf{d}}:=\eps\Delta_{\Sigma(\eps^2\mathbf{d})}+\eps^{-1}\partial^2_t+\eps^{-1}f'(v_\star)+\text{L}_{\eps,\mathbf{d}},
\end{eqnarray}
with
\begin{equation}
\begin{aligned}
\text{L}_{\eps,\mathbf{d}}:=&\eps^{-1}\big(f'(u_0(\mathbf{d}))-f'(v_\star)\big)+\eps^2 \Delta_{\Sigma(\eps^2\mathbf{d})} h \partial_t-2\eps^2\nabla_{\Sigma(\eps^2\mathbf{d})} h\nabla_{\Sigma(\eps^2\mathbf{d})}\partial_t+\eps^3|\nabla_{\Sigma(\eps^2\mathbf{d})} h|^2\partial_t^2\\
& +(H_{\Sigma(\eps^2\mathbf{d})}+(\eps t+\eps^2 h)\chi_{4,\eps}|A_{\Sigma(\eps^2\mathbf{d})}|^2+(\eps t+\eps^2 h)^2\chi_{4,\eps}\mathbb{Q}_{\Sigma(\eps^2\mathbf{d})}(\mathbf{y},\eps t+\eps^2 h))\partial_t\\
& +\eps^2 (t+\eps h)\chi_{4,\eps}\mathbb{A}_{\Sigma(\eps^2\mathbf{d})}(\mathbf{y},\eps t+\eps^2 h)-\eps^3 (t+\eps h)\chi_{4,\eps}\mathbb{A}_{\Sigma(\eps^2\mathbf{d})}(\mathbf{y},\eps t+\eps^2 h)[h]\partial_t\\
& -2\eps^3 (t+\eps h)\chi_{4,\eps}a_{\Sigma(\eps^2\mathbf{d})}^{ij}(\mathbf{y},\eps t+\eps^2 h)\partial_i h\partial_{tj}+\eps^4 (t+\eps h)\chi_{4,\eps}a_{\Sigma(\eps^2\mathbf{d})}^{ij}(\mathbf{y},\eps t+\eps^2 h)\partial_i h\partial_j h\partial_t^2.
\end{aligned}
\end{equation}
This operator is basically the linearization of the Cahn-Hilliard equation (\ref{cahn_hilliard}) around $u_0(\mathbf{d})$ expressed in the shifted Fermi coordinates $(\mathbf{y},t)$ of $\Sigma(\mathbf{d})$, which is involved in the computation of the error produced by the new approximation. The cutoff function $\chi_{4,\eps}$ is introduced in order to be able to estimate $\eps|t|\chi_{4,\eps}\le 5\delta$, which is not true for $\eps|t|$, and this would cause problems. In these notations, we have
\begin{eqnarray}
\chi_{3,\eps}(\eps\Delta v(\mathbf{d})+\eps^{-1}f(v(\mathbf{d}))-\ell_\eps)=\chi_{3,\eps}(\eps\Delta u_0(\mathbf{d})+\eps^{-1}f(u_0(\mathbf{d}))-\ell_\eps)\\\notag
+\chi_{3,\eps}\sum_{j=1}^k[\mathbb{L}_{\eps,\mathbf{d}},\zeta_j]\phi_j
+\chi_{3,\eps}\sum_{j=1}^k\zeta_j \mathbb{L}_{\eps,\mathbf{d}}\phi_j+\chi_{3,\eps}\eps^{-1}Q_{u_0(\mathbf{d})}(\sum_{j=1}^k\zeta_j\phi_j),
\end{eqnarray}
where $Q_v(t):=-t^2(3v+t)$ is quadratic in $t$. We stress that, since the cutoff functions $\zeta_j$ do not depend on $t$, the commutators satisfy
\begin{eqnarray}
\|[\mathbb{L}_{\eps,\mathbf{d}},\zeta_j]\phi_j\|_{\mathcal{E}^{0,\alpha}_{a,\gamma}(\Sigma\times\R)}\le C\|\varphi_j\|_{E^{2,\alpha}_\gamma(\R^2)}.\label{est_commutators}
\end{eqnarray}
Recollecting all this information, using (\ref{asymptotically_periodic_error}) and the decay of $v_j$, we can rewrite the error as
\begin{equation}
\begin{aligned}
&\chi_{3,\eps}(\eps\Delta v(\mathbf{d})+\eps^{-1}f(v(\mathbf{d}))-\ell_\eps)=\chi_{3,\eps}\big(-\eps^2\mathcal{J}_{\Sigma(\eps^2\mathbf{d})}(h_0\circ \beta(\eps^2\mathbf{d}))U'-(H_{\Sigma(\eps^2\mathbf{d})}-H_\Sigma)U'\big)\label{error_correct}\\
&+\chi_{3,\eps}\mathcal{G}^3_\eps(\mathbf{h},\mathbf{d},\mathbf{\varphi})
+\chi_{3,\eps}\sum_{j=1}^k\zeta_j \mathcal{H}^3_\eps({\tt h}_j,\mathbf{d}_j,\varphi_j),
\end{aligned}
\end{equation}
where the new periodic term is given by
\begin{equation}
\label{per_error_all}
\begin{aligned}
&\mathcal{H}^3_\eps({\tt h}_j,\mathbf{d}_j,\varphi_j)\circ Y_j(\eps^2\mathbf{d}_j)=\eps (\tau_j(\eps^2\mathbf{d}_j) e^{\sigma_{\tau_j(\eps^2\mathbf{d}_j)}})^{-2}\partial^2_s\varphi_j+\eps^{-1}\partial^2_t\varphi_j+\eps^{-1}f'(v_\star)\varphi_j\\
&-\eps^2 \tilde{L}_{\tau_j(\eps^2\mathbf{d}_j)}{\tt h}_j U'+\eps^2\mathcal{H}^2_\eps({\tt h}_j,\mathbf{d}_j)+\mathcal{H}^4_\eps({\tt h}_j,\mathbf{d}_j,\varphi_j),
\end{aligned}
\end{equation}
(see (\ref{per_error})). 
The remainder $\mathcal{H}^4_\eps({\tt h}_j,\mathbf{d}_j,\varphi_j)\in \D^{0,\alpha}_{\tau_j(\eps^2\mathbf{d}_j),\gamma}(\R^2)$ recollects all the quadratic terms in $\varphi_j$ and the linear terms coming from the periodic part of $\text{L}_{\eps,\mathbf{d}}$, and satisfies
\begin{eqnarray}
\|\mathcal{H}^4_\eps({\tt h}_j,\mathbf{d}_j,\varphi_j)\|_{E^{0,\alpha}_\gamma(\R^2)}\le C(\|\varphi_j\|_{E^{2,\alpha}_\gamma(\R^3)}+\eps^{-1}\|\varphi_j\|^2_{E^{2,\alpha}_\gamma(\R^3)}).\label{est_H4}
\end{eqnarray}
and 
\begin{eqnarray}
\mathcal{G}^3_\eps(\mathbf{h},\mathbf{d},\mathbf{\varphi})=\eps^2\mathcal{G}^2_\eps(\mathbf{h},\mathbf{d})
+\mathcal{G}^4_\eps(\mathbf{h},\mathbf{d},\mathbf{\varphi}),\label{def_G4}
\end{eqnarray}
with $\mathcal{G}^4_\eps(\mathbf{h},\mathbf{d},\mathbf{\varphi})\in\mathcal{E}^{0,\alpha}_{a,\gamma}(\Sigma(\mathbf{d})\times\R)$ satisfying
\begin{eqnarray}
\|\mathcal{G}^4_\eps(\mathbf{h},\mathbf{d},\mathbf{\varphi})\|_{\mathcal{E}^{0,\alpha}_{a,\gamma}(\Sigma(\mathbf{d})\times\R)}\le 
C\sum_{j=1}^k(\|\varphi_j\|_{E^{2,\alpha}_\gamma(\R^2)}+\eps^{-1}\|\varphi_j\|^2_{E^{2,\alpha}_\gamma(\R^2)}).\label{est_G4}
\end{eqnarray}
(See also (\ref{est_commutators})). For future reference, we underline that also $\mathcal{H}^4_\eps$ depends on the data in a Lipschitz way. As regards the dependence on $\mathbf{d}_j$ and ${\tt h}_j$, we have
\begin{equation}
\label{lip_H4_varphi}
\begin{aligned}
&\quad\|\mathcal{H}^4_\eps({\tt h}^1_j,\mathbf{d}^1_j,\varphi^1_j)-\mathcal{H}^4_\eps({\tt h}^2_j,\mathbf{d}^2_j,\varphi^2_j)\|
_{E^{0,\alpha}_\gamma(\R^2)}\le\\
&\quad C(\eps\|{\tt h}^1_j-{\tt h}^2_j\|_{C^{2,\alpha}(\R)}+\eps^2|\mathbf{d}^1-\mathbf{d}^2|)(\|\varphi^1_j\|_{E^{2,\alpha}_\gamma(\R^2)}
+\|\varphi^2_j\|_{E^{2,\alpha}_\gamma(\R^2)})\\ 
&\quad C\|\varphi^1_j-\varphi^2_j\|_{E^{2,\alpha}_\gamma(\R^2)} (1+\eps^{-1}\|\varphi^1_j\|_{E^{2,\alpha}_\gamma(\R^2)}+\eps^{-1}\|\varphi^2_j\|_{E^{2,\alpha}_\gamma(\R^2)}),
\end{aligned}
\end{equation}
for any $\varphi^i_j\in \D^{2,\alpha}_{\tau_j(\eps^2\mathbf{d}^i_j),\gamma}(\R^2)$ with $\|\varphi^i_j\|_{E^{2,\alpha}_\gamma(\R^2)}<1$, for any ${\tt h}^i_j\in D^{2,\alpha}_{\tau_j(\eps^2\mathbf{d}_j)}(\R)$, $\|{\tt h}^i_j\|_{C^{2,\alpha}(\R)}<K$, for any $\mathbf{d}^i_j\in B_M$, $i=1,2$, $1\le j\le k$.\\ 

The behaviour of $\mathcal{G}^4_\eps$ is similar, in the sense that, in the previous notations, the dependence on $\mathbf{d}$ and $\textbf{h}$ is given by  
\begin{equation}
\label{lip_G4_varphi}
\begin{aligned}
&\|\mathcal{G}^4_\eps(\mathbf{h}^1,\textbf{d}^1,\mathbf{\varphi}^1)\circ(\beta(\eps^2\mathbf{d}^1)^{-1}\times Id_\R)
-\mathcal{G}^4_\eps(\mathbf{h}^2,\textbf{d}^2,\mathbf{\varphi}^2)\circ(\beta(\eps^2\mathbf{d}^2)^{-1}\times Id_\R)\|_{\mathcal{E}^{0,\alpha}_{a,\gamma}(\Sigma\times\R)}\\
&\le C(\eps\sum_{j=1}^k \|{\tt h}^1_j-{\tt h}^2_j\|_{C^{2,\alpha}(\R)}+\eps\|h^1_0-h^2_0\|_{\mathcal{C}^{2,\alpha}_a(\Sigma)}+
\eps^2|\mathbf{d}^1-\mathbf{d}^2|)\big(\sum_{j=1}^k (\|\varphi^1_j\|_{E^{2,\alpha}_\gamma(\R^2)}+\|\varphi^2_j\|_{E^{2,\alpha}_\gamma(\R^2)})\big)\\
&+C\sum_{j=1}^k \|\varphi^1_j-\varphi^2_j\|_{E^{2,\alpha}_\gamma(\R^2)} (1+\eps^{-1}\|\varphi^1_j\|_{E^{2,\alpha}_\gamma(\R^2)}+\eps^{-1}\|\varphi^2_j\|_{E^{2,\alpha}_\gamma(\R^2)}),
\end{aligned}
\end{equation}
for any $\mathbf{d}^i \in(B_M)^k$, $\mathbf{h}^i\in\mathbf{H}(\mathbf{d}^i)$ and $\mathbf{\varphi}^i=(\varphi^1_1,\dots,\varphi^k_k)\in D^{2,\alpha}_{\tau(\eps^2\mathbf{d}^1),\gamma}(\R^2)\times\dots\times D^{2,\alpha}_{\tau(\eps^2\mathbf{d}^k),\gamma}(\R^2)$, $i=1,2$. Once again, in (\ref{lip_H4_varphi}) and in (\ref{lip_G4_varphi}), the constants $M>0$ and $K>0$ involved in the definition of $\mathbf{H}(\mathbf{d}^i)$, the decay rates $a\in(0,\bar{a})$ and $\gamma\in(o,\sqrt{2})$ and the Holder constant $\alpha\in(0,1)$ are arbitrary, and $\eps$ is small. In order to get a decaying error, we have to choose, for any $\mathbf{d}_j\in B_M$, ${\tt h}_j:={\tt h}_j(\mathbf{d}_j)$ and $\varphi_j:=\varphi_j(\mathbf{d}_j)$ in such a way that
\begin{eqnarray}
\mathcal{H}^3_\eps({\tt h}_j,\varphi_j,\mathbf{d}_j)=0, \qquad 1\le j\le k.\label{eq_ls_end}
\end{eqnarray}
\begin{proposition}
Let $M>0$, $\alpha\in(0,1/2)$, $\gamma\in(0,\sqrt{2})$ and $\mathbf{d}_j\in B_M$. Then there exist $K>0$ such that, for $\eps$ small enough, there exist ${\tt h}_j:={\tt h}_j(\mathbf{d}_j)\in D^{2,\alpha}_{\tau_j(\eps^2\mathbf{d}_j)}(\R)$ and $\varphi_j:=\varphi_j(\mathbf{d}_j)\in \D^{2,\alpha}_{\tau_j(\eps^2\mathbf{d}_j),\gamma}(\R^2)$ such that (\ref{eq_ls_end}) is satisfied. Moreover, ${\tt h}_j$ and $\varphi_j$ enjoy the following properties
\begin{enumerate}
\item $\|{\tt h}_j\|_{C^{2,\alpha}(\R)}\le K$.
\item $\|\varphi_j\|_{E^{2,\alpha}_\gamma(\R^2)}\le \Lambda_1\eps^{3-\alpha}$, for some constant $\Lambda_1>0$ independent of $\mathbf{d}_j$.
\item $$\int_\R \varphi_j(s,t)v'_\star(t)dt=0, \qquad\forall\, s\in\R.$$
\item Moreover,
\begin{eqnarray}
\|\varphi_j(\mathbf{d}_j^1)-\varphi_j(\mathbf{d}_j^2)\|_{E^{2,\alpha}_\gamma(\R^2)}\le C\eps^{5-\alpha}|\mathbf{d}^1_j-\mathbf{d}^2_j|\label{lip_varphi_d}\\
\|{\tt h}_j(\mathbf{d}_j^1)-{\tt h}_j(\mathbf{d}_j^2)\|_{C^{2,\alpha}(\R)}\le C\eps^2|\mathbf{d}^1_j-\mathbf{d}^2_j|\label{lip_h_d},
\end{eqnarray}
for some constant $C>0$, for any $\mathbf{d}^i_j\in B_M$, $i=1,2$.
\end{enumerate}
\label{prop_ls_end}
\end{proposition}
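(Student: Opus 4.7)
The plan is a Lyapunov--Schmidt reduction for the equation $\mathcal{H}^3_\eps({\tt h}_j,\varphi_j,\mathbf{d}_j)=0$ (see (\ref{per_error_all})), with the projection made in the $t$-variable along the kernel $\R v'_\star$ of the model operator $L_0:=\partial_t^2+f'(v_\star)$. After multiplying the equation by $\eps$, the leading part is $L_0\varphi_j$; the $s$-derivative term $\eps^2(\tau_j e^{\sigma_{\tau_j}})^{-2}\partial_s^2\varphi_j$ is a small perturbation in the weighted norm (\ref{def_wnorm_end_eps}) since that norm already weights $\partial_s^m$ by $\eps^m$. I would impose the orthogonality constraint $\int_\R \varphi_j(s,t)v'_\star(t)\,dt=0$ (item (3) of the statement), which splits the equation into an \emph{auxiliary equation} on the orthogonal complement of $v'_\star$ in $L^2(\R_t)$ (for each $s$), and a scalar \emph{reduced equation} obtained by testing against $v'_\star$.

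\textbf{Auxiliary equation (for $\varphi_j$, with ${\tt h}_j$ frozen).} I fix $\|{\tt h}_j\|_{C^{2,\alpha}(\R)}\le K$ and invert $L_0$ on $(v'_\star)^\perp$ by the standard one-dimensional bounded right-inverse with exponential decay in $t$. The source consists of $\eps^3[\tilde L_{\tau_j(\eps^2\mathbf{d}_j)}{\tt h}_j\,U']^\perp$, $\eps^3[\mathcal{H}^2_\eps]^\perp$ and $\eps[\mathcal{H}^4_\eps]^\perp$; by (\ref{est_H2}), (\ref{est_H4}) and the fact that $U'=v'_\star+\eps v'_1$, so the projection onto $(v'_\star)^\perp$ of $U'$ is itself of order $\eps$, each of these is $O(\eps^3)$ in the $E^{0,\alpha}_\gamma(\R^2)$ norm. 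A contraction mapping in the ball $\{\|\varphi_j\|_{E^{2,\alpha}_\gamma(\R^2)}\le \Lambda_1\eps^{3-\alpha}\}$ then produces a unique solution $\varphi_j=\Psi({\tt h}_j,\mathbf{d}_j)$; the small loss $\eps^{-\alpha}$ comes from the Schauder estimate controlling the $C^{2,\alpha}$ norm in terms of the $C^{0,\alpha}$ norm after rescaling $s$ to a unit scale. The symmetries (\ref{symm_delaunay}) are preserved since all data share them. Using (\ref{lip_H4_varphi}), the map $\Psi$ is Lipschitz in $({\tt h}_j,\mathbf{d}_j)$ with the scales stated in (\ref{lip_varphi_d}).

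\textbf{Reduced equation (for ${\tt h}_j$).} Substituting $\varphi_j=\Psi({\tt h}_j,\mathbf{d}_j)$ into the projection onto $v'_\star$, one uses $\int L_0\varphi_j\,v'_\star\,dt=0$ and $\int \partial_s^2\varphi_j\,v'_\star\,dt=\partial_s^2\int\varphi_j\,v'_\star\,dt=0$ (thanks to the orthogonality constraint) to reduce everything to a scalar ODE on $\R$ of the form
\begin{equation*}
\tilde L_{0,\tau_j(\eps^2\mathbf{d}_j)}\,{\tt h}_j \;=\; \mathcal{F}_\eps({\tt h}_j,\mathbf{d}_j),
\end{equation*}
where the $\eps^2$ from $\tilde L{\tt h}_j U'$ cancels against $\eps^2\mathcal{H}^2_\eps$ after division by the nonzero constant $\int U'v'_\star\,dt=\int(v'_\star)^2\,dt+O(\eps)$, and $\mathcal{F}_\eps$ is uniformly bounded in $C^{0,\alpha}(\R)$ and Lipschitz in its arguments. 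The key point is that $\tilde L_{0,\tau}$ is invertible on $D^{2,\alpha}_{\tau}(\R)$: by Lemma \ref{lemma_Jacobi_fields} the only $\vartheta$-independent periodic Jacobi field of $D_\tau$ is $\partial_s\sigma_\tau$, which is \emph{odd} with respect to $s_\tau/2$ and hence does not lie in $D^{2,\alpha}_\tau$, while $\Phi^D_\tau$ grows linearly. Choosing $K$ larger than the $C^{2,\alpha}(\R)$ norm of $\tilde L_{0,\tau_j}^{-1}\mathcal{F}_\eps(0,\mathbf{d}_j)$, a contraction argument in $\{\|{\tt h}_j\|_{C^{2,\alpha}}\le K\}$ yields ${\tt h}_j$, proving (1)--(3).

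\textbf{Lipschitz estimates (\ref{lip_varphi_d})--(\ref{lip_h_d}).} These follow by applying the same contraction to the difference $(\mathbf{d}_j^1,\mathbf{d}_j^2)\mapsto({\tt h}_j(\mathbf{d}_j^1)-{\tt h}_j(\mathbf{d}_j^2),\varphi_j(\mathbf{d}_j^1)-\varphi_j(\mathbf{d}_j^2))$: the $\eps^2|\mathbf{d}_j^1-\mathbf{d}_j^2|$ dependence comes from Lemma \ref{lemma_lip_H2}, from (\ref{lip_H4_varphi}) and from the $\eps^2|\mathbf{d}_j^1-\mathbf{d}_j^2|$ control on the coefficients of $\tilde L_{0,\tau_j(\eps^2\mathbf{d}_j)}$ provided by Lemma \ref{lemma_Sigma_d}. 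The extra powers of $\eps$ in (\ref{lip_varphi_d}) (namely $\eps^{5-\alpha}=\eps^{3-\alpha}\cdot\eps^2$) reflect the combination of the $\eps^{3-\alpha}$ size of $\varphi_j$ itself with the $\eps^2$ $\mathbf{d}$-sensitivity of the data. The main subtlety is the invertibility of $\tilde L_{0,\tau}$ on even periodic functions (which hinges on the reflection symmetry argument above) and the careful bookkeeping of powers of $\eps$ in the weighted norm $E^{n,\alpha}_\gamma$ to match the claimed rate $\eps^{3-\alpha}$; all other steps are by now standard perturbative constructions.
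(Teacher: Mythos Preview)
Your Lyapunov--Schmidt outline, the invertibility argument for $\tilde L_{0,\tau}$ on $D^{2,\alpha}_\tau(\R)$ via the parity of $\partial_s\sigma_\tau$, and the bookkeeping of the Lipschitz dependence are all in line with the paper. The gap is in the auxiliary step. You propose to invert only the one-dimensional operator $L_0=\partial_t^2+f'(v_\star)$ slice by slice and to treat $\eps^2(\tau_j e^{\sigma_{\tau_j}})^{-2}\partial_s^2\varphi_j$ as a small perturbation, on the grounds that the norm (\ref{def_wnorm_end_eps}) already carries a weight $\eps^m$ on $\partial_s^m$. But this term is \emph{not} small in $E^{0,\alpha}_\gamma$ relative to $\|\varphi_j\|_{E^{2,\alpha}_\gamma}$: it is precisely one of the summands defining $\|\varphi_j\|_{E^{2,\alpha}_\gamma}$, so its size is comparable to that norm with no extra factor of $\eps$. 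Worse, the slice-wise inverse $L_0^{-1}$ gains regularity only in $t$; to place $L_0^{-1}g$ back in $E^{2,\alpha}_\gamma$ you need $\eps^m\partial_s^m g$ controlled for $m\le 2$, i.e.\ $g\in E^{2,\alpha}_\gamma$, not just $E^{0,\alpha}_\gamma$. Applied to $g=\eps^2\partial_s^2\varphi_j$, this requires $\eps^3\partial_s^3\varphi_j$ and $\eps^4\partial_s^4\varphi_j$, which the $E^{2,\alpha}_\gamma$-norm of $\varphi_j$ does not control. The same obstruction hits the source $\eps^3\tilde L_{\tau_j(\eps^2\mathbf d_j)}{\tt h}_j\,U'$, since $\tilde L{\tt h}_j$ already contains $\partial_s^2{\tt h}_j$ and two further $s$-derivatives would involve $\partial_s^4{\tt h}_j$, not bounded by $\|{\tt h}_j\|_{C^{2,\alpha}}$. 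Your contraction therefore does not close.

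The paper avoids this by inverting the \emph{full} operator $\eps(\tau e^{\sigma_\tau})^{-2}\partial_s^2+\eps^{-1}\partial_t^2+\eps^{-1}f'(v_\star)$: after the rescaling ${\tt s}=\eps^{-1}s$ it becomes the genuinely elliptic operator $(\tau e^{\sigma_\tau})^{-2}\partial_{\tt s}^2+\partial_{\tt t}^2+f'(v_\star)$ on $\R^2$, for which Schauder estimates furnish full $C^{2,\alpha}$ control from $C^{0,\alpha}$ data (Lemma~\ref{lemma_ls_scaled}). Scaling back yields the bounded map $\Psi^1_\tau:E^{0,\alpha}_\gamma\to E^{2,\alpha}_\gamma$ with norm $C\eps^{1-\alpha}$ (Lemma~\ref{lemma_aux_lin}); it is this gain of two $s$-derivatives from ellipticity, not from smallness, that closes the fixed point for $\varphi_j$. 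Your parenthetical remark about ``rescaling $s$ to a unit scale'' is the right instinct, but it only works if you keep $\partial_s^2$ as part of the principal symbol rather than demoting it to a perturbation.
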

We note that, in Proposition \ref{prop_ls_end}, $M>0$ is still arbitrary, while $K>0$ has to be chosen large enough. The proof is based on a Lyapunov-Schmidt reduction. We note that here we need to take $\alpha\in(0,1/2)$, in order to be able to solve some fixed point problem. For $\tau\in(0,1)$, $\gamma>0$ and $\alpha\in(0,1)$, we define the space
\begin{eqnarray}\notag
\mathcal{Z}_{\tau}:=\{\varphi\in \D^{0,\alpha}_{\tau,\gamma}(\R^2):\int_\R \varphi(s,t)v'_\star(t)dt=0,&\text{ for a. e. $s\in\R$}\},
\end{eqnarray}
and we decompose equation (\ref{eq_ls_end}) into the system
\begin{equation}
\label{eq_aux_end}
\eps (\tau_j(\eps^2\mathbf{d}_j) e^{\sigma_{\tau_j(\eps^2\mathbf{d}_j)}})^{-2}\partial^2_s\varphi_j+\eps^{-1}\partial^2_t\varphi_j+\eps^{-1}f'(v_\star)\varphi_j=
\Pi_{\mathcal{Z}_{\tau_j(\eps^2\mathbf{d}_j)}}\mathcal{H}^5_\eps({\tt h}_j,\varphi_j,\mathbf{d}_j)
\end{equation}
\begin{equation}
\label{eq_bifo_end}
(Id-\Pi_{\mathcal{Z}_{\tau_j(\eps^2\mathbf{d}_j)}})\mathcal{H}^5_\eps({\tt h}_j,\varphi_j,\mathbf{d}_j)=0,
\end{equation}
where
\begin{equation}
\label{def_H5}
\begin{aligned}
&\mathcal{H}^5_\eps({\tt h}_j,\varphi_j,\mathbf{d}_j):=\mathcal{H}^3_\eps({\tt h}_j,\varphi_j,\mathbf{d}_j)
-\big(\eps (\tau_j(\eps^2\mathbf{d}_j) e^{\sigma_{\tau_j(\eps^2\mathbf{d}_j)}})^{-2}\partial^2_s\varphi_j+\eps^{-1}\partial^2_t\varphi_j+\eps^{-1}f'(v_\star)\varphi_j\big)=\\
&-\eps^2\tilde{L}_{\tau_j(\eps^2\mathbf{d}_j)}{\tt h}_j U'+\eps^2\mathcal{H}^2_\eps({\tt h}_j,\mathbf{d}_j)+\mathcal{H}^4_\eps({\tt h}_j,\mathbf{d}_j,\varphi_j),
\end{aligned}
\end{equation}
and $\Pi_{\mathcal{Z}_\tau}$ is the projection onto $\mathcal{Z}_\tau$, defined by
\[
\Pi_{\mathcal{Z}_\tau} g(s,t):=g(s,t)-\frac{\int_\R g(s,\mu) v'_\star(\mu)d\mu}{\int_\R(v'_\star(\mu))^2 d\mu}v'_\star(t), \qquad\forall\, g\in \D^{0,\alpha}_{\tau,\gamma}(\R^2).
\]
First we fix $M,K>0$, $\mathbf{d}_j$ and ${\tt h}_j$ and we solve the \textit{auxiliary equation} (\ref{eq_aux_end}) with respect to $\varphi_j$. 
\begin{proposition}
Let $M>0$, $K>0$, $\gamma\in(0,\sqrt{2})$, $\alpha\in(0,1/2)$, $\mathbf{d}_j\in B_M$ and ${\tt h}_j\in D^{2,\alpha}_{\tau_j(\eps^2\mathbf{d}_j)}(\R)$, with $\|{\tt h}_j\|_{D^{2,\alpha}_{\tau_j(\eps^2\mathbf{d}_j)}(\R)}<K$. Then there exists a unique solution $\varphi_j:=\varphi_j({\tt h}_j,\mathbf{d}_j)\in \D^{2,\alpha}_{\tau_j(\eps^2\mathbf{d}_j),\gamma}(\R^2)\cap\mathcal{Z}_{\tau_j(\eps^2\mathbf{d}_j)}$ to (\ref{eq_aux_end}) satisfying
$$\|\varphi_j({\tt h}_j,\mathbf{d}_j)\|_{E^{2,\alpha}_\gamma(\R^2)}\le \Lambda_1\eps^{3-\alpha},$$
for some constant $\Lambda_1>0$, independent of $\mathbf{d}_j$. Moreover, there exists a constant $C>0$, independent of $\mathbf{d}_j$, such that
\begin{equation}\notag
\|\varphi_j(\text{h}^1_j,\mathbf{d}^1_j)-\varphi_j(\text{h}^2_j,\mathbf{d}^2_j)\|_{E^{2,\alpha}_\gamma(\R^2)}\le C(\eps^{4-\alpha}\|{\tt h}^1_j-{\tt h}^2_j\|_{C^{2,\alpha}(\R)}+\eps^{5-\alpha}|\mathbf{d}^1_j-\mathbf{d}^2_j|),
\end{equation}
for any $\mathbf{d}^i_j\in B_M$, ${\tt h}^i_j\in D^{2,\alpha}_{\tau_j(\eps^2\mathbf{d}^i_j)}(\R)$, with $\|{\tt h}^i_j\|_{C^{2,\alpha}(\R)}<K$, $i=1,2$, $1\le j\le k$.
\label{prop_aux_end}
\end{proposition}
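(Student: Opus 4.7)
The strategy is a contraction mapping argument on the ball
\[
\mathcal{B}_{\Lambda_1}:=\{\varphi_j\in \D^{2,\alpha}_{\tau_j(\eps^2\mathbf{d}_j),\gamma}(\R^2)\cap \mathcal{Z}_{\tau_j(\eps^2\mathbf{d}_j)}:\ \|\varphi_j\|_{E^{2,\alpha}_\gamma(\R^2)}\le \Lambda_1\eps^{3-\alpha}\},
\]
for some large constant $\Lambda_1>0$ to be fixed. The map will be
\[
\mathcal{T}(\varphi_j):=\mathcal{L}_\eps^{-1}\,\Pi_{\mathcal{Z}_{\tau_j(\eps^2\mathbf{d}_j)}}\mathcal{H}^5_\eps({\tt h}_j,\varphi_j,\mathbf{d}_j),
\]
where $\mathcal{L}_\eps:=\eps(\tau_j(\eps^2\mathbf{d}_j)e^{\sigma_{\tau_j(\eps^2\mathbf{d}_j)}})^{-2}\partial^2_s+\eps^{-1}\partial^2_t+\eps^{-1}f'(v_\star)$ is the left-hand side of (\ref{eq_aux_end}).

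The first and main step is to prove that $\mathcal{L}_\eps$ is an isomorphism from $\mathcal{Z}_{\tau_j(\eps^2\mathbf{d}_j)}\cap \D^{2,\alpha}_{\tau_j(\eps^2\mathbf{d}_j),\gamma}(\R^2)$ onto $\mathcal{Z}_{\tau_j(\eps^2\mathbf{d}_j)}\cap \D^{0,\alpha}_{\tau_j(\eps^2\mathbf{d}_j),\gamma}(\R^2)$ with
\[
\|\mathcal{L}_\eps^{-1}g\|_{E^{2,\alpha}_\gamma(\R^2)}\le C\eps^{1-\alpha}\|g\|_{\D^{0,\alpha}_\gamma(\R^2)}.
\]
For this I would start from the one-dimensional fact that $\partial_t^2+f'(v_\star)$ is an isomorphism from $C^{2,\alpha}_\gamma(\R)\cap\{v'_\star\}^\perp$ onto $C^{0,\alpha}_\gamma(\R)\cap\{v'_\star\}^\perp$ for every $\gamma\in(0,\sqrt{2})$, since $v'_\star$ is the only tempered kernel element and the indicial roots are $\pm\sqrt{2}$. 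This gives a uniform inverse in the $t$-variable. Since $\varphi_j$ is periodic in $s$, one may then either expand in a Fourier series in $s/s_{\tau_j(\eps^2\mathbf{d}_j)}$ and treat the operator mode-by-mode, or, more directly, apply Schauder estimates after rescaling the $t$-variable (equivalently, rewriting $\eps\mathcal{L}_\eps=\eps^2(\tau e^\sigma)^{-2}\partial^2_s+\partial^2_t+f'(v_\star)$ where the $s$-term is a small regular perturbation). Tracking the interplay between the $t$-scale $O(1)$ and the $s$-scale (which the norm $E^{2,\alpha}_\gamma$ rescales by a factor $\eps^m$ on $m$-th $s$-derivatives) yields the quoted bound; the $\eps^{-\alpha}$ loss is the standard Schauder cost of interpolating Hölder regularity between fast and slow variables.

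The second step is to apply this inverse. Evaluating $\mathcal{H}^5_\eps$ at $\varphi_j=0$ gives, by (\ref{def_H5}),
\[
\mathcal{H}^5_\eps({\tt h}_j,0,\mathbf{d}_j)=-\eps^2\tilde{L}_{\tau_j(\eps^2\mathbf{d}_j)}{\tt h}_j U'+\eps^2\mathcal{H}^2_\eps({\tt h}_j,\mathbf{d}_j),
\]
which, using (\ref{est_H2}), the periodicity and $C^{2,\alpha}$-bound on ${\tt h}_j$, and the exponential decay of $U'$, is bounded by $C_K\eps^2$ in $\D^{0,\alpha}_\gamma(\R^2)$, hence by the linear bound $\|\mathcal{T}(0)\|_{E^{2,\alpha}_\gamma}\le C_K\eps^{3-\alpha}$. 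Choosing $\Lambda_1=2C_K$ makes $\mathcal{T}(0)\in\mathcal{B}_{\Lambda_1/2}$. For the contraction, (\ref{lip_H4_varphi}) applied with $({\tt h}^1_j,\mathbf{d}^1_j)=({\tt h}^2_j,\mathbf{d}^2_j)$ yields
\[
\|\mathcal{H}^5_\eps(\varphi^1)-\mathcal{H}^5_\eps(\varphi^2)\|_{\D^{0,\alpha}_\gamma}\le C\bigl(1+\eps^{-1}(\|\varphi^1\|+\|\varphi^2\|)\bigr)\|\varphi^1-\varphi^2\|_{E^{2,\alpha}_\gamma},
\]
and on $\mathcal{B}_{\Lambda_1}$ the parenthetical factor is $1+O(\eps^{2-\alpha})=O(1)$. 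Combining with the linear bound gives $\|\mathcal{T}(\varphi^1)-\mathcal{T}(\varphi^2)\|_{E^{2,\alpha}_\gamma}\le C\eps^{1-\alpha}\|\varphi^1-\varphi^2\|_{E^{2,\alpha}_\gamma}$, a strict contraction for $\eps$ small. The Banach fixed point theorem produces the unique solution $\varphi_j({\tt h}_j,\mathbf{d}_j)\in\mathcal{B}_{\Lambda_1}$. Property (3) is built into the codomain $\mathcal{Z}_{\tau_j(\eps^2\mathbf{d}_j)}$.

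Finally, for the Lipschitz dependence (property (4), restricted here to $\varphi_j$), subtract the equations satisfied by $\varphi_j({\tt h}^i_j,\mathbf{d}^i_j)$, apply $\mathcal{L}_\eps^{-1}$, use (\ref{lip_H4_varphi}) together with Lemma~\ref{lemma_lip_H2} to bound the difference of right-hand sides by
\[
C\bigl(\eps^2\|{\tt h}^1_j-{\tt h}^2_j\|_{C^{2,\alpha}}+\eps^2|\mathbf{d}^1_j-\mathbf{d}^2_j|+(1+O(\eps^{2-\alpha}))\|\varphi^1-\varphi^2\|_{E^{2,\alpha}_\gamma}\bigr),
\]
and absorb the last summand into the left-hand side via the contraction property. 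This yields the claimed $O(\eps^{4-\alpha})$ and $O(\eps^{5-\alpha})$ Lipschitz constants.

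The main obstacle is Step~1: establishing the uniform linear theory for $\mathcal{L}_\eps$ in the weighted, symmetric Hölder spaces with the correct tracking of $\eps$. The projection $\Pi_{\mathcal{Z}}$ is essential to kill the obstruction from $v'_\star$, and the restriction $\alpha<1/2$ used in the proposition is precisely what ensures the Schauder constants stay uniformly bounded as one interpolates between the $s$- and $t$-scales, so that the contraction ratio $\eps^{1-\alpha}$ is indeed less than one and the nonlinear term $\eps^{-1}\|\varphi\|^2$ remains subordinate on $\mathcal{B}_{\Lambda_1}$.
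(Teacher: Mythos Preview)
Your proposal follows the same overall contraction-mapping scheme as the paper: invert the linear operator $\mathcal{L}_\eps$ on the orthogonal complement of $v'_\star$, then run Banach's fixed point theorem on the ball of radius $\Lambda_1\eps^{3-\alpha}$ using the estimates (\ref{est_H2}), (\ref{est_H4}) and (\ref{lip_H4_varphi}). The nonlinear part of your argument matches the paper essentially line for line.

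The only substantive difference is in Step~1, the linear theory. The paper does not use a Fourier decomposition in $s$ nor treat $\eps^2(\tau e^\sigma)^{-2}\partial_s^2$ as a perturbation of the one-dimensional operator. Instead it rescales $s\mapsto {\tt s}=\eps^{-1}s$ so that the operator becomes uniformly elliptic (Lemma~\ref{lemma_ls_scaled}), solves the Neumann problem on one period $(0,\eps^{-1}s_\tau)\times\R$ by Riesz representation (coercivity on $\mathcal{Z}_{\eps,\tau}$), extends by the imposed symmetries, and obtains the weighted decay in ${\tt t}$ by a barrier argument. The factor $\eps^{1-\alpha}$ in the inverse bound then falls out of the comparison (\ref{compare_norms_per}) between the $E$- and $\D$-norms under rescaling (Lemma~\ref{lemma_aux_lin}). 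Your Fourier/perturbative approach would also work, but the rescaling route avoids having to sum over modes and gives the H\"older estimate directly from standard Schauder theory on the rescaled domain.

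One small correction: your stated reason for the restriction $\alpha<1/2$ is not quite right. The contraction ratio $C\eps^{1-\alpha}$ is less than one for every $\alpha<1$, and the quadratic term $\eps^{-1}\|\varphi\|^2\sim\eps^{5-2\alpha}$ is subordinate for all $\alpha<3/2$. The genuine reason $\alpha<1/2$ enters is the $\eps^{1-2\alpha}$ factor in the Lipschitz dependence of $\Psi^1_\tau$ on $\tau$ (see (\ref{lip_varphi_gamma_s})), which is needed when you vary $\mathbf{d}_j$ and hence $\tau_j(\eps^2\mathbf{d}_j)$ in the final Lipschitz estimate.
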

The proof of this result will be given in section \ref{section_aux_end}. The next step is to solve the \textit{bifurcation equation} (\ref{eq_bifo_end}), by means of the operator $L_{0,\tau_j(\eps^2\mathbf{d}_j)}$, which is proportional to the Jacobi operator of the Delaunay unduloid $D_{\tau_j}(\eps^2\mathbf{d}_j)$ for functions satisfying the symmetries of the surface (see (\ref{def_L_tau})). In fact, plugging $\varphi_j({\tt h}_j,\mathbf{d}_j)$ in (\ref{eq_ls_end}), multiplying the equation by $v'_\star$ and integrating over $\R$ yields that (\ref{eq_bifo_end}) is equivalent to
\begin{eqnarray}\notag
\int_\R \mathcal{H}^5_\eps({\tt h}_j,\varphi_j({\tt h}_j,\mathbf{d}_j),\mathbf{d}_j)(s,t) v'_\star(t) dt=0, \qquad\forall\, s\in\R,
\end{eqnarray}
that is
\begin{eqnarray}
L_{0,\tau_j(\eps^2\mathbf{d}_j)}{\tt h}_j=\mathcal{F}^1_\eps({\tt h}_j,\mathbf{d}_j),
\label{eq_bifo_end_pr}
\end{eqnarray}
with 
\begin{equation}
\label{def_F1}
\begin{aligned}
\mathcal{F}^1_\eps({\tt h}_j,\mathbf{d}_j):=&\frac{(\tau_j(\eps^2\mathbf{d}_j)e^{\sigma_{\tau_j(\eps^2\mathbf{d}_j)}})^2}{c_\star} \int_\R (\mathcal{H}^2({\tt h}_j,\mathbf{d}_j)
+\eps^{-2}\mathcal{H}^4_\eps({\tt h}_j,\varphi_j({\tt h}_j,\mathbf{d}_j),\mathbf{d}_j))(s,t)v'_\star(t)dt\\
&-L_{0,\tau_j(\eps^2\mathbf{d}_j)}{\tt h}_j(s)\int_\R(U'(t)-v'_\star(t)) v'_\star(t)dt.
\end{aligned}
\end{equation}
and
$$c_\star:=\int_\R (v'_\star)^2 dt>0.$$
It turns out that $\mathcal{F}^1_\eps$ satisfies
\begin{eqnarray}
\|\mathcal{F}^1_\eps({\tt h}_j,\mathbf{d}_j)\|_{C^{0,\alpha}(\R)}\le C,\label{est_F}
\end{eqnarray}
for some constant $C>0$, for any $\mathbf{d}_j\in B_M$, for any ${\tt h}_j\in D^{2,\alpha}_{\tau_j(\eps^2\mathbf{d}_j)}(\R)$. Moreover, 
\begin{eqnarray}
\|\mathcal{F}^1_\eps({\tt h}^1_j,\mathbf{d}^1_j)-\mathcal{F}^1_\eps({\tt h}^2_j,\mathbf{d}^2_j)\|_{C^{0,\alpha}(\R)}\le C(\eps^2|\mathbf{d}^1_j-\mathbf{d}^2_j|+\eps\|{\tt h}^1_j-{\tt h}^2_j\|_{C^{2,\alpha}(\R)}),\label{lip_F_d_h}
\end{eqnarray}
for some constant $C>0$, for any $\mathbf{d}^i_j\in B_M$, $\text{h}^i_j\in D^{2,\alpha}_{\tau_j(\mathbf{d}_j)}(\R)$, with $\|{\tt h}^i_j\|_{C^{2,\alpha}(\R)}<K$, $1\le i\le 2$, $1\le j\le k$. Once again, the constant $C$ does not depend on the parameters, and $M,K>0$ are arbitrary. This follows from the size of $\varphi_j({\tt h}_j,\mathbf{d}_j)$, given by Proposition \ref{prop_aux_end}, the properties of the $\mathcal{H}^2_\eps$ and $\mathcal{H}^4_\eps$ (see (\ref{est_H2}), (\ref{est_H4}), (\ref{lip_H4_varphi}) and Lemma \ref{lemma_lip_H2}, which is satisfied also by $\mathcal{H}^4_\eps$).
\begin{proposition}
Let $M>0$, $\mathbf{d}_j\in B_M$. Then there exists $K>0$ independent of $M$ such that, for $\eps$ small enough, there exists a solution ${\tt h}_j:={\tt h}_j(\mathbf{d}_j)\in D^{2,\alpha}_{\tau_j(\eps^2\mathbf{d}_j)}(\R)$ to equation (\ref{eq_bifo_end_pr}) fulfilling 
\[
\|{\tt h}_j\|_{C^{2,\alpha}(\R)}<K.
\]
Moreover,
\begin{eqnarray}
\|{\tt h}_j(\mathbf{d}^1_j)-{\tt h}_j(\mathbf{d}^2_j)\|_{C^{2,\alpha}(\R)}\le C\eps^2|\mathbf{d}^1_j-\mathbf{d}^2_j|,
\end{eqnarray}
for some constant $C>0$, for any $\mathbf{d}^i_j\in B_M$, $i=1,2$.\label{prop_bifo_end}
\end{proposition}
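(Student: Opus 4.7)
The bifurcation equation is an ODE on $\R$ involving the periodic operator $L_{0,\tau_j(\eps^2\mathbf{d}_j)}$, and the strategy is a contraction mapping argument in the ball $\{\|{\tt h}_j\|_{C^{2,\alpha}(\R)}<K\}\subset D^{2,\alpha}_{\tau_j(\eps^2\mathbf{d}_j)}(\R)$.

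The crucial preliminary step is to establish that
\[
L_{0,\tau_j(\eps^2\mathbf{d}_j)}:D^{2,\alpha}_{\tau_j(\eps^2\mathbf{d}_j)}(\R)\longrightarrow D^{0,\alpha}_{\tau_j(\eps^2\mathbf{d}_j)}(\R)
\]
is an isomorphism with inverse bounded uniformly for $\mathbf{d}_j\in B_M$ and $\eps$ small. This is a standard Schauder/Fredholm fact for a second order ODE with periodic smooth coefficients once the kernel on the symmetry subspace is trivial. By Lemma \ref{lemma_Jacobi_fields}, the only $\vartheta$-independent Jacobi fields of $D_\tau$ are $\Phi^{T,e_3}_\tau=\partial_s\sigma_\tau$ (periodic but odd about $s_\tau/2$, hence outside $D^{2,\alpha}_\tau(\R)$) and $\Phi^D_\tau$ (linearly growing). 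Thus $\mathop{ker}L_{0,\tau}|_{D^{2,\alpha}_\tau(\R)}=\{0\}$, and self-adjointness on the even, $s_\tau$-periodic subspace of $L^2$ together with a bootstrap to $C^{2,\alpha}$ yields the invertibility. The dependence of $s_\tau$ and of the coefficient $\tau^2\cosh(2\sigma_\tau)$ on $\tau$ is smooth; after a rescaling $s\mapsto s\cdot s_\tau/s_{\tau_j}$ which maps the spaces $D^{2,\alpha}_{\tau_j(\eps^2\mathbf{d}_j)}(\R)$ to the fixed space $D^{2,\alpha}_{\tau_j}(\R)$, the rescaled inverse operators depend Lipschitz-continuously on $\mathbf{d}_j$ with Lipschitz constant $O(\eps^2)$.

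With invertibility in hand, define
\[
\mathcal{T}({\tt h}_j):=L_{0,\tau_j(\eps^2\mathbf{d}_j)}^{-1}\,\mathcal{F}^1_\eps({\tt h}_j,\mathbf{d}_j).
\]
First I verify that $\mathcal{F}^1_\eps({\tt h}_j,\mathbf{d}_j)$ lies in $D^{0,\alpha}_{\tau_j(\eps^2\mathbf{d}_j)}(\R)$: this is transparent since each ingredient (the geometric coefficients of $D_{\tau_j}(\eps^2\mathbf{d}_j)$, the remainders $\mathcal{H}^2_\eps$ and $\mathcal{H}^4_\eps$, and ${\tt h}_j$ itself) carries the periodicity and even symmetry across $s_{\tau_j(\eps^2\mathbf{d}_j)}/2$ by construction. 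The uniform estimate (\ref{est_F}) combined with the uniform bound on $L_{0,\tau_j(\eps^2\mathbf{d}_j)}^{-1}$ then furnishes a constant $C_0>0$, independent of $M$, $\eps$, and $\mathbf{d}_j$, such that $\|\mathcal{T}({\tt h}_j)\|_{C^{2,\alpha}(\R)}\le C_0$. Fix $K:=2C_0$; then $\mathcal{T}$ maps the ball of radius $K$ in $D^{2,\alpha}_{\tau_j(\eps^2\mathbf{d}_j)}(\R)$ into itself. The Lipschitz estimate (\ref{lip_F_d_h}) with $\mathbf{d}^1_j=\mathbf{d}^2_j$ gives
\[
\|\mathcal{T}({\tt h}^1_j)-\mathcal{T}({\tt h}^2_j)\|_{C^{2,\alpha}(\R)}\le C\eps\,\|{\tt h}^1_j-{\tt h}^2_j\|_{C^{2,\alpha}(\R)},
\]
which is a contraction for $\eps$ small. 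The Banach fixed point theorem yields a unique ${\tt h}_j=:{\tt h}_j(\mathbf{d}_j)$ in the ball, proving existence and the bound $\|{\tt h}_j\|_{C^{2,\alpha}(\R)}<K$.

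For the Lipschitz dependence on $\mathbf{d}_j$, subtracting the two fixed point equations gives
\[
L_{0,\tau_j(\eps^2\mathbf{d}^1_j)}({\tt h}_j(\mathbf{d}^1_j)-{\tt h}_j(\mathbf{d}^2_j))=\mathcal{F}^1_\eps({\tt h}_j(\mathbf{d}^1_j),\mathbf{d}^1_j)-\mathcal{F}^1_\eps({\tt h}_j(\mathbf{d}^2_j),\mathbf{d}^2_j)+\bigl(L_{0,\tau_j(\eps^2\mathbf{d}^2_j)}-L_{0,\tau_j(\eps^2\mathbf{d}^1_j)}\bigr){\tt h}_j(\mathbf{d}^2_j).
\]
The first right-hand-side contribution is bounded by $C\eps^2|\mathbf{d}^1_j-\mathbf{d}^2_j|+C\eps\|{\tt h}_j(\mathbf{d}^1_j)-{\tt h}_j(\mathbf{d}^2_j)\|_{C^{2,\alpha}(\R)}$ thanks to (\ref{lip_F_d_h}), while the second, by smoothness of $\sigma_\tau$ in $\tau$ and the chain rule, is $O(\eps^2|\mathbf{d}^1_j-\mathbf{d}^2_j|)$ times $\|{\tt h}_j(\mathbf{d}^2_j)\|_{C^{0,\alpha}(\R)}=O(1)$. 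Absorbing the contraction term on the left yields the claimed estimate $\|{\tt h}_j(\mathbf{d}^1_j)-{\tt h}_j(\mathbf{d}^2_j)\|_{C^{2,\alpha}(\R)}\le C\eps^2|\mathbf{d}^1_j-\mathbf{d}^2_j|$. The main technical nuisance is the $\mathbf{d}_j$-dependence of the ambient space $D^{2,\alpha}_{\tau_j(\eps^2\mathbf{d}_j)}(\R)$, which I handle via the reparametrization above so that all comparisons are performed on a fixed reference space.
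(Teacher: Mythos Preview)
Your proof is correct and follows essentially the same approach as the paper: the paper establishes invertibility of $L_{0,\tau}$ on $D^{2,\alpha}_\tau(\R)$ via the same Jacobi-field argument (their Lemma on injectivity plus an a~priori estimate, yielding the inverse $\Psi^2_\tau$ with the Lipschitz bound (\ref{lip_Phi2_tau})), and then runs the contraction mapping exactly as you do, using (\ref{est_F}) and (\ref{lip_F_d_h}). Your explicit mention of the rescaling $s\mapsto s\cdot s_\tau/s_{\tau_j}$ to compare functions living in different periodic spaces is a helpful detail that the paper leaves implicit in its Lipschitz estimate for $\Psi^2_\tau$.
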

This result will be proved in section \ref{section_bifo_end}. It will be clear from the proof that the constant $K$ is chosen exactly in this step. The aim is to have an error of order $\eps^2$ in a suitable decaying norm. 
\begin{lemma}
Let $M>0$, $a\in(0,\bar{a})$, $\gamma\in(0,\sqrt{2})$, $\alpha\in(0,1)$ and $\mathbf{d}\in (B_M)^k$. Then the error is given by
\begin{eqnarray}\notag
\chi_{3,\eps}(\eps\Delta v(\mathbf{d})+\eps^{-1}f(v(\mathbf{d}))-\ell_\eps)=\chi_{3,\eps}\big(-\eps^2\mathcal{J}_{\Sigma(\eps^2\mathbf{d})}(h_0\circ \beta(\eps^2\mathbf{d}))-(H_{\Sigma(\eps^2\mathbf{d})}-H_\Sigma)\big)U'\\\notag
+\chi_{3,\eps}\mathcal{G}^5_\eps(h_0,\mathbf{d}),
\end{eqnarray}
with $\mathcal{G}^5_\eps$ such that
\begin{equation}
\label{est_G5}
\begin{aligned}
\|\chi_{3,\eps}\mathcal{G}^5_\eps(h_0,\mathbf{d})\circ(\beta(\eps^2\mathbf{d})^{-1}\times Id_\R)\|_{\mathcal{E}^{2,\alpha}_{a,\gamma}(\Sigma\times\R)}\le C\eps^2,\\
\|\chi_{3,\eps}\mathcal{G}^5_\eps(h^1_0,\mathbf{d}^1)\circ(\beta(\eps^2\mathbf{d}^1)^{-1}\times Id_\R)-\chi_{3,\eps}\mathcal{G}^5_\eps(h^2_0,\mathbf{d}^2)\circ(\beta(\eps^2\mathbf{d}^2)^{-1}\times Id_\R)\|_{\mathcal{E}^{2,\alpha}_{a,\gamma}(\Sigma\times\R)}\\
\le C(\eps^4|\mathbf{d}^1_j-\mathbf{d}^2_j|+\eps^3\|h^1_0-h^2_0\|_{\mathcal{C}^{2,\alpha}_a(\Sigma)}),\\
\end{aligned}
\end{equation}
for some constant $C>0$, for any $\mathbf{d}^i\in (B_M)^k$, for any $h^i_0\in \mathcal{C}^{0,\alpha}_a(\Sigma)$, $\|h^i_0\|_{\mathcal{C}^{0,\alpha}_a(\Sigma)}<M$.
\label{lemma_dec_error}
\end{lemma}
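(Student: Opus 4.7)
The plan is to obtain Lemma \ref{lemma_dec_error} by plugging the functions $\mathbf{h}=(h_0,{\tt h}_1(\mathbf{d}_1),\dots,{\tt h}_k(\mathbf{d}_k))$ and $\mathbf{\varphi}=(\varphi_1(\mathbf{d}_1),\dots,\varphi_k(\mathbf{d}_k))$ supplied by Proposition \ref{prop_ls_end} into identity (\ref{error_correct}) and then identifying
\[
\mathcal{G}^5_\eps(h_0,\mathbf{d}):=\mathcal{G}^3_\eps(\mathbf{h},\mathbf{d},\mathbf{\varphi}).
\]
Since those $({\tt h}_j,\varphi_j)$ are precisely the solutions of (\ref{eq_ls_end}), we have $\mathcal{H}^3_\eps({\tt h}_j,\mathbf{d}_j,\varphi_j)=0$ for every $1\le j\le k$, so the periodic sum $\sum_j\zeta_j\mathcal{H}^3_\eps$ in (\ref{error_correct}) disappears and the only remaining contributions besides the leading term $-\eps^2\mathcal{J}_{\Sigma(\eps^2\mathbf{d})}(h_0\circ\beta(\eps^2\mathbf{d}))U'-(H_{\Sigma(\eps^2\mathbf{d})}-H_\Sigma)U'$ are collected in $\mathcal{G}^3_\eps$.

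For the size estimate, I would use the splitting (\ref{def_G4}): $\mathcal{G}^3_\eps=\eps^2\mathcal{G}^2_\eps(\mathbf{h},\mathbf{d})+\mathcal{G}^4_\eps(\mathbf{h},\mathbf{d},\mathbf{\varphi})$. The first piece is controlled by (\ref{est_G2}), giving $\|\eps^2\mathcal{G}^2_\eps\|\le \tilde{C}_1(K)\eps^2$. The second piece is controlled by (\ref{est_G4}) combined with the bound $\|\varphi_j\|_{E^{2,\alpha}_\gamma(\R^2)}\le \Lambda_1\eps^{3-\alpha}$ from Proposition \ref{prop_ls_end}, which yields $\|\mathcal{G}^4_\eps\|\le C(\eps^{3-\alpha}+\eps^{5-2\alpha})$. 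Since $\alpha<1/2$, both terms are $o(\eps^2)$, hence altogether $\|\mathcal{G}^5_\eps\|_{\mathcal{E}^{0,\alpha}_{a,\gamma}}\le C\eps^2$. To pass from the $\mathcal{E}^{0,\alpha}$ norm to the $\mathcal{E}^{2,\alpha}$ norm required in (\ref{est_G5}), I would revisit the proof of Lemma \ref{lemma_lip_G2}: every term in (\ref{exp_error_t}) is a product of a smooth function of $\mathbf{y}$ (built from the geometric quantities of $\Sigma(\eps^2\mathbf{d})$, which are smooth by Remark \ref{rem_regularity_Sigma_d}) and a smooth exponentially decaying function of $t$ (built from $U$, $\psi_0$ and their derivatives). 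Differentiating these products up to order two and using Remark \ref{rem_regularity_Sigma_d} together with the factor of $\eps$ in each tangential derivative in the definition (\ref{def_wa}) of the $\mathcal{E}^{n,\alpha}$ norm preserves the $\eps^2$ bound.

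For the Lipschitz estimate, the strategy is to combine three ingredients: (i) Lemma \ref{lemma_lip_G2} for the $\mathcal{G}^2_\eps$ contribution, prefactored by $\eps^2$; (ii) estimate (\ref{lip_G4_varphi}) for the $\mathcal{G}^4_\eps$ contribution; (iii) the Lipschitz dependence of ${\tt h}_j(\mathbf{d}_j)$ and $\varphi_j(\mathbf{d}_j)$ on $\mathbf{d}_j$, namely (\ref{lip_varphi_d})--(\ref{lip_h_d}). Writing $\mathbf{h}^i=(h^i_0,{\tt h}_1(\mathbf{d}_1^i),\dots)$, the chain rule-type decomposition
\[
\mathcal{G}^3_\eps(\mathbf{h}^1,\mathbf{d}^1,\mathbf{\varphi}^1)-\mathcal{G}^3_\eps(\mathbf{h}^2,\mathbf{d}^2,\mathbf{\varphi}^2)
\]
produces terms in which each direct Lipschitz constant is multiplied by the Lipschitz constant of ${\tt h}_j$ or $\varphi_j$ with respect to $\mathbf{d}$. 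A book-keeping of powers of $\eps$ shows that the $\mathbf{d}$-direction gives $\eps^2\cdot\eps^2+\eps^{3-\alpha}\cdot\eps^2+O(\eps^{5-\alpha})=O(\eps^4)$, while the direct $h_0$-direction gives $\eps^2\cdot\eps+\eps^{4-\alpha}=O(\eps^3)$, which matches (\ref{est_G5}).

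The main obstacle I anticipate is the bookkeeping in step (iii), specifically ensuring that no cross term produces a constant worse than $\eps^4|\mathbf{d}^1-\mathbf{d}^2|+\eps^3\|h^1_0-h^2_0\|$. The delicate factor is the contribution $\|\varphi^1_j-\varphi^2_j\|_{E^{2,\alpha}_\gamma}$ in (\ref{lip_G4_varphi}), which comes with a coefficient $1+\eps^{-1}\|\varphi^i_j\|$; here one must use the sharp estimate (\ref{lip_varphi_d}), namely $\|\varphi_j(\mathbf{d}^1_j)-\varphi_j(\mathbf{d}^2_j)\|_{E^{2,\alpha}_\gamma(\R^2)}\le C\eps^{5-\alpha}|\mathbf{d}^1_j-\mathbf{d}^2_j|$, so that this term contributes $O(\eps^{5-\alpha})=o(\eps^4)$ thanks to $\alpha<1/2$. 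Once this accounting is done, one obtains both bounds in (\ref{est_G5}) with constants independent of $\mathbf{d}$ and $\eps$, modulo the dependence on $M$ and $K$ which are by now fixed in Propositions \ref{prop_aux_end} and \ref{prop_bifo_end}.
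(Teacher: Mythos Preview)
Your proposal is correct and follows essentially the same approach as the paper: define $\mathcal{G}^5_\eps(h_0,\mathbf{d}):=\mathcal{G}^3_\eps\big((h_0,{\tt h}_1(\mathbf{d}_1),\dots,{\tt h}_k(\mathbf{d}_k)),\mathbf{d},(\varphi_1(\mathbf{d}_1),\dots,\varphi_k(\mathbf{d}_k))\big)$, use (\ref{eq_ls_end}) to kill the periodic sum in (\ref{error_correct}), and then derive the size bound from (\ref{est_G2}), (\ref{def_G4}), (\ref{est_G4}) and Proposition \ref{prop_ls_end}, and the Lipschitz bound from Lemma \ref{lemma_lip_G2}, (\ref{lip_G4_varphi}), (\ref{lip_varphi_d}) and (\ref{lip_h_d}). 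Your bookkeeping of the $\eps$-powers is in fact more detailed than the paper's own proof, and your remark on upgrading from $\mathcal{E}^{0,\alpha}$ to $\mathcal{E}^{2,\alpha}$ is a fair observation (the paper does not comment on this point).
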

\begin{proof}
Going back to (\ref{error_correct}) and using the corrections $\varphi_j$ and $\text{h}_j$ determined in Proposition \ref{prop_ls_end}, it is possible to see that
\begin{eqnarray}\notag
\chi_{3,\eps}(\eps\Delta v(\mathbf{d})+\eps^{-1}f(v(\mathbf{d}))-\ell_\eps)=\chi_{3,\eps}\big(-\eps^2\mathcal{J}_{\Sigma(\eps^2\mathbf{d})}(h_0\circ \beta(\eps^2\mathbf{d}))-(H_{\Sigma(\eps^2\mathbf{d})}-H_\Sigma)\big)U'\\\notag
+\chi_{3,\eps}\mathcal{G}^3_\eps((h_0,{\tt h}_1(\mathbf{d}_1),\dots,{\tt h}_k(\mathbf{d}_k)),\mathbf{d},
(\varphi_1(\mathbf{d}_1),\dots,\varphi_k(\mathbf{d}_k))).
\end{eqnarray}
The only remaining unknowns here are $h_0$ and $\mathbf{d}$. The size of 
$$\mathcal{G}^5_\eps(h_0,\mathbf{d}):=\mathcal{G}^3_\eps((h_0,{\tt h}_1(\mathbf{d}_1),\dots,{\tt h}_k(\mathbf{d}_k)),\mathbf{d},
(\varphi_1(\mathbf{d}_1),\dots,\varphi_k(\mathbf{d}_k))),$$ 
given by (\ref{est_G5}), follows from (\ref{est_G2}), (\ref{def_G4}), (\ref{est_G4}) and Proposition \ref{prop_ls_end}, while the Lipschitz dependence on the data follows from Lemma \ref{lemma_lip_G2}, (\ref{lip_G4_varphi}), (\ref{lip_varphi_d}) and (\ref{lip_h_d}).
\end{proof}

\begin{remark}
By Lemma \ref{lemma_dec_error} and Lemma \ref{lemma_H_Sigmad}, we have produced a new error, which, for any $a\in(0,\bar{a})$, $\gamma\in(0,\sqrt{2})$ and $\eps$ small enough, is decaying both along $\Sigma(\eps^2\mathbf{d})$, at rate $a$, and in $t$, at rate $\gamma$, and satisfies
$$\|\chi_{3,\eps}(\eps\Delta v(\mathbf{d})+\eps^{-1}f(v(\mathbf{d}))-\ell_\eps)\|_{\mathcal{E}^{0,\alpha}_{a,\gamma}(\Sigma(\eps^2\mathbf{d})\times\R)}\le C\eps^2,$$
for some constant $C>0$ independent of the parameters. The expression of the error for $t$ large does not really matter, since, as we will see in the sequel, the approximate solution will be chosen to be constant far from its nodal set, in such a way that the error vanishes outside a tubular neighbourhood of $\Sigma(\eps^2\mathbf{d})$. \label{remark_decaying_error}
\end{remark}

\subsection{The auxiliary equation along an end}\label{section_aux_end}

In order to solve equation (\ref{eq_aux_end}), we first study the inverse of the operator $\eps(\tau e^{\sigma_\tau})^{-2}\partial^2_s+\eps^{-1}\partial^2_t+\eps^{-1}f'(v_\star)$ on the space of functions that are $L^2(\R)$-orthogonal to the kernel, spanned by $v'_\star$, then we apply a fixed point argument.

\subsubsection{The linear problem}\label{section_aux_end_linear}
We treat the linear problem
\begin{eqnarray}
\eps(\tau e^{\sigma_\tau})^{-2}\partial^2_s\varphi+\eps^{-1}\partial^2_t\varphi+\eps^{-1}f'(v_\star)\varphi=g &\text{in $\R^2$.}\label{aux_lin_end}
\end{eqnarray}
We assume that the right-hand side satisfies the orthogonality condition
\begin{eqnarray}
\int_\R g(s,t)v'_\star(t)dt=0, \qquad\forall \,s\in\R, \label{cond_ort_end}
\end{eqnarray}
and we look for solutions fulfilling the same. In order to simplify the proof, it is convenient to rescale the variables, thus we introduce the coordinates
$$
\begin{cases}
{\tt t}=t\\
{\tt s}=\eps^{-1}s.
\end{cases}$$ 
and we study the equation
\begin{eqnarray}
(\tau e^{\sigma_\tau})^{-2}\partial^2_{{\tt s}}\varphi+\partial^2_{{\tt t}}\varphi+f'(v_\star)\varphi=g &\text{in $\R^2$.}\label{aux_lin_scaled_end}
\end{eqnarray}
For this scaled problem, we prove existence and uniqueness under the orthogonality condition
\begin{eqnarray}
\int_{\R}\varphi({\tt s},{\tt t})v'_\star({\tt s})d{\tt s}=0, \qquad\forall \,{\tt s}\in\R \label{cond_ort_scaled_end}.
\end{eqnarray}
In order to estimate the inverse, we set, for $\varphi\in\D^{2,\alpha}_{\tau,\gamma}(\R^2)$, $\varphi_\eps({\tt s},{\tt t}):=\varphi(\eps{\tt s},{\tt t})$ and we introduce the spaces
\[\D^{n,\alpha}_{\eps,\tau,\gamma}(\R^2):=\{\varphi_\eps:\,\varphi\in \D^{n,\alpha}_{\tau,\gamma}(\R^2)\}.\]
On these spaces, we introduce the norms
\begin{eqnarray}
\|\varphi\|_{\D^{n,\alpha}_\gamma(\R^2)}:=\sum_{0\le m+k\le n}\|\partial_{{\tt t}}^k \partial^m_{{\tt s}}\varphi\|_{\D^{0,\alpha}_\gamma(\R^2)},\label{norm_scaled_aux}
\end{eqnarray}
where
$$\|\varphi\|_{\D^{0,\alpha}_\gamma(\R^2)}:=\|\varphi \cosh^\gamma({\tt t})\|_{C^{0,\alpha}(\R)}.$$
We note that
\begin{equation}
\label{compare_norms_per}
c\eps^\alpha\|\varphi\|_{E^{n,\alpha}_\gamma(\R^2)}\le \|\varphi_\eps\|_{\D^{n,\alpha}_\gamma(\R^2)}\le C\|\varphi\|_{E^{n,\alpha}_\gamma(\R^2)}, \qquad 0<c<C,\, n\ge 0.
\end{equation}
\begin{lemma}
Let $0<\underline{\tau}<\overline{\tau}<1$, $\tau\in(\underline{\tau},\overline{\tau})$, $\gamma\in(0,\sqrt{2})$, $\alpha\in (0,1)$ and $g\in \D^{0,\alpha}_{\eps,\tau,\gamma}(\R^2)$. Assume that $g$ satisfies (\ref{cond_ort_scaled_end}). Then, for $\eps$ small enough, there exists a unique solution $\varphi_\tau\in \D^{2,\alpha}_{\eps,\tau,\gamma}(\R^2)$ to (\ref{aux_lin_scaled_end}) satisfying (\ref{cond_ort_scaled_end}). Moreover,
\begin{eqnarray}
\|\varphi_\tau\|_{\D^{2,\alpha}_\gamma(\R^2)}\le C\|g\|_{\D^{0,\alpha}_\gamma(\R^2)},\label{est_wnorm_aux_end}
\end{eqnarray}
for some constant $C>0$.
\label{lemma_ls_scaled}
\end{lemma}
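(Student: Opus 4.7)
The proof rests on the classical spectral theory of the one-dimensional operator $\mathcal{L}_0 := \partial_{\tt t}^2 + f'(v_\star)$, whose $L^2(\R)$-kernel is exactly $\mathop{span}\{v'_\star\}$ and which is coercive on $\{v'_\star\}^\perp$ with a spectral gap $\lambda_0 > 0$; moreover $f'(v_\star({\tt t})) \to f'(\pm 1) = -2$ exponentially as ${\tt t} \to \pm\infty$. I will first prove the a priori bound (\ref{est_wnorm_aux_end}) uniformly in $\eps$ and $\tau \in [\underline\tau, \overline\tau]$, then obtain existence by a truncation/Fredholm argument; uniqueness is immediate, since any two solutions differ by an element of the (constrained) kernel, which must vanish by the a priori estimate.

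\textbf{A priori estimate by blow-up.} It suffices to prove $\|\varphi\|_{\D^{0,0}_\gamma} \le C \|g\|_{\D^{0,\alpha}_\gamma}$, since the full $\D^{2,\alpha}_\gamma$ bound follows by applying interior Schauder estimates on unit $({\tt s},{\tt t})$-balls (the coefficient $({\tau}e^{\sigma_\tau(\eps {\tt s})})^{-2}$ is uniformly elliptic and its derivatives in $\tt s$ are of size $O(\eps)$, so uniformly bounded on unit scales). Suppose the $L^\infty_\gamma$ bound fails: take sequences $\eps_n\to 0$, $\tau_n\to\tau_\infty\in[\underline\tau,\overline\tau]$, $\varphi_n$ and $g_n$ satisfying (\ref{aux_lin_scaled_end})--(\ref{cond_ort_scaled_end}) with $\|\varphi_n\|_{\D^{0,0}_\gamma}=1$ and $\|g_n\|_{\D^{0,\alpha}_\gamma}\to 0$. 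Pick $({\tt s}_n,{\tt t}_n)$ with $|\varphi_n({\tt s}_n,{\tt t}_n)|\cosh^\gamma({\tt t}_n)\ge 1/2$ and translate in $\tt s$. If $\{{\tt t}_n\}$ is bounded, the slowly-varying coefficient converges to a positive constant $a_\infty$ and, along a subsequence, $\varphi_n\to\varphi_\infty$ in $C^2_{\rm loc}$, where $\varphi_\infty$ is bounded, non-trivial, orthogonal to $v'_\star$ for every $\tt s$, and solves $a_\infty\partial_{\tt s}^2\varphi_\infty + \mathcal{L}_0\varphi_\infty = 0$ on $\R^2$. A Fourier argument in $\tt s$ decomposes $\varphi_\infty$ into modes of $\mathcal{L}_0$; since $-\mathcal{L}_0$ is strictly positive on $\{v'_\star\}^\perp$, the symbol $a_\infty\xi^2 - \mathcal{L}_0$ is invertible there, forcing $\varphi_\infty\equiv 0$, a contradiction. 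If instead $|{\tt t}_n|\to\infty$ (say ${\tt t}_n\to+\infty$), I rescale $\hat\varphi_n({\tt s},{\tt t}) := \cosh^\gamma({\tt t}_n)\varphi_n({\tt s},{\tt t}+{\tt t}_n)$, obtaining $|\hat\varphi_n(0,0)|\ge 1/2$ and $|\hat\varphi_n({\tt s},{\tt t})|\le C e^{-\gamma {\tt t}}$ uniformly. The limit satisfies $a_\infty\partial_{\tt s}^2\hat\varphi_\infty + \partial_{\tt t}^2\hat\varphi_\infty - 2\hat\varphi_\infty = 0$ on $\R^2$. Fourier transforming in $\tt s$, each frequency $\xi$ gives $\partial_{\tt t}^2 = (2+a_\infty\xi^2)\ge 2$, whose nontrivial solutions grow at least like $e^{\sqrt{2}{\tt t}}$; since $\gamma<\sqrt{2}$ this contradicts the growth bound, so $\hat\varphi_\infty\equiv 0$, a contradiction.

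\textbf{Existence.} With the uniform a priori bound in hand, I approximate on strips $\R\times[-R,R]$ with zero Dirichlet data on $\{{\tt t}=\pm R\}$, where classical Fredholm theory applies: the finite-strip kernel is spanned by $v'_\star$-type modes which the orthogonality constraint annihilates, yielding a unique solution $\varphi_R$ with a bound independent of $R$ (by the same blow-up argument, localized). Passing to $R\to\infty$ by diagonal extraction in $C^{2,\alpha}_{\rm loc}$ and using the uniform $\D^{2,\alpha}_\gamma$ control produces the desired solution, which automatically inherits the periodicity/symmetry in $\tt s$ because the data and coefficients do. The main obstacle is the $|{\tt t}_n|\to\infty$ regime of the blow-up: the weight $\cosh^\gamma$ competes directly with the asymptotic rate $\sqrt{2}$ of the kernel $v'_\star$, so the strict inequality $\gamma<\sqrt{2}$ together with the exact value $f'(\pm 1)=-2$ is indispensable to rule out nontrivial blow-up limits.
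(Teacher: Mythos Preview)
Your proposal is essentially correct and takes a genuinely different route from the paper.

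The paper exploits the periodicity and evenness in ${\tt s}$ from the outset: it solves (\ref{aux_lin_scaled_end}) on the bounded-in-${\tt s}$ strip $(0,\eps^{-1}s_\tau)\times\R$ with Neumann data, obtaining existence directly by Riesz representation (the quadratic form is coercive on $\{v'_\star\}^\perp$, and $g\in L^2$ thanks to the $\cosh^{-\gamma}$ weight), then extends by symmetry to an entire periodic solution. The weighted $L^\infty$ bound is derived afterwards by a barrier/maximum-principle argument with the explicit supersolution $\lambda\cosh^{-\gamma}({\tt t})$ on $\{{\tt t}>{\tt t}_0\}$. Your scheme reverses the order: a blow-up/compactness argument gives the a priori estimate first, then existence follows by truncation in ${\tt t}$ and passage to the limit. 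This is the del~Pino--Kowalczyk--Wei style and is more portable (it does not structurally depend on periodicity in ${\tt s}$), whereas the paper's argument is more constructive and yields the decay constant explicitly through the barrier.

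Two points deserve tightening. First, the ``Fourier in ${\tt s}$'' step is loose: your blow-up limits are only bounded in ${\tt s}$, not $L^2$, so the Fourier transform is not directly available. In Case~1 you can instead use the auxiliary function $\psi({\tt s})=\int_\R\varphi_\infty^2\,d{\tt t}$, which is bounded and satisfies $a_\infty\psi''\ge 2\lambda_0\psi$ by the spectral gap, forcing $\psi\equiv 0$; in Case~2, conjugate by $e^{\gamma{\tt t}}$ to obtain a bounded entire solution of an elliptic equation with zeroth-order coefficient $\gamma^2-2<0$ and invoke the maximum principle on $\R^2$. Second, in your existence step, ``classical Fredholm theory'' on the unbounded strip $\R\times[-R,R]$ requires you to first reduce to a compact domain via the ${\tt s}$-periodicity (work on the torus $[0,\eps^{-1}s_\tau]\times[-R,R]$). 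Once there, the Dirichlet-in-${\tt t}$ problem actually has \emph{empty} kernel for every finite $R$ (since $v'_\star$ does not vanish at ${\tt t}=\pm R$), so invertibility is automatic and the orthogonality constraint is needed only for the uniform-in-$R$ bound, which your blow-up argument supplies.
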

\begin{proof}
\textit{step (i): Existence and uniqueness in a stripe.}\\

First we solve equation (\ref{aux_lin_scaled_end}) in the stripe $(0,\eps^{-1}s_\tau)\times\R$ with zero Neumann boundary conditions. We use that, thanks to the spectral properties of the operator $-\partial^2_{{\tt t}}-f'(v_\star({\tt t}))$, the quadratic form  
\begin{eqnarray}\notag
\mathcal{Q}_\tau(\varphi):=\int_{(0,\eps^{-1}s_\tau)\times\R} (|\nabla\varphi|^2-f'(v_\star)\varphi^2) d\text{s}d\text{t}
\end{eqnarray}
is positive definite on the space
$$\mathcal{Z}_{\eps,\tau}:=\{\varphi\in H^1((0,\eps^{-1}s_\tau)\times\R):\int_{\R}\varphi({\tt s},{\tt t})v'_\star(\text{s})d{\tt s}=0, \text{ for a. e. ${\tt s}\in\R$}\},$$
that is it satisfies
$$\mathcal{Q}_\tau(\varphi)\ge C\|\varphi\|^2_{H^1((0,\eps^{-1}s_\tau)\times\R)}, \qquad\forall\, \varphi\in \mathcal{Z}_{\eps,\tau},$$
for some constant $C>0$ independent of $\tau$. Therefore, by the Riesz representation theorem, there exists a unique $\bar{\varphi}_\tau\in \mathcal{Z}_{\eps,\tau}$ such that 
\begin{eqnarray}
\int_{(0,\eps^{-1}s_\tau)\times\R}(\nabla\bar{\varphi}_\tau,\nabla\psi)-f'(v_\star)\bar{\varphi}_\tau\psi d{\tt s}d{\tt t}=\int_{(0,\eps^{-1}s_\tau)\times\R} g\psi d{\tt s}d{\tt t}, \qquad\forall\psi\in \mathcal{Z}_{\eps,\tau}. \label{weak_sol_aux_lin_end}
\end{eqnarray}
In order to prove that $\bar{\varphi}_\tau$ is actually a weak solution, we have to check that (\ref{weak_sol_aux_lin_end}) is also verified for any $\psi$ in the orthogonal complement
$$\mathcal{Z}_{\eps,\tau}^\bot=\{a({\tt s})v'_\star({\tt t}):\,a\in H^1((0,\eps^{-1}s_\tau))\},$$
which is the case since $\bar{\varphi}_\tau\in \mathcal{Z}_{\eps,\tau}$ and $v'_\star$ is in the kernel of $-\partial^2_{{\tt t}}-f'(v_\star({\tt t}))$. If $g\in \D^{0,\alpha}_{\eps,\tau,\gamma}(\R^2)$, then, by the regularity theory (see \cite{GT}), we see that $\bar{\varphi}_\tau\in C^{2,\alpha}_{loc}((0,\eps^{-1}s_\tau)\times\R)$ is a classical solution, and the boundary conditions are satisfied in the classical sense. Therefore, by Theorem $6.26$ of \cite{GT}, we see that $\bar{\varphi}_\tau\in C^2([0,\eps^{-1}s_\tau]\times\R)$. \\

\textit{Step (ii): Symmetry and extension to an entire solution.}\\

Due to uniqueness, $\bar{\varphi}_\tau$ inherits the symmetries of $g$, that is it satisfies 
$$\bar{\varphi}_\tau({\tt s},{\tt t})=\bar{\varphi}_\tau(\eps^{-1}s_{\tau}-{\tt s},{\tt t}),\qquad \forall \, {\tt s}\in (0,\eps^{-1}s_\tau),$$
thus it can be extended to an entire solution $\varphi_\tau\in C^{2,\alpha}_{loc}(\R^2)$, which fulfils (\ref{symm_delaunay}).\\

\textit{Step (iii): Boundedness. $\varphi_\tau\in L^\infty(\R^2)$ and}
\begin{eqnarray}
\|\varphi_\tau\|_{L^\infty(\R^2)}\le C(\gamma)\|g\cosh^\gamma({\tt t})\|_{L^\infty(\R^2)}.\label{bound_Linfty}
\end{eqnarray}
Testing the equation with $\varphi_\tau$ gives
$$\|\varphi_\tau\|_{H^1((0,\eps^{-1}s_{\tau})\times\R)}\le C\|g_\tau\|_{L^2((0,\eps^{-1}s_{\tau})\times\R)}.$$
By the Sobolev embeddings, local elliptic estimates (see \cite{GT}, Theorem $8.8$) and periodicity, for any $({\tt s},{\tt t})\in\R^2$, we have
\begin{equation}
\begin{aligned}
&\|\varphi_\tau\|_{L^\infty(B_1(({\tt s},{\tt t})))}\le C\|\varphi_\tau\|_{W^{2,2}(B_1(({\tt s},{\tt t})))}\le C(\|\varphi_\tau\|_{L^2(B_2(({\tt s},{\tt t})))}+\|g\|_{L^2(B_2(({\tt s},{\tt t})))})\\
&\le C(\|\varphi_\tau\|_{L^2((0,\eps^{-1}s_{\tau})\times\R)}+\|g\|_{L^2((0,\eps^{-1}s_{\tau})\times\R)})\le
C\|g\|_{L^2((0,\eps^{-1}s_{\tau})\times\R)}\le C(\gamma)\|g\cosh^\gamma({\tt t})\|_{L^\infty(\R^2)},
\end{aligned}
\end{equation}
where the constants may depend on $\gamma$, but not on $({\tt s},{\tt t})$. As a consequence, $\varphi_\tau\in L^\infty(\R^2)$ and (\ref{bound_Linfty}) holds.

\textit{Step (iv): decay. $\varphi_\tau \cosh^\gamma({\tt t})\in L^\infty(\R^2)$ and there exists $C>0$ such that}
\begin{eqnarray}
\|\varphi_\tau\cosh^\gamma({\tt t})\|_{L^\infty(\R^2)}\le C\|g_\tau\cosh^\gamma({\tt t})\|_{L^\infty(\R^2)}.\label{bound_Linfty_gamma}
\end{eqnarray}
This can be proved by the maximum principle, using the function $\cosh^{-\gamma}(\tt{t})$ as a barrier. In fact, setting $\Omega_{{\tt t}_0}:=\{({\tt s},{\tt t})\in\R^2:{\tt t}>{\tt t}_0\}$, we have
$$\varphi_\tau({\tt s},{\tt t}_0)\le \|\varphi_\tau\|_{L^\infty(\R^2)}\le \lambda \cosh^{-\gamma}({\tt t}_0)$$
on $\partial\Omega_{{\tt t}_0}$, provided $\lambda\ge \|\varphi_\tau\|_{L^\infty(\R^2)}e^{\gamma{\tt t}_0}$. Moreover, in $\Omega_{{\tt t}_0}$ we have
\begin{equation}
\begin{aligned}
&-((\tau e^{\sigma_\tau})^{-2}\partial^2_{{\tt s}}+\partial^2_{{\tt t}}+f'(v_\star))(\varphi_\tau-\lambda \cosh^{-\gamma}({\tt t}))=g_\tau-\lambda(-\gamma^2-f'(v_\star))\cosh^{-\gamma}({\tt t})\\
&\le (\|g\cosh^\gamma({\tt t})\|_{L^\infty(\R^2)}-\lambda(1-\frac{\gamma^2}{2}))\cosh^{-\gamma}({\tt t})\le 0
\end{aligned}
\end{equation}
provided 
$$\lambda\ge \frac{2\|g_\tau\cosh^\gamma({\tt t})\|_{L^\infty(\R^2)}}{2-\gamma^2}.$$
As a consequence, taking 
$$\lambda:=\frac{2\|g\cosh^\gamma({\tt t})\|_{L^\infty(\R^2)}}{2-\gamma^2}+\|\varphi_\tau\|_{L^\infty(\R^2)}\cosh^{\gamma}({\tt t}_0)$$
and applying the maximum principle in the unbounded domain $\Omega_{{\tt t}_0}$, we have
\begin{eqnarray}\notag
\varphi_\tau({\tt s},{\tt t}) \cosh^\gamma({\tt t})\le \frac{2\|g\cosh^\gamma({\tt t})\|_{L^\infty(\R^2)}}{2-\gamma^2}+2\|\varphi_\tau\|_{L^\infty(\R^2)}\cosh^{\gamma}({\tt t}_0)\le C(\gamma)\|g\cosh^\gamma({\tt t})\|_{L^\infty(\R^2)},
\end{eqnarray}
for any $({\tt s},{\tt t})\in\R^2_+:=\{({\tt s},{\tt t})\in\R^2:\,{\tt t}>0\}$. For the maximum principle in possibly unbounded domains, we refer to Lemma $2.1$ of \cite{BCN}. Applying this last inequality to $-\varphi({\tt s},{\tt t})$ and to $\varphi({\tt s},-{\tt t})$, we have the statement.\\

\textit{Step (v): Estimate of the weighted norm of $\varphi_\tau$.}\\

The estimate of the solution given by (\ref{est_wnorm_aux_end}) follows from step \textit{(iv)} and local elliptic estimates (see \cite{GT}, Corollary $6.3$). We note that the constants appearing in the elliptic estimates do not depend on $\eps$, thanks to the scaling.\\



\end{proof}

\begin{lemma}
Let $0<\underline{\tau}<\overline{\tau}<1$, $\tau\in(\underline{\tau},\overline{\tau})$, $\gamma\in(0,\sqrt{2})$, $\alpha\in (0,1)$ and $g\in \D^{0,\alpha}_{\tau,\gamma}(\R^2)$. Assume that $g$ satisfies (\ref{cond_ort_end}). Then, for $\eps$ small enough, there exists a unique solution $\varphi_\tau:=\Psi^1_\tau(g)\in \D^{2,\alpha}_{\tau,\gamma}(\R^2)$ to (\ref{aux_lin_end}) satisfying (\ref{cond_ort_end}) and
\begin{eqnarray}
\|\Psi^1_\tau(g)\|_{E^{2,\alpha}_\gamma(\R^2)}\le C\eps^{1-\alpha}\|g\|_{E^{0,\alpha}_\gamma(\R^2)}.\label{est_wnorm_aux_end_s}
\end{eqnarray}
for some constant $C>0$. Moreover,
\begin{equation}
\label{lip_varphi_gamma_s}
\begin{aligned}
&\|\Psi^1_{\tau_1}(g_1)-\Psi^1_{\tau_2}(g_2)\|_{E^{2,\alpha}_\gamma(\R^2)}\\
&\le C(\eps^{1-\alpha}\|g_1-g_2\|_{E^{0,\alpha}_\gamma(\R^2)}
+\eps^{1-2\alpha}|\tau_1-\tau_2|(\|g_1\|_{E^{0,\alpha}_\gamma(\R^2)}+\|g_2\|_{E^{0,\alpha}_\gamma(\R^2)})),
\end{aligned}
\end{equation}
for any $\tau_i\in(\underline{\tau},\overline{\tau})$, $g_i\in\D^{0,\alpha}_{\tau_i,\gamma}(\R^2)$, $i=1,2$.\label{lemma_aux_lin}
\end{lemma}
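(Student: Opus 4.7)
The plan is to reduce to the scaled Lemma \ref{lemma_ls_scaled} by the change of variables $({\tt s},{\tt t})=(\eps^{-1}s,t)$, and then derive the Lipschitz estimate by subtracting the equations corresponding to different parameters. First I would set $\varphi_\eps({\tt s},{\tt t}):=\varphi(\eps{\tt s},{\tt t})$ and $g_\eps({\tt s},{\tt t}):=g(\eps{\tt s},{\tt t})$. Since $\partial^2_s=\eps^{-2}\partial^2_{\tt s}$, multiplying (\ref{aux_lin_end}) by $\eps$ shows that $\varphi$ solves (\ref{aux_lin_end}) if and only if $\varphi_\eps$ solves (\ref{aux_lin_scaled_end}) with right-hand side $\eps g_\eps$, while (\ref{cond_ort_end}) becomes (\ref{cond_ort_scaled_end}) for $\eps g_\eps$. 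The comparison (\ref{compare_norms_per}) gives $\|g_\eps\|_{\D^{0,\alpha}_\gamma}\le C\|g\|_{E^{0,\alpha}_\gamma}$, so Lemma \ref{lemma_ls_scaled} yields a unique $\varphi_\eps\in\D^{2,\alpha}_{\eps,\tau,\gamma}(\R^2)\cap\mathcal{Z}_{\tau}$ with $\|\varphi_\eps\|_{\D^{2,\alpha}_\gamma}\le C\eps\|g\|_{E^{0,\alpha}_\gamma}$. Undoing the scaling via the lower bound in (\ref{compare_norms_per}) produces (\ref{est_wnorm_aux_end_s}); I set $\Psi^1_\tau(g):=\varphi$.

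For the Lipschitz estimate I would split $\Psi^1_{\tau_1}(g_1)-\Psi^1_{\tau_2}(g_2)=\Psi^1_{\tau_1}(g_1-g_2)+\bigl(\Psi^1_{\tau_1}(g_2)-\Psi^1_{\tau_2}(g_2)\bigr)$. By linearity of the equation in $(\varphi,g)$, the first term is handled by (\ref{est_wnorm_aux_end_s}) applied to $g_1-g_2$ and yields the $\eps^{1-\alpha}\|g_1-g_2\|_{E^{0,\alpha}_\gamma}$ contribution. For the second, setting $\varphi_i:=\Psi^1_{\tau_i}(g_2)$ and subtracting the two equations gives
\[
\eps(\tau_1 e^{\sigma_{\tau_1}})^{-2}\partial^2_s(\varphi_1-\varphi_2)+\eps^{-1}\partial^2_t(\varphi_1-\varphi_2)+\eps^{-1}f'(v_\star)(\varphi_1-\varphi_2)=\eps\bigl[(\tau_2 e^{\sigma_{\tau_2}})^{-2}-(\tau_1 e^{\sigma_{\tau_1}})^{-2}\bigr]\partial^2_s\varphi_2,
\]
whose right-hand side is still orthogonal to $v'_\star(t)$ because $\varphi_2$ is. Smoothness of $\tau\mapsto\sigma_\tau$ from Section \ref{section_Delaunay} controls the coefficient difference by $C|\tau_1-\tau_2|$ in $C^{0,\alpha}$, and from the definition of the $E^{2,\alpha}_\gamma$-norm one has $\eps^2\|\partial^2_s\varphi_2\|_{E^{0,\alpha}_\gamma}\le\|\varphi_2\|_{E^{2,\alpha}_\gamma}\le C\eps^{1-\alpha}\|g_2\|_{E^{0,\alpha}_\gamma}$, so the right-hand side has $E^{0,\alpha}_\gamma$-norm at most $C|\tau_1-\tau_2|\eps^{-\alpha}\|g_2\|_{E^{0,\alpha}_\gamma}$. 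A second application of (\ref{est_wnorm_aux_end_s}) produces the $\eps^{1-2\alpha}|\tau_1-\tau_2|$ contribution.

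The main delicate point will be tracking the compounding $\eps^{-\alpha}$ losses from H\"older seminorms under the anisotropic rescaling in $s$: one factor comes from inverting the scaled operator, and a second from bounding $\partial^2_s\varphi_2$ in the $E$-norm. A minor technical subtlety is that $\varphi_1$ and $\varphi_2$ carry different Delaunay periods $s_{\tau_1}\ne s_{\tau_2}$, so their difference lacks an intrinsic Delaunay symmetry; however, the estimates in Lemma \ref{lemma_ls_scaled} only rest on the orthogonality in $t$ and the $L^\infty$ barrier argument of step (iv), which apply to the difference viewed as a generic decaying solution on $\R^2$, so no genuine obstruction arises.
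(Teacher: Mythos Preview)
Your argument is correct and matches the paper's proof: both reduce existence and the estimate (\ref{est_wnorm_aux_end_s}) to Lemma \ref{lemma_ls_scaled} via the rescaling $({\tt s},{\tt t})=(\eps^{-1}s,t)$ together with (\ref{compare_norms_per}), and both obtain the Lipschitz bound by subtracting the two equations and using that the coefficient difference $(\tau_1 e^{\sigma_{\tau_1}})^{-2}-(\tau_2 e^{\sigma_{\tau_2}})^{-2}$ is $O(|\tau_1-\tau_2|)$ while $\eps\,\partial^2_s\varphi_{\tau_2}$ is controlled by (\ref{est_wnorm_aux_end_s}). The only cosmetic difference is that the paper writes a single equation for $\Psi^1_{\tau_1}(g_1)-\Psi^1_{\tau_2}(g_2)$ with right-hand side $g_1-g_2-\eps\bigl((\tau_1 e^{\sigma_{\tau_1}})^{-2}-(\tau_2 e^{\sigma_{\tau_2}})^{-2}\bigr)\partial^2_s\varphi_{\tau_2}$, rather than splitting off $\Psi^1_{\tau_1}(g_1-g_2)$ first; your observation that the period mismatch forces one to read the a priori estimate of Lemma \ref{lemma_ls_scaled} (steps (iii)--(v)) as holding for generic bounded solutions orthogonal to $v'_\star$ is a point the paper leaves implicit but which is needed in either formulation.
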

\begin{proof}
This can be proved applying Lemma \ref{lemma_ls_scaled} and a scaling argument (see (\ref{compare_norms_per})). Indeed, it is possible to find a solution $\varphi_{\tau,\eps}\in \D^{2,\alpha}_{\eps,\tau,\gamma}(\R^2)\cap \mathcal{Z}_{\eps,\tau}$ to 
\begin{eqnarray}\notag
(\tau e^{\sigma_\tau})^{-2}\partial^2_{{\tt s}}\varphi_{\tau,\eps}+\partial^2_{{\tt t}}\varphi_{\tau,\eps}+f'(v_\star)\varphi_{\tau,\eps}=\eps g_\eps, \qquad g_\eps({\tt s},{\tt t}):=g(\eps{\tt s},{\tt t})=g(s,t).
\end{eqnarray}
Then $\varphi_\tau(s,t):=\varphi_{\tau,\eps}(\eps^{-1}s,t)=\varphi_{\tau,\eps}({\tt s},{\tt t})$ is the required solution. As regards the Lipschitz dependence on the data, it is enough to observe that
$$(\eps(\tau_1 e^{\sigma_{\tau_1}})^{-2}\partial^2_{{\tt s}}+\eps^{-1}\partial^2_{{\tt t}}+\eps^{-1}f'(v_\star))(\varphi_{\tau_1}-\varphi_{\tau_2})=g_1-g_2
-\eps((\tau_1 e^{\sigma_{\tau_1}})^{-2}-(\tau_2 e^{\sigma_{\tau_2}})^{-2})\partial^2_{{\tt s}}\varphi_{\tau_2},$$
and to use the size in $\eps$ of $\varphi_\tau$ and (\ref{est_wnorm_aux_end_s}).
\end{proof}

\subsubsection{The proof of Proposition \ref{prop_aux_end}: a fixed point argument}

Using the right inverse constructed in section \ref{section_aux_end_linear} (see Lemma \ref{lemma_aux_lin}), it is possible to formulate equation (\ref{eq_aux_end}) as a fixed point problem of the form
$$\varphi_j=\Psi^1_{\tau_j(\eps^2\mathbf{d}_j)}\Pi_{\mathcal{Z}_{\tau_j(\eps^2\mathbf{d}_j)}}\mathcal{H}^5_\eps({\tt h}_j,\varphi_j,\mathbf{d}_j)$$
in the ball
$$B_{\Lambda_1}:=\{\varphi_j\in \D^{2,\alpha}_{\tau_j(\eps^2\mathbf{d}_j),\gamma}(\R^2)\cap\mathcal{Z}_{\tau_j(\eps^2\mathbf{d}_j)}:
\|\varphi_j\|_{E^{2,\alpha}_\gamma(\R^2)}<\Lambda_1\eps^{3-\alpha}\}.$$
The right hand side defines a contraction on $B_{\Lambda_1}$ provided $\alpha\in (0,1/2)$ and $\Lambda_1>0$ is large enough, due to the definition of $\mathcal{H}^5_\eps$ (\ref{def_H5}), the estimates (\ref{est_H2}), (\ref{est_H4}), Lemma \ref{lemma_lip_H2} and (\ref{lip_H4_varphi}). The Lipschitz dependence on the data ${\tt h}_j$ and $\mathbf{d}_j$ follows in a similar way.

\subsection{The bifurcation equation along an end}\label{section_bifo_end}

The aim of this subsection is to solve equation (\ref{eq_bifo_end}). This can be done applying once again the contraction mapping theorem, after constructing a right inverse of the linear operator $\partial^2_s+\tau^2 \cosh(2\sigma_\tau)$.

\subsubsection{The linear problem}

We fix $0<\underline{\tau}<\overline{\tau}<1$, $\tau\in(\underline{\tau},\overline{\tau})$ and we study the ODE 
\begin{eqnarray}
L_{0,\tau}{\tt h}=g.\label{leading_ODE}
\end{eqnarray}
It is natural to assume that $g$ is periodic of period $s_\tau$ and even with respect to $s_\tau/2$, namely $g\in D^{0,\alpha}_\tau(\R)$. We recall that we have set
$$L_{0,\tau}:=\partial^2_s+\tau^2\cosh(2\sigma_\tau).$$

\begin{lemma}{[Injectivity]}
Let $0<\underline{\tau}<\overline{\tau}<1$, $\tau\in(0,1)$ and $\alpha\in (0,1)$. If ${\tt h}\in D^{2,\alpha}_\tau(\R)$ is a solution to the equation $L_{0,\tau}{\tt h}=0$, then ${\tt h}=0$.
\label{lemma_injet_tau}
\end{lemma}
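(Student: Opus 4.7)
The plan is to exploit that $L_{0,\tau}=\partial_s^2+\tau^2\cosh(2\sigma_\tau)$ is a second-order linear ODE with periodic coefficient, so its kernel on $\R$ is two-dimensional, and to identify an explicit basis of this kernel using the rotationally symmetric Jacobi fields of $D_\tau$ recorded in Lemma \ref{lemma_Jacobi_fields}.

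First I would observe that $\Phi^{T,e_3}_\tau=\partial_s\sigma_\tau$ and $\Phi^D_\tau$ are independent of $\vartheta$, so the full equation $L_\tau \phi=0$ collapses to $L_{0,\tau}\phi=0$ on them; hence both lie in $\ker L_{0,\tau}$. These two elements are linearly independent: $\partial_s\sigma_\tau$ is $s_\tau$-periodic and therefore bounded, whereas $\Phi^D_\tau$ grows linearly in $s$, since it contains the factor $\partial_\tau k_\tau$, and $k_\tau$ itself grows linearly because $\partial_s k_\tau=\tfrac{\tau^2}{2}(1+e^{2\sigma_\tau})$ is positive and $s_\tau$-periodic. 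Consequently $\{\partial_s\sigma_\tau,\Phi^D_\tau\}$ spans $\ker L_{0,\tau}$.

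Next I would record the relevant symmetry of $\sigma_\tau$: it is even about both $s=0$ and $s=s_\tau/2$. This is a consequence of $\rho_\tau$ being even about $x_3=0$ (by uniqueness of the periodic solution of (\ref{ODE_rho}) with $\partial_{x_3}\rho_\tau(0)=0$) and, by periodicity, also about $x_3=T_\tau/2$; the isothermal change of variables defined by $\partial_s k_\tau=\tfrac{\tau^2}{2}(1+e^{2\sigma_\tau})$ and $\tau e^{\sigma_\tau}=\rho_\tau\circ k_\tau$ transports these symmetries to the $s$-variable. In particular, $\partial_s\sigma_\tau$ is $s_\tau$-periodic but odd about $s=s_\tau/2$.

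Given now ${\tt h}\in D^{2,\alpha}_\tau(\R)$ with $L_{0,\tau}{\tt h}=0$, I would write ${\tt h}=c_1\partial_s\sigma_\tau+c_2\Phi^D_\tau$. Periodicity of ${\tt h}$ forces $c_2=0$ (otherwise ${\tt h}$ would be unbounded), so ${\tt h}=c_1\partial_s\sigma_\tau$ is odd about $s_\tau/2$; combined with the evenness required by membership in $D^{2,\alpha}_\tau(\R)$, this forces $c_1=0$, whence ${\tt h}\equiv 0$. I do not anticipate any serious obstacle — the only delicate points are the symmetry of $\sigma_\tau$ and the linear growth of $\Phi^D_\tau$, and both follow directly from the definitions in Section \ref{section_Delaunay}.
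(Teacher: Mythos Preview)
Your proof is correct and follows the same skeleton as the paper's: both identify $\{\partial_s\sigma_\tau,\Phi^D_\tau\}$ as a basis of $\ker L_{0,\tau}$ and then show that the constraints defining $D^{2,\alpha}_\tau(\R)$ force the coefficients to vanish. The tactical difference is in how the constraints are enforced. You argue qualitatively: periodicity (hence boundedness) of ${\tt h}$ kills the coefficient of the linearly growing $\Phi^D_\tau$, and the required evenness about $s_\tau/2$ kills the coefficient of $\partial_s\sigma_\tau$, which is odd about that point. The paper instead restricts to $[0,s_\tau]$, observes that membership in $D^{2,\alpha}_\tau(\R)$ forces the Neumann conditions ${\tt h}'(0)={\tt h}'(s_\tau)=0$, computes $\partial_s\Phi^{0,\pm}_\tau$ explicitly at both endpoints, and checks that the resulting $2\times2$ system is nonsingular (using that $\partial_\tau k_\tau(0)\ne\partial_\tau k_\tau(s_\tau)$). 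Your route is a bit more streamlined since it avoids these explicit computations; the paper's route has the minor advantage of not needing the parity of $\sigma_\tau$ about $s_\tau/2$, which you correctly justify but which does require a short argument.
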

\begin{proof}
Thanks to periodicity, we can reduce ourselves to study (\ref{leading_ODE}) on the interval $(0,s_\tau)$. We prove that any solution ${\tt h}$ in the space
$$D^{2,\alpha}_{\tau,0}([0,s_\tau]):=\{{\tt h}|_{[0,s_\tau]}:\,{\tt h}\in D^{2,\alpha}_\tau(\R)\}=\{{\tt h}\in D^{2,\alpha}([0,s_\tau]):\,{\tt h}(s)=\text{h}(s_\tau-s),\,{\tt h}'(0)={\tt h}'(s_\tau)=0\}.$$
is trivial. For this purpose, we note that $L_{0,\tau}$ has two Jacobi fields given by
$$\Phi^{0,+}_\tau:=\partial_s\sigma_\tau,\qquad\Phi^{0,-}_\tau:=\sqrt{1-\tau^2}\bigg(\frac{1}{\tau}\partial_s\sigma\partial_\tau k-e^{\sigma_\tau}\cosh\sigma_\tau(1+\tau\partial_\tau \sigma_\tau)\bigg).$$
Using the equation satisfied by $\sigma_\tau$, we see that
$$
    \begin{pmatrix}
      \partial_s\Phi^{0,+}_\tau(0) \\ \partial_s\Phi^{0,+}_\tau(s_\tau) 
    \end{pmatrix}
    = \sqrt{1-\tau^2} \begin{pmatrix}
      1 \\ 1 
    \end{pmatrix},\qquad \qquad 
    \begin{pmatrix}
     \partial_s\Phi^{0,-}_\tau(0) \\ \partial_s\Phi^{0,-}_\tau(s_\tau)
    \end{pmatrix}
    = \frac{1-\tau^2}{\tau} \begin{pmatrix}
      \partial_\tau k_\tau(0) \\  \partial_\tau k_\tau(s_\tau)
    \end{pmatrix}, 
    $$
hence, due to the linear growth in $s$ of $\partial_\tau k_\tau$, $\partial_\tau k_\tau(0)\ne \partial_\tau k_\tau(s_\tau)$, thus $\Phi^{0,\pm}_\tau$ are linearly independent. Being $L_{0,\tau}$ a second order operator, any solution to the homogeneous equation is a linear combination of these functions. By the above relation, we see that a function ${\tt h}\in D^{2,\alpha}_{\tau,0}([0,s_\tau])$ which is a linear combination of $\Phi^{0,\pm}_\tau$ is trivial, due to the boundary conditions. 
\end{proof}

\begin{lemma}{[A priori estimate]}
Let $0<\underline{\tau}<\overline{\tau}<1$, $\tau\in(\underline{\tau},\overline{\tau})$ and $\alpha\in (0,1)$. Then there exists a constant $C>0$ independent of $\tau$ such that, if ${\tt h}_\tau\in D^{2,\alpha}_\tau(\R)$ and $g\in D^{0,\alpha}_\tau(\R)$ are such that $L_{0,\tau}{\tt h}_\tau=g$, then
$$\|{\tt h}_\tau\|_{C^{2,\alpha}(\R)}\le C\|g\|_{C^{0,\alpha}(\R)}$$
\label{apriori_tau}
\end{lemma}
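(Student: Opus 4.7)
The plan is a standard contradiction/compactness argument combining the injectivity statement of Lemma \ref{lemma_injet_tau} with uniform Schauder estimates, following the usual Fredholm-alternative philosophy in one space dimension.

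First, I would observe that the coefficient $s\mapsto \tau^2\cosh(2\sigma_\tau(s))$ is bounded in $C^{0,\alpha}(\R)$ uniformly for $\tau\in[\underline{\tau},\overline{\tau}]$, since $\sigma_\tau$ is periodic, smooth in $(s,\tau)$, and the period $s_\tau$ depends continuously on $\tau\in[\underline{\tau},\overline{\tau}]$. Consequently, interior Schauder estimates for the second order ODE $L_{0,\tau}{\tt h}_\tau=g$ (applied on balls of fixed radius larger than $\sup_{\tau\in[\underline{\tau},\overline{\tau}]}s_\tau$, then using periodicity to pass to the uniform norm on $\R$) yield
\[
\|{\tt h}_\tau\|_{C^{2,\alpha}(\R)}\le C_0\bigl(\|g\|_{C^{0,\alpha}(\R)}+\|{\tt h}_\tau\|_{L^\infty(\R)}\bigr),
\]
with a constant $C_0$ independent of $\tau\in[\underline{\tau},\overline{\tau}]$. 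The lemma is therefore reduced to establishing
\[
\|{\tt h}_\tau\|_{L^\infty(\R)}\le C\|g\|_{C^{0,\alpha}(\R)}.
\]

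I would argue this by contradiction: suppose there exist sequences $\tau_n\in[\underline{\tau},\overline{\tau}]$, ${\tt h}_n\in D^{2,\alpha}_{\tau_n}(\R)$ and $g_n\in D^{0,\alpha}_{\tau_n}(\R)$ such that $L_{0,\tau_n}{\tt h}_n=g_n$, $\|{\tt h}_n\|_{L^\infty(\R)}=1$, and $\|g_n\|_{C^{0,\alpha}(\R)}\to 0$. Up to a subsequence $\tau_n\to\tau_\infty\in[\underline{\tau},\overline{\tau}]$, whence $s_{\tau_n}\to s_{\tau_\infty}$ and $\sigma_{\tau_n}\to\sigma_{\tau_\infty}$ in $C^{2,\alpha}_{\rm loc}(\R)$. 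By the Schauder estimate above the sequence ${\tt h}_n$ is uniformly bounded in $C^{2,\alpha}(\R)$, so by Arzel\`a-Ascoli (and periodicity, which prevents mass from escaping to infinity) a subsequence converges in $C^2_{\rm loc}(\R)$ to a limit ${\tt h}_\infty$. Passing to the limit in the ODE gives $L_{0,\tau_\infty}{\tt h}_\infty=0$; the even symmetry about $s_{\tau_n}/2$ and the $s_{\tau_n}$-periodicity pass to the limit since $s_{\tau_n}\to s_{\tau_\infty}$ (if one prefers, rescale first by setting $\tilde{{\tt h}}_n(x):={\tt h}_n(s_{\tau_n}x)$ so that all functions live on the fixed interval $[0,1]$), so ${\tt h}_\infty\in D^{2,\alpha}_{\tau_\infty}(\R)$. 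By Lemma \ref{lemma_injet_tau}, ${\tt h}_\infty\equiv 0$.

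To reach the contradiction I need $\|{\tt h}_\infty\|_{L^\infty(\R)}=1$; this is where I must upgrade $C^2_{\rm loc}$ convergence to uniform convergence on $\R$. This is immediate from periodicity: after the rescaling to $[0,1]$ (or working on the bounded interval $[0,\sup_n s_{\tau_n}+1]$), $C^2_{\rm loc}$ convergence is the same as uniform convergence, and then periodicity propagates it to all of $\R$. Thus $\|{\tt h}_\infty\|_{L^\infty(\R)}=\lim\|{\tt h}_n\|_{L^\infty(\R)}=1$, contradicting ${\tt h}_\infty\equiv 0$.

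The only mildly delicate point is ensuring uniformity of all constants in $\tau\in[\underline{\tau},\overline{\tau}]$, which is why I single out the Schauder step and the rescaling to a fixed interval; once those are in place, the argument is standard. The injectivity input from Lemma \ref{lemma_injet_tau} is what breaks the compactness loop and produces the desired bound.
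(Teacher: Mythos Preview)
Your argument is correct and essentially identical to the paper's: both reduce via Schauder to an $L^\infty$ bound, argue by contradiction with normalized sequences, extract a limit via compactness, and invoke Lemma~\ref{lemma_injet_tau} to force the limit to vanish. The only cosmetic difference is in the final contradiction step---the paper picks points $\bar s_k\in[0,s_{\tau_k}]$ with $|{\tt h}_k(\bar s_k)|>1/2$ and passes to the limit along a convergent subsequence of $\bar s_k$, whereas you phrase the same periodicity-based localization as upgrading $C^2_{\mathrm{loc}}$ convergence to uniform convergence on a fixed bounded interval.
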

\begin{proof}
By the elliptic estimates, it is enough to bound the $L^\infty(\R)$-norm of ${\tt h}_{\tau}$. We assume by contradiction that there exist sequences $\tau_k$ in $(\underline{\tau},\overline{\tau})$, ${\tt h}_k\in D^{2,\alpha}_{\tau_k}(\R)$ and $g_k\in D^{2,\alpha}_{\tau_k}(\R)$ such that
$$L_{0,\tau_k}{\tt h}_k=g_k, \qquad \|{\tt h}_k\|_{L^\infty(\R)}=1, \qquad \|g_k\|_{C^{0,\alpha}(\R)}=o(1).$$
Then, by the elliptic estimates, ${\tt h}_k$ turns out to be bounded in $C^{2,\alpha}(\R)$, thus it converges, up to a subsequence, to some function ${\tt h}_\infty$ in $C^2_{loc}(\R)$. In order to compute the equation satisfied by ${\tt h}_\infty$, we observe that, up to a subsequence, $\tau_k\to\tau_\infty\in [\underline{\tau},\overline{\tau}]$, thus, by the continuous dependence on the data, $\sigma_{\tau_k}\to \sigma_{\tau_\infty}$ point wise and $s_{\tau_k}\to s_{\tau_\infty}$, which yields that ${\tt h}_\infty\in D^{2,\alpha}_{\tau_\infty}(\R)$ is a solution to $L_{0,\tau_\infty}{\tt h}_\infty=0$, hence, by Lemma \ref{lemma_injet_tau}, ${\tt h}_\infty\equiv 0$. It is exactly here that we need the bound $\tau\in[\underline{\tau},\overline{\tau}]$, in order to exclude that $\tau_\infty\in\{0,1\}$. However, there exist $\bar{s}_k\in [0,s_{\tau_k}]$ such that ${\tt h}_{\tau_k}(\bar{s}_k)>1/2$. Since $\bar{s}_k$ is bounded too, then, up to a subsequence, $\bar{s}_k\to \bar{s}_\infty\in[0,s_{\tau_\infty}]$ and ${\tt h}_\infty(\bar{s}_\infty)\ge 1/2$, a contradiction.
\end{proof}

\begin{lemma}
Let $0<\underline{\tau}<\overline{\tau}<1$, $\tau\in(\underline{\tau},\overline{\tau})$, $\alpha\in (0,1)$ and $g\in D^{0,\alpha}_\tau(\R)$. Then there exists a unique solution ${\tt h}_\tau:=\Psi^2_\tau(g)\in D^{0,\alpha}_\tau(\R)$ to the equation $L_{0,\tau}{\tt h}_\tau=g$ such that
\begin{equation}
\|\Psi^2_\tau(g)\|_{C^{2,\alpha}(\R)}\le C\|g\|_{C^{0,\alpha}(\R)},\label{est_Psi2}
\end{equation}
for some constant $C>0$. Moreover,
\begin{equation}
\|\Psi^2_{\tau_1}(g_1)-\Psi^2_{\tau_2}(g_2)\|_{C^{2,\alpha}(\R)}\le C\|g_1-g_2\|_{C^{0,\alpha}(\R)}+C|\tau_1-\tau_2|(\|g_1\|_{C^{0,\alpha}(\R)}+\|g_2\|_{C^{0,\alpha}(\R)})\label{lip_Phi2_tau},
\end{equation}
for any $\tau_i\in(\underline{\tau},\overline{\tau})$, $g_i\in D^{0,\alpha}_{\tau_i}(\R)$, $i=1,2$.
\end{lemma}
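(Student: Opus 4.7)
For fixed $\tau\in(\underline\tau,\overline\tau)$ and $g\in D^{0,\alpha}_\tau(\R)$, I reduce \eqref{leading_ODE} to the fundamental domain $[0,s_\tau]$ equipped with Neumann conditions ${\tt h}'(0)={\tt h}'(s_\tau)=0$ and the reflection symmetry ${\tt h}(s)={\tt h}(s_\tau-s)$, which the operator $L_{0,\tau}$ preserves because $\sigma_\tau$ itself has the same symmetry. On this subspace, $L_{0,\tau}$ is self-adjoint with compact resolvent, hence Fredholm of index zero, and by Lemma~\ref{lemma_injet_tau} its kernel is trivial. This gives existence and uniqueness of a symmetric solution on $[0,s_\tau]$, which extends by even reflection and periodicity to $\Psi^2_\tau(g)\in D^{2,\alpha}_\tau(\R)$. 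Uniqueness on the whole line then follows again from Lemma~\ref{lemma_injet_tau}, while the bound \eqref{est_Psi2} is precisely Lemma~\ref{apriori_tau}.

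The delicate step is the Lipschitz dependence \eqref{lip_Phi2_tau}, because $\Psi^2_{\tau_1}(g_1)$ and $\Psi^2_{\tau_2}(g_2)$ live in spaces of different periods $s_{\tau_1}\ne s_{\tau_2}$, so their difference is not periodic and Lemma~\ref{apriori_tau} does not apply to it directly. My approach is to rescale to the common period $1$: setting
$$u_i(x):=\Psi^2_{\tau_i}(g_i)(s_{\tau_i}x),\qquad \tilde\sigma_{\tau_i}(x):=\sigma_{\tau_i}(s_{\tau_i}x),\qquad \tilde g_i(x):=g_i(s_{\tau_i}x),$$
each $u_i$ belongs to a $\tau$-independent Banach space of $C^{2,\alpha}$ functions that are $1$-periodic and symmetric about $1/2$, and satisfies
$$\mathcal{L}_{\tau_i}u_i:=u_i''+s_{\tau_i}^2\tau_i^2\cosh(2\tilde\sigma_{\tau_i})u_i=s_{\tau_i}^2\,\tilde g_i.$$
The family $\mathcal{L}_\tau$ inherits a uniform (in $\tau\in[\underline\tau,\overline\tau]$) $C^{2,\alpha}\to C^{0,\alpha}$ a priori bound by the same compactness-and-injectivity argument as in Lemma~\ref{apriori_tau}; this transfer of the uniform estimate is the one point that I would need to spell out carefully, but it is essentially automatic because the rescaling depends continuously on $\tau$ and the limiting injectivity is preserved.

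Then
$$\mathcal{L}_{\tau_1}(u_1-u_2)=s_{\tau_1}^2\tilde g_1-s_{\tau_2}^2\tilde g_2+(\mathcal{L}_{\tau_2}-\mathcal{L}_{\tau_1})u_2,$$
and the smooth dependence of $\sigma_\tau$ and $s_\tau$ on $\tau$ (standard ODE perturbation applied to \eqref{ODE_rho} and \eqref{def_sigma_k}) together with the a priori bound $\|u_2\|_{C^{2,\alpha}}\le C\|g_2\|_{C^{0,\alpha}}$ lets me control the right-hand side by $C\bigl(\|g_1-g_2\|_{C^{0,\alpha}(\R)}+|\tau_1-\tau_2|(\|g_1\|_{C^{0,\alpha}(\R)}+\|g_2\|_{C^{0,\alpha}(\R)})\bigr)$. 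The uniform estimate for $\mathcal{L}_{\tau_1}$ then yields the same bound for $\|u_1-u_2\|_{C^{2,\alpha}([0,1])}$, and transferring back to the $s$-variable (using that $s_{\tau_i}$ and $1/s_{\tau_i}$ stay in a compact range on $[\underline\tau,\overline\tau]$) gives \eqref{lip_Phi2_tau}.
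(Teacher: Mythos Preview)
Your existence, uniqueness, and estimate \eqref{est_Psi2} match the paper exactly: the paper's four-line proof says ``injective (Lemma~\ref{lemma_injet_tau}), self-adjoint, closed range (Lemma~\ref{apriori_tau}), hence isomorphism,'' which is your Fredholm-of-index-zero argument phrased slightly differently.

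For the Lipschitz bound \eqref{lip_Phi2_tau} the paper gives \emph{no} argument, so here you go further than the paper. You correctly spot that $\Psi^2_{\tau_1}(g_1)$ and $\Psi^2_{\tau_2}(g_2)$ have different periods and that Lemma~\ref{apriori_tau} cannot be applied to their difference; rescaling to a common period is the natural fix. Two caveats worth recording. First, with $g_i$ merely in $C^{0,\alpha}$, comparing $\tilde g_1(x)=g_1(s_{\tau_1}x)$ with $\tilde g_2(x)=g_2(s_{\tau_2}x)$ only yields a term of size $|\tau_1-\tau_2|^\alpha\|g\|_{C^{0,\alpha}}$, not the linear $|\tau_1-\tau_2|\|g\|$ of \eqref{lip_Phi2_tau}; to recover the linear rate you need one extra derivative of $g$, which in the application is available since $\mathcal{F}^1_\eps$ in \eqref{def_F1} is smooth. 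Second, ``transferring back to the $s$-variable'' on all of $\R$ is not literally possible: two periodic functions of slightly different periods eventually drift out of phase, so $\|\Psi^2_{\tau_1}(g_1)-\Psi^2_{\tau_2}(g_2)\|_{L^\infty(\R)}$ need not be small. The estimate \eqref{lip_Phi2_tau} should therefore be read as a bound on the rescaled difference $u_1-u_2$ (equivalently, on a single period), which is also how it is effectively used downstream via $\beta(\eps^2\mathbf{d})$ and the parametrisations $Y_j(\eps^2\mathbf{d}_j)$.
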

\begin{proof}
By Lemma \ref{lemma_injet_tau}, we know that $L_{0,\tau}$ is injective. Since this operator is self-adjoint on $D^{2,\alpha}_{\tau}(\R)$, in order to prove that it is also surjective, it is enough to prove that it is Fredholm. This property follows from the fact that the image is closed, due to estimate provided by Lemma \ref{apriori_tau}.
\end{proof}

\subsubsection{The proof of Proposition \ref{prop_bifo_end}: a fixed point argument}

In order to prove Proposition \ref{prop_ls_end}, we write equation (\ref{eq_bifo_end_pr}) as a fixed point problem, in the form
\begin{eqnarray}\notag
{\tt h}_j=\Psi^2_{\tau_j(\eps^2\mathbf{d}_j)}\mathcal{F}^1_\eps({\tt h}_j,\mathbf{d}_j).
\end{eqnarray}
For the definition of $\mathcal{F}^1_\eps$, see (\ref{def_F1}). It is possible to prove that the right-hand side defines a contraction on the ball
$$B_K:=\{{\tt h}\in D^{2,\alpha}_{\tau_j(\eps^2\mathbf{d}_j)}(\R):\|{\tt h}\|_{C^{2,\alpha}(\R)}<K\},$$
provided $K$ is large enough, due to (\ref{est_F}) and (\ref{lip_F_d_h}). The Lipschitz dependence on the datum $\mathbf{d}_j$ follows from (\ref{lip_F_d_h}). We note that in this step we determine the constant $K$ defined in (\ref{def_H}).

\section{The proof of Theorem \ref{main_th}}\label{section_proof}

\setcounter{equation}{0}

\subsection{A gluing procedure}\label{section_gluing}
In section (\ref{section_approx_solution}) we defined the approximate solution near the surface, with the aid of the Fermi coordinates. Here we define a global approximation, by interpolating between that approximation and the constant solutions $\pm 1+\sigma_\eps^\pm$ far from the surface. In order to do so, we introduce the pull back and the push forward of a function. For a function $u:\mathcal{N}_{9 \delta}\to \R$, the pull back is $u_\sharp:=u\circ \mathcal{Y}_{\eps,h(\mathbf{d}),\mathbf{d}}$, $\mathbf{d}\in (B_M)^k$, and, for a function $\phi:\Sigma(\eps^2\mathbf{d})\times(-9\delta/\eps,9\delta/\eps)\to\R$, the push forward is $\phi^\sharp:=\phi\circ \mathcal{Y}^{-1}_{\eps,h(\mathbf{d}),\mathbf{d}}$. In this notation, we set
\begin{eqnarray}
{\tt w}_\eps(\mathbf{d}):=\chi^\sharp_{5,\eps} v(\mathbf{d})^\sharp+ (1-\chi^\sharp_{5,\eps})\mathbb{H}_\eps,
\end{eqnarray}
where 
$$
\mathbb{H}_\eps(x):=
\begin{cases}
1+\sigma_\eps^+ &\text{if $x\in\Sigma(\eps^2\mathbf{d})^+$}\\
-1+\sigma_\eps^- &\text{if $x\in\Sigma(\eps^2\mathbf{d})^-$,}
\end{cases}
$$
$\Sigma(\eps^2\mathbf{d})^+$ is the interior and $\Sigma(\eps^2\mathbf{d})^-$ the exterior of $\Sigma(\eps^2\mathbf{d})$. Since we also need a small correction far from the surface, we need to introduce weighted spaces of functions that are globally defined in $\R^3$ and decaying both along the ends and in the orthogonal direction. For this purpose, we rescale the surface, that is we consider the dilation $\Sigma(\eps^2\mathbf{d})_\eps:=\eps^{-1}\Sigma(\eps^2\mathbf{d})$ and, for $\gamma>0$, we take the Green function $G_\gamma$ of $\Delta-\gamma$ and we set
$$\varphi_\gamma(x):=\int_{\Sigma(\eps^2\mathbf{d})_\eps}G_\gamma(x,y)d\sigma(y),$$
where $\sigma$ is the surface measure on $\Sigma(\eps^2\mathbf{d})_\eps$. This weight is exponentially decaying in the orthogonal direction to the dilated surface. In order to keep track of the behaviour of our functions along the ends, we introduce $2$ families of open cones $\{C_j\}_{1\le j\le k}$ and $\{C'_j\}_{1\le j\le k}$ such that, for $1\le j\le k$,\\
\begin{itemize}
\item $ C_j$ and $C'_j$ have a common centre, that we denote by $\eps^{-1}\mathbf{b}_j$ and their axis coincides with the one the end $E_j$. We denote its generator by $\mathbf{c}_j$, with the convention that $\mathbf{c}_j\cdotp x>0$, for any $x\in E_j$, $1\le j\le k$. 
\item $C'_j\subset C_j$, for $1\le j\le k$.
\item $C'_i\cap C'_j=\emptyset$, for $1\le i\ne j\le k$.
\end{itemize}
We stress that this construction is possible thanks to the fact that there are no parallel ends in $\Sigma$ (see hypothesis (H)). For instance, we can put
\begin{equation}
\begin{aligned}
& C_j:=\{x\in\R^3: \mathbf{c}_j\cdotp(x-\eps^{-1}\mathbf{b}_j)>\theta|\Pi_j(x-\eps^{-1}\mathbf{b}_j)|\},\\
& C'_j:=\{x\in\R^3: \mathbf{c}_j\cdotp(x-\eps^{-1}\mathbf{b}_j)>2\theta|\Pi_j(x-\eps^{-1}\mathbf{b}_j)|\},\\
\end{aligned}
\end{equation}
where $\Pi_j$ is the projection onto the orthogonal complement to $\mathbf{c}_j$ in $\R^3$. If $\theta$ is large enough, then the aforementioned properties are satisfied. Then we take a family $\{\beta_j\}_{1\le j\le k}$ of smooth cutoff functions equal $1$ in $C'_j\backslash B_{\eps^{-1}(\bar{R}+1)}$ and equal $0$ outside $C_j\backslash B_{\eps^{-1}\bar{R}}$, with $\bar{R}>0$ so large that $\Sigma(\eps^2\mathbf{d})\backslash \cup_{1\le j\le k} Z_j((s_0+5,\infty)\times S^1)\subset B_{\bar{R}}$, $\beta_0:=1-\sum_{j=1}^k \beta_j$. In view of this construction, we set, for $\gamma>0$, $a>0$ and $\eps$ small enough,
$$\Gamma_\eps(x):=\varphi_\gamma^{-1}(x)\bigg(\beta_0(x)+\sum_{j=1}^k \beta_j(x) e^{\eps a k^{-1}_{\tau_j(\eps^2\mathbf{d}_j)}\big((x-\eps^{-1}\mathbf{b}_j)\cdotp\mathbf{c}_j\big)}\bigg).$$
This weight is suitable to deal with a dilated version of the problem. For our original equation, we need the weight $\Gamma(x):=\Gamma_\eps(x/\eps)$, thus we introduce the spaces $C^{n,\alpha}_{a,\gamma}(\R^3)$ as the spaces of functions $\psi\in C^{n,\alpha}_{loc}(\R^3)$ for which the norm
$$\|\psi\|_{C^{n,\alpha}_{a,\gamma}(\R^3)}:=\sum_{m=1}^n \big(\eps^m\|\nabla^m(\psi\Gamma)\|_{L^\infty(\R^3)}+\eps^{m+\alpha}[\nabla^m(\psi\Gamma)]_\alpha\big)$$
is finite, where
\[
[u]_\alpha:=\sup_{x\ne y}\frac{|u(x)-u(y)|}{|x-y|^\alpha}.
\]
\begin{remark}
We note that, thanks to the fact that none of two of ends of $\Sigma$ are parallel (see hypothesis (H)), $C^{n,\alpha}_{a,\gamma}(\R^3)\subset L^2(\R^3)$. This is crucial in our approach, since it allows to use coercivity to solve (\ref{eq_far}).
\label{rem_angles} 
\end{remark}
We look for a true solution to (\ref{cahn_hilliard}) of the form
$$u_\eps={\tt w}_\eps(\mathbf{d})+\chi^\sharp_{2,\eps} \phi^\sharp+\psi,$$
where $\phi(\textbf{y},t)=\phi_0(\beta(\eps^2\mathbf{d})(\textbf{y}),t)$, with $\phi_0\in \mathcal{E}^{0,\alpha}_{a,\gamma}(\Sigma\times\R)$, and $\psi\in C^{2,\alpha}_{a',\gamma'}(\R^3)$. Here we take $0<a'<a<\bar{a}$ and $0<\gamma'<\gamma<\sqrt{2}$. The aim now is to reduce equation (\ref{cahn_hilliard}) to a system of $2$ equations, whose unknowns are $\phi_0$, $h_0$, $\mathbf{d}$ and $\psi$. We recall that $h_0$ and $\mathbf{d}$ play a role in the definition of the approximate solutions and in the diffeomeorphism $\mathcal{Y}_{\eps,h(\mathbf{d}),\mathbf{d}}$. In order to do so, we introduce another smooth cutoff function $\tilde{\chi}:\R^3\to\R$ equal to $1$ in $\Sigma(\eps^2\mathbf{d})^+\backslash \mathcal{N}_\delta$ and equal to $0$ in $\Sigma(\eps^2\mathbf{d})^-\backslash \mathcal{N}_\delta$, and we set
$$q:=\tilde{\chi}f(1+\sigma_\eps^+)+(1-\tilde{\chi})f(-1+\sigma_\eps^-), \qquad L_0:=\eps\Delta+\eps^{-1}(f'({\tt w}_\eps(\mathbf{d}))(1-\chi^\sharp_{1,\eps})+q\chi^\sharp_{1,\eps}).$$
Using that $\chi^\sharp_{1,\eps}\chi^\sharp_{2,\eps}=\chi^\sharp_{1,\eps}$, (\ref{cahn_hilliard}) can be written as
\begin{multline*}
\chi^\sharp_{2,\eps}\big(N_\eps({\tt w}_\eps(\mathbf{d}))+L_{{\tt w}_\eps}\phi^\sharp+\eps^{-1}\chi^\sharp_{1,\eps}Q_{{\tt w}_\eps}(\phi^\sharp+\psi)
+\eps^{-1}\chi^\sharp_{1,\eps}(f'({\tt w}_\eps(\mathbf{d}))-q)\psi\big)\\
+L_0\psi+(1-\chi^\sharp_{2,\eps})N_\eps({\tt w}_\eps(\mathbf{d}))+(1-\chi^\sharp_{1,\eps})Q_{{\tt w}_\eps}(\chi^\sharp_{2,\eps}\phi^\sharp+\psi)
+\eps[\Delta,\chi^\sharp_{2,\eps}]\phi^\sharp=0,
\end{multline*}
where  $N_\eps(u):=\eps\Delta u+\eps^{-1}f(u)-\ell_\eps$. The latter equation is solved if we solve simulatenously
\begin{equation}
\label{eq_near}
\begin{aligned}
&\eps\Delta_{\Sigma(\eps^2\mathbf{d})}\phi+\eps^{-1}\partial^2_t\phi+\eps^{-1}f'(v_\star)\phi=-\chi_{3,\eps}
\big(N_\eps(v(\mathbf{d}))+\text{L}_{\eps,\mathbf{d}}\phi\\
&\qquad +\eps^{-1}\chi_{1,\eps}Q_{v(\mathbf{d})}(\phi+\psi_\sharp)+\eps^{-1}\chi_{1,\eps}(f'(v(\mathbf{d}))-q_\sharp)\psi_\sharp\big) &\text{in $\Sigma(\eps^2\mathbf{d})\times\R$,}
\end{aligned} 
\end{equation}
and
\begin{equation}
\label{eq_far}
\begin{aligned}
L_0\psi=-(1-\chi^\sharp_{2,\eps})N_\eps({\tt w}_\eps(\mathbf{d}))-(1-\chi^\sharp_{1,\eps})Q_{{\tt w}_\eps}(\chi^\sharp_{2,\eps}\phi^\sharp+\psi)
-\eps[\Delta,\chi^\sharp_{2,\eps}]\phi^\sharp \quad\text{ in $\R^3$.}
\end{aligned}
\end{equation}
in $\phi$ and $\psi$. The aim now is to reduce equation (\ref{eq_near}) to an equation on $\Sigma\times\R$, through the diffeomrphism $\beta(\eps^2\mathbf{d})$. In order to explain how our argument works, we state a preliminary Lemma, which will be useful in the sequel
\begin{lemma}
Let $M>0$, $a\in(0,\bar{a})$, $\alpha\in(0,1)$, $\mathbf{d}^i\in(B_M)^k$, $i=1,2$, and $h_0\in \mathcal{C}^{2,\alpha}_a(\Sigma)$. Then
$$\|\big(\Delta_{\Sigma(\eps^2\mathbf{d}^1)}(h_0\circ\beta(\eps^2\mathbf{d}^1))\big)\circ\beta(\eps^2\mathbf{d}^1)^{-1}-
\big(\Delta_{\Sigma(\eps^2\mathbf{d}^2)}(h_0\circ\beta(\eps^2\mathbf{d}^2))\big)\circ\beta(\eps^2\mathbf{d}^2)^{-1}\|_{\mathcal{C}^{0,\alpha}_a(\Sigma)}\le C\eps^2|\mathbf{d}^1-\mathbf{d}^2|\|h_0\|_{\mathcal{C}^{2,\alpha}_a(\Sigma)},$$
for some constant $C>0$.\label{lemma_Delta_Sigma_d}
\end{lemma}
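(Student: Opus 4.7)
The plan is to reduce everything to local coordinates on the ends, where the geometry of $\Sigma(\eps^2\mathbf{d})$ is explicitly controlled by Lemma \ref{lemma_Sigma_d}. On the compact core $\mathbf{K} \subset \Sigma$, both the diffeomorphism $\beta(\eps^2\mathbf{d})$ and the surface $\Sigma(\eps^2\mathbf{d})$ coincide with their unperturbed counterparts (because of the cutoff $\xi$ appearing in the definition of $Y_j(\mathbf{d}_j)$), so the two terms in the difference cancel identically and only the ends need to be analysed.

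On an end $E_j$, I would work in the coordinates $(s,\vartheta) \in (0,\infty) \times S^1$ through the parametrization $Y_j(\eps^2\mathbf{d}_j)$. The key observation is that, by the very definition of $\beta(\eps^2\mathbf{d})$, one has $\beta(\eps^2\mathbf{d}) \circ Y_j(\eps^2\mathbf{d}_j) = Y_j$, hence
\[
(h_0 \circ \beta(\eps^2\mathbf{d})) \circ Y_j(\eps^2\mathbf{d}_j) = h_0 \circ Y_j =: \mathfrak{h},
\]
a $\mathbf{d}$-independent function in $C^{2,\alpha}((0,\infty) \times S^1)$ with norm controlled by $\|h_0\|_{\mathcal{C}^{2,\alpha}_a(\Sigma)}\, e^{-as}$. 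Using the divergence form
\[
\bigl[\Delta_{\Sigma(\eps^2\mathbf{d})}(h_0 \circ \beta(\eps^2\mathbf{d}))\bigr] \circ Y_j(\eps^2\mathbf{d}_j) = \tfrac{1}{\sqrt{|G(\eps^2\mathbf{d})|}}\,\partial_l\bigl(\sqrt{|G(\eps^2\mathbf{d})|}\,G^{lm}(\eps^2\mathbf{d})\,\partial_m \mathfrak{h}\bigr),
\]
with $G_{lm}(\eps^2\mathbf{d}) := (g_{\Sigma(\eps^2\mathbf{d})})_{lm} \circ Y_j(\eps^2\mathbf{d}_j)$, the entire $\mathbf{d}$-dependence is carried by these metric coefficients and their first derivatives.

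The core estimate then follows from Lemma \ref{lemma_Sigma_d}, which (applied with the parameter $\eps^2\mathbf{d}$ in place of $\mathbf{d}$) gives
\[
\|G_{lm}(\eps^2\mathbf{d}^1) - G_{lm}(\eps^2\mathbf{d}^2)\|_{C^{1,\alpha}((0,\infty)\times S^1)} \le C\eps^2|\mathbf{d}^1-\mathbf{d}^2|,
\]
together with the uniform bound on $G_{lm}$ and $G^{lm}$ from the accompanying Remark. A telescoping identity that isolates the differences in $\sqrt{|G|}$, in $G^{lm}$, and in the first partials of these quantities, combined with the algebra and composition properties of $C^{0,\alpha}$, converts these metric estimates into the required $C^{0,\alpha}$ bound on the difference of Laplacians, with constant proportional to $\|\mathfrak{h}\|_{C^{2,\alpha}((0,\infty)\times S^1)}$.

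To conclude, I would pass back to the weighted norm $\mathcal{C}^{0,\alpha}_a(\Sigma)$. In the local coordinates the weight $w_a$ equals $e^{as}$ and does not depend on $\mathbf{d}$, so multiplying the pointwise $C^{0,\alpha}$ difference by $e^{as}$ and using $\|e^{as}\mathfrak{h}\|_{C^{2,\alpha}} \le \|h_0\|_{\mathcal{C}^{2,\alpha}_a(\Sigma)}$ yields the desired inequality. The only delicate point—and the one I expect to be most technical—is ensuring that the $C^{1,\alpha}$ regularity of the metric provided by Lemma \ref{lemma_Sigma_d} is exactly what is needed to control the first-derivative terms in the expanded divergence formula while maintaining only a $C^{0,\alpha}$ loss on the output; this is a careful but routine bookkeeping argument once the local expression is in hand.
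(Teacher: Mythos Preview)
Your proposal is correct and follows essentially the same approach as the paper: restrict to the ends (where the compact core contributes nothing), use the identity $\beta(\eps^2\mathbf{d})\circ Y_j(\eps^2\mathbf{d}_j)=Y_j$ to make the function $h_0\circ Y_j$ independent of $\mathbf{d}$, write the Laplace--Beltrami operator in divergence form in the $(s,\vartheta)$ coordinates, and then invoke Lemma~\ref{lemma_Sigma_d} to control the metric differences. Your write-up is in fact more detailed than the paper's, which simply records the local coordinate expression and cites Lemma~\ref{lemma_Sigma_d}.
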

\begin{remark}
In particular, taking, for instance $\mathbf{d}^2=0$, we can see that $$\|\big(\Delta_{\Sigma(\eps^2\mathbf{d})}(h_0\circ\beta(\eps^2\mathbf{d}))\big)\circ\beta(\eps^2\mathbf{d})^{-1}-\Delta_\Sigma h_0\|_{\mathcal{C}^{0,\alpha}_a(\Sigma)}\le C\eps^2|\mathbf{d}|\|h_0\|_{\mathcal{C}^{2,\alpha}_a(\Sigma)}.$$
Roughly, passing from $\Sigma(\eps^2\mathbf{d})$ to $\Sigma$ does not preserves exactly the differential operators, however it produces an error which is small enough, of order $\eps^2$.
\end{remark}
\begin{proof}
It is enough to observe that, if we restrict ourselves to an end of $\Sigma(\eps^2\mathbf{d})$, using that $h_0\circ\beta(\eps^2\mathbf{d})\circ Y_j(\eps^2\mathbf{d}_j)=h_0\circ\beta(\eps^2\mathbf{d})$, we have
\begin{multline*}
\Delta_{\Sigma(\eps^2\mathbf{d})}(h_0\circ\beta(\eps^2\mathbf{d}))\circ Y_j(\eps^2\mathbf{d}_j)\\=\frac{1}{\sqrt{|g_{\Sigma(\eps^2\mathbf{d})}\circ Y_j(\eps^2\mathbf{d}_j)|}}\partial_m\bigg(\sqrt{|g_{\Sigma(\eps^2\mathbf{d})}\circ Y_j(\eps^2\mathbf{d}_j)|}g_{\Sigma(\eps^2\mathbf{d})}^{lm}\circ Y_j(\eps^2\mathbf{d}_j)\partial_l(h_0\circ Y_j)\bigg).
\end{multline*}
Hence the conclusion follows from Lemma \ref{lemma_Sigma_d}.
\end{proof}
Composing with the diffeomorphism $\beta(\eps^2\mathbf{d})^{-1}\times Id_\R$ we can see that (\ref{eq_near}) is equivalent to 
\begin{equation}
\label{eq_near_Sigma}
\begin{aligned}
&\eps\Delta_\Sigma\phi_0+\eps^{-1}\partial^2_t\phi_0+\eps^{-1}f'(v_\star)\phi_0=-\chi_{3,\eps}
\big(N_\eps(v(\mathbf{d}))
+\eps^{-1}\chi_{1,\eps}Q_{v(\mathbf{d})}(\phi+\psi_\sharp)\\
&\qquad +\text{L}_{\eps,\mathbf{d}}\phi+\eps^{-1}\chi_{1,\eps}(f'(v(\mathbf{d}))-q_\sharp)\psi_\sharp\big)(\beta(\eps^2\mathbf{d})^{-1}\times Id_\R)+\chi_{3,\eps}\text{R}_{\eps,\mathbf{d}}\phi_0&\text{in $\Sigma\times\R$.}
\end{aligned}
\end{equation}
Arguing as in the proof of Lemma \ref{lemma_Delta_Sigma_d}, we can see that the remainder $\text{R}_{\eps,\mathbf{d}}$, coming from the difference between the Laplace-Beltrami operator of $\Sigma(\eps\mathbf{d})$ and the one of $\Sigma$, satisfies
\begin{equation}
\label{ets_R_epsd}
\|\text{R}_{\eps,\mathbf{d}}\phi_0\|_{\mathcal{E}^{0,\alpha}_{a,\gamma}(\Sigma\times\R)}\le C\eps^2|\mathbf{d}|\|\phi_0\|_{\mathcal{E}^{2,\alpha}_{a,\gamma}(\Sigma\times\R)},
\end{equation}
\begin{equation}
\label{lip_R_eps_d}
\begin{aligned}
&\|\text{R}_{\eps,\mathbf{d}^1}\phi^1_0-\text{R}_{\eps,\mathbf{d}^2}\phi^2_0\|_{\mathcal{E}^{2,\alpha}_{a,\gamma}(\Sigma\times\R)}\le C\|\phi^1_0-\phi^2_0\|_{\mathcal{E}^{2,\alpha}_{a,\gamma}(\Sigma\times\R)}\\
&\qquad +C\eps^2|\mathbf{d}^1_j-\mathbf{d}^2_j|(\|\phi^1_0\|_{\mathcal{E}^{2,\alpha}_{a,\gamma}(\Sigma\times\R)}
+\|\phi^2_0\|_{\mathcal{E}^{2,\alpha}_{a,\gamma}(\Sigma\times\R)}),
\end{aligned}
\end{equation}
for some constant $C>0$, for any $M>0$, for any $\mathbf{d},\mathbf{d}^i\in (B_M)^k$, $\phi_0,\phi^i_0\in \mathcal{E}^{2,\alpha}_{a,\gamma}(\Sigma\times\R)$, with $\|\phi_0\|_{\mathcal{E}^{2,\alpha}_{a,\gamma}(\Sigma\times\R)}<1$, $\|\phi^i_0\|_{\mathcal{E}^{2,\alpha}_{a,\gamma}(\Sigma\times\R)}<1$, $i=1,2$.\\

First we fix $\phi_0$, $\mathbf{d}$ and $h_0$ and we solve (\ref{eq_far}).
\begin{proposition}
Let $M>0$, $a\in(0,\bar{a})$, $\gamma\in(0,\sqrt{2})$, $\alpha\in(0,1/2)$ and $\mathbf{d}\in (B_M)^k$. Let $\phi_0\in \mathcal{E}^{2,\alpha}_{a,\gamma}(\Sigma\times\R)$ be such that $\|\phi_0\|_{\mathcal{E}^{2,\alpha}_{a,\gamma}(\Sigma\times\R)}<1$ and $h_0\in \mathcal{C}^{2,\alpha}_a(\Sigma)$ be such that $\|h_0\|_{C^{2,\alpha}_a(\Sigma)}<M$. Let $a'\in (0,a)$, $\gamma'\in(0,\gamma)$. Then there exists a solution $\psi:=\psi(\phi_0,h_0,\mathbf{d})\in C^{2,\alpha}_{a',\gamma'}(\R^3)$ to (\ref{eq_far}) such that
\begin{eqnarray}\notag
\|\psi(\phi_0,h_0,\mathbf{d})\|_{C^{2,\alpha}_{a',\gamma'}(\R^3)}\le \Lambda_3 e^{-\lambda_3/\eps},
\end{eqnarray}
for some constants $\Lambda_3>0$ and $\lambda_3>0$, with $\lambda_3$ depending on $a'-a$ and $\gamma'-\gamma$. Moreover, there exists $C>0$ such that
\begin{multline*}
\|\psi(\phi_0^1,h_0^1,\mathbf{d}^1)-\psi(\phi_0^2,h_0^2,\mathbf{d}^2)\|_{C^{2,\alpha}_{a',\gamma'}(\R^3)}\\
\le 
 C e^{-\lambda_3/\eps}\big(\|\phi_0^1-\phi_0^2\|_{\mathcal{E}^{2,\alpha}_{a,\gamma}(\Sigma\times\R)}
+|h_0^1-h_0^2\|_{\mathcal{C}^{2,\alpha}_{a}(\Sigma)}+|\mathbf{d}^1-\mathbf{d}^2|\big),
\end{multline*}
for any $h^i_0\in\mathcal{C}^{2,\alpha}_a(\Sigma)$, $\|h^i_0\|_{\mathcal{C}^{2,\alpha}_a(\Sigma)}<M$, $\mathbf{d}^i\in (B_M)^k$, $\phi_0^i\in\mathcal{E}^{2,\alpha}_{a,\gamma}(\Sigma\times\R)$, $\|\phi_0^i\|_{\mathcal{E}^{2,\alpha}_{a,\gamma}(\Sigma\times\R)}<1$, $i=1,2$.
\label{prop_far}
\end{proposition}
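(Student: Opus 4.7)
The plan is to solve (\ref{eq_far}) by a contraction mapping argument, viewing it as a fixed point problem $\psi = T(\psi) := L_0^{-1}(\text{RHS}(\psi))$ in the ball $\{\|\psi\|_{C^{2,\alpha}_{a',\gamma'}(\R^3)} \le \Lambda_3 e^{-\lambda_3/\eps}\}$. This requires first building a right inverse for $L_0$ with controlled loss, and then verifying that each term of the RHS is exponentially small in $1/\eps$ in the weighted norm.

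The first and main step is the linear theory for $L_0 = \eps\Delta + \eps^{-1}V$, where $V := f'({\tt w}_\eps(\mathbf{d}))(1-\chi^\sharp_{1,\eps}) + q\chi^\sharp_{1,\eps}$. Outside the tubular neighbourhood $\mathcal{N}_\delta$ one has ${\tt w}_\eps(\mathbf{d}) = \pm 1 + \sigma_\eps^\pm$, so $V = f'(\pm 1) + O(\sigma_\eps^\pm) = -2 + o(1)$ is uniformly negative; in the transition zone $\delta \le \eps|t| \le 5\delta$, $V$ still stays close to $-2$ because $v_\star$ is exponentially close to $\pm 1$; only in the inner strip where $\chi^\sharp_{1,\eps}\equiv 1$ does the multiplier become the (uniformly small) $q$, and there the $\eps\Delta$ term dominates the multiplicative part. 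Since by Remark \ref{rem_angles}, thanks to hypothesis (H), the weighted space $C^{0,\alpha}_{a,\gamma}(\R^3)$ embeds into $L^2(\R^3)$, I first test $L_0\psi = g$ in $L^2$ (with the conjugated weight $\Gamma$) to get a coercivity estimate, then I upgrade to the weighted Hölder norm by local Schauder estimates in balls of dilated radius $1$, on which the weight $\Gamma$ is essentially constant. This produces a bounded right inverse $L_0^{-1}: C^{0,\alpha}_{a',\gamma'}(\R^3) \to C^{2,\alpha}_{a',\gamma'}(\R^3)$ with operator norm of at most a fixed negative power of $\eps$.

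Next I estimate the size of each contribution to the RHS, for fixed $(\phi_0, h_0, \mathbf{d})$. The term $(1-\chi^\sharp_{2,\eps})N_\eps({\tt w}_\eps(\mathbf{d}))$ is supported in $\{\eps|t| > \delta\}$; there ${\tt w}_\eps(\mathbf{d})$ is either the exact constant $\pm 1+\sigma_\eps^\pm$, for which $N_\eps = 0$ by the choice of $\sigma_\eps^\pm$, or, in the thin interpolation strip introduced by the gluing, involves $v_\star(t)\mp 1 = O(e^{-\sqrt{2}|t|})$ and its derivatives evaluated at $|t| \gtrsim \delta/\eps$. After multiplication by the weight $\Gamma$, which grows at most like $e^{C\delta/\eps}$ across the tube, the $C^{0,\alpha}_{a',\gamma'}$ norm of this term is $O(e^{-\lambda_3/\eps})$ for some $\lambda_3 > 0$, provided $\gamma' < \sqrt{2}$. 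The commutator $\eps[\Delta,\chi^\sharp_{2,\eps}]\phi^\sharp$ is supported in an annulus where $|t|\sim\delta/\eps$, and there $\phi^\sharp$ inherits the $\cosh^{-\gamma}(t)$ decay of $\phi_0$, which gives again $O(e^{-\lambda_3/\eps})$ when $\gamma' < \gamma$. Finally, $(1-\chi^\sharp_{1,\eps})Q_{{\tt w}_\eps}(\chi^\sharp_{2,\eps}\phi^\sharp + \psi)$ is at least quadratic in its argument; on the support $\{\eps|t|>\delta\}$ the cross term $\chi^\sharp_{2,\eps}\phi^\sharp$ is once more exponentially small, while the $\psi$-contribution is quadratic in $\psi$ and is therefore absorbed inside the fixed-point ball.

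Combining these estimates, $T$ sends the ball $\{\|\psi\|_{C^{2,\alpha}_{a',\gamma'}(\R^3)} \le \Lambda_3 e^{-\lambda_3/\eps}\}$ into itself and is a contraction there, for $\Lambda_3$ large and $\eps$ small. The Lipschitz estimate in $(\phi_0,h_0,\mathbf{d})$ is obtained by the same scheme applied to differences: the data enter the RHS only through ${\tt w}_\eps(\mathbf{d})$ (Lipschitz via Lemma \ref{lemma_Sigma_d}, Lemma \ref{lemma_perturbe_D}, and the shifted Fermi expansion (\ref{lapl_shift})) and through $\phi^\sharp$, and in both cases their contribution is confined to the same cutoff supports, so each difference is again decorated by a factor $e^{-\lambda_3/\eps}$. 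The principal obstacle is the uniform-in-$\eps$ linear theory for $L_0$: producing a right inverse whose operator norm grows at most polynomially in $1/\eps$, so that the exponential smallness of the data survives; the $L^2$-ingredient rests crucially on Remark \ref{rem_angles}, and hence on the non-parallel-ends hypothesis (H), without which the weighted norm would fail to control an $L^2$ norm along the ends.
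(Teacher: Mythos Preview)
Your approach is the same as the paper's: invert $L_0$ via coercivity in the rescaled variables (with the $L^2$ embedding coming from Remark~\ref{rem_angles} and hypothesis (H)), upgrade to the weighted H\"older norm by Schauder, and then run a contraction in the ball $\{\|\psi\|_{C^{2,\alpha}_{a',\gamma'}}\le \Lambda_3 e^{-\lambda_3/\eps}\}$, the exponential smallness of each right-hand-side term being forced by the cutoff supports sitting at distance $\sim\delta/\eps$ from $\Sigma(\eps^2\mathbf{d})$ combined with $\gamma'<\gamma$ and $a'<a$.

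One point deserves correction. In the inner strip $\{\chi^\sharp_{1,\eps}\equiv 1\}$ the multiplier is \emph{not} ``uniformly small'': the intended definition of $q$ uses $f'$ rather than $f$ (the paper's proof of Lemma~\ref{lemma_far_scaled} explicitly relies on the full potential being bounded below by $2+o(1)$ everywhere), so $q\approx f'(\pm 1)=-2$ there as well. With this reading the potential is uniformly coercive on all of $\R^3$ and your workaround ``$\eps\Delta$ dominates the multiplicative part'' is neither needed nor sufficient --- a merely small zeroth-order term would not give a global $L^2$ estimate. Apart from this slip, your outline reproduces the paper's argument.
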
 
This proposition will be proved in subsection \ref{subsec_far}. After that, it remains to solve equation (\ref{eq_near_Sigma}). In order to do so, we define the space
$$\mathcal{X}:=\{\phi\in L^2 (\Sigma\times\R):\int_\R \phi(y,t) v'_\star(t) dt=0,\text{ for a.e. $y\in\Sigma$} \}$$
and the projection
$$\Pi_{\mathcal{X}}:L^2 (\Sigma\times\R)\to\mathcal{X}$$
by
$$\Pi_{\mathcal{X}}\phi(y,t):=\phi(y,t)-\frac{\int_\R \phi(y,\mu)v'_\star(\mu)d\mu}{\int_\R (v'_\star(\mu))^2 d\mu} v'_\star(t).$$
Denoting the right-hand side of (\ref{eq_near_Sigma}) by $\mathcal{G}^6_\eps(h_0,\phi_0,\mathbf{d})$ and composing with the orthogonal projections $\Pi_{\mathcal{X}}$ and $Id-\Pi_{\mathcal{X}}$, we can reduce ourselves to solve
\begin{equation}
\eps\Delta_\Sigma \phi_0+\eps^{-1}\partial^2_t \phi_0+\eps^{-1}f'(v(\mathbf{d}))\phi_0=\Pi_{\mathcal{X}}\mathcal{G}^6_\eps(h_0,\mathbf{d})\label{eq_X}
\end{equation}
\begin{equation}
(Id-\Pi_{\mathcal{X}})\mathcal{G}^6_\eps(h_0,\mathbf{d})=0\label{eq_Xbot}
\end{equation}
Equation (\ref{eq_X}) can be solved through a fixed point argument, exploiting the orthogonality to $v'_\star$.
\begin{proposition}
Let $M>0$, $\alpha\in(0,1/2)$, $\gamma\in(0,\sqrt{2})$, $a\in(0,\bar{a})$, $\mathbf{d}\in (B_M)^k$ and $h_0\in \mathcal{C}^{2,\alpha}_a(\Sigma)$, with $\|h_0\|_{\mathcal{C}^{2,\alpha}_a(\Sigma)}<M$. Then there exists a solution $\phi_0:=\phi_0(h_0,\mathbf{d})\in\mathcal{E}^{2,\alpha}_{a,\gamma}(\Sigma\times\R)$ to (\ref{eq_X}) such that
$$\|\phi_0(h_0,\mathbf{d})\|_{\mathcal{E}^{2,\alpha}_{a,\gamma}(\Sigma\times\R)}\le \Lambda_4\eps^{3-\alpha}.$$
for some constant $\Lambda_4>0$. Moreover
\begin{equation}
\begin{aligned}
&\|\phi_0(h_0^1,\mathbf{d}^1)-\phi_0(h_0^2,\mathbf{d}^2)\|_{\mathcal{E}^{2,\alpha}_{a,\gamma}(\Sigma\times\R)}\le C (\eps^{4-\alpha}\|h_0^1-h_0^2\|_{\mathcal{C}^{2,\alpha}_{a}(\Sigma)}+\eps^{5-\alpha}|\mathbf{d}^1-\mathbf{d}^2|).
\end{aligned}
\end{equation}
for $\mathbf{d}_i\in (B_M)^k$, $h_0^i\in \mathcal{C}^{2,\alpha}_a(\Sigma)$, $\|h_0^i\|_{\mathcal{C}^{2,\alpha}_a(\Sigma)}<M$, $i=1,2$.
\label{prop_near_X}
\end{proposition}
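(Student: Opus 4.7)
The strategy is a Lyapunov--Schmidt reduction carried out globally on $\Sigma\times\R$, parallel to the one of Section \ref{section_aux_end} but now for the whole surface. First I would construct a bounded right inverse $\Psi$ for the model operator
\[
\mathbb{L}_0:=\eps\Delta_\Sigma+\eps^{-1}\partial_t^2+\eps^{-1}f'(v_\star),
\]
mapping $\mathcal{E}^{0,\alpha}_{a,\gamma}(\Sigma\times\R)\cap\mathcal{X}$ into $\mathcal{E}^{2,\alpha}_{a,\gamma}(\Sigma\times\R)\cap\mathcal{X}$ with the estimate
\[
\|\Psi g\|_{\mathcal{E}^{2,\alpha}_{a,\gamma}(\Sigma\times\R)}\le C\eps^{1-\alpha}\|g\|_{\mathcal{E}^{0,\alpha}_{a,\gamma}(\Sigma\times\R)}.
\]
On the compact part $\mathbf{K}\times\R$, after the tangential dilation $\mathbf{y}\mapsto \eps^{-1}\mathbf{y}$ the operator becomes uniformly elliptic and the transverse operator $-\partial_t^2-f'(v_\star)$ is coercive on the pointwise $L^2(\R)$-orthogonal complement of $v_\star'$; Riesz representation together with the barrier $\cosh^{-\gamma}(t)$, as in steps (i)--(iv) of Lemma \ref{lemma_ls_scaled}, yields a local solution. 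On each end, in isothermal coordinates $\Delta_\Sigma$ is asymptotic to $(\tau_j e^{\sigma_{\tau_j}})^{-2}(\partial_s^2+\partial_\vartheta^2)$ modulo terms decaying as $e^{-\bar a_j s}$; decomposing in the Fourier basis of $S^1$ (the $\partial_\vartheta^2$-contribution is $\le 0$, hence helpful for the barrier), Lemma \ref{lemma_aux_lin} applied mode by mode produces a decaying inverse with weight $w_{a,j}$. The local inverses are combined through the partition of unity $\{\tilde\eta_j\}_{0\le j\le k}$; the commutators $[\mathbb{L}_0,\tilde\eta_j]$ are of order $\eps$ and are absorbed by a Neumann series using the $\eps^{1-\alpha}$ smallness of the local inverses.

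With $\Psi$ in hand, I would recast (\ref{eq_X}) as the fixed-point problem
\[
\phi_0=\mathcal{T}(\phi_0):=\Psi\,\Pi_\mathcal{X}\,\mathcal{G}^6_\eps(h_0,\phi_0,\mathbf{d})
\]
on the ball $B_{\Lambda_4}:=\{\phi_0\in\mathcal{E}^{2,\alpha}_{a,\gamma}(\Sigma\times\R)\cap\mathcal{X}:\|\phi_0\|_{\mathcal{E}^{2,\alpha}_{a,\gamma}}<\Lambda_4\eps^{3-\alpha}\}$. The leading piece of $\mathcal{G}^6_\eps$ is the approximation error $\chi_{3,\eps}N_\eps(v(\mathbf{d}))\circ(\beta(\eps^2\mathbf{d})^{-1}\times\mathop{Id}_\R)$, of size $\eps^2$ in $\mathcal{E}^{0,\alpha}_{a,\gamma}$ by Lemma \ref{lemma_dec_error} and Lemma \ref{lemma_H_Sigmad}. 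The contribution $\chi_{3,\eps}\mathrm{L}_{\eps,\mathbf{d}}\phi+\chi_{3,\eps}\mathrm{R}_{\eps,\mathbf{d}}\phi_0$ is linear in $\phi_0$ with small coefficient by (\ref{ets_R_epsd}); the piece $\eps^{-1}\chi_{1,\eps}Q_{v(\mathbf{d})}(\phi+\psi_\sharp)$ is quadratic and on $B_{\Lambda_4}$ contributes $O(\eps^{-1}\|\phi_0\|^2)=O(\eps^{5-2\alpha})$; and the coupling term $\eps^{-1}\chi_{1,\eps}(f'(v(\mathbf{d}))-q_\sharp)\psi_\sharp$ is exponentially small in $1/\eps$ by Proposition \ref{prop_far}. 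Applying $\Psi$ turns the dominant $\eps^2$ into $\eps^{3-\alpha}$, so $B_{\Lambda_4}$ is invariant under $\mathcal{T}$ for $\Lambda_4$ large.

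For the contraction property and the Lipschitz dependence I would use the same estimates: on $B_{\Lambda_4}$, the Lipschitz constant of the linear perturbation is $O(\eps)$ by (\ref{lip_R_eps_d}), that of the quadratic term is $O(\eps^{-1}\cdot\eps^{3-\alpha})=O(\eps^{2-\alpha})$, and the $\psi$-contribution is exponentially small by Proposition \ref{prop_far}; combined with the $\eps^{1-\alpha}$ cost of $\Psi$, and exactly as in Proposition \ref{prop_aux_end}, the condition $\alpha<1/2$ ensures $\mathcal{T}$ is a contraction. For the Lipschitz dependence on $(h_0,\mathbf{d})$, Lemma \ref{lemma_dec_error} provides pre-$\Psi$ Lipschitz constants $\eps^3\|h_0^1-h_0^2\|_{\mathcal{C}^{2,\alpha}_a(\Sigma)}$ and $\eps^4|\mathbf{d}^1-\mathbf{d}^2|$, which after the $\eps^{1-\alpha}$ cost of $\Psi$ yield exactly the advertised factors $\eps^{4-\alpha}$ and $\eps^{5-\alpha}$. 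I expect the main obstacle to lie in Step 1, namely producing $\Psi$ with the combined tangential/transverse decay: the weight $w_a$ along the ends and the weight $\cosh^{-\gamma}(t)$ in the normal direction must be obtained simultaneously, and the gluing across $\mathbf{K}$ must preserve both the orthogonality to $v_\star'$ and the weighted $C^{0,\alpha}$-norm; the key point making this possible is that the orthogonality condition defining $\mathcal{X}$ kills the kernel of the transverse operator, so $\mathbb{L}_0$ is, at leading order, a uniformly invertible elliptic operator whose inverse degrades only by the tangential rescaling factor $\eps^{-\alpha}$ arising from the Schauder norms.
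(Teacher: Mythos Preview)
Your fixed-point reduction in the second and third paragraphs matches the paper exactly: equation (\ref{eq_X}) is rewritten as $\phi_0=\Psi^4_\eps\Pi_{\mathcal{X}}\mathcal{G}^6_\eps(h_0,\phi_0,\mathbf{d})$, and the contraction on $B_{\Lambda_4}$ is obtained precisely from the $\eps^{1-\alpha}$ cost of the inverse together with the $\eps^2$ size of the error from Lemma~\ref{lemma_dec_error}, with $\alpha<1/2$ used exactly as you say.

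Where you diverge is in the construction of the linear inverse. The paper does \emph{not} build $\Psi$ by a parametrix/gluing procedure over $\mathbf{K}$ and the ends. Instead it works globally on the rescaled surface $\Sigma_\eps:=\eps^{-1}\Sigma$: Lemma~\ref{lemma_existence} gives $H^1$-existence directly from coercivity of the bilinear form on $\mathcal{X}$ (no patching, no Neumann series); Lemma~\ref{lemma_dec_t} gives the $\cosh^{-\gamma}(t)$ decay by the barrier you describe; and the key device you are missing is Lemma~\ref{lemma_dec_y}, which obtains the tangential decay $w_{a,\eps}^{-1}$ not by any end-by-end analysis but by introducing the auxiliary function $\psi({\tt y})=\int_\R\phi^2({\tt y},{\tt t})\,d{\tt t}$, showing it satisfies the differential inequality $-\Delta_{\Sigma_\eps}\psi+2\psi\le\int g^2\,d{\tt t}$, and then applying the maximum principle with the weight $w_{a,\eps}^{-2}$. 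This trick delivers the simultaneous tangential/transverse decay in one stroke and completely sidesteps the obstacle you flag in your last paragraph.

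Your proposed route via Lemma~\ref{lemma_aux_lin} has a concrete gap: that lemma is set up for functions that are $s_\tau$-\emph{periodic} in $s$ (the spaces $\D^{n,\alpha}_{\tau,\gamma}$), whereas on an end of $\Sigma$ you need solutions that \emph{decay} like $e^{-as}$ along the half-cylinder. The periodic inverse does not produce the weight $w_{a,j}$, and a Fourier decomposition in $\vartheta$ does not change this. A parametrix scheme could be made to work, but it would require a separate half-cylinder lemma (with exponential weight in $s$) that the paper never states; the paper's global argument is both shorter and avoids the commutator bookkeeping.
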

It remains to solve (\ref{eq_Xbot}). 
\begin{lemma}
Let $M>0$, $\alpha\in(0,1/2)$ and $a\in(0,\bar{a})$. Equation (\ref{eq_Xbot}) is equivalent to an equation of the form
\begin{eqnarray}
\mathcal{J}_\Sigma (h_0+\Phi(\mathbf{d}))=\mathcal{F}^2_\eps(h_0,\mathbf{d}),\label{eq_Sigma}
\end{eqnarray}
where $\mathcal{F}^2_\eps$ satisfies
\begin{equation}\notag
\begin{aligned}
&\|\mathcal{F}^2_\eps(h_0,\mathbf{d})\|_{\mathcal{C}^{0,\alpha}_a(\Sigma)}\le C(1+M e^{(\bar{a}-a)s_0}),\\
&\|\mathcal{F}^2_\eps(h^1_0,\mathbf{d}^1)-\mathcal{F}^2_\eps(h^2_0,\mathbf{d}^2)\|_{\mathcal{C}^{0,\alpha}_a(\Sigma)}\le C(\eps^2+e^{(\bar{a}-a)s_0})|\mathbf{d}^1-\mathbf{d}^2|+C\eps\|h^1_0-h^2_0\|_{\mathcal{C}^{2,\alpha}_a(\Sigma)},
\end{aligned}
\end{equation}
for some constant $C>0$, for any $h_0,h^i_0\in\mathcal{C}^{2,\alpha}_a(\Sigma)$, $\|h^i_0\|_{C^{2,\alpha}(\R)}<M$, $\mathbf{d},\mathbf{d}^i\in (B_M)^k$, $i=1,2$.
\end{lemma}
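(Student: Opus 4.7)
The plan is to reformulate equation (\ref{eq_Xbot}) by taking the scalar product against $v'_\star(t)$ in $t$, then identify the leading linear combination of $\mathcal{J}_\Sigma h_0$ and $\mathcal{J}_\Sigma\Phi(\mathbf{d})$ and pack everything else into $\mathcal{F}^2_\eps$. Once $\phi_0=\phi_0(h_0,\mathbf{d})$ is the solution built in Proposition \ref{prop_near_X} and $\psi=\psi(\phi_0(h_0,\mathbf{d}),h_0,\mathbf{d})$ is the one built in Proposition \ref{prop_far}, the right-hand side $\mathcal{G}^6_\eps$ of (\ref{eq_near_Sigma}) depends only on $(h_0,\mathbf{d})$. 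Because $\phi_0\in\mathcal{X}$ and $v'_\star$ lies in the kernel of $\partial_t^2+f'(v_\star)$, the left-hand side of (\ref{eq_near_Sigma}) automatically belongs to $\mathcal{X}$; hence (\ref{eq_Xbot}) is equivalent to the scalar (in $t$) equation
$$\int_\R \mathcal{G}^6_\eps(h_0,\mathbf{d})(y,t)\,v'_\star(t)\,dt=0\qquad\text{on }\Sigma.$$

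Next, I would isolate the dominant contribution to this integral. By Lemma \ref{lemma_dec_error}, the main piece $-\chi_{3,\eps}N_\eps(v(\mathbf{d}))\circ(\beta(\eps^2\mathbf{d})^{-1}\times Id_\R)$ of $\mathcal{G}^6_\eps$ reads
$$\chi_{3,\eps}\bigl[\eps^2\mathcal{J}_{\Sigma(\eps^2\mathbf{d})}(h_0\circ\beta(\eps^2\mathbf{d}))+H_{\Sigma(\eps^2\mathbf{d})}-H_\Sigma\bigr]U'\circ(\beta(\eps^2\mathbf{d})^{-1}\times Id_\R)-\chi_{3,\eps}\mathcal{G}^5_\eps(h_0,\mathbf{d})\circ(\ldots),$$
with $\|\mathcal{G}^5_\eps\|_{\mathcal{E}^{2,\alpha}_{a,\gamma}(\Sigma\times\R)}\le C\eps^2$ by (\ref{est_G5}). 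Since $U=v_\star+\eps v_1$ and $\chi_{3,\eps}$ is identically $1$ on the region where $v'_\star$ is not exponentially small in $1/\eps$, one has $\int_\R\chi_{3,\eps}U'v'_\star\,dt=c_\star+O(\eps)$, while the two bracketed factors are $t$-independent and factor out of the integral. Using Lemma \ref{lemma_H_Sigmad} together with the linearity $\Phi(\eps^2\mathbf{d})=\eps^2\Phi(\mathbf{d})$, I would replace $\eps^{-2}(H_{\Sigma(\eps^2\mathbf{d})}-H_\Sigma)$ by $\mathcal{J}_\Sigma\Phi(\mathbf{d})$ modulo $O(|\mathbf{d}|e^{-(\bar{a}-a)s_0})$ on each end; then Lemma \ref{lemma_Delta_Sigma_d} (combined with (\ref{metric_Sigma_d})--(\ref{secondff_Sigma_d})) replaces $\mathcal{J}_{\Sigma(\eps^2\mathbf{d})}(h_0\circ\beta(\eps^2\mathbf{d}))\circ\beta(\eps^2\mathbf{d})^{-1}$ by $\mathcal{J}_\Sigma h_0$ modulo $O(\eps^2|\mathbf{d}|\,\|h_0\|_{\mathcal{C}^{2,\alpha}_a})$. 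Dividing through by $-c_\star\eps^2$ produces exactly $\mathcal{J}_\Sigma(h_0+\Phi(\mathbf{d}))=\mathcal{F}^2_\eps(h_0,\mathbf{d})$.

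Finally, the norm and Lipschitz bounds on $\mathcal{F}^2_\eps$ would come from collecting all remainder terms. The $\mathcal{G}^5_\eps$ contribution yields, after division by $\eps^2$, a uniform bound of order $O(1)$ with Lipschitz factors $\eps\|h_0^1-h_0^2\|_{\mathcal{C}^{2,\alpha}_a}$ and $\eps^2|\mathbf{d}^1-\mathbf{d}^2|$ by (\ref{est_G5}); the geometric approximation from Lemma \ref{lemma_H_Sigmad} produces the $Me^{(\bar{a}-a)s_0}$ factor both in the uniform bound and in the $\mathbf{d}$-Lipschitz bound (the exponential constant tracks how the end-by-end norm $C^{0,\alpha}_a((0,\infty)\times S^1)$ embeds into the global $\mathcal{C}^{0,\alpha}_a(\Sigma)$ weight, together with the contribution of the commutators $[\mathcal{J}_\Sigma,\tilde{\eta}_j]$ applied to the linearly growing Jacobi fields $\Phi^{R,e_l}_{E_j}$ and $\Phi^D_{E_j}$ appearing in $\Phi(\mathbf{d})$); the projections of $\text{L}_{\eps,\mathbf{d}}\phi$, $\eps^{-1}\chi_{1,\eps}Q_{v(\mathbf{d})}(\phi+\psi_\sharp)$, $\eps^{-1}\chi_{1,\eps}(f'(v(\mathbf{d}))-q_\sharp)\psi_\sharp$ and $\text{R}_{\eps,\mathbf{d}}\phi_0$ against $v'_\star$ are at worst $O(\eps)$ by Propositions \ref{prop_near_X}--\ref{prop_far} and (\ref{ets_R_epsd})--(\ref{lip_R_eps_d}), using $\|\phi_0(h_0,\mathbf{d})\|_{\mathcal{E}^{2,\alpha}_{a,\gamma}}\lesssim\eps^{3-\alpha}$ and the fact that $\chi_{1,\eps}v'_\star$ is exponentially small in $1/\eps$. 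The Lipschitz estimates are assembled analogously from the Lipschitz dependencies already built into each cited lemma. The main obstacle will be the careful bookkeeping of the $e^{(\bar{a}-a)s_0}$ factor when transferring end-by-end estimates to the global $\mathcal{C}^{0,\alpha}_a(\Sigma)$ norm; apart from that, the proof reduces to a direct chain of estimates already assembled in Sections \ref{section_surface} and \ref{section_approx_solution}.
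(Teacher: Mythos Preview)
Your proposal is correct and follows essentially the same approach as the paper: project against $v'_\star$, use Lemma~\ref{lemma_dec_error} to expand $N_\eps(v(\mathbf{d}))$, replace $H_{\Sigma(\eps^2\mathbf{d})}-H_\Sigma$ by $\eps^2\mathcal{J}_\Sigma\Phi(\mathbf{d})$ via Lemma~\ref{lemma_H_Sigmad}, replace $\mathcal{J}_{\Sigma(\eps^2\mathbf{d})}(h_0\circ\beta(\eps^2\mathbf{d}))\circ\beta(\eps^2\mathbf{d})^{-1}$ by $\mathcal{J}_\Sigma h_0$ via Lemma~\ref{lemma_Delta_Sigma_d}, and absorb everything else into $\mathcal{F}^2_\eps$. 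Your write-up is in fact more explicit than the paper's about the auxiliary terms $\text{L}_{\eps,\mathbf{d}}\phi$, $Q_{v(\mathbf{d})}$, $\text{R}_{\eps,\mathbf{d}}\phi_0$ and the role of Propositions~\ref{prop_near_X}--\ref{prop_far}; note only that the exponential factor should read $e^{-(\bar{a}-a)s_0}$ (as in Lemma~\ref{lemma_H_Sigmad} and in the fixed-point argument that follows), so your speculation about its origin via commutators and norm embeddings is unnecessary---it comes directly from the curvature estimate along the ends.
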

\begin{proof}
We deal with the term $\chi_{3,\eps}N_\eps(v(\mathbf{d}))$, which is the main term. According to Lemma \ref{lemma_dec_error}, 
\begin{equation}
\begin{aligned}
\chi_{3,\eps}N_\eps(v(\mathbf{d}))=&\chi_{3,\eps}\big(-\eps^2\mathcal{J}_{\Sigma(\eps^2\mathbf{d})}(h_0\circ \beta(\eps^2\mathbf{d}))-(H_{\Sigma(\eps^2\mathbf{d})}-H_\Sigma)\big)U'
+\chi_{3,\eps}\mathcal{G}^5_\eps(\mathbf{h},\mathbf{d})\\
&=\chi_{3,\eps}\big(-\eps^2\mathcal{J}_{\Sigma(\eps^2\mathbf{d})}(h_0\circ \beta(\eps^2\mathbf{d}))-\eps^2\mathcal{J}_\Sigma\Phi(\mathbf{d})\circ\beta(\eps^2\mathbf{d})
-(H_{\Sigma(\eps^2\mathbf{d})}\\
&-\eps^2\mathcal{J}_\Sigma\Phi(\mathbf{d})\circ\beta(\eps^2\mathbf{d})-H_\Sigma)\big)U'+\chi_{3,\eps}\mathcal{G}^5_\eps(\mathbf{h},\mathbf{d})
\end{aligned}
\end{equation}
with $\mathcal{G}^5_\eps$ of order $\eps^2$. By Lemma \ref{lemma_Delta_Sigma_d},
$$\big(\mathcal{J}_{\Sigma(\eps^2\mathbf{d})}(h_0\circ\beta(\eps^2\mathbf{d}))\big)\circ\beta(\eps^2\mathbf{d})^{-1}=\mathcal{J}_\Sigma h_0+\mathcal{F}^3_\eps(h_0,\mathbf{d}),$$
with $\|\mathcal{F}^3_\eps(h_0,\mathbf{d})\|_{\mathcal{C}^{0,\alpha}_a(\Sigma)}\le C\eps^2|\mathbf{d}|\|h_0\|_{\mathcal{C}^{2,\alpha}_a(\Sigma)}$. Therefore, multiplying (\ref{eq_near}) by $v'_\star$, integrating over $\R$, composing with $\beta(\eps^2\mathbf{d})^{-1}$ and recalling Lemma \ref{lemma_H_Sigmad}, we can reduce ourselves to an equation of the form
$$\mathcal{J}_\Sigma(h_0+\Phi(\mathbf{d}))=\mathcal{F}^2_\eps(h_0,\mathbf{d}),$$
with $\mathcal{F}^2_\eps$ satisfying the required estimates. Note that this also follows from the choice of $\varphi_j$ and $\text{h}_j$, which simplifies the periodic part of the error along the ends, as we show in Proposition \ref{prop_ls_end}, and the upper bound (\ref{est_commutators}). Observe  the we also use the precise size of the corrections $\varphi_j$, which are of order $\eps^{3-\alpha}$ in the appropriate weighted norm. 
\end{proof}
\begin{proposition}
Let $\alpha\in(0,1/2)$ and $a\in(0,\bar{a})$. Then equation (\ref{eq_Sigma}) admits a solution $h_0+\Phi(\mathbf{d})\in \mathcal{C}^{2,\alpha}_a(\Sigma)\oplus \mathcal{D}(\Sigma)$, provided $s_0$ and $M$ are large enough and $\eps$ is small enough.
\end{proposition}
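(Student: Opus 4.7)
The plan is to invert $\mathcal{J}_\Sigma$ on a well-chosen complement using the non-degeneracy assumption, reduce (\ref{eq_Sigma}) to a fixed point problem on a product space, and then apply the Banach contraction principle with the choices of $s_0$, $M$, and $\eps$ made in the correct order. By Proposition \ref{prop_non_deg}, non-degeneracy guarantees a $3k$-dimensional complement $\mathcal{E}(\Sigma)\subset\mathcal{D}(\Sigma)$ with $\mathcal{K}(\Sigma)\oplus\mathcal{E}(\Sigma)=\mathcal{D}(\Sigma)$, such that
$$\mathcal{J}_\Sigma:\mathcal{C}^{2,\alpha}_a(\Sigma)\oplus\mathcal{E}(\Sigma)\to\mathcal{C}^{0,\alpha}_a(\Sigma)$$
is a bounded isomorphism. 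Let $\mathcal{G}$ denote its inverse, with operator norm $C_0>0$ depending only on $\Sigma,a,\alpha$.

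Next I exploit the freedom in the kernel. The linear map $\Phi:\R^{6k}\to\mathcal{D}(\Sigma)$ from (\ref{def_Phi_tau}) and (\ref{def_tilde_eta_j}) is an isomorphism, so the decomposition $\mathcal{D}=\mathcal{K}\oplus\mathcal{E}$ induces a splitting $\R^{6k}=V_\mathcal{K}\oplus V_\mathcal{E}$ together with corresponding linear maps $\Phi_\mathcal{K}$ and $\Phi_\mathcal{E}$ such that $\Phi(\mathbf{d})=\Phi_\mathcal{K}(\mathbf{d})+\Phi_\mathcal{E}(\mathbf{d})$. Since $\ker\mathcal{J}_\Sigma\subset\mathcal{C}^{2,\alpha}_a\oplus\mathcal{K}$, adding a kernel element only modifies the $\mathcal{K}$-component of $\Phi(\mathbf{d})$ and the $\mathcal{C}^{2,\alpha}_a$-component of $h_0$, so without loss of generality I can restrict to the slice $V_\mathcal{E}:=\{\mathbf{d}\in\R^{6k}:\Phi_\mathcal{K}(\mathbf{d})=0\}$; on this slice $\Phi(\mathbf{d})=\Phi_\mathcal{E}(\mathbf{d})\in\mathcal{E}(\Sigma)$, and the map $\Phi_\mathcal{E}:V_\mathcal{E}\to\mathcal{E}(\Sigma)$ is a linear isomorphism between finite-dimensional spaces.

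With this constraint, (\ref{eq_Sigma}) is equivalent to the fixed point equation
$$(h_0,\mathbf{d})=\mathcal{T}(h_0,\mathbf{d}):=\big(\pi_\mathcal{C}\mathcal{G}\mathcal{F}^2_\eps(h_0,\mathbf{d}),\ \Phi_\mathcal{E}^{-1}\pi_\mathcal{E}\mathcal{G}\mathcal{F}^2_\eps(h_0,\mathbf{d})\big)$$
in $\mathcal{C}^{2,\alpha}_a(\Sigma)\times V_\mathcal{E}$, where $\pi_\mathcal{C},\pi_\mathcal{E}$ are the continuous projections associated to the splitting $\mathcal{C}^{2,\alpha}_a\oplus\mathcal{E}$. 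I apply Banach's theorem on the closed ball
$$B_M:=\{(h_0,\mathbf{d})\in\mathcal{C}^{2,\alpha}_a(\Sigma)\times V_\mathcal{E}:\ \|h_0\|_{\mathcal{C}^{2,\alpha}_a(\Sigma)}\le M,\ |\mathbf{d}|\le M\}.$$
The self-map property follows from the bound $\|\mathcal{F}^2_\eps(h_0,\mathbf{d})\|_{\mathcal{C}^{0,\alpha}_a(\Sigma)}\le C(1+Me^{(\bar a-a)s_0})$ provided by the preceding lemma: combining with $\|\mathcal{G}\|\le C_0$ gives $\|\mathcal{T}(h_0,\mathbf{d})\|\le C_0C(1+Me^{(\bar a-a)s_0})\le M$ as soon as $M$ is taken large enough relative to the constant $C_0C$ and the exponential factor is tamed by the appropriate choice of $s_0$. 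The contraction estimate comes from the Lipschitz bound on $\mathcal{F}^2_\eps$: one obtains
$$\|\mathcal{T}(h_0^1,\mathbf{d}^1)-\mathcal{T}(h_0^2,\mathbf{d}^2)\|\le C_0C\big((\eps^2+e^{(\bar a-a)s_0})|\mathbf{d}^1-\mathbf{d}^2|+\eps\|h_0^1-h_0^2\|_{\mathcal{C}^{2,\alpha}_a(\Sigma)}\big),$$
which is a strict contraction once the exponential factor and $\eps$ are both small.

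The main obstacle is precisely the interplay among $s_0$, $M$, and $\eps$: each proposition along the construction (Proposition \ref{prop_ls_end}, Proposition \ref{prop_far}, Proposition \ref{prop_near_X}, and Lemma \ref{lemma_dec_error}) has its own smallness condition, and the constant in the self-map estimate is $M$-independent only after $s_0$ is frozen. The resolution is to fix the parameters sequentially: first pick $s_0$ large enough to dominate the geometric exponential factor by $1/(2C_0C)$; then choose $M$ large enough so that $C_0C(1+\tfrac12)\le M$ and so that all earlier propositions remain applicable with this $M$; finally let $\eps\to 0$ so that the $\eps$ and $\eps^2$ terms in the Lipschitz bound, together with the inherited smallness from Propositions \ref{prop_ls_end}–\ref{prop_near_X} (whose corrections were already of order $\eps^{3-\alpha}$ or $e^{-\lambda_3/\eps}$), render $\mathcal{T}$ a $\tfrac12$-contraction on $B_M$. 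The Banach fixed point thus provides $(h_0,\mathbf{d})$, completing the proof.
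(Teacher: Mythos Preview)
Your argument is correct and follows the same route as the paper: invoke Proposition \ref{prop_non_deg} to invert $\mathcal{J}_\Sigma$ on $\mathcal{C}^{2,\alpha}_a(\Sigma)\oplus\mathcal{E}(\Sigma)$, rewrite (\ref{eq_Sigma}) as a fixed point for $(h_0,\mathbf{d})$, and close with the contraction mapping theorem after choosing $s_0$, then $M$, then $\eps$. Your handling of the kernel via the slice $V_\mathcal{E}$ is in fact a bit more explicit than the paper's, which simply writes $\Pi\mathcal{J}_\Sigma^{-1}$ and leaves the reader to unpack the meaning through Proposition \ref{prop_non_deg}. One small correction: the exponential factor you carried over from the preceding lemma should be $e^{-(\bar a-a)s_0}$, not $e^{(\bar a-a)s_0}$ (the lemma statement has a sign typo; compare Lemma \ref{lemma_H_Sigmad} and the paper's own proof of the proposition), and indeed your phrase ``tamed by large $s_0$'' only makes sense with the decaying sign.
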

\begin{proof}
We can solve this equation thanks to the non degeneracy of $\Sigma$. In fact, denoting by
$$\Pi:\mathcal{C}^{2,\alpha}_a(\Sigma)\oplus\mathcal{D}(\Sigma)\to \mathcal{C}^{2,\alpha}_a(\Sigma)\times\R^{6k}$$
the projection of a function in $\mathcal{C}^{2,\alpha}_a(\Sigma)\oplus\mathcal{D}(\Sigma)$ onto the components, we can reduce (\ref{eq_Sigma}) to finding a fixed point of the mapping,
$$(h_0,\mathbf{d})\in \tilde{B}_M\times (B_M)^k\mapsto \Pi\mathcal{J}^{-1}_\Sigma\mathcal{F}^2_\eps(h_0,\mathbf{d}),$$
where
$$\tilde{B}_M:=\{h_0\in \mathcal{C}^{2,\alpha}_a(\Sigma):\,\|h_0\|_{\mathcal{C}^{2,\alpha}_a(\Sigma)}<M\}.$$
It is possible to show that we actually have a contraction, in fact
$$\|\Pi\mathcal{J}^{-1}_\Sigma\mathcal{F}^2_\eps(h_0,\mathbf{d})\|_{\mathcal{C}^{0,\alpha}(\Sigma)\times\R}\le C(1+Me^{-(\bar{a}-a)s_0})<M$$
if, for instance $M\ge 2C$, $s_0>0$ is large enough and $\eps$ is small enough.
\end{proof}

\subsection{The equation far from $\Sigma$}\label{subsec_far}
In order to solve equation (\ref{eq_far}), we first study the linear theory for the operator $L_0:=\eps\Delta+\eps^{-1}V(x)$, where the potential is given by $V(x):=-f'({\tt w}_\eps(\mathbf{d}))(1-\chi^\sharp_{1,\eps})-q\chi^\sharp_{1,\eps}$ in all $\R^3$. Although we do not make it explicit through the notation, the potential $V$  depends on $\mathbf{d}$. However, as we will see in the sequel, this dependence is mild enough for our purposes. Thanks to coercivity, this operator is invertible, and its inverse will be used to solve our equation by a fixed point argument.
\subsubsection{The linear problem}
We are interested in the equation
\begin{eqnarray}
\eps\Delta \phi-\eps^{-1}V(x)\phi=g \qquad \text{in $\R^3$}, \qquad g\in C^{0,\alpha}_{a,\gamma}(\R^3)\label{eq_far_lin}
\end{eqnarray}
Scaling the variables, we can reduce ourselves to solve the equation 
\begin{eqnarray}
\Delta \phi-V_\eps(x)\phi =g \label{eq_far_lin_scaled} \qquad \text{in $\R^3$}, \qquad V_\eps(x):=V(\eps x),
\end{eqnarray}
with $g\in C^{0,\alpha}_{loc}(\R^3)$ such that $\|g \Gamma_\eps\|_{C^{0,\alpha}(\R^3)}<\infty$. In this setting, we prove the following result.
\begin{lemma}
Let $M>0$, $\alpha\in(0,1)$, $a\in(0,\bar{a})$, $\gamma\in(0,\sqrt{2})$ and $\mathbf{d}\in (B_M)^k$. Let $g\in C^{0,\alpha}_{loc}(\R^3)$ be a function such that $\|g \Gamma_\eps\|_{C^{0,\alpha}(\R^3)}<\infty$. Then, for $\eps$ small enough, there exists a unique solution $\phi\in C^{2,\alpha}_{loc}(\R^3)$ to (\ref{eq_far_lin_scaled}) such that
\begin{eqnarray}
\|\phi \Gamma_\eps\|_{C^{2,\alpha}(\R^3)}\le C\|g \Gamma_\eps\|_{C^{0,\alpha}(\R^3)},\label{est_eq_far}
\end{eqnarray}
for some constant $C>0$.
\label{lemma_far_scaled}
\end{lemma}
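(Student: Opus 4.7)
The operator $-\Delta+V_\eps$ is coercive on $H^1(\R^3)$. I would first verify that $V_\eps\geq\gamma_0>\gamma$ uniformly in $\mathbf{d}$ and $\eps$: outside the scaled tubular neighbourhood of $\eps^{-1}\Sigma(\eps^2\mathbf{d})$ one has $V_\eps=-f'({\tt w}_\eps(\mathbf{d}))\approx -f'(\pm 1)=2$, whereas inside the neighbourhood the modification $-q$ is bounded below by a positive constant by the very choice of $q$. By Remark \ref{rem_angles}, hypothesis (H) guarantees $\Gamma_\eps^{-1}\in L^2(\R^3)$, so any $g$ with $\|g\Gamma_\eps\|_{C^{0,\alpha}}<\infty$ lies in $L^2(\R^3)$. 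Existence of a unique weak $H^1(\R^3)$ solution to $(-\Delta+V_\eps)\phi=-g$ then follows from Lax--Milgram applied to the coercive bilinear form $B(\phi,\psi):=\int(\nabla\phi\cdot\nabla\psi+V_\eps\phi\psi)\,dx$; uniqueness in $L^2$ is immediate by pairing with the solution itself.

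The weighted pointwise bound would come from a supersolution argument based on $\Gamma_\eps^{-1}$ itself. Writing $\Gamma_\eps^{-1}=\varphi_\gamma\,\Omega$ with
\[
\Omega:=\Bigl(\beta_0+\sum_{j=1}^k\beta_j\omega_j\Bigr)^{-1},\qquad \omega_j(x):=e^{\eps a\,k_{\tau_j(\eps^2\mathbf{d}_j)}^{-1}((x-\eps^{-1}\mathbf{b}_j)\cdot\mathbf{c}_j)},
\]
I would compute
\[
(-\Delta+V_\eps)(\varphi_\gamma\Omega)=(V_\eps-\gamma)\varphi_\gamma\Omega-2\nabla\varphi_\gamma\cdot\nabla\Omega-\varphi_\gamma\Delta\Omega,
\]
using that $(\Delta-\gamma)\varphi_\gamma=-\delta_{\eps^{-1}\Sigma(\eps^2\mathbf{d})}\leq 0$ in the distributional sense (so the Lipschitz jump of $\varphi_\gamma$ across the scaled surface has the favourable sign). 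The linear growth of $k_\tau^{-1}$ makes $\nabla\omega_j$ and $\Delta\omega_j$ of order $\eps a$ and $(\eps a)^2$ times $\omega_j$ respectively, so the last two terms are $O(\eps)\,\varphi_\gamma\Omega$. Since $V_\eps-\gamma\geq\gamma_0-\gamma>0$, choosing $\eps$ and $a$ small yields
\[
(-\Delta+V_\eps)\Gamma_\eps^{-1}\geq c_1\,\Gamma_\eps^{-1}
\]
for some $c_1>0$. Then $w:=c_1^{-1}\|g\Gamma_\eps\|_{L^\infty}\Gamma_\eps^{-1}$ satisfies $(-\Delta+V_\eps)w\geq|g|$, and applying the comparison principle (as in Lemma~2.1 of \cite{BCN}) to $w\pm\phi$, both tending to $0$ at infinity, yields
\[
\|\phi\Gamma_\eps\|_{L^\infty(\R^3)}\leq C\,\|g\Gamma_\eps\|_{L^\infty(\R^3)}.
\]

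The weighted $C^{2,\alpha}$ estimate (\ref{est_eq_far}) would then follow from interior Schauder estimates on balls of unit radius: on each such ball $\Gamma_\eps$ varies by a uniformly bounded multiplicative factor, hence $\phi\Gamma_\eps$ solves a uniformly elliptic equation with $C^{0,\alpha}$-bounded coefficients and a right-hand side controlled in terms of $\|g\Gamma_\eps\|_{C^{0,\alpha}}$ and the $L^\infty$-bound just established. The main obstacle is the barrier argument: verifying $(-\Delta+V_\eps)\Gamma_\eps^{-1}\geq c_1\Gamma_\eps^{-1}$ uniformly, in particular across the Lipschitz interface $\eps^{-1}\Sigma(\eps^2\mathbf{d})$ where $\Delta\varphi_\gamma$ must be interpreted distributionally, and in the overlap regions $C_j\setminus C'_j$ where several cutoffs $\beta_j$ coexist and produce cross terms $\nabla\beta_j\cdot\nabla\omega_j$ that need to be dominated. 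Choosing $\gamma<2$ and $a>0$ sufficiently small ensures that these correction terms do not spoil the positive sign and that the gap $\gamma_0-\gamma$ dominates the $O(\eps)$ errors.
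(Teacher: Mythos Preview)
Your proof is correct and follows essentially the same strategy as the paper: coercivity of $-\Delta+V_\eps$ for existence (using that $g\in L^2(\R^3)$ thanks to hypothesis (H)), a weighted $L^\infty$ bound via the specific structure of $\Gamma_\eps^{-1}$, and then interior Schauder estimates. The key computation underlying both the paper's and your weighted bound is the same identity
\[
2\frac{|\nabla\Gamma_\eps|^2}{\Gamma_\eps^2}-\frac{\Delta\Gamma_\eps}{\Gamma_\eps}=\Gamma_\eps\,\Delta(\Gamma_\eps^{-1})\le \gamma^2+o(1),
\]
so that the gap between $V_\eps\ge 2+o(1)$ and this quantity is positive for $\gamma<\sqrt 2$.

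The one methodological difference is that the paper first truncates to balls $B_R$, obtains the weighted bound for the Dirichlet solution $\phi_R$ by evaluating the equation for $\phi_R\Gamma_\eps$ at its interior maximum (which is guaranteed to exist on a bounded domain), and then passes to the limit $R\to\infty$ using the uniform $H^1$ bound. You instead work directly on $\R^3$ via Lax--Milgram and invoke a comparison principle on the whole space. Your route is slightly more streamlined, but it requires you to first check that the $H^1$ solution $\phi$ actually decays at infinity before the barrier comparison is justified; this does follow (from $\phi\in H^1$ plus local $W^{2,2}$ estimates and Sobolev embedding), but you should make that step explicit. The paper's truncation device sidesteps this verification at the cost of an extra limiting argument. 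Both approaches also share the minor technicality that $\varphi_\gamma$ is only Lipschitz across the scaled surface, so that the computation of $\Delta(\Gamma_\eps^{-1})$ involves a distributional surface-measure term of the correct sign; you flag this correctly, and the paper's interior-maximum argument faces the same issue if the maximum falls on the surface.
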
 
\begin{proof}
The proof of this Lemma is split into three steps.\\

\textit{Step (i). Existence, uniqueness and boundedness.}\\

Firs we consider the truncated problem
\begin{eqnarray}\notag
-\Delta \phi_R+V_\eps\phi_R=g &\text{in $B_R$}, \qquad\phi_R\in H^1_0(B_R).
\end{eqnarray}
Existence and uniqueness follow from variational arguments, since the potential is uniformly positive. Testing the equation with the solution we have the bound
\begin{eqnarray}
\|\phi_R\|_{H^1(B_R)}\le C\|g\|_{L^2(\R^3)},\label{unif_H1_bound}
\end{eqnarray}
for some constant $C>0$ independent of $R$, $M$ and $\mathbf{d}$. Here it is crucial to use that $g\in L^2(\R^3)$, which follows from the fact that not two of the ends of $\Sigma$ are parallel (see also Remark \ref{rem_angles}). Therefore, using the Sobolev embeddings and the elliptic estimates (see theorem $8.12$ of \cite{GT}), we can prove that $\phi_R\in L^\infty(B_R)$ and
\begin{eqnarray}
\|\phi_R\|_{L^\infty(B_R)}\le C\|\phi_R\|_{W^{2,2}(B_R)}\le C_R(\|\phi_R\|_{L^2(B_R)}+\|g\|_{L^2(B_R)})\le C_R\|g\|_{L^2(\R^3)}<\infty. \label{unif_Linfty_bound}
\end{eqnarray}
Note that the constants here do depend on $R$, however this is totally irrelevant, since we just need to prove that $\phi_R\in L^\infty(B_R)$. \\



\textit{Step (ii). Estimate in the weighted norm: there exists a constant $C>0$ such that}
\begin{eqnarray}\notag
\|\phi_R \Gamma_\eps\|_{L^\infty(B_R)}\le C\|g\Gamma_\eps\|_{L^\infty(\R^3)}.
\end{eqnarray}
We assume that $\phi_R\ne 0$, otherwise the claim is trivial. We compute the equation satisfied by $\tilde{\phi}:=\phi_R\Gamma_\eps$, that is 
\[
\begin{aligned}
&-\Delta \tilde{\phi}+V_\eps\tilde{\phi}=\tilde{g}-2\Gamma_\eps^{-1}\nabla\tilde{\phi}\cdotp\nabla\Gamma_\eps
+\tilde{\phi}\bigg(2\Gamma_\eps^{-2}|\nabla\Gamma_\eps|^2-\Gamma_\eps^{-1}\Delta\Gamma_\eps\bigg)\\
&\qquad \le \tilde{g}-2\Gamma_\eps^{-1}\nabla\tilde{\phi}\cdotp\nabla\Gamma_\eps+
(2\frac{|\nabla\Gamma_\eps|^2}{\Gamma_\eps^2}-\frac{\nabla\Gamma}{\Gamma})\tilde{\phi}.
\end{aligned}
\]
Since the domain is a ball and $\phi_R$ satisfies Dirichlet boundary conditions,  there exists a point $y\in B_R$ such that $|\tilde{\phi}(y)|=\|\tilde{\phi}\|_{L^\infty(B_R)}>0$. If, for instance, $\tilde{\phi}(y)>0$, then $y$ is a maximum point, thus
$$(2+o(1))\tilde{\phi}(y)\le -\Delta\tilde{\phi}(y)+V_\eps(y)\tilde{\phi}(y)\le \|\tilde{g}\|_{L^\infty(\R^3)}+(\gamma^2+o(1))\tilde{\phi}(y),$$
which proves the claim, provided $\gamma\in(0,\sqrt{2})$ and $\eps$ is small enough, due to the fact that the potential is bounded below by $2+o(1)>0$. Here $o(1)$ is taken in the usual sense of functions of one variable, in this case $\eps$, which goes to $0$. Replacing $\phi$ with $-\phi$, we have the statement.\\

\textit{Step (iii): passing to the limit.}\\

Since the constant appearing in (\ref{unif_H1_bound}) is independent of $R$, $\phi_R$ is bounded in $H^1(\R^3)$, thus there exists a sequence $R_n\to\infty$ such that $\phi_{R_n}$ converges weakly in $H^1(\R^3)$ to an entire solution $\phi\in C^{2,\alpha}_{loc}(\R^3)$. Up to a subsequence, the convergence is also strong in $L^2_{loc}(\R^3)$ and pointwise almost everywhere, therefore, by step \textit{(ii)}, $\phi\in C^{0,\alpha}_{a,\gamma}(\R^3)$ and  
$$\|\phi \Gamma_\eps\|_{L^\infty(\R^3)}\le C\|g\Gamma_\eps\|_{L^\infty(\R^3)}.$$
Therefore, by local elliptic estimates (see Corollary $6.3$ of \cite{GT}) applied to the equation satisfied by $\phi\Gamma_\eps$, we can see that $\phi\in C^{2,\alpha}_{a,\gamma}(\R^3)$ and satisfies
\begin{eqnarray}\notag
\|\phi\Gamma_\eps\|_{C^{2,\alpha}(\R^3)}\le C\|g\Gamma_\eps\|_{C^{0,\alpha}(\R^3)},
\end{eqnarray}
for some constant $C>0$.
\end{proof}
 
Now we apply a scaling argument to go back to our original problem

\begin{lemma}
Let $M>0$, $\alpha\in(0,1)$, $a\in(0,\bar{a})$, $\gamma\in(0,\sqrt{2})$ and $\mathbf{d}\in (B_M)^k$. For any $g\in C^{0,\alpha}_{a,\gamma}(\R^3)$ and $\eps$ small enough, there exists a unique solution $\phi:=\Psi^3(g)\in C^{2,\alpha}_{a,\gamma}(\R^3)$ to (\ref{eq_far_lin}) such that
$$\|\Psi^3(g)\|_{C^{2,\alpha}_{a,\gamma}(\R^3)}\le C\eps \|g\|_{C^{0,\alpha}_{a,\gamma}(\R^3)}.$$
\label{lemma_far}
\end{lemma}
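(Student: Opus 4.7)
The plan is to reduce the statement to Lemma \ref{lemma_far_scaled} by the standard rescaling that converts the singularly perturbed equation on $\R^3$ into an $\eps$-independent one. Set $\tilde\phi(x):=\phi(\eps x)$ and $\tilde g(x):=\eps\,g(\eps x)$. A direct calculation (multiplying (\ref{eq_far_lin}) by $\eps$ and evaluating at $\eps x$) shows that $\phi$ solves (\ref{eq_far_lin}) if and only if $\tilde\phi$ solves
\begin{equation*}
\Delta\tilde\phi-V_\eps(x)\tilde\phi=\tilde g\quad\text{in }\R^3,
\end{equation*}
which is exactly equation (\ref{eq_far_lin_scaled}) with datum $\tilde g$.

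Next I would translate the two norms. Since the weights are related by $\Gamma(x)=\Gamma_\eps(x/\eps)$, i.e.\ $\Gamma_\eps(x)=\Gamma(\eps x)$, the substitution $y=\eps x$ gives $\bigl(\tilde\phi\,\Gamma_\eps\bigr)(x)=(\phi\,\Gamma)(y)$ and $\nabla_x^m(\tilde\phi\,\Gamma_\eps)(x)=\eps^m\nabla_y^m(\phi\,\Gamma)(y)$. The factors $\eps^m$ and $\eps^{m+\alpha}$ in the definition of $\|\cdot\|_{C^{n,\alpha}_{a,\gamma}(\R^3)}$ are precisely the ones that absorb this chain-rule scaling, so that
\begin{equation*}
\|\tilde\phi\,\Gamma_\eps\|_{C^{2,\alpha}(\R^3)}=\|\phi\|_{C^{2,\alpha}_{a,\gamma}(\R^3)},\qquad
\|\tilde g\,\Gamma_\eps\|_{C^{0,\alpha}(\R^3)}=\eps\,\|g\|_{C^{0,\alpha}_{a,\gamma}(\R^3)},
\end{equation*}
the extra $\eps$ on the right coming from the prefactor in the definition of $\tilde g$.

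Finally I would apply Lemma \ref{lemma_far_scaled} to produce, for $\eps$ small enough, a unique $\tilde\phi\in C^{2,\alpha}_{loc}(\R^3)$ with $\tilde\phi\,\Gamma_\eps\in C^{2,\alpha}(\R^3)$ satisfying
\begin{equation*}
\|\tilde\phi\,\Gamma_\eps\|_{C^{2,\alpha}(\R^3)}\le C\,\|\tilde g\,\Gamma_\eps\|_{C^{0,\alpha}(\R^3)}.
\end{equation*}
Undoing the rescaling via the two norm identities above yields $\phi\in C^{2,\alpha}_{a,\gamma}(\R^3)$ with $\|\phi\|_{C^{2,\alpha}_{a,\gamma}(\R^3)}\le C\eps\,\|g\|_{C^{0,\alpha}_{a,\gamma}(\R^3)}$. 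Uniqueness transfers in the same way: the difference of two solutions in $C^{2,\alpha}_{a,\gamma}(\R^3)$ rescales to a weighted solution of the homogeneous equation $\Delta w-V_\eps w=0$, which must vanish by the uniqueness part of Lemma \ref{lemma_far_scaled}. Defining $\Psi^3(g):=\phi$ we obtain the claim.

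The argument is essentially bookkeeping, and I do not expect any real obstacle: the weighted space $C^{n,\alpha}_{a,\gamma}(\R^3)$ has been designed precisely so that this dilation is an isometry between the weighted $C^{n,\alpha}$ norms on the unscaled and scaled sides; the only thing worth double-checking is that the polynomial $\eps$-weights in the definition exactly match the differentiation scaling, which indeed they do. The single factor $\eps$ in the final estimate is traceable to the prefactor in $\tilde g=\eps\,g(\eps\,\cdot)$ produced when multiplying (\ref{eq_far_lin}) through by $\eps$.
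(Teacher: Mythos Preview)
Your argument is correct and is exactly the rescaling approach taken in the paper: apply Lemma~\ref{lemma_far_scaled} to the rescaled datum $\eps\, g(\eps\,\cdot)$, then undo the scaling via $\phi(x)=\phi_\eps(x/\eps)$, with the $\eps$-weights in the definition of $\|\cdot\|_{C^{n,\alpha}_{a,\gamma}(\R^3)}$ designed precisely so that the norm identities you wrote hold. Your write-up is in fact more detailed than the paper's own proof, which simply records the rescaled equation and the resulting estimate.
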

\begin{proof}
In view of Lemma \ref{lemma_far_scaled}, there exists a unique solution $\phi_\eps$ to the equation
$$\Delta\phi_\eps-V_\eps(x)\phi_\eps=\eps g_\eps(x), \qquad\text{in $\R^3$}, \qquad g_\eps(x):=g(\eps x)$$
such that 
$$\|\phi_\eps\Gamma_\eps\|_{C^{2,\alpha}(\R^3)}\le C\eps \|g_\eps\Gamma_\eps\|_{C^{2,\alpha}(\R^3)}.$$
Then $\phi(x):=\phi_\eps(x/\eps)$ solves (\ref{eq_far_lin}) and satisfies the required estimate.
\end{proof}
\subsubsection{The proof of Proposition \ref{prop_far}: a fixed point argument}
The argument is based on the contraction mapping theorem, applied to the equation
$$\psi=\Psi^3(-(1-\chi^\sharp_{2,\eps})N_\eps({\tt w}_\eps(\mathbf{d}))-(1-\chi^\sharp_{1,\eps})Q_{w_\eps}(\chi^\sharp_{2,\eps}\phi^\sharp+\psi)
-\eps[\Delta,\chi^\sharp_{2,\eps}]\phi^\sharp)$$
in the ball
$$B_{\Lambda_3}:=\{\psi\in C^{2,\alpha}_{a',\gamma'}(\R^3):\|\psi\|_{C^{2,\alpha}_{a',\gamma'}(\R^3)}\le \Lambda_3 e^{-\lambda_3/\eps}\},$$
where $\lambda_3$ is an appropriate constant, depending on $a-a'$ and $\gamma-\gamma'$. We note that, thanks to the fact that $a'<a$ and $\gamma'<\gamma$, all the norms are actually finite, and, thanks to the cutoff functions $\chi^\sharp_{1,\eps}$ and $\chi^\sharp_{2,\eps}$, we actually have a contraction of $B_{\Lambda_3}$.


\subsection{The equation near $\Sigma$}\label{section_near}

\subsubsection{The linear problem}
This subsection is devoted to the study of the linear equation
\begin{eqnarray}
\eps\Delta_\Sigma\phi+\eps^{-1}\partial^2_t \phi+\eps^{-1}f'(v_\star(t))\phi=g, &\text{in $\Sigma\times\R$.} \label{eq_near_lin}
\end{eqnarray}
We look for solutions satisfying the orthogonality condition
\begin{eqnarray}
\int_{\R}\phi(y,t)v'_\star(t)dt=0, \qquad\forall \, y\in \Sigma \label{cond_ort}
\end{eqnarray}
Once again, in order to deal with the problem, we need to rescale the variables and consider the equation
\begin{eqnarray}
\Delta_{\Sigma_\eps}\phi+\partial^2_{{\tt t}} \phi+f'(v_\star({\tt t}))\phi=g &\text{in $\Sigma_\eps\times\R$,} \label{eq_near_lin_scaled}
\end{eqnarray}
where $\Sigma_\eps:=\eps^{-1}\Sigma$ is the scaled version of $\Sigma$ and the new coordinates are
$$
\begin{cases}
{\tt t}=t\\
{\tt y}=\eps^{-1}y.
\end{cases}$$ 
For this scaled problem, we prove existence and uniqueness under the orthogonality condition
\begin{eqnarray}
\int_{\R}\phi({\tt y},{\tt t})v'_\star({\tt t})d{\tt t}=0, \qquad\forall \, {\tt y}\in \Sigma_\eps \label{cond_ort_scaled},
\end{eqnarray}
and we estimate the inverse in terms of the norms
$$\|\phi\|_{\mathcal{C}^{n,\alpha}_{a,\gamma}(\Sigma_\eps\times\R)}:=\sum_{0\le m+k\le n}\|\partial_t^k D^m_{\Sigma_\eps}\phi\|_{\mathcal{C}^{0,\alpha}_{a,\gamma}(\Sigma_\eps\times\R)},$$
where $a>0$, $\gamma>0$, $n\in\N$, $\alpha\in(0,1)$ and
\[
\begin{aligned}
&\|\phi\|_{\mathcal{C}^{0,\alpha}_{a,\gamma}(\Sigma_\eps\times\R)}:=\|\phi w_{a,\eps}({\tt y})\cosh^\gamma({\tt t})\|_{C^{0,\alpha}(\Sigma_\eps\times\R)} \\
&w_{a,\eps}({\tt y}):=w_a(\eps {\tt y}), \qquad \forall \, {\tt y}\in\Sigma_\eps. 
\end{aligned}
\]
We recall that $w_a$ is defined in (\ref{def_wa}) and $\zeta_j$ are defined in (\ref{def_zeta_j}). As usual, we say that a function $\phi\in C^{n,\alpha}_{loc}(\Sigma_\eps\times\R)$ is in $\mathcal{C}^{n,\alpha}_{a,\gamma}(\Sigma_\eps\times\R)$ if the above norm is finite.
\begin{lemma}
Let $\alpha\in(0,1)$, $\gamma\in(0,\sqrt{2})$ and $a\in(0,\bar{a})$. Let $g\in \mathcal{C}^{0,\alpha}_{a,\gamma}(\Sigma_\eps\times\R)$ satisfy the orthogonality condition (\ref{cond_ort_scaled}). Then, for $\eps$ small enough, there exists a unique solution $\phi\in \mathcal{C}^{2,\alpha}_{a,\gamma}(\Sigma_\eps\times\R)$ to (\ref{eq_near_lin_scaled}) satisfying (\ref{cond_ort_scaled}). Moreover, it satisfies the estimate
\begin{eqnarray}
\|\phi\|_{\mathcal{C}^{2,\alpha}_{a,\gamma}(\Sigma_\eps\times\R)}\le C\|g\|_{\mathcal{C}^{0,\alpha}_{a,\gamma}(\Sigma_\eps\times\R)}, \label{est_norm_near_scaled}
\end{eqnarray}  
for some constant $C>0$.\label{lemma_lin_near_scaled}
\end{lemma}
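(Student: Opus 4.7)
The plan is to extend the argument of Lemma \ref{lemma_ls_scaled} from a single periodic cell to the whole surface $\Sigma_\eps$, following the same three-step scheme: weak existence by coercivity, a pointwise $L^\infty$ bound by local estimates, and the weighted $L^\infty$ bound by a barrier argument. I first obtain a weak solution. On the closed subspace
$$\mathcal{X}_\eps:=\Bigl\{\phi\in H^1(\Sigma_\eps\times\R)\ :\ \int_\R\phi(\mathbf{y},{\tt t})v'_\star({\tt t})\,d{\tt t}=0\text{ for a.e. }\mathbf{y}\in\Sigma_\eps\Bigr\},$$
the quadratic form
$$Q(\phi):=\int_{\Sigma_\eps\times\R}\bigl(|\nabla_{\Sigma_\eps}\phi|^2+(\partial_{\tt t}\phi)^2-f'(v_\star({\tt t}))\phi^2\bigr)\,d\sigma\,d{\tt t}$$
is positive definite, thanks to the spectral gap of $-\partial_{\tt t}^2-f'(v_\star)$ on the $L^2_{\tt t}$-orthogonal complement of $v'_\star$. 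The Riesz representation theorem therefore produces a unique $\phi\in\mathcal{X}_\eps$ solving (\ref{eq_near_lin_scaled}) weakly against test functions in $\mathcal{X}_\eps$; and since $g\perp v'_\star$ while any element of $\mathcal{X}_\eps^\perp$ has the form $a(\mathbf{y})v'_\star({\tt t})$, which is annihilated by $-\partial_{\tt t}^2-f'(v_\star)$, the equation in fact holds against all $H^1$ test functions. Standard local elliptic regularity then upgrades $\phi$ to $C^{2,\alpha}_{loc}$.

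The crucial step, and the main technical obstacle, is the weighted $L^\infty$ estimate, obtained via the barrier $\tilde{w}(\mathbf{y},{\tt t}):=w_{a,\eps}^{-1}(\mathbf{y})\cosh^{-\gamma}({\tt t})$. Using the asymptotics $-f'(v_\star)\to 2$ as $|{\tt t}|\to\infty$ together with $-\partial_{\tt t}^2\cosh^{-\gamma}=-\gamma^2\cosh^{-\gamma}+\gamma(\gamma+1)\cosh^{-\gamma-2}$, one obtains
$$\bigl(-\Delta_{\Sigma_\eps}-\partial_{\tt t}^2-f'(v_\star)\bigr)\tilde{w}=\bigl(2-\gamma^2+o(1)\bigr)\tilde{w}-\cosh^{-\gamma}({\tt t})\Delta_{\Sigma_\eps}w_{a,\eps}^{-1}(\mathbf{y})\quad\text{as }|{\tt t}|\to\infty.$$
Along each end of $\Sigma_\eps$, $w_{a,\eps}^{-1}$ corresponds to $e^{-as}$ in the isothermal coordinate $s$ of the dilated Delaunay model $\eps^{-1}D_{\tau_j}$, and by (\ref{def_jacobi_operator}) combined with the dilation by $\eps^{-1}$ one has $-\Delta_{\Sigma_\eps}w_{a,\eps}^{-1}=O(\eps^2)\,w_{a,\eps}^{-1}$, a contribution absorbed by $2-\gamma^2$ for $\eps$ small. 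Hence the barrier satisfies $\bigl(-\Delta_{\Sigma_\eps}-\partial_{\tt t}^2-f'(v_\star)\bigr)\tilde{w}\ge c\,\tilde{w}$ with $c>0$ outside a large compact set $K\subset\Sigma_\eps\times\R$, uniformly in $\eps$. The maximum principle for unbounded domains (Lemma $2.1$ of \cite{BCN}), combined with a preliminary pointwise bound on $K$ obtained from local $W^{2,2}$ estimates and the Sobolev embedding, yields
$$\|\phi\cosh^\gamma({\tt t})w_{a,\eps}\|_{L^\infty(\Sigma_\eps\times\R)}\le C\|g\cosh^\gamma({\tt t})w_{a,\eps}\|_{L^\infty(\Sigma_\eps\times\R)}.$$
To ensure that $C$ is independent of $\eps$, the required $L^\infty$ control on $K$ can be obtained by a blow-up argument, whose limiting profiles either live on $\Sigma\times\R$ or on $D_{\tau_j}\times\R$ and are ruled out respectively by the nondegeneracy of $\Sigma$ (Proposition \ref{prop_non_deg}) and by Lemma \ref{lemma_ls_scaled}.

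Finally, local interior Schauder estimates (Corollary $6.3$ of \cite{GT}) applied on unit balls of $\Sigma_\eps\times\R$, whose metric coefficients are uniformly $C^{0,\alpha}$-bounded in $\eps$ because $\Sigma_\eps$ is a rescaling of the fixed smooth surface $\Sigma(\eps^2\mathbf{d})$, upgrade the weighted $L^\infty$ bound to the full $\mathcal{C}^{2,\alpha}_{a,\gamma}$ estimate (\ref{est_norm_near_scaled}); uniqueness in $\mathcal{C}^{2,\alpha}_{a,\gamma}\cap\mathcal{X}_\eps$ is already encoded in the Riesz construction. The hard part is the barrier verification along the ends, where the restriction $a<\bar{a}=\min_j\sqrt{2+\tau_j^2}$ is essential: it is exactly this gap between $a$ and the indicial roots of the Delaunay Jacobi operator that lets the $O(\eps^2)$ error from $-\Delta_{\Sigma_\eps}w_{a,\eps}^{-1}$ be dominated by the coercive contribution $2-\gamma^2$.
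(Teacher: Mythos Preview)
There is a genuine gap in your barrier argument. You claim the supersolution inequality $(-\Delta_{\Sigma_\eps}-\partial_{\tt t}^2-f'(v_\star))\tilde w\ge c\,\tilde w$ holds outside a compact set $K\subset\Sigma_\eps\times\R$, but this is false: the inequality fails wherever $-f'(v_\star({\tt t}))$ is not large enough to dominate $\gamma^2$, i.e.\ on the entire non-compact strip $\Sigma_\eps\times[-{\tt t}_0,{\tt t}_0]$ for some fixed ${\tt t}_0$. To initialise the comparison you would therefore need a bound $|\phi({\tt y},{\tt t})|\le C\,w_{a,\eps}^{-1}({\tt y})$ on that strip, which is precisely the decay along $\Sigma_\eps$ you are trying to prove. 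Local $W^{2,2}$ estimates and Sobolev embedding give only an unweighted $L^\infty$ bound there. Your proposed blow-up argument does not close the gap either: a contradiction sequence whose maxima drift to infinity along an end yields, after translation, a limiting solution on a Delaunay cylinder orthogonal to $v'_\star$, and ruling this out has nothing to do with the CMC nondegeneracy statement of Proposition~\ref{prop_non_deg}, which concerns the Jacobi operator $\mathcal J_\Sigma$ on $\Sigma$, not the linearised Allen--Cahn operator on $\Sigma\times\R$.

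The paper avoids this obstruction by decoupling the two weights. First it proves decay in ${\tt t}$ alone, using the barrier $\cosh^{-\gamma}({\tt t})$ (plus a $\nu\cosh^{\gamma}$ regularisation) on $\{|{\tt t}|>{\tt t}_1\}$, with boundary data on $\{|{\tt t}|={\tt t}_1\}$ supplied by the unweighted $L^\infty$ bound. Then, for the decay along $\Sigma_\eps$, it does \emph{not} use a pointwise barrier in $({\tt y},{\tt t})$ but the auxiliary function $\psi({\tt y})=\int_\R\phi^2({\tt y},{\tt t})\,d{\tt t}$: multiplying the equation by $\phi$, integrating in ${\tt t}$ and invoking the spectral gap of $-\partial_{\tt t}^2-f'(v_\star)$ on $(v'_\star)^\perp$ yields the elliptic inequality $-\Delta_{\Sigma_\eps}\psi+2\psi\le\int_\R g^2\,d{\tt t}$ on $\Sigma_\eps$, whose coercivity constant is independent of the pointwise sign of $-f'(v_\star)$. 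One then compares $\psi$ with $w_{a,\eps}^{-2}$ on $\Sigma_\eps$ alone, where now $-\Delta_{\Sigma_\eps}w_{a,\eps}^{-2}=O(\eps^2)w_{a,\eps}^{-2}$ is indeed dominated by the zero-order term. This integrated-in-${\tt t}$ device is the idea missing from your sketch.
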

Given a function $\phi:\Sigma\times\R\to\R$, we obtain a function $\phi_\eps:\Sigma_\eps\times\R\to\R$ by setting $\phi_\eps(\tt{y},\tt{t}):=\phi(\eps\tt{y},\tt{t})$, for $(\tt{y},\tt{t})\in\Sigma_\eps\times\R$. Their weighted norms satisfy the relation
\begin{eqnarray}
c\eps^\alpha\|\phi\|_{\mathcal{E}^{n,\alpha}_{a,\gamma}(\Sigma\times\R)}\le \|\phi_\eps\|_{\mathcal{C}^{n,\alpha}_{a,\gamma}(\Sigma_\eps\times\R)}\le C \|\phi\|_{\mathcal{E}^{n,\alpha}_{a,\gamma}(\Sigma\times\R)}, \qquad 0<c<C, \, n\ge 0,\label{compare_norms_Sigma}
\end{eqnarray}
thus a scaling arguments proves the following result.
\begin{lemma}
Let $\alpha\in(0,1)$, $\gamma\in(0,\sqrt{2})$ and $a\in(0,\bar{a})$. Let $g\in \mathcal{E}^{0,\alpha}_{a,\gamma}(\Sigma\times\R)$ be a function satisfying the orthogonality condition (\ref{cond_ort}). Then, for $\eps$ small enough, there exists a unique solution $\phi\in\mathcal{E}^{2,\alpha}_{a,\gamma}(\Sigma\times\R)$ to (\ref{eq_near_lin}) satisfying (\ref{cond_ort}). Moreover, it fulfils the estimate
\begin{eqnarray}
\|\phi\|_{\mathcal{E}^{2,\alpha}_{a,\gamma}(\Sigma\times\R)}\le C\eps^{1-\alpha}\|g\|_{\mathcal{E}^{0,\alpha}_{a,\gamma}(\Sigma\times\R)}, \label{est_norm_near}
\end{eqnarray}  
for some constant $C>0$.\label{lemma_lin_near}
\end{lemma}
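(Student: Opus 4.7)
The plan is to reduce this to the scaled problem already handled in Lemma \ref{lemma_lin_near_scaled} by running the same dilation $\Sigma_\eps:=\eps^{-1}\Sigma$ used in Section \ref{section_proof}. Setting ${\tt y}=\eps^{-1}y$, ${\tt t}=t$, and $\phi_\eps({\tt y},{\tt t}):=\phi(\eps{\tt y},{\tt t})$, $g_\eps({\tt y},{\tt t}):=g(\eps{\tt y},{\tt t})$, a direct computation using $\Delta_\Sigma\phi=\eps^{-2}\Delta_{\Sigma_\eps}\phi_\eps$ shows that (\ref{eq_near_lin}) is equivalent to
\begin{equation}
\Delta_{\Sigma_\eps}\phi_\eps+\partial^2_{\tt t}\phi_\eps+f'(v_\star({\tt t}))\phi_\eps=\eps\,g_\eps \qquad\text{in } \Sigma_\eps\times\R.
\end{equation}
Since $v'_\star$ depends only on ${\tt t}=t$, the orthogonality conditions (\ref{cond_ort}) and (\ref{cond_ort_scaled}) are equivalent under this change of variables, both for $\phi\leftrightarrow\phi_\eps$ and for $g\leftrightarrow g_\eps$.

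Given $g\in\mathcal{E}^{0,\alpha}_{a,\gamma}(\Sigma\times\R)$ satisfying (\ref{cond_ort}), the scaled datum $\eps g_\eps$ belongs to $\mathcal{C}^{0,\alpha}_{a,\gamma}(\Sigma_\eps\times\R)$ and satisfies (\ref{cond_ort_scaled}). I would then apply Lemma \ref{lemma_lin_near_scaled} to obtain, for $\eps$ small, a unique $\phi_\eps\in\mathcal{C}^{2,\alpha}_{a,\gamma}(\Sigma_\eps\times\R)$ solving the scaled equation, satisfying (\ref{cond_ort_scaled}), and obeying
\begin{equation}
\|\phi_\eps\|_{\mathcal{C}^{2,\alpha}_{a,\gamma}(\Sigma_\eps\times\R)}\le C\,\|\eps g_\eps\|_{\mathcal{C}^{0,\alpha}_{a,\gamma}(\Sigma_\eps\times\R)}=C\eps\,\|g_\eps\|_{\mathcal{C}^{0,\alpha}_{a,\gamma}(\Sigma_\eps\times\R)}.
\end{equation}
Undoing the dilation yields the required $\phi\in\mathcal{E}^{2,\alpha}_{a,\gamma}(\Sigma\times\R)$. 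Uniqueness passes through the scaling because it is a linear bijection that preserves both the weighted spaces and the orthogonality condition; any two solutions of (\ref{eq_near_lin}) in $\mathcal{E}^{2,\alpha}_{a,\gamma}(\Sigma\times\R)$ satisfying (\ref{cond_ort}) differ by a scaled solution of the homogeneous problem, which must vanish by Lemma \ref{lemma_lin_near_scaled}.

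The only remaining point is to convert the estimate from the scaled norm into the unscaled one, and this is where the $\eps^{1-\alpha}$ rather than $\eps$ emerges. The norm equivalence (\ref{compare_norms_Sigma}) gives $c\eps^\alpha\|\phi\|_{\mathcal{E}^{2,\alpha}_{a,\gamma}(\Sigma\times\R)}\le\|\phi_\eps\|_{\mathcal{C}^{2,\alpha}_{a,\gamma}(\Sigma_\eps\times\R)}$ on the left, and $\|g_\eps\|_{\mathcal{C}^{0,\alpha}_{a,\gamma}(\Sigma_\eps\times\R)}\le C\|g\|_{\mathcal{E}^{0,\alpha}_{a,\gamma}(\Sigma\times\R)}$ on the right (the asymmetry comes from the H\"older seminorms gaining a factor $\eps^\alpha$ under the dilation of the surface variables only). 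Chaining these with the previous bound produces
\[
\|\phi\|_{\mathcal{E}^{2,\alpha}_{a,\gamma}(\Sigma\times\R)}\le C\eps^{-\alpha}\cdot\eps\,\|g\|_{\mathcal{E}^{0,\alpha}_{a,\gamma}(\Sigma\times\R)}=C\eps^{1-\alpha}\|g\|_{\mathcal{E}^{0,\alpha}_{a,\gamma}(\Sigma\times\R)},
\]
which is exactly (\ref{est_norm_near}). There is no genuine obstacle at this stage: all the analytic content (coercivity of the quadratic form $\mathcal{Q}$ on the orthogonal complement of $v'_\star$, together with the $\cosh^{-\gamma}$-barrier argument for $t$-decay) has already been carried out in Lemma \ref{lemma_lin_near_scaled} along the lines of Lemma \ref{lemma_ls_scaled}; what remains here is purely a bookkeeping of weights and $\eps$-powers.
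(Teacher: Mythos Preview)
Your proof is correct and follows precisely the same approach as the paper: the paper simply states that the norm comparison (\ref{compare_norms_Sigma}) combined with a scaling argument reduces Lemma~\ref{lemma_lin_near} to Lemma~\ref{lemma_lin_near_scaled}, and you have spelled out exactly that reduction, including the origin of the $\eps^{1-\alpha}$ factor from the asymmetry in (\ref{compare_norms_Sigma}).
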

Now we focus on the proof of Lemma \ref{lemma_lin_near_scaled}. We start by proving existence and uniqueness, then we will prove the decay of the solution and the estimate in the suitable weighted norm.

\begin{lemma}
Let $\alpha\in(0,1)$, $\gamma\in(0,\sqrt{2})$ and $a\in(0,\bar{a})$. Let $g\in \mathcal{E}^{0,\alpha}_{a,\gamma}(\Sigma_\eps\times\R)$ be a function satisfying (\ref{cond_ort_scaled}). Then, for $\eps$ small enough, there exists a unique solution $\phi\in H^1(\Sigma_\eps\times\R)$ to (\ref{eq_near_lin_scaled}) satisfying (\ref{cond_ort_scaled}). Moreover, there exists a constant $C(a,\gamma)>0$ depending on $a$ and $\gamma$ such that
\begin{eqnarray}
\|\phi\|_{L^\infty(\Sigma_\eps\times\R)}\le C(a,\gamma)\|g w_{a,\eps}({\tt y})\cosh^\gamma({\tt t})\|_{L^\infty(\Sigma_\eps\times\R)}.\label{est_phi_infty}
\end{eqnarray}
\label{lemma_existence}
\end{lemma}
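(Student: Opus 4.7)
The plan is to proceed in three steps: first obtain a weak solution via a coercivity/Riesz argument on the $L^2$--orthogonal complement of $v'_\star$, then upgrade to $L^\infty_{loc}$ by local elliptic regularity, and finally pin down the global $L^\infty$ bound with the weight $w_{a,\eps}({\tt y})\cosh^\gamma({\tt t})$ by a barrier/maximum principle argument. This mirrors steps (i)--(iv) of the proof of Lemma \ref{lemma_ls_scaled} but with the richer geometry of $\Sigma_\eps$ replacing the cylinder factor $(0,\eps^{-1}s_\tau)$.

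For existence and uniqueness, I introduce the Hilbert space
\[
\mathcal{X}_\eps:=\Big\{\phi\in H^1(\Sigma_\eps\times\R):\,\int_\R \phi({\tt y},{\tt t})v'_\star({\tt t})\,d{\tt t}=0\text{ for a.e. }{\tt y}\in\Sigma_\eps\Big\}.
\]
The one-dimensional operator $-\partial^2_{{\tt t}}-f'(v_\star({\tt t}))$ has $v'_\star$ as the unique (up to scalar) bounded element of its kernel, and its spectrum on $(v'_\star)^\perp$ is bounded away from zero. Fiber-wise integration in ${\tt t}$ therefore yields a uniform constant $c_0>0$ with
\[
\int_{\Sigma_\eps\times\R}\bigl(|\partial_{{\tt t}}\phi|^2-f'(v_\star)\phi^2\bigr)\ge c_0\int_{\Sigma_\eps\times\R}\phi^2,\qquad\forall\,\phi\in\mathcal{X}_\eps,
\]
and adding $\int|\nabla_{\Sigma_\eps}\phi|^2$ shows that the quadratic form associated with $-\Delta_{\Sigma_\eps}-\partial^2_{{\tt t}}-f'(v_\star)$ is coercive on $\mathcal{X}_\eps$. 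I would exhaust $\Sigma_\eps$ by bounded truncations (e.g. $K_n:=\{{\tt y}\in\Sigma_\eps:\,\mathrm{dist}({\tt y},\mathbf{K}_\eps)<n\}$), solve the Neumann/Dirichlet problem in $K_n\times(-n,n)$ by the Riesz representation theorem in $\mathcal{X}_\eps\cap H^1_0$ (the orthogonality to $v'_\star$ is preserved because $v'_\star\in\ker(-\partial_{{\tt t}}^2-f'(v_\star))$, so testing against elements of $\mathcal{X}_\eps^\perp$ is automatic), get a uniform $H^1$ bound from the coercivity estimate tested with the solution itself, and pass to a weak limit to obtain $\phi\in\mathcal{X}_\eps\cap H^1(\Sigma_\eps\times\R)$ solving \eqref{eq_near_lin_scaled}. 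Uniqueness is immediate from the coercivity.

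For the local $L^\infty$ bound I invoke the standard Sobolev embedding together with Theorem 8.8 of \cite{GT}: since the metric of $\Sigma_\eps$ is uniformly quasi-Euclidean on unit balls (the curvature of $\Sigma_\eps$ is $O(\eps)$), on any unit ball $B_1(({\tt y},{\tt t}))\subset\Sigma_\eps\times\R$ we get
\[
\|\phi\|_{L^\infty(B_1)}\le C\bigl(\|\phi\|_{L^2(B_2)}+\|g\|_{L^\infty(B_2)}\bigr),
\]
with $C$ independent of $({\tt y},{\tt t})$. Combined with a uniform local $L^2$ bound (obtained by multiplying the coercivity inequality by a localizer) this gives a bound $\|\phi\|_{L^\infty}\le C(a,\gamma)\|g\,w_{a,\eps}({\tt y})\cosh^{\gamma}({\tt t})\|_{L^\infty}$, where the right-hand side controls the $L^2$ norm of $g$ uniformly in $\eps$ precisely because the ends of $\Sigma$ are non-parallel (so $w_{a,\eps}^{-1}\cosh^{-\gamma}$ is square-integrable over $\Sigma_\eps\times\R$ with a constant independent of $\eps$, cf.\ Remark \ref{rem_angles}).

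For the weighted $L^\infty$ bound I use the barrier
\[
B({\tt y},{\tt t}):=\lambda\,w_{a,\eps}({\tt y})^{-1}\cosh^{-\gamma}({\tt t}),
\]
and compute, using $\Delta_{\Sigma_\eps}(w_{a,\eps}^{-1})=a^2\eps^2 w_{a,\eps}^{-1}+O(\eps^2)$ along each end (since $w_{a,\eps}(Y_j)=e^{a\eps {\tt s}}$ in the scaled isothermal coordinate of $D_{\tau_j}$) and $-\partial^2_{{\tt t}}\cosh^{-\gamma}-f'(v_\star)\cosh^{-\gamma}\ge (2-\gamma^2+o(1))\cosh^{-\gamma}$ for $|{\tt t}|$ large, that
\[
(-\Delta_{\Sigma_\eps}-\partial^2_{{\tt t}}-f'(v_\star))B\ge (2-\gamma^2-o(1))B\ge \tfrac12(2-\gamma^2)B
\]
outside a compact set of $\Sigma_\eps\times\R$, provided $\eps$ is small (this is where $a<\bar a$ and $\gamma<\sqrt 2$ enter). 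Choosing $\lambda$ proportional to $\|g\,w_{a,\eps}\cosh^{\gamma}\|_{L^\infty}$ plus $\|\phi\|_{L^\infty}$ on the compact set from the previous step, the maximum principle of Lemma 2.1 of \cite{BCN} applied to $\pm\phi-B$ on the unbounded region where $B$ is a supersolution yields \eqref{est_phi_infty}. The main obstacle will be step three: ensuring that the constants in the barrier computation are genuinely uniform in $\eps$ and independent of the point on $\Sigma_\eps$, which requires using the explicit Delaunay structure along the ends and controlling the error terms in the expansion of $\Delta_{\Sigma_\eps}$ applied to $w_{a,\eps}^{-1}$; the ingredients for this are exactly the estimates on the geometry of $\Sigma(\eps^2\mathbf{d})$ collected in Section \ref{section_SIgma_d}.
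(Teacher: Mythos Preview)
Your first two steps are essentially the paper's proof and are correct: coercivity of the bilinear form on $\mathcal{X}_\eps$ gives existence and uniqueness in $H^1$ by Riesz, and then the global $H^1$ bound $\|\phi\|_{H^1}\le C\|g\|_{L^2}\le C(a,\gamma)\|g\,w_{a,\eps}\cosh^\gamma\|_{L^\infty}$ combined with Sobolev embedding and Theorem~8.8 of \cite{GT} on unit balls yields \eqref{est_phi_infty}. (The paper does not even truncate or localise: it applies Riesz directly on $\Sigma_\eps\times\R$ and uses the global $H^1$ bound to control $\|\phi\|_{L^2(B_2)}$ in the local elliptic estimate.)

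Your third step, however, is superfluous and based on a misreading of the statement. Estimate \eqref{est_phi_infty} is \emph{not} a weighted bound on $\phi$: the left-hand side is the plain $L^\infty$ norm, and only the right-hand side carries the weight. So the lemma is already proved at the end of your step~2, and the barrier argument is not needed here. The weighted decay of $\phi$ is the content of the two subsequent lemmas, Lemma~\ref{lemma_dec_t} (decay in ${\tt t}$) and Lemma~\ref{lemma_dec_y} (decay along $\Sigma_\eps$).

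It is also worth noting that your proposed single barrier $B({\tt y},{\tt t})=\lambda\,w_{a,\eps}^{-1}\cosh^{-\gamma}({\tt t})$ would not work directly even for those later lemmas. The region where the maximum principle fails pointwise, namely $\{|{\tt t}|\le {\tt t}_0\}$ where $f'(v_\star)>-1-\gamma^2/2$, is a \emph{non-compact} strip in $\Sigma_\eps\times\R$, so ``outside a compact set'' does not cover it. This is exactly why the paper separates the two decays: first a barrier in ${\tt t}$ on $\{|{\tt t}|>{\tt t}_0\}$ (Lemma~\ref{lemma_dec_t}), and then, for the decay along the surface, the auxiliary function $\psi({\tt y})=\int_\R\phi^2\,d{\tt t}$, which satisfies a genuinely coercive inequality $-\Delta_{\Sigma_\eps}\psi+2\psi\le\int g^2\,d{\tt t}$ on all of $\Sigma_\eps$ (Lemma~\ref{lemma_dec_y}).
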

\begin{proof}
By the spectral decomposition of the operator $-\partial_{{\tt t}}^2-f'(v_\star({\tt t}))$, we know that the quadratic form
$$\mathcal{Q}(\phi):=\int_{\Sigma_\eps\times\R}\big(|\nabla_{\Sigma_\eps}\phi|^2+(\partial_{{\tt t}}\phi)^2-f'(v_\star({\tt t}))\phi^2\big) d\sigma({\tt y})d{\tt t}$$
is positive definite on the space
$$X:=\{\phi\in H^1(\Sigma_\eps\times\R):\, \int_\R \phi({\tt y},{\tt t}) v'_\star({\tt t}) d{\tt t}=0,\text{ for a. e. } {\tt y}\in \Sigma_\eps\},$$
namely there exists $C>0$ such that
\begin{eqnarray}
\mathcal{Q}(\phi)\ge C\|\phi\|^2_{H^1(\Sigma_\eps\times\R)}, \qquad\forall\,\phi\in X.\label{def_pos_H1}
\end{eqnarray}
As a consequence, by the Riesz representation theorem, there exist a unique $\phi\in X$ such that
\begin{eqnarray}
\int_{\Sigma_\eps\times\R}\big(\langle\nabla_{\Sigma_\eps}\phi,\nabla_{\Sigma_\eps}\psi\rangle_{\Sigma_\eps}
+\partial_{{\tt t}}\phi\partial_{{\tt t}}\psi-f'(v_\star({\tt t}))\phi\psi\big) d\sigma({\tt y})d{\tt t}=\int_{\Sigma_\eps\times\R} g\psi d\sigma({\tt y})d{\tt t}, \, \forall \psi\in X. \label{weak_sol}
\end{eqnarray}
In order for $\phi$ to be a true weak solution, (\ref{weak_sol}) has to be satisfied for any $\psi\in H^1(\Sigma_\eps\times\R)$. This can be verified using a test function in the space
$$X^\bot=\{a({\tt y})v'_\star({\tt t}):\,a\in H^1(\Sigma_\eps)\},$$
which fulfils (\ref{weak_sol}) due to the fact that $\phi$ is orthogonal to $v'_\star$ and $v'_\star$ is in the formal kernel of $\Delta_{\Sigma_\eps}+\partial^2_{{\tt t}}+f'(v_\star({\tt t}))$. We note that here $v'_\star$ is not in $H^1(\Sigma_\eps\times\R)$, however this is not a problem, since the product $a({\tt y})v'_\star({\tt t})$ is in that space provided $a\in H^1(\Sigma_\eps)$. Testing the equation with $\phi$ and using once again (\ref{def_pos_H1}), we have
$$\|\phi\|_{H^1(\Sigma_\eps\times\R)}\le C\|g\|_{L^2(\Sigma_\eps\times\R)}\le C(a,\gamma)\|gw_{a,\eps}({\tt y})\cosh^\gamma({\tt t})\|_{L^\infty(\Sigma_\eps\times\R)},$$
for some constant $C>0$. Therefore, by Sobolev embeddings and local elliptic estimates (see Theorem $8.8$ of \cite{GT}), we have, for any ${\tt x}_0:=({\tt y}_0,{\tt t}_0)\in \Sigma_\eps\times\R$,
$$\|\phi\|_{L^\infty(B_1({\tt x}_0))}\le C\|\phi\|_{W^{2,2}(B_1({\tt x}_0))}\le C(\|\phi\|_{L^2(B_2({\tt x}_0))}+\|g\|_{L^2(B_2({\tt x}_0))})\le C(a,\gamma)\|gw_{a,\eps}({\tt y})\cosh^\gamma({\tt t})\|_{L^\infty(\Sigma_\eps\times\R)},$$
where the constants are independent of ${\tt x}_0$, thus $\phi\in L^\infty(\Sigma_\eps\times\R)$ and (\ref{est_phi_infty}) is true. It is precisely here that we need the scaling, otherwise the constants in the elliptic estimates would depend on $\eps$.
\end{proof}

\begin{lemma}{[Decay in \tt{t}]}
Let $\alpha\in(0,1)$, $\gamma\in(0,\sqrt{2})$ and $a\in(0,\bar{a})$. Let $\phi\in H^1(\Sigma_\eps\times\R)$ be a bounded solution to (\ref{eq_near_lin_scaled}), with $g\in \mathcal{C}^{0,\alpha}_{a,\gamma}(\Sigma_\eps\times\R)$. Then $\phi\cosh^\gamma({\tt t})\in L^\infty(\Sigma_\eps\times\R)$, and there exists a constant $C(a,\gamma)>0$ depending on $a$ and $\gamma$ such that, for $\eps$ small enough,
$$\|\phi\cosh^\gamma({\tt t})\|_{L^\infty(\Sigma_\eps\times\R)}\le C(\|\phi\|_{L^\infty(\Sigma_\eps\times\R)}+\|g\cosh^\gamma({\tt t})\|_{L^\infty(\Sigma_\eps\times\R)}).$$\label{lemma_dec_t}
\end{lemma}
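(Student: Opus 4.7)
My plan is to use $\cosh^{-\gamma}(\mathtt{t})$ as a barrier and the maximum principle in the far-$\mathtt{t}$ region, mimicking Step (iv) of the proof of Lemma \ref{lemma_ls_scaled}. The key quantitative observation is that, for the full linearised operator $L:=\Delta_{\Sigma_\eps}+\partial_{\mathtt{t}}^2+f'(v_\star(\mathtt{t}))$, a direct computation gives
\[
L[\cosh^{-\gamma}(\mathtt{t})]=\bigl(\gamma^{2}+f'(v_\star(\mathtt{t}))\bigr)\cosh^{-\gamma}(\mathtt{t})-\gamma(\gamma+1)\cosh^{-\gamma-2}(\mathtt{t}).
\]
Since $f'(v_\star(\mathtt{t}))\to -2$ as $|\mathtt{t}|\to\infty$ and $\gamma<\sqrt{2}$, there exist $T_0>0$ and $\delta>0$ (both independent of $\eps$) such that $\gamma^{2}+f'(v_\star(\mathtt{t}))\le-\delta$ for all $|\mathtt{t}|\ge T_0$.

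Next, on the unbounded domain $\Omega_{T_0}:=\Sigma_\eps\times\{|\mathtt{t}|>T_0\}$, I would set
\[
\lambda:=\delta^{-1}\|g\cosh^\gamma(\mathtt{t})\|_{L^\infty(\Sigma_\eps\times\R)}
+\cosh^\gamma(T_0)\|\phi\|_{L^\infty(\Sigma_\eps\times\R)},
\]
and compare $\phi$ with the barrier $\psi_\lambda:=\lambda\cosh^{-\gamma}(\mathtt{t})$. By the choice of $\lambda$, we have $\psi_\lambda\ge|\phi|$ on $\partial\Omega_{T_0}=\Sigma_\eps\times\{|\mathtt{t}|=T_0\}$, and
\[
L(\psi_\lambda-\phi)=\lambda\bigl(\gamma^{2}+f'(v_\star)\bigr)\cosh^{-\gamma}-\lambda\gamma(\gamma+1)\cosh^{-\gamma-2}-g\le -\lambda\delta\cosh^{-\gamma}-g\le 0
\]
on $\Omega_{T_0}$, since $g\ge -\|g\cosh^\gamma\|_{L^\infty}\cosh^{-\gamma}$ and $\lambda\delta\ge\|g\cosh^\gamma\|_{L^\infty}$. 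Here the coefficient of the zeroth-order term of $-L$ is bounded below by $\delta>0$, so the maximum principle in unbounded domains (cf.\ Lemma 2.1 of \cite{BCN}, as already invoked earlier in the paper) applies to the bounded function $\psi_\lambda-\phi$ and yields $\phi\le\psi_\lambda$ on $\Omega_{T_0}$. Replacing $\phi$ with $-\phi$ gives $|\phi|\le\psi_\lambda$ on $\Omega_{T_0}$.

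On the complementary slab $\Sigma_\eps\times[-T_0,T_0]$ the bound is trivial: $|\phi|\cosh^\gamma(\mathtt{t})\le\cosh^\gamma(T_0)\|\phi\|_{L^\infty}$ there. Combining the two regions yields
\[
\|\phi\cosh^\gamma(\mathtt{t})\|_{L^\infty(\Sigma_\eps\times\R)}\le C(a,\gamma)\bigl(\|\phi\|_{L^\infty(\Sigma_\eps\times\R)}+\|g\cosh^\gamma(\mathtt{t})\|_{L^\infty(\Sigma_\eps\times\R)}\bigr),
\]
with a constant depending only on $T_0$ and $\delta$, i.e.\ on $\gamma$ (and implicitly on $a$ through the class of allowed right-hand sides). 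The main obstacle is the rigorous application of the maximum principle on the unbounded domain $\Omega_{T_0}$ (whose cross section $\Sigma_\eps$ is itself unbounded), which is what forces the use of the version from \cite{BCN} with a strictly positive zeroth-order coefficient; note that neither the decay rate $a$ in $\mathtt{y}$ nor the geometry of $\Sigma_\eps$ enters the argument, because the barrier depends only on $\mathtt{t}$ and the coercivity of $-L$ for $|\mathtt{t}|\ge T_0$ is uniform in $\mathtt{y}$. The decay in $\mathtt{y}$ will be recovered in a separate step (via a similar barrier built from $w_{a,\eps}$) when completing the proof of Lemma \ref{lemma_lin_near_scaled}.
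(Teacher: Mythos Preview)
Your proof is correct and rests on the same barrier idea as the paper, but the implementation differs. The paper does not invoke the unbounded-domain maximum principle here; instead it builds a two-parameter barrier
\[
v_{\lambda,\nu}({\tt y},{\tt t})=\bigl(\lambda\cosh^{-\gamma}({\tt t})+\nu\cosh^{\gamma}({\tt t})\bigr)w_{\nu,\eps}({\tt y}),
\]
applies the classical maximum principle on the \emph{bounded} domains $\Sigma_{\eps,\eps^{-1}R_0}\times({\tt t}_1,{\tt t}_2)$, and only then sends $\nu\to 0$. The extra factors $\nu\cosh^{\gamma}({\tt t})$ and $w_{\nu,\eps}({\tt y})$ are there precisely to dominate $\phi$ on the artificial pieces of boundary at $|{\tt t}|={\tt t}_2$ and on $\partial\Sigma_{\eps,\eps^{-1}R_0}$, so that the noncompactness of both the ${\tt t}$-direction and of $\Sigma_\eps$ is handled by elementary means. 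Your route, by contrast, works directly on $\Sigma_\eps\times\{|{\tt t}|>T_0\}$ with the single barrier $\lambda\cosh^{-\gamma}({\tt t})$ and appeals to the BCN-type maximum principle for bounded subsolutions with strictly positive zeroth-order coefficient---exactly as the paper itself does earlier in Step~(iv) of Lemma~\ref{lemma_ls_scaled}. Your version is shorter and perfectly adequate (the BCN argument transfers to the present manifold setting since the coercivity $-f'(v_\star)\ge\delta>0$ is uniform in ${\tt y}$ and $\psi_\lambda-\phi$ is bounded); the paper's version is slightly more self-contained in that it never leaves the realm of the classical maximum principle on bounded domains.
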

\begin{proof}
The proof relies on comparing the solution with a barrier. For any $\nu>0$ and $\lambda>0$, we set
$$v_{\lambda,\nu}({\tt y},{\tt t}):=(\lambda\cosh^{-\gamma}({\tt t})+\nu\cosh^\gamma({\tt t}))w_{\nu,\eps}({\tt y}).$$
The idea is to apply the maximum principle to bounded domain 
$$\Omega_{{\tt t}_1,{\tt t}_2,R_0}:=\Sigma_{\eps,\eps^{-1}R_0}\times({\tt t}_1,{\tt t}_2), \qquad\Sigma_{\eps,\eps^{-1}R_0}:=\Sigma_\eps\cap B_{\eps^{-1}R_0},$$
with $R_0>0,{\tt t}_1>0,{\tt t}_2>0$ to be determined. First we note that, in order to apply the maximum principle, $f'(v_\star({\tt t}))$ must be uniformly negative, thus we choose ${\tt t}_1>0$ such that $f'(v_\star({\tt t}))<-1-\frac{\gamma^2}{2}$ for $|{\tt t}|>{\tt t}_1$. Then we observe that, since $w_{\nu,\eps}\ge 1$,  for ${\tt y}\in\Sigma_\eps$ and ${\tt t}={\tt t}_1$ we have
$$\phi({\tt y},{\tt t}_1)\le\|\phi\|_{L^\infty(\Sigma_\eps\times\R)}\le \lambda \cosh^\gamma({\tt t}_1) \le v_{\lambda,\nu}({\tt y},{\tt t})$$
provided $\lambda\ge\|\phi\|_{L^\infty(\Sigma_\eps\times\R)}\cosh^{\gamma}({\tt t}_1)$. For $|{\tt t}|={\tt t}_2$, we have
$$\phi({\tt y},{\tt t}_2)\le \|\phi\|_{L^\infty(\Sigma_\eps\times\R)}\le \nu\cosh^\gamma({\tt t}_2)\le v_{\lambda,\nu}$$
provided ${\tt t}_2$ is large enough. Moreover, on $\partial \Sigma_{\eps,\eps^{-1}R_0}\times ({\tt t}_1,{\tt t}_2)$, we have
$$v_{\lambda,\nu}({\tt y},{\tt t})\ge (\lambda\cosh^{-\gamma}({\tt t}_2)+\nu\cosh^\gamma({\tt t}_1)) e^{\nu(R_0-C_\Sigma)}\ge \|\phi\|_{L^\infty(\Sigma_\eps\times\R)}\ge \phi({\tt y},{\tt t}),$$
provided $R_0>0$ is large enough, where $C_\Sigma>0$ is a constant depending just on $\Sigma$. In $\Omega_{{\tt t}_1,{\tt t}_2,R_0}$ the following differential inequality holds
\begin{multline*}
\big(-(\Delta_{\Sigma_\eps}+\partial^2_{{\tt t}})-f'(v_\star({\tt t}))\big)(\phi-v_{\lambda,\nu})\le -g-\lambda(1-\frac{\gamma^2}{2})v_{\lambda,\nu}\\
\le \|g\cosh^\gamma({\tt t})\|_{L^\infty(\Sigma_\eps\times\R)}\cosh^{-\gamma}({\tt t})-\lambda(1-\frac{\gamma^2}{2})v_{\lambda,\nu} \\
\le\big(\|g\cosh^\gamma({\tt t})\|_{L^\infty(\Sigma_\eps\times\R)}-\lambda(1-\frac{\gamma^2}{2})\big)\cosh^{-\gamma}({\tt t})
<0
\end{multline*}
provided $$\lambda\ge \frac{2\|g\cosh^\gamma({\tt t})\|_{L^\infty(\Sigma_\eps\times\R)}}{2-\gamma^2}.$$
In conclusion, applying the maximum principle and letting $\nu\to 0$, we have
$$\phi\le \lambda \cosh^{-\gamma}({\tt t}) \text{ in $\Sigma_\eps\times({\tt t}_1,\infty)$}, \qquad \lambda=\|\phi\|_{L^\infty(\Sigma_\eps\times\R)}\cosh^{\gamma}({\tt t}_1)
+\frac{2\|g\cosh^\gamma({\tt t})\|_{L^\infty(\Sigma_\eps\times\R)}}{2-\gamma^2}.$$
If $\phi$ satisfies the hypothesis of the Lemma, then also $-\phi$ and $\phi({\tt y},-{\tt t})$ do, hence the proof is concluded.
\end{proof}

\begin{lemma}{[Decay along the surface]}
Let $\alpha\in(0,1)$, $\gamma\in(0,\sqrt{2})$ and $a\in(0,\bar{a})$. Let $\phi\in H^1(\Sigma_\eps\times\R)$ be a bounded solution to equation (\ref{eq_near_lin_scaled}), with $g\in\mathcal{C}^{0,\alpha}_{a,\gamma}(\Sigma_\eps\times\R)$. Then $\phi w_{a,\eps}({\tt y})\cosh^\gamma({\tt t})\in L^\infty(\Sigma_\eps\times\R)$, and there exists a constant $C(a,\gamma)>0$ depending on $a$ and $\gamma$ such that, for $\eps$ small enough,
\begin{eqnarray}
\|\phi w_{a,\eps}({\tt y})\cosh^\gamma({\tt t})\|_{L^\infty(\Sigma_\eps\times\R)}\le C\|gw_{a,\eps}({\tt y})\cosh^\gamma({\tt t})\|_{L^\infty(\Sigma_\eps\times\R)}.\label{est_dec_infty}
\end{eqnarray}
\label{lemma_dec_y}
\end{lemma}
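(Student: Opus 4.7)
The plan is to mimic the barrier argument of Lemma \ref{lemma_dec_t}, now with a barrier that also decays along $\Sigma_\eps$. I would set
$$V_{\lambda,\nu}({\tt y},{\tt t}) := \lambda\, w_{a,\eps}^{-1}({\tt y})\cosh^{-\gamma}({\tt t}) + \nu\bigl(\cosh^{\gamma}({\tt t}) + w_{\nu,\eps}({\tt y})\bigr),$$
and compare $\pm\phi$ with $V_{\lambda,\nu}$ on the bounded cylinder $\Sigma_{\eps,\eps^{-1}R_0}\times\{{\tt t}_1<|{\tt t}|<{\tt t}_2\}$, letting $\nu\to 0$ and $R_0,{\tt t}_2\to\infty$ afterwards, exactly as in the proof of Lemma \ref{lemma_dec_t}. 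For the interior differential inequality, write $w_{a,\eps}^{-1}=e^{-as}$ in isothermal coordinates on each end and use that the Laplace-Beltrami operator of the rescaled Delaunay end is $\eps^2(\tau_j e^{\sigma_{\tau_j}})^{-2}(\partial_s^2+\partial_\vartheta^2)$ by (\ref{def_jacobi_operator}), giving $\Delta_{\Sigma_\eps}w_{a,\eps}^{-1}=O(\eps^2)w_{a,\eps}^{-1}$. Combined with $\partial_{\tt t}^2\cosh^{-\gamma}=\bigl(\gamma^2+O(\cosh^{-2}({\tt t}))\bigr)\cosh^{-\gamma}$ and $f'(v_\star({\tt t}))\to-2$ as $|{\tt t}|\to\infty$, this yields $L(w_{a,\eps}^{-1}\cosh^{-\gamma})\le -\tfrac{2-\gamma^2}{2}\,w_{a,\eps}^{-1}\cosh^{-\gamma}$ for $|{\tt t}|>{\tt t}_1$ with ${\tt t}_1$ large and $\eps$ small, so $LV_{\lambda,\nu}\le g$ in the region once $\lambda\gtrsim\|g\,w_{a,\eps}\cosh^{\gamma}\|_{L^\infty}$.

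The main obstacle is the inner boundary $|{\tt t}|={\tt t}_1$: Lemmas \ref{lemma_existence} and \ref{lemma_dec_t} together bound $|\phi({\tt y},{\tt t}_1)|$ by $C\cosh^{-\gamma}({\tt t}_1)\|g\|_{\mathcal{C}^{0,\alpha}_{a,\gamma}}$ with no decay in ${\tt y}$, whereas the barrier decays like $w_{a,\eps}^{-1}({\tt y})$. To bridge this, I plan to exploit the fibrewise orthogonality (\ref{cond_ort_scaled}) via an averaged identity for $F({\tt y}):=\int_\R\phi({\tt y},{\tt t})^2\,d{\tt t}$: multiplying (\ref{eq_near_lin_scaled}) by $\phi$, integrating in ${\tt t}$, and invoking the coercivity $\int_\R\phi(-\partial_{\tt t}^2-f'(v_\star))\phi\,d{\tt t}\ge 2\int_\R\phi^2\,d{\tt t}$ on $(v'_\star)^\perp\subset L^2(\R)$ (furnished by the spectral decomposition used in the proof of Lemma \ref{lemma_existence}), the function $F$ satisfies the scalar coercive elliptic inequality
$$\tfrac{1}{2}\,\Delta_{\Sigma_\eps}F\;-\;2F\;\ge\;\int_\R g\phi\,d{\tt t}\qquad \text{on }\Sigma_\eps.$$
Since $\Delta_{\Sigma_\eps}w_{a,\eps}^{-\beta}=O(\eps^2)w_{a,\eps}^{-\beta}$ is absorbed by the coercive $-4F$ for $\eps$ small, a comparison of $F$ with multiples of $w_{a,\eps}^{-\beta}$ (regularised by a $\nu w_{\nu,\eps}$ term to handle behaviour at spatial infinity) can be set up. Starting from the crude pointwise bound $|\phi|\le C\cosh^{-\gamma}({\tt t})\|g\|_{\mathcal{C}^{0,\alpha}_{a,\gamma}}$ of Lemma \ref{lemma_dec_t}, one obtains $\bigl|\int g\phi\,d{\tt t}\bigr|\le Cw_{a,\eps}^{-1}({\tt y})\|g\|_{\mathcal{C}^{0,\alpha}_{a,\gamma}}^2$, hence $F\le Cw_{a,\eps}^{-1}$; local elliptic estimates on unit balls in $\Sigma_\eps\times\R$ promote this to $|\phi({\tt y},{\tt t}_1)|\le Cw_{a,\eps}^{-1/2}({\tt y})\|g\|_{\mathcal{C}^{0,\alpha}_{a,\gamma}}$, which re-injected into $\int g\phi$ gives a better decay for $F$. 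Iterating this bootstrap finitely many times, using the freedom to choose the barrier exponent $\beta$ up to $a$, produces the sharp pointwise bound $|\phi({\tt y},{\tt t}_1)|\le Cw_{a,\eps}^{-1}({\tt y})\|g\|_{\mathcal{C}^{0,\alpha}_{a,\gamma}}$ matching the barrier.

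With the inner boundary comparison secured, the maximum principle on $\Sigma_{\eps,\eps^{-1}R_0}\times\{{\tt t}_1<|{\tt t}|<{\tt t}_2\}$ gives $|\phi|\le V_{\lambda,\nu}$ there, and the limits $\nu\to 0$, $R_0,{\tt t}_2\to\infty$ yield $|\phi({\tt y},{\tt t})|\le Cw_{a,\eps}^{-1}({\tt y})\cosh^{-\gamma}({\tt t})\|g\|_{\mathcal{C}^{0,\alpha}_{a,\gamma}}$ for $|{\tt t}|>{\tt t}_1$. For $|{\tt t}|\le{\tt t}_1$ the same pointwise bound follows from the sharpened estimate on $F$ together with the same local elliptic estimates, establishing (\ref{est_dec_infty}).
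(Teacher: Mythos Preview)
Your plan is essentially the same as the paper's: introduce the fibrewise energy $F({\tt y})=\int_\R\phi^2\,d{\tt t}$, derive a coercive scalar elliptic inequality for $F$ on $\Sigma_\eps$ from the spectral gap of $-\partial_{\tt t}^2-f'(v_\star)$ on $(v'_\star)^\perp$, bound $F$ by a barrier on $\Sigma_\eps$, upgrade to pointwise control via local elliptic estimates, and finish with a product barrier $w_{a,\eps}^{-1}\cosh^{-\gamma}$ in $\Sigma_\eps\times\{|{\tt t}|>{\tt t}_1\}$. This is exactly the structure of the paper's argument.

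The one real difference is how you extract the sharp decay of $F$. Your iteration ($|\phi|\le Cw_{a,\eps}^{-\beta_n}$ with $\beta_{n+1}=(1+\beta_n)/2$) converges to the exponent $1$ but never reaches it in finitely many steps, so as written the bootstrap does not close; ``the freedom to choose $\beta$ up to $a$'' does not help, since the lemma asks for $w_{a,\eps}$ itself. The paper bypasses this entirely with one application of Young's inequality: from
\[
\Delta_{\Sigma_\eps}F\;\ge\;3F+2\int_\R g\phi\,d{\tt t}
\]
(which is the inequality you wrote, up to the constant) one estimates $2|\int g\phi|\le\int g^2+\int\phi^2=\int g^2+F$, absorbs the $F$, and obtains $-\Delta_{\Sigma_\eps}F+2F\le\int_\R g^2\,d{\tt t}\le C\,w_{a,\eps}^{-2}\|gw_{a,\eps}\cosh^\gamma\|_{L^\infty}^2$. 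A single comparison of $F$ with the barrier $\lambda\,w_{a,\eps}^{-2}+\nu\,w_{a,\eps}^{2}$ then gives $F\le C\,w_{a,\eps}^{-2}$ directly, hence $|\phi({\tt y},{\tt t}_1)|\le C\,w_{a,\eps}^{-1}({\tt y})$ after local elliptic estimates, and your final barrier step goes through. So: keep your structure, but replace the bootstrap by the Young inequality absorption and you recover exactly the paper's proof.
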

\begin{proof}
The proof of this result parallels the one of Lemma $3.5$ of \cite{DKPW}. Thanks to Lemma \ref{lemma_dec_t}, which guarantees the exponential decay in ${\tt t}$, the auxiliary function
\begin{eqnarray}
\psi({\tt y}):=\int_\R \phi^2({\tt y},{\tt t}) d{\tt t}, \qquad\forall\, {\tt y}\in\Sigma_\eps
\end{eqnarray}
is well defined and bounded, since
\begin{equation}
\label{est_psi_bd}
\begin{aligned}
&\psi({\tt y})\le C(\gamma)\|\phi\cosh^\gamma({\tt t})\|_{L^\infty(\Sigma_\eps\times\R)}^2
\le C(\gamma)\big(\|\phi\|_{L^\infty(\Sigma_\eps\times\R)}^2+\|g\cosh^\gamma({\tt t})\|_{L^\infty(\Sigma_\eps\times\R)}^2\big)\\
&\le C(a,\gamma)\|gw_{a,\eps}({\tt y})\cosh^\gamma({\tt t})\|^2_{L^\infty(\Sigma_\eps\times\R)}.
\end{aligned}
\end{equation}
Direct differentiation yields
$$\Delta_{\Sigma_\eps}\psi({\tt y})=\int_\R \big(2|\nabla_{\Sigma_\eps}\phi|^2+2\phi\Delta_{\Sigma_\eps}\phi\big) d{\tt t}.$$
Testing the equation with $\phi$ and using the decay in ${\tt t}$, we have
$$\int_\R |\nabla_{\Sigma_\eps}\phi|^2 d\sigma({\tt y})d{\tt t}=\int_\R(\partial_{{\tt t}}\phi)^2d{\tt t}-\int_\R f'(v_\star)\phi^2 d{\tt t}+\int_\R g\phi d{\tt t},$$
thus, by the spectral properties of $-\partial_{{\tt t}}^2-f'(v_\star({\tt t}))$, 
$$\Delta_{\Sigma_\eps}\psi({\tt y})\ge 2\int_\R|\nabla_{\Sigma_\eps}\phi|^2 d{\tt t}+3\int_\R \phi^2 d{\tt t} +2\int_\R g\phi d{\tt t}.$$
Due to the H\"{o}lder inequality and the Young inequality, we can see that $\psi$ satisfies the following differential inequality
$$-\Delta_{\Sigma_\eps}\psi({\tt y})+2\psi({\tt y})\le \int_\R g^2({\tt y},{\tt t})d{\tt t}\le C(\gamma)w_{a,\eps}({\tt y})^{-2}\|gw_{a,\eps}({\tt y})\cosh^\gamma({\tt t})\|^2_{L^\infty(\Sigma_\eps\times\R)}.$$
As a consequence, using the function
$$\bar{\psi}_\nu({\tt y}):=\lambda\|g\cosh^\gamma({\tt t})w_{a,\eps}({\tt y})\|^2_{L^\infty(\Sigma_\eps\times\R)} w_{a,\eps}^{-2}({\tt y})+\nu w_{a,\eps}^2({\tt y})$$
as a barrier, with $\lambda>0$ large enough and $\nu>0$ arbitrarily small, by the maximum principle we can show that 
\begin{eqnarray}
\psi({\tt y})\le C(a,\gamma) \|g\cosh^\gamma({\tt t})w_{a,\eps}({\tt y})\|^2_{L^\infty(\Sigma_\eps\times\R)} w_{a,\eps}^{-2}({\tt y}).\label{est_psi_y}
\end{eqnarray}
In fact, (\ref{est_psi_y}) is trivially satisfied in the compact part $\mathbf{K}$ of $\Sigma$, due to (\ref{est_psi_bd}), and along the ends $w_{a,\eps}$ is explicitly expressed in terms of exponential functions. Therefore, by local elliptic estimates, for any ${\tt t}_0>0$ we have
$$|\phi({\tt y},{\tt t})|\cosh^\gamma({\tt t})w_{a,\eps}({\tt y})\le C(a,\gamma,{\tt t}_0)\|g\cosh^\gamma({\tt t})w_{a,\eps}({\tt y})\|_{L^\infty(\Sigma_\eps\times\R)},$$
for any $({\tt y},{\tt t})\in\Sigma_\eps\times(-{\tt t}_0,{\tt t}_0)$. In order to deal with the region $\{({\tt y},{\tt t})\in\Sigma_\eps\times\R:\, |{\tt t}|>{\tt t}_0\}$, we compare $\phi$ with the barrier
$$\bar{\phi}_\nu({\tt y},{\tt t}):=M\|g\cosh^\gamma({\tt t})w_{a,\eps}({\tt y})\|_{L^\infty(\Sigma_\eps\times\R)} \cosh^{-\gamma}({\tt t})w_{a,\eps}^{-1}({\tt y})+\nu\cosh^\gamma({\tt t})w_{a,\eps}({\tt y}),$$
with $\lambda>0$ large enough and $\nu>0$ arbitrarily small.
\end{proof}

\begin{lemma}
Let $\alpha\in(0,1)$, $\gamma\in(0,\sqrt{2})$ and $a\in(0,\bar{a})$. Let $g\in \mathcal{C}^{0,\alpha}_{a,\gamma}(\Sigma_\eps\times\R)$ be a function satisfying (\ref{cond_ort_scaled}). Then, for $\eps$ small enough, there exists a unique solution $\phi\in\mathcal{C}^{2,\alpha}_{a,\gamma}(\Sigma_\eps\times\R)$ to (\ref{eq_near_lin_scaled}) satisfying (\ref{cond_ort_scaled}). Moreover the solution satisfies the estimate
\begin{eqnarray}
\|\phi\|_{\mathcal{C}^{2,\alpha}_{a,\gamma}(\Sigma_\eps\times\R)}\le C\|g\|_{\mathcal{C}^{0,\alpha}_{a,\gamma}(\Sigma_\eps\times\R)}.\label{est_near_lin_scaled}
\end{eqnarray}
\label{lemma_lin_Sigma_scaled}
\end{lemma}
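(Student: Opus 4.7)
The plan is to assemble this lemma from the three preceding lemmas (existence in $H^1$ with $L^\infty$ control, decay in ${\tt t}$, and decay along the surface) and then upgrade the weighted $L^\infty$ bound to the full weighted $C^{2,\alpha}$ bound via local Schauder estimates. The crucial point is that all work is done on the scaled surface $\Sigma_\eps$, where the geometry is essentially flat on unit scales, so that the constants in the interior Schauder estimates are uniform in $\eps$.

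First I would invoke Lemma \ref{lemma_existence} to produce a unique weak solution $\phi\in H^1(\Sigma_\eps\times\R)$ satisfying the orthogonality condition (\ref{cond_ort_scaled}), and which is bounded with
\[
\|\phi\|_{L^\infty(\Sigma_\eps\times\R)}\le C(a,\gamma)\|g\,w_{a,\eps}({\tt y})\cosh^\gamma({\tt t})\|_{L^\infty(\Sigma_\eps\times\R)}.
\]
Feeding this bound into Lemma \ref{lemma_dec_t} upgrades the estimate to decay at rate $\gamma$ in ${\tt t}$, and then Lemma \ref{lemma_dec_y} upgrades this further to the two-weight $L^\infty$ bound
\[
\|\phi\,w_{a,\eps}({\tt y})\cosh^\gamma({\tt t})\|_{L^\infty(\Sigma_\eps\times\R)}\le C\|g\,w_{a,\eps}({\tt y})\cosh^\gamma({\tt t})\|_{L^\infty(\Sigma_\eps\times\R)}.
\]
This already proves the statement at the level of $\mathcal{C}^{0,\alpha}_{a,\gamma}$ up to the H\"older seminorm, which is what remains to be handled.

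For the $C^{2,\alpha}$ piece I would fix a point $({\tt y}_0,{\tt t}_0)\in\Sigma_\eps\times\R$ and apply the interior Schauder estimates (Corollary 6.3 of \cite{GT}) on the unit ball $B_1({\tt y}_0,{\tt t}_0)$ in the product $\Sigma_\eps\times\R$. Because $\Sigma_\eps=\eps^{-1}\Sigma$, on such a unit ball the metric coefficients of $\Sigma_\eps$ and all their derivatives are uniformly bounded (uniformly in $\eps$), and the potential $f'(v_\star({\tt t}))$ is smooth, so the Schauder constant is independent of the chosen point and of $\eps$. This gives
\[
\|\phi\|_{C^{2,\alpha}(B_{1/2}({\tt y}_0,{\tt t}_0))}\le C\bigl(\|\phi\|_{L^\infty(B_1({\tt y}_0,{\tt t}_0))}+\|g\|_{C^{0,\alpha}(B_1({\tt y}_0,{\tt t}_0))}\bigr).
\]
Multiplying by the weight $w_{a,\eps}({\tt y}_0)\cosh^\gamma({\tt t}_0)$ and using that the weights $w_{a,\eps}$ and $\cosh^\gamma$ vary by a factor bounded above and below by constants independent of $\eps$ on any unit ball (this is immediate for $\cosh^\gamma$ and holds for $w_{a,\eps}$ because along the ends $w_{a,\eps}(Y_j\circ(\eps^{-1}\cdot))$ is a bounded perturbation of $e^{a{\tt s}}$ with $a$ fixed), I would obtain
\[
w_{a,\eps}({\tt y}_0)\cosh^\gamma({\tt t}_0)\|\phi\|_{C^{2,\alpha}(B_{1/2}({\tt y}_0,{\tt t}_0))}\le C\bigl(\|\phi\,w_{a,\eps}\cosh^\gamma\|_{L^\infty}+\|g\,w_{a,\eps}\cosh^\gamma\|_{C^{0,\alpha}}\bigr).
\]
Taking the supremum over $({\tt y}_0,{\tt t}_0)$ and combining with the $L^\infty$ bound proved above yields (\ref{est_near_lin_scaled}).

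The only delicate point — and the one I would write out most carefully — is verifying that the elliptic constants in the local Schauder estimate really are uniform in $\eps$ and in the base point. Along the compact part of $\Sigma_\eps$ the surface itself may have curvature, but on unit balls this is fine. Along an end, in the isothermal parametrisation $({\tt s},\vartheta)=(\eps^{-1}s,\vartheta)$ of the scaled Delaunay unduloid, the metric coefficients are $\tau^2 e^{2\sigma_\tau(\eps{\tt s})}$ which are bounded above and below and whose derivatives in $({\tt s},\vartheta)$ carry a factor $\eps$, so they are in fact \emph{better} behaved as $\eps\to 0$. Uniqueness follows from Lemma \ref{lemma_existence} applied to the difference of two solutions.
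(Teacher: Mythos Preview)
Your proof is correct and follows exactly the paper's approach: existence and uniqueness from Lemma~\ref{lemma_existence}, the weighted $L^\infty$ bound from Lemmas~\ref{lemma_dec_t} and~\ref{lemma_dec_y}, and then local Schauder estimates to upgrade to the weighted $C^{2,\alpha}$ norm. The paper states this in three sentences; your version spells out the uniformity-in-$\eps$ of the Schauder constants more carefully, but the argument is the same.
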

\begin{proof}
Existence and uniqueness follow from Lemma \ref{lemma_existence}. By Lemmas \ref{lemma_dec_t} and \ref{lemma_dec_y}, the solution is decaying both along $\Sigma_\eps$ and in the orthogonal direction, and satisfies the estimate (\ref{est_dec_infty}). In conclusion, local elliptic estimates yield that $\phi\in\mathcal{C}^{2,\alpha}_{a,\gamma}(\Sigma_\eps\times\R)$ and (\ref{est_near_lin_scaled}) is true.
\end{proof}
\begin{lemma}
Let $\alpha\in(0,1)$, $\gamma\in(0,\sqrt{2})$ and $a\in(0,\bar{a})$. Let $g\in \mathcal{E}^{0,\alpha}_{a,\gamma}(\Sigma\times\R)$ be a function satisfying (\ref{cond_ort}). Then, for $\eps$ small enough, there exists a unique solution $\phi:=\Psi^4_\eps(g)\in\mathcal{E}^{2,\alpha}_{a,\gamma}(\Sigma\times\R)$ to (\ref{eq_near_lin}) satisfying (\ref{cond_ort}) and
\begin{equation}
\|\Psi^4(g)\|_{\mathcal{E}^{2,\alpha}_{a,\gamma}(\Sigma\times\R)}\le C\eps^{1-\alpha}\|g\|_{\mathcal{E}^{0,\alpha}_{a,\gamma}(\Sigma\times\R)},
\end{equation}
for some constant $C>0$.
\end{lemma}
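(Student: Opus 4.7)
The plan is to deduce this unscaled statement directly from its scaled counterpart, namely Lemma \ref{lemma_lin_near_scaled}, via the rescaling $\mathtt{y}=\eps^{-1}y$, $\mathtt{t}=t$ that relates $\Sigma$ and $\Sigma_\eps=\eps^{-1}\Sigma$, together with the norm-comparison (\ref{compare_norms_Sigma}). No new analytic input is required beyond what has already been established in the scaled setting.

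First, given $g\in\mathcal{E}^{0,\alpha}_{a,\gamma}(\Sigma\times\R)$ satisfying (\ref{cond_ort}), I would set $g_\eps(\mathtt{y},\mathtt{t}):=g(\eps\mathtt{y},\mathtt{t})$ on $\Sigma_\eps\times\R$. By the definition of the norms, $g_\eps\in\mathcal{C}^{0,\alpha}_{a,\gamma}(\Sigma_\eps\times\R)$, and the orthogonality (\ref{cond_ort}) transfers pointwise in $\mathtt{y}$ to (\ref{cond_ort_scaled}). Next, I would translate the equation (\ref{eq_near_lin}) via the scaling: using $\Delta_{\Sigma_\eps}\phi_\eps(\mathtt{y},\mathtt{t})=\eps^2(\Delta_\Sigma\phi)(\eps\mathtt{y},\mathtt{t})$ for $\phi_\eps(\mathtt{y},\mathtt{t}):=\phi(\eps\mathtt{y},\mathtt{t})$, the identity
\[
\eps\Delta_\Sigma\phi+\eps^{-1}\partial_t^2\phi+\eps^{-1}f'(v_\star)\phi=g
\]
is equivalent, after multiplication by $\eps$ and evaluation at $(\eps\mathtt{y},\mathtt{t})$, to
\[
\Delta_{\Sigma_\eps}\phi_\eps+\partial_{\mathtt{t}}^2\phi_\eps+f'(v_\star(\mathtt{t}))\phi_\eps=\eps g_\eps\qquad\text{in }\Sigma_\eps\times\R,
\]
which is exactly (\ref{eq_near_lin_scaled}) with right-hand side $\eps g_\eps$.

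Now I would invoke Lemma \ref{lemma_lin_near_scaled}: since $\eps g_\eps\in\mathcal{C}^{0,\alpha}_{a,\gamma}(\Sigma_\eps\times\R)$ satisfies (\ref{cond_ort_scaled}), for $\eps$ small enough there exists a unique solution $\phi_\eps\in\mathcal{C}^{2,\alpha}_{a,\gamma}(\Sigma_\eps\times\R)$ to the scaled equation satisfying (\ref{cond_ort_scaled}), together with the estimate
\[
\|\phi_\eps\|_{\mathcal{C}^{2,\alpha}_{a,\gamma}(\Sigma_\eps\times\R)}\le C\|\eps g_\eps\|_{\mathcal{C}^{0,\alpha}_{a,\gamma}(\Sigma_\eps\times\R)}=C\eps\|g_\eps\|_{\mathcal{C}^{0,\alpha}_{a,\gamma}(\Sigma_\eps\times\R)}.
\]
I would then define $\Psi^4_\eps(g)(y,t):=\phi_\eps(\eps^{-1}y,t)$, which by construction solves (\ref{eq_near_lin}) and preserves the orthogonality (\ref{cond_ort}). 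Applying (\ref{compare_norms_Sigma}) in both directions — the lower bound on the left and the upper bound to $\|g_\eps\|_{\mathcal{C}^{0,\alpha}_{a,\gamma}(\Sigma_\eps\times\R)}\le C\|g\|_{\mathcal{E}^{0,\alpha}_{a,\gamma}(\Sigma\times\R)}$ on the right — I obtain
\[
c\,\eps^\alpha\|\Psi^4_\eps(g)\|_{\mathcal{E}^{2,\alpha}_{a,\gamma}(\Sigma\times\R)}\le\|\phi_\eps\|_{\mathcal{C}^{2,\alpha}_{a,\gamma}(\Sigma_\eps\times\R)}\le C\eps\|g\|_{\mathcal{E}^{0,\alpha}_{a,\gamma}(\Sigma\times\R)},
\]
which rearranges to the claimed bound with a factor $\eps^{1-\alpha}$. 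Uniqueness is inherited from the uniqueness statement of Lemma \ref{lemma_lin_near_scaled}, since any two solutions in $\mathcal{E}^{2,\alpha}_{a,\gamma}(\Sigma\times\R)$ satisfying (\ref{cond_ort}) rescale to two solutions in $\mathcal{C}^{2,\alpha}_{a,\gamma}(\Sigma_\eps\times\R)$ of the same equation satisfying (\ref{cond_ort_scaled}), hence must coincide.

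There is no real obstacle here: the analytic content — positivity of the quadratic form away from $v'_\star$, decay in $\mathtt{t}$ via a $\cosh^{-\gamma}$ barrier, and decay along the surface via the auxiliary function $\psi(\mathtt{y})=\int_\R\phi^2\,d\mathtt{t}$ — has already been carried out in the scaled setting, where the elliptic constants are independent of $\eps$. The only point that requires any care is bookkeeping: tracking where the single power of $\eps$ on the right-hand side of the scaled equation goes, and how it combines with the $\eps^\alpha$ loss coming from the H\"older seminorm in (\ref{compare_norms_Sigma}), to yield precisely the factor $\eps^{1-\alpha}$ in the final estimate.
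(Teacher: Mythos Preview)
Your proposal is correct and follows exactly the paper's approach: the paper's proof consists of the single sentence ``it is enough to use Lemma \ref{lemma_lin_Sigma_scaled} and a scaling argument; the size of the inverse is a consequence of (\ref{compare_norms_Sigma}),'' and you have simply unpacked this in full detail, correctly tracking the extra factor of $\eps$ from the rescaled right-hand side and the $\eps^{-\alpha}$ from the lower bound in (\ref{compare_norms_Sigma}).
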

\begin{proof}
Once again, it is enough to use Lemma \ref{lemma_lin_Sigma_scaled} and a scaling argument. The size of the inverse is a consequence of (\ref{compare_norms_Sigma}).
\end{proof}

\subsubsection{The proof of Proposition \ref{prop_near_X}: a fixed point argument}
The proof is based on a fixed point argument. In fact, equation (\ref{eq_X}) is equivalent to the fixed point problem
$$\phi_0=\Psi^4_\eps\Pi_{\mathcal{X}}\mathcal{G}^6_\eps(h_0,\phi_0,\mathbf{d}).$$
Using the previous construction, it is possible to prove that, if $\alpha\in(0,1/2)$, the right-hand side defines a contraction on the ball
$$B_{\Lambda_4}:=\{\phi_0\in\mathcal{E}^{2,\alpha}_{a,\gamma}(\Sigma\times\R)
:\,\|\phi_0\|_{\mathcal{E}^{2,\alpha}_{a,\gamma}(\Sigma\times\R)}<\Lambda_4\eps^{3-\alpha}\},$$
thus we have existence of a solution.


\begin{thebibliography}{SK}




\normalsize
\baselineskip=17pt

\bibitem{BCN} \textsc{Berestycki, H., Caffarelli, A., L., Nirenberg, L.} Monotonicity for elliptic Equations in unbounded Lipschitz domains \emph{Communications on pure and applied mathematics} \textbf{50} (1997), 1089--1111.

\bibitem{DKP} \textsc{Del Pino, M., Kowalczyk, M., \& Pacard, F.} Moduli space theory for the Allen-Cahn equation in the plane \emph{ Trans. Amer. Math. Soc.} \textbf{365} (2013), no.~2, 721--766.

\bibitem{DKPW} \textsc{Del Pino, M., Kowalczyk, M., Pacard, F., \& Wei, J.} Multiple-end solutions to the Allen-Cahn equation in $\R^2$ \emph{Journal of functional analysis} \textbf{258} (2010), no.~2, 458--503. 

\bibitem{DKW} \textsc{Del Pino, M., Kowalczyk, M., \& Wei, J.} On De Giorgi's conjecture in dimension $N\ge 9$. \emph{Ann. of Math. (2)} \textbf{174} (2011), no.~3, 1485--1569.

\bibitem{GT} \textsc{Gilbarg, D., \& Trudinger, N. S.} \emph{Elliptic partial Differential Equations of Second Order} Classics in Mathematics, Springer-Verlag, Berlin (2001), reprint of the 1998 edition.

\bibitem{Gi} \textsc{Giusti, E.} \emph{Minimal surfaces and functions of bounded variation} Birkh\"{a}user, Boston, Basel, Stuttgart, (1984).

\bibitem{HK1} \textsc{Hernández, Á., \& Kowalczyk, M.} Rotationally symmetric solutions to the Cahn-Hilliard equation \emph{Discrete Contin. Dyn. Syst.} \textbf{37} (2017), no.~2, 801--827. 

\bibitem{HK2} \textsc{Hernández, Á., \& Kowalczyk, M.} Nondegeneracy and the Jacobi fields of rotationally symmetric solutions to the Cahn-Hilliard equation \emph{preprint,} (2017).

\bibitem{JP} \textsc{Jleli, M., \& Pacard, F.} An end-to-end construction for compact constant mean curvature surfaces. \emph{Pacific J. Math.} \textbf{221} (2005), no.~1, 81--108. 

\bibitem{K} \textsc{Kapouleas, M.,} Complete constant mean curvature surfaces in Euclidean three-space, \emph{Ann. of
Math.} \textbf{131} (1990), 239-330.

\bibitem{KKS} \textsc{Korevaar, N., Kusner, R., Solomon} The structure of complete embedded surfaces with constant mean curvature. \emph{J. Differential Geometry} \textbf{30} (1989), 465--503.

\bibitem{KLP} \textsc{Kowalczyk, M., Liu, Y., \& Pacard, F.} The space of 4-ended solutions to the Allen-Cahn equation in the plane \emph{ Ann. Inst. H. Poincaré Anal. Non Linéaire} \textbf{29} (2012), no.~5, 761--781. 

\bibitem{KLPW} \textsc{Kowalczyk, M., Liu, Y., Pacard, F., \& Wei, J.} End-to-end construction for the Allen-Cahn equation in the plane \emph{ Calc. Var. Partial Differential Equations} \textbf{52} (2015), no.~1-2, 281--302. 

\bibitem{KMP} \textsc{Kusner, R., Mazzeo, R., \& Pollack, D.} The moduli space of complete embedded constant mean curvature surfaces. \emph{Geom. Funct. Anal.} \textbf{6} (1996), no.~1, 120--137. 


\bibitem{MP} \textsc{Mazzeo, R., \& Pacard, F.} Constant mean curvature surfaces with Delaunay ends. \emph{Comm. Anal. Geom.} \textbf{9} (2001), no.~1, 169--237. 

\bibitem{M} \textsc{Modica, L.,} The gradient theory of phase transitions and the minimal interface criterion. \emph{Arch. Rational Mech. Anal.} \textbf{98} (1987), no.~2, 123--142.

\bibitem{MM} \textsc{Modica, L., Mortola, S.,} Un esempio di $\Gamma$-convergenza. (Italian) \emph{Boll. Un. Mat. Ital. B (5)} \textbf{14} (1977), no.~1, 285--299.

\bibitem{N} \textsc{Nardi, G.,} Schauder estimate for solutions of Poisson's equation with Neumann boundary condition, \emph{preprint}.

\bibitem{PR} \textsc{Pacard, F., \& Ritoré, M.} From constant mean curvature hypersurfaces to the gradient theory of phase transitions. \emph{J. Differential geometry} \textbf{64} (2003), no.~3, 359--423.









\end{thebibliography}
\end{document}